\documentclass[11pt,letterpaper]{amsart}

\usepackage{amsmath,amssymb,amsthm,mathtools,microtype,bm}
\usepackage{bbm}
\usepackage[dvipsnames]{xcolor}
\usepackage[a4paper,top=3cm,bottom=2.5cm,left=3cm,right=3cm,marginparwidth=1.75cm]{geometry}

\usepackage{enumitem,needspace}
\setlist[enumerate,1]{label=\textup{(\alph*)},leftmargin=*,itemsep=2pt,beginpenalty=10000}
\usepackage[colorlinks=true,linkcolor=blue,citecolor=blue,urlcolor=blue]{hyperref}
\usepackage{comment}

\newtheorem{theorem}{Theorem}[section]
\newtheorem{conjecture}{Conjecture}[section]
\newtheorem{proposition}[theorem]{Proposition}
\newtheorem{lemma}[theorem]{Lemma}
\newtheorem{corollary}[theorem]{Corollary}
\theoremstyle{definition}
\newtheorem{definition}[theorem]{Definition}

\newtheorem{remark}[theorem]{Remark}
\numberwithin{equation}{section}

\newcommand{\xra}{\xrightarrow}

\newcommand{\sth}{\mid} 
\newcommand{\set}[1]{\left\{ \, #1 \, \right\}}

\newcommand{\D}{\mathrm{D}}
\newcommand{\V}{\mathrm{V}}

\newcommand{\tf}{\mathrm{tf}}

\newcommand{\Db}{\mathrm{D}^{\mathrm{b}}}

\newcommand{\ZZ}{\mathbb{Z}}

\newcommand{\QQ}{\mathbb{Q}}

\newcommand{\CC}{\mathbb{C}}
\newcommand{\RR}{\mathbb{R}}
\newcommand{\PP}{\mathbb{P}}

\newcommand{\DD}{\mathbb{D}}
\newcommand{\bv}{\mathbf{v}}
\newcommand{\bw}{\mathbf{w}}

\newcommand{\DT}{\mathsf{DT}}

\newcommand{\CH}{\mathrm{CH}}

\newcommand{\NS}{\mathrm{NS}}

\newcommand{\ch}{\mathrm{ch}}

\newcommand{\td}{\mathrm{td}}

\newcommand{\KK}{\mathrm{K}}
\newcommand{\Knum}{\mathrm{K}_{\mathrm{num}}}

\newcommand{\T}{\mathrm{T}}

\newcommand{\Del}{\overline{\Delta}}

\renewcommand{\Re}{\operatorname{Re}}
\renewcommand{\Im}{\operatorname{Im}}

\DeclareMathOperator{\rk}{rk}

\DeclareMathOperator{\Coh}{\mathrm{Coh}}

\DeclareMathOperator{\Sym}{Sym}

\DeclareMathOperator{\Hom}{Hom}

\DeclareMathOperator{\Spec}{Spec}
\DeclareMathOperator{\Pic}{Pic}

\DeclareMathOperator{\Stab}{Stab}

\newcommand{\cX}{\mathcal{X}}

\newcommand{\cA}{\mathcal{A}}
\newcommand{\cE}{\mathcal{E}}

\newcommand{\cH}{\mathcal{H}}

\newcommand{\cI}{\mathcal{I}}

\newcommand{\cT}{\mathcal{T}}

\newcommand{\cP}{\mathcal{P}}
\newcommand{\cD}{\mathcal{D}}
\newcommand{\cL}{\mathcal{L}}

\newcommand{\rH}{\mathrm{H}}
\newcommand{\cO}{\mathcal{O}}
\newcommand{\bP}{\mathbb{P}}
\newcommand{\bZ}{\mathbb{Z}}

\newcommand{\kk}{\mathbbm{k}}

\usepackage{xpatch}
\makeatletter   
\xpatchcmd{\@tocline}
{\hfil\hbox to\@pnumwidth{\@tocpagenum{#7}}\par}
{\ifnum#1<0\hfill\else\dotfill\fi\hbox to\@pnumwidth{\@tocpagenum{#7}}\par}
{}{}
\makeatother 

\makeatletter
\renewcommand\part{%
   \if@noskipsec \leavevmode \fi
   \par
   \addvspace{4ex}%
   \@afterindentfalse
   \secdef\@part\@spart}

\def\@part[#1]#2{%
    \ifnum \c@secnumdepth >\m@ne
      \refstepcounter{part}%
      \addcontentsline{toc}{part}{Part \thepart.\hspace{1em}#1}%
    \else
      \addcontentsline{toc}{part}{#1}%
    \fi
    {\parindent \z@ \raggedright
     \interlinepenalty \@M
     \normalfont
     \ifnum \c@secnumdepth >\m@ne
     \centering 
     \Large\bfseries \partname\nobreakspace\thepart     
       \nobreak. 
     \fi
     \Large \bfseries { #2}%
     \par}%
    \nobreak
    \vskip 3ex
    \@afterheading}
\def\@spart#1{%
    {\parindent \z@ \raggedright
     \interlinepenalty \@M
     \normalfont
     \huge \bfseries #1\par}%
     \nobreak
     \vskip 3ex
     \@afterheading}
\makeatother

\renewcommand{\thepart}{\Roman{part}}

\makeatletter 
\def\l@subsection{\@tocline{2}{0pt}{3pc}{6pc}{}} 
\makeatother

\hypersetup{bookmarksdepth=2,
 pdftitle={BMT via Frobenius},
 pdfsubject={Tilt stability, specialization, and Frobenius estimates}}
\title[Generalized BG inequalities via Frobenius] %and Frobenius morphism]
 {Generalized Bogomolov--Gieseker inequalities \\ via Frobenius} 
\date{}
\subjclass[2020]{14F08 (Primary); 14F06, 14J30 (Secondary)}
\keywords{Tilt-stability, Generalized Bogomolov--Gieseker inequalities,
Frobenius morphism, Derived categories}

\AtBeginDocument{\begingroup\fontsize{6}{7}\selectfont\endgroup}

\begin{document}

\author[Chunyi Li]{Chunyi Li}
\address{\parbox{0.9\textwidth}{Mathematics Institute (WMI), University of Warwick\\[1pt]
Coventry, CV4 7AL, United Kingdom
\vspace{1mm}}}
\email{C.Li.25@warwick.ac.uk}
\urladdr{\url{https://sites.google.com/site/chunyili0401/}}

\author[Zhiyu Liu]{Zhiyu Liu}
\address{\parbox{0.9\textwidth}{Department of Mathematics, Princeton University\\[1pt] Princeton, NJ 08544, USA
\vspace{1mm}}}
\email{zl3301@princeton.edu}
\urladdr{\url{https://sites.google.com/view/zhiyuliu}}

\author[Alexander Perry]{Alexander Perry}
\address{\parbox{0.9\textwidth}{Department of Mathematics, University of Michigan\\[1pt]
530 Church Street, Ann Arbor, MI 48109, USA
\vspace{1mm}}}
\email{arper@umich.edu}
\urladdr{\url{http://www-personal.umich.edu/~arper/}}

\author[Xiaolei Zhao]{Xiaolei Zhao}
\address{\parbox{0.9\textwidth}{Department of Mathematics, University of California Santa Barbara\\[1pt]
552 University Rd, Santa Barbara, CA 93117, USA
\vspace{1mm}}}
\email{xlzhao@ucsb.edu}
\urladdr{\url{https://sites.google.com/site/xiaoleizhaoswebsite/}}

\begin{abstract}
We establish a Bogomolov--Gieseker-type inequality for tilt-semistable objects on smooth projective varieties of dimension $\geq 3$, in the spirit of the Bayer--Macr\`i--Toda conjecture. A key ingredient is an analysis of the action of the Frobenius morphism on tilt-semistable objects.

As an application, we prove the Castelnuovo Bound Conjecture, up to an explicit constant term. Combining our results with work of Feyzbakhsh and Thomas, we further show that on a Calabi--Yau threefold,  arbitrary rank Donaldson--Thomas invariants of semistable sheaves are determined by rank one invariants, and hence are symplectic invariants.
\end{abstract}
\maketitle

\setcounter{tocdepth}{1}
\hypersetup{linktoc=page}
\tableofcontents

\section{Introduction}

Motivated by Douglas's $\Pi$-stability \cite{Douglas,DFR}, Bridgeland introduced the notion of stability conditions on   triangulated categories \cite{bridgeland:stability}. Their existence on the derived categories of complex smooth projective varieties has recently been established in \cite{Li}. 
This construction was subsequently extended in \cite{LLL} to the relative setting of flat projective families, where the existence of proper moduli spaces of semistable objects was also shown. These results are based on the construction of well-behaved stability conditions on projective space, which are then restricted to closed subvarieties. 
On the other hand, the traditional approach to the construction of stability conditions on threefolds went through the notion of tilt-stability, a weak stability condition defined in \cite{bridgeland:K3,BMT}. 
For smooth projective threefolds, Bayer--Macr\`i--Toda proposed a conjectural Bogomolov--Gieseker-type inequality \cite{Gieseker,Langer} involving $\ch_3$ of tilt-semistable objects \cite{BMT,BMS}. 
The original statement of this conjecture turned out to be incorrect \cite{Sch17,martinez-schmidt:blowups}, but later a more flexible formulation incorporating a rational 1-cycle $\Gamma$ was proposed in \cite{BMSZ}, 
which we call the \emph{$\Gamma$-BMT Conjecture}. 
When true, these inequalities yield an open subset of the spaces of stability conditions on threefolds \cite{BMS} and 
have important applications in enumerative geometry \cite{Toda:limit,toda:bogomolov-counting,Toda:CYBG,Toda:GepnerBG,feyz:curve-counting,FT2,FT1,fey:modularity,liu-ruan,liu} and birational geometry \cite{BM:projectivity,bbmt14}. 
The original BMT inequality and its $\Gamma$-variant have been proved for various classes of threefolds; see, for example, \cite{macri:projective-space,schmidt:quadric,maciocia-piyaratne:abelian-I,maciocia-piyaratne:abelian-II,BMS,li:fano-pic-rk-1,BMSZ,li:quintic,koseki:product-threefolds, sun:tilt-vanishing, koseki:double-triple-solids,liu:quadratic,FKLR,xu:quintic} and the survey \cite{bm:icm}.

The goal of this paper is to prove a stronger form of the $\Gamma$-BMT Conjecture for any smooth projective variety of dimension $\geq 3$ and to discuss some of its applications.

\subsection{Main theorem}
Let $(X,H)$ be a polarized smooth projective variety of dimension $n\geq 3$ over a field of characteristic $0$. To state our main result, we recall some notation from tilt-stability and refer to Section~\ref{sec:parameters} for more details. For any $B\in\NS(X)_\RR$ and $E\in \Db(X)$, we define $\ch^B(E)\coloneqq e^{-B}.\ch(E)$ and
\[\Del^H_B(E)\coloneqq \bigl(H^{n-1}.\ch_1^B(E)\bigr)^2 -2H^n.\ch_0(E)\,H^{n-2}.\ch_2^B(E).\]
Let $\Coh^{H,B}(X)$ be the first tilted heart. For any $E\in \Coh^{H,B}(X)$ and $\alpha>0$, we set
\[\nu^H_{\alpha,B}(E)\coloneqq \frac{H^{n-2}.\ch_2^B(E)-\frac{\alpha^2}{2}H^n.\ch_0(E)} {H^{n-1}.\ch_1^B(E)}\]
with value $+\infty$ when the denominator is $0$. Our main theorem is as follows.

\begin{theorem}\label{thm:main1.1}
There is a constant $\mathsf{C}=\mathsf{C}(X,H)>0$ depending only on the pair $(X,H)$ such that for every
$(\alpha,B)\in\RR_{>0}\times \NS(X)_\RR$ and every $\nu^H_{\alpha,B}$-semistable object $E\in\Coh^{H,B}(X)$, we have
\begin{equation}\label{eq:quad}
\begin{aligned}
&\alpha^2\Del^H_B(E)
 +4\bigl(H^{n-2}.\ch_2^B(E)\bigr)^2\\
&\qquad
 -6\bigl(H^{n-1}.\ch_1^B(E)\bigr)
 \left(H^{n-3}.\ch_3^B(E)
 -\mathsf{C}\sqrt{\Del^H_B(E)}\right)\ge0.
\end{aligned}
\end{equation}
\end{theorem}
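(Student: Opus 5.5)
We prove the inequality by reduction to positive characteristic, where the Frobenius morphism improves the quadratic (Bogomolov-type) error terms, followed by a standard reduction to the case where $E$ is near the "vertical wall" $\nu=0$.

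\textbf{Step 1: reduction to $\nu_{\alpha,B}(E)=0$ and small $\alpha$.} Following \cite{BMT,BMS}, we first show that \eqref{eq:quad} is implied by a family of inequalities of the form
\[
H^{n-3}.\ch_3^{B}(E)\le \tfrac{\alpha^2}{6}H^{n-1}.\ch_1^{B}(E)+\mathsf{C}\sqrt{\Del^H_B(E)}
\]
for $\nu_{\alpha,B}$-semistable $E$ with $\nu_{\alpha,B}(E)=0$. The argument is the usual one: given $E$ semistable at $(\alpha,B)$, move $B$ along $H$ and $\alpha$ along the ray through $\Pi(E)$ in the $(\beta,\alpha)$-plane until $\nu=0$. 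If $E$ stays semistable we are done. Otherwise we hit a wall and induct on $\Del^H_B$ using the support property $\Del\ge0$, together with the fact that $\Del$ strictly decreases for destabilizing factors. The extra term $\mathsf{C}\sqrt{\Del}$ is subadditive enough to survive this induction. This is where the concavity of $\sqrt{\cdot}$ and the inequality $\sqrt{\Del(A)}+\sqrt{\Del(F)}\le\sqrt{\Del(E)}+\ldots$ along walls must be checked carefully. By continuity and rescaling, we may also assume $B$ is rational.

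\textbf{Step 2: spreading out and reducing mod $p$.} We spread $(X,H,E)$ over a finitely generated $\ZZ$-algebra and specialize to a closed point of characteristic $p\gg0$. Tilt-semistability is open in families, and its specialization behavior is controlled by the Langton-type argument for tilt-stability, so we obtain a $\nu$-semistable object $E_p$ on $X_p$ with the same Chern character. By hypothesis, the constant must depend only on $(X,H)$, so we fix a model of $X$ and allow $E$ to vary; the specialization must be handled uniformly in $E$.

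\textbf{Step 3: Frobenius.} On $X_p$, consider $F^{*}E_p$, or the iterates $F^{k*}$. We have $\ch_i(F^*E)=p^i\ch_i(E)$, and $B$ scales by $p$. The key claim is that $F^*$ of a tilt-semistable object is "almost" semistable: its Harder--Narasimhan factors at the scaled parameter have slopes within $O(\text{const})$ of each other. The constant comes from the failure of Frobenius pullback to preserve semistability, bounded via the Langer/Shepherd-Barron estimate in terms of $\mu_{\max}(\Omega_X)$ times $(p-1)$. This step is the main obstacle. I would attempt it by bounding $\Hom(A,F^*E)$ through the adjunction with $F_*$ and the filtration of $F_*\cO$ by $\Omega$-powers (the Cartier / canonical connection filtration), then controlling $\ch_3$ via $\chi(F^*E)$ and Serre vanishing for large twists.

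Then we combine Riemann--Roch with the vanishing $\Hom(\cO(-mH),F^{k*}E)$ for a suitable wall-position. This gives an upper bound on $\ch_3$ of order $p^{3k}(\tfrac{\alpha^2}{6}\ch_1)+O(p^{2k}\cdot \text{error})$. The error is governed by the spread of the HN factors, which is bounded by $\sqrt{\Del}$ through the usual support-property estimate. Dividing by $p^{3k}$ and letting $k\to\infty$ kills the lower-order Todd-class contributions but leaves a term linear in $\sqrt{\Del}$ times a constant depending only on $(X_p,H)$. This constant is bounded uniformly for $p\gg0$, and the resulting inequality on the numerical class lifts back to characteristic $0$, giving \eqref{eq:quad}.
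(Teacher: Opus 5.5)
Your outline has the right architecture: spread out, specialize to characteristic $p$, show the Frobenius pullback is almost tilt-semistable via the adjunction with $F_*$ and Sun's filtration, then let $p\to\infty$. Your ``almost semistable'' claim is the analogue of Theorem~\ref{thm:twoendpoints}.

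\textbf{The main gap.} Nothing in your argument produces an upper bound on $\ch_3$ in the first place. Frobenius is only an amplifier, and it needs a weak seed inequality to amplify. In the paper this is Lemma~\ref{lem:seed} (from Proposition~\ref{prop:weak}): $v_3\le\frac{\alpha^2}{6}v_1+\Gamma_0.\ch_1$ for slope-zero $\nu^H_{\alpha,B}$-semistable objects with $\alpha\ge MA_0$, uniformly over the fibers of the spread-out family. Applied to each HN factor of $\Phi_pE$ after moving it to slope zero, the $\Gamma_0$-error is of order $p$ while $\ch_3$ is of order $p^3$, so the error disappears as $p\to\infty$.

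The seed comes from the recent existence of geometric stability conditions on $X$. These are induced from powers of an elliptic curve via products of projective spaces, with $m$ depending only on the Hilbert polynomial, so they are uniform in $p$. Combined with the heart comparison of Lemma~\ref{lem:window}, they force $E[1]\in\cP(0,1]$ and hence $\Re Z^{a,B}_X(E)>0$.

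Your substitute, Riemann--Roch plus the vanishing of $\Hom(\cO(-mH),F^{k*}E)$, cannot do this job. We have $\chi(\cO(-mH),G)=\sum_i(-1)^i\dim\mathrm{Ext}^i(\cO(-mH),G)$. So for $n=3$, an upper bound on $\ch_3(G)$ also requires $\mathrm{Ext}^2(\cO(-mH),G)\cong\Hom(G,\cO(-mH)\otimes\omega_X[1])^\vee=0$ for arbitrary HN factors $G$. (For $n\ge4$, $\chi$ does not even isolate $H^{n-3}.\ch_3$.) When $K_X.H^{2}\ge0$, e.g.\ for Calabi--Yau threefolds, no tilt parameter puts $\cO(-mH)$ in $\mathcal T_{H,B}$ and $\cO(-mH)\otimes\omega_X$ in $\mathcal F_{H,B}$ simultaneously. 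Slope arguments therefore give no such vanishing. This is exactly the obstruction that makes BMT-type inequalities hard, and Frobenius does not remove it.

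\textbf{Other points to repair.}
\begin{enumerate}
\item The $\sqrt{\Del}$ term does not come from a ``usual support-property estimate'' on the spread of HN slopes. In the paper the HN slopes of $\Phi_pE$ are bounded by $pB_p(A)$, with $B_p(A)$ independent of $E$ and of $\Del$. The factor $\sqrt{\eta}$ appears only in Lemma~\ref{lem:jointmoments}. There one applies the seed inequality factorwise and optimizes $\frac23V_p+\frac{\alpha_p^2}{3}y_p$ under the constraints \eqref{eq:constraints} (Bogomolov on each factor and $|\xi_i|\le B_p(A)$), together with the cohomology-sheaf gap $\sum_{\xi_i\le0}w_i(1-g_px_i)\ge0$, via chord/convexity estimates.
\item Your Step~1 runs in the wrong direction. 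The Frobenius estimate needs a fixed, moderately large parameter ($A\ge2D$, plus the gap hypothesis \eqref{eq:sourcegaps}). So the paper proves the bound at $\alpha=2D$ and then propagates to smaller $\alpha$ by the wall induction of Proposition~\ref{prop:propagate}, at an extra cost of $\frac{A^2-\alpha^2}{6}\sqrt{\Del}$.
\item The passage from arbitrary $\nu$ to $\nu=0$ (Proposition~\ref{prop:ch3boundimplyquadform}) uses Bertram's nested walls and crosses no walls.
\item You do not need the specialization to be uniform in $E$. The prime $p$ may depend on $E$, since the constants surviving the limit depend only on $D=nu+v$.
\item The specialization only yields $v_3(E_s)\ge v_3(E)$, not the same Chern character. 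That is the direction you need.
\end{enumerate}
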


We can make the constant $\mathsf{C}(X,H)$ very explicit, see Remark~\ref{rmk:constant}.

If we set $\Gamma\coloneqq \mathsf{C}H^{n-1}$, then the quadratic form in
\cite[Proposition 2.8]{BMSZ}, with this $\Gamma$, equals the left-hand side of \eqref{eq:quad} plus a nonnegative term. Hence, \eqref{eq:quad} implies the $\Gamma$-BMT Conjecture.

\begin{corollary}\label{cor:gamma-bmt}
Set $\Gamma=\mathsf{C}H^{n-1}$. Then for every $(\alpha,B)\in\RR_{>0}\times \NS(X)_\RR$ and every $\nu^H_{\alpha,B}$-semistable object $E\in\Coh^{H,B}(X)$, we have
\[\begin{aligned}
0 \leq Q^{\Gamma}_{\alpha,B}(E)
&\coloneqq \alpha^2\!\left(\Del^H_B(E)+3\,\frac{\Gamma. H}{H^n}\big(H^n.\ch_0^{B}(E)\big)^2\right) \\
&\quad +\,2\big(H^{n-2}.\ch_2^{B}(E)\big)\!\left(2\,H^{n-2}.\ch_2^{B}(E)-3\,(\Gamma. H)\,\ch_0^{B}(E)\right) \\
&\quad -\,6\big(H^{n-1}.\ch_1^{B}(E)\big)\!\left(H^{n-3}.\ch_3^{B}(E)-\Gamma. \ch_1^{B}(E)\right).
\end{aligned}\]
\end{corollary}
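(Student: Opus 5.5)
The plan is to deduce Corollary~\ref{cor:gamma-bmt} directly from Theorem~\ref{thm:main1.1}. I will show that, for $\Gamma=\mathsf{C}H^{n-1}$, the difference between $Q^{\Gamma}_{\alpha,B}(E)$ and the left-hand side of \eqref{eq:quad} is a sum of squares with a nonnegative coefficient. Since the left-hand side of \eqref{eq:quad} is nonnegative for $\nu^H_{\alpha,B}$-semistable $E$, nonnegativity of $Q^\Gamma_{\alpha,B}(E)$ then follows.

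Abbreviate $r=H^n.\ch_0(E)$, $a=H^{n-1}.\ch_1^B(E)$, $c=H^{n-2}.\ch_2^B(E)$ and $d=H^{n-3}.\ch_3^B(E)$, so that $\Del^H_B(E)=a^2-2rc$. With $\Gamma=\mathsf{C}H^{n-1}$ we have $\Gamma.H=\mathsf{C}H^n$ and $\Gamma.\ch_1^B(E)=\mathsf{C}a$. Note also that $\ch_0^B=\ch_0$, so $(\Gamma.H)\ch_0^B(E)=\mathsf{C}r$ and $\frac{\Gamma.H}{H^n}(H^n.\ch_0^B(E))^2=\mathsf{C}r^2$. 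Substituting gives
\[
Q^\Gamma_{\alpha,B}(E)=\alpha^2\Del^H_B(E)+3\mathsf{C}\alpha^2r^2+4c^2-6\mathsf{C}rc-6ad+6\mathsf{C}a^2 .
\]
Subtracting the left-hand side of \eqref{eq:quad}, which is $\alpha^2\Del^H_B(E)+4c^2-6ad+6\mathsf{C}a\sqrt{\Del^H_B(E)}$, leaves
\[
\mathsf{C}\bigl(3\alpha^2r^2+6a^2-6rc-6a\sqrt{\Del^H_B(E)}\bigr).
\]
Using $6a^2-6rc=3a^2+3\Del^H_B(E)$, this equals
\[
\mathsf{C}\Bigl(3\alpha^2r^2+3\bigl(a-\sqrt{\Del^H_B(E)}\bigr)^2\Bigr)\ \ge 0,
\]
because $\mathsf{C}>0$.

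The only point needing care is that $\sqrt{\Del^H_B(E)}$ must be real, so that the square is legitimate. This is the standard Bogomolov--Gieseker inequality $\Del^H_B(E)\ge0$ for $\nu^H_{\alpha,B}$-semistable objects of $\Coh^{H,B}(X)$ (see \cite{BMT}). It is in any case implicit in the statement of Theorem~\ref{thm:main1.1}, which already involves $\sqrt{\Del^H_B(E)}$.

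No other obstacle is expected: the argument is pure algebra once the main theorem is granted. In particular, the sign of $a$ is not needed for this step, since the difference is a sum of squares.
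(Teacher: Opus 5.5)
Your proof is correct and is the paper's argument: the paper only remarks that with $\Gamma=\mathsf{C}H^{n-1}$ the form $Q^{\Gamma}_{\alpha,B}(E)$ equals the left-hand side of \eqref{eq:quad} plus a nonnegative term. You make that term explicit as $3\mathsf{C}\bigl(\alpha^2 r^2+(a-\sqrt{\Del^H_B(E)})^2\bigr)$, which is legitimate because $\Del^H_B(E)\ge 0$ for tilt-semistable objects in characteristic $0$ (Proposition~\ref{prop:tiltBG} with $\epsilon=0$).
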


\subsection{Applications}

When $n=3$, our Theorem~\ref{thm:main1.1} and Corollary~\ref{cor:gamma-bmt} have several important applications. 

Let $(X,H)$ be a polarized Calabi--Yau threefold over $\CC$. For any numerical class $\bv\in \Knum(X)$, we denote by $\DT_H(\bv)\in \QQ$ the generalized Donaldson--Thomas (DT) invariant counting Gieseker-semistable sheaves (with respect to $H$) of class $\bv$, as  constructed in \cite{JS}. If $\rk(\bv)=1$, up to a multiple, this is the usual DT invariant counting ideal sheaves of subschemes in $X$ (see~\cite{Th}).

In \cite{FT1,FT2}, it was shown that arbitrary rank DT invariants can be expressed in terms of those of rank $1$, conditional on a version of the BMT Conjecture.  Using Theorem~\ref{thm:main1.1}, we can make their result unconditional.

\begin{theorem}[{Theorem~\ref{thm:rk-r}}]
Let $(X,H)$ be a polarized Calabi--Yau threefold. Then for a fixed $\bv\in \Knum(X)$ with $\rk(\bv)\geq 0$, \[\mathsf{DT}_H(\bv)=F_1(\mathsf{DT}_H(\alpha_1),\mathsf{DT}_H(\alpha_2),\dots)\]
is a universal polynomial in invariants $\mathsf{DT}_H(\alpha_i)$ with $\rk(\alpha_i)=1$.
\end{theorem}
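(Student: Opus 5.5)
The plan is to follow the strategy of Feyzbakhsh--Thomas \cite{FT1,FT2}. They proved that on a Calabi--Yau threefold satisfying a BMT-type inequality, arbitrary rank DT invariants are universal polynomials in rank one invariants. The task is therefore to check that Theorem~\ref{thm:main1.1}, or equivalently Corollary~\ref{cor:gamma-bmt} with $\Gamma=\mathsf{C}H^{2}$, gives exactly the input their argument needs. Concretely, I would first recall the precise hypothesis used in \cite{FT2}. This is a conjectural inequality of the form
\[
\ch_3^B(E) \le \tfrac{\alpha^2}{6}\ch_1^B(E)H^2 + \text{(correction)}
\]
for $\nu_{\alpha,B}$-semistable $E$ in the region relevant to their wall-crossing, together with its $\Gamma$-modified versions. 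As I recall, they explicitly allow a term linear in $\ch_1$, i.e.\ $\Gamma.\ch_1^B$, with $\Gamma$ fixed depending only on $(X,H)$. I would verify algebraically that the quadratic form $Q^\Gamma_{\alpha,B}\ge0$ in Corollary~\ref{cor:gamma-bmt} implies their required inequality. This amounts to specializing to the boundary points $\nu_{\alpha,B}(E)=0$, or small $\alpha$, where $Q^\Gamma$ reduces to a linear bound on $\ch_3$.

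Next I would run their argument, which has two steps. The first step is the rank reduction. For a Gieseker semistable sheaf of class $\bv$, tensor by $\cO(-n)$ for $n\gg0$ and move along the tilt-stability wall structure near the large volume limit. Then use the BMT-type inequality to show that the only walls for objects of class $\bv$ or related classes (such as $\ch(\cO(-n))^{\oplus}$ extensions, or pairs) are controlled. This realizes rank $r$ DT invariants via Joyce--Song wall-crossing formulas from invariants of lower rank, or of rank $0$ and $1$ classes. The second step handles rank $0$ and its induction. Here the inequality bounds $\ch_3$ of destabilizing subobjects, which is needed to show that only finitely many walls occur and that the terms are classes with smaller invariants. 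This makes the induction terminate. Iterating gives the universal polynomial $F_1$. Finally, since rank one DT invariants are deformation invariant and, via MNOP/Pandharipande--Thomas correspondences, symplectic invariants, the conclusion about symplectic invariance follows. That part is not needed for the displayed statement itself.

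The main obstacle will be the first step: matching constants. Feyzbakhsh--Thomas state their conditional result using an inequality of a specific shape on a specific region of $(\alpha,B)$, for instance $\alpha$ small relative to $\sqrt{\Del}$. The correction term here, $\mathsf{C}\sqrt{\Del^H_B(E)}$, is not literally linear in $\ch_1$ in the way their hypothesis may assume. I would first try to absorb this term using the bound $\sqrt{\Del}\le H^2.\ch_1^B$ coming from the Bogomolov inequality $\Del\ge0$ combined with $\ch_0$-terms. That produces the $\Gamma$-form of Corollary~\ref{cor:gamma-bmt}, which is the version I expect \cite{FT2} accommodates, since they note their argument is robust under weakening BMT by any fixed $\Gamma$. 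If their argument needs the inequality on all of $\Coh^{H,B}$ for all $(\alpha,B)$, this is provided; one must only double-check that their finiteness-of-walls estimates still close with the extra $\Gamma$-terms, which are lower order in the relevant asymptotics ($n\to\infty$).
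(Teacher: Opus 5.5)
Your framework is the same as the paper's: feed the new inequality into the wall-crossing arguments of \cite{FT1,FT2}. The gap is in the step you call routine. After you pass to the $\Gamma$-form of Corollary~\ref{cor:gamma-bmt}, the extra terms are \emph{not} of lower order everywhere, which is why the paper keeps the $\sqrt{\Del}$-shaped correction. In Feyzbakhsh--Thomas coordinates $(r,c,s,d)$, the two versions add different terms to the BMT expression:
\begin{itemize}
\item \eqref{eq:linear} adds $3\mathsf C(c-br)\sqrt{c^2-2rs}$;
\item $Q^{\Gamma}_{\alpha,B}\ge0$ with $\Gamma=\mathsf CH^2$ adds $3\mathsf C(r^2w-rcb+c^2-rs)$.
\end{itemize}
Now take the setting of \cite[Proposition~2.5]{FT1}: $\bw_n=\bv_{n_0}$, $c'=0$, and $\ch_H(E_1)=(-1,n_0,-\tfrac{n_0^2}{2}+s',\tfrac{n_0^3}{6}+d')$, so $c^2-2rs=2s'$. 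The wall is the line through $\Pi(E_1)$ of slope $k=(s_0-s')/c$; put $x=b+n_0>0$ on it. Then:
\begin{itemize}
\item the BMT expression of $E_1$ equals $(2s'k-3d'-n_0s')\,x$;
\item the paper's correction is $3\mathsf C\sqrt{2s'}\,x$;
\item your correction is $3\mathsf C(n_0+k)\,x$.
\end{itemize}
With the paper's version, combining with \eqref{eq:rankzero} for $E_0$ gives \eqref{eq:rankminusone}. Its right-hand side is bounded independently of $n_0$, so $s'\ge 1/(2H^3)$ is impossible for $n_0\gg0$, and hence $s'=0$. With your version the right-hand side acquires $3\mathsf C\bigl(n_0+\tfrac{s_0-s'}{c}\bigr)$, which is of the same order $n_0$ as the left-hand side $s'n_0$. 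You then only get $s'\lesssim 3\mathsf C$, and since $3\mathsf C\gg 1/(2H^3)$ this does not force $s'=0$. Choosing a different point on the wall does not help, because both sides are proportional to $x$.

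Consequently you cannot identify $E_1\simeq T(-n_0H)[1]$, and the wall description of \cite[Section~2]{FT1} is not established. That description is what the rank-$0$-to-rank-$1$ wall-crossing rests on. There is a second dependence on the $\sqrt{\Del}$ form right after $s'=0$. The paper uses that $\Del(E_1)=0$ makes its correction vanish, so the argument of \cite[Lemma~2.6]{FT2} runs with the sharp inequality. Your $\Gamma$-form still only gives $d'\le\mathsf C(n_0+k)$ at that point.

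The missing idea is to keep a correction proportional to $\sqrt{\Del}$, which is small exactly for the nearly-$\cO(-n_0)[1]$ objects that occur on these walls. The rest of the paper's proof is the verification you defer, carried out with that term:
\begin{itemize}
\item the (wBG) condition of \cite[Section~3.1]{FT2}, checked via the endpoints of $\ell_f(\bv_n)$ on $w=b^2/2$;
\item equations (24), (38), (40) and Propositions~2.4--2.6 of \cite{FT1}.
\end{itemize}
It does not appeal to any robustness remark in \cite{FT1,FT2}. Separately, $\sqrt{\Del^H_B(E)}\le H^2.\ch_1^B(E)$ fails when $\ch_0(E)\,H.\ch_2^B(E)<0$. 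Corollary~\ref{cor:gamma-bmt} comes instead from $2v_1\sqrt{\Del}\le v_1^2+\Del$.
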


As a corollary, which is already mentioned in \cite{FT1}, we get the symplectic invariance of $\DT_H(\bv)$.

\begin{corollary}[{Corollary~\ref{cor:sym-inv}}]
Let $(X_1,H_1)$ and $(X_2,H_2)$ be polarized Calabi--Yau threefolds.
Choose K\"ahler forms $\omega_i$ with $[\omega_i]=H_i\in \rH^2(X_i,\RR)$ and a
symplectomorphism
\[\phi\colon(X_1,\omega_1)\xra{\simeq}(X_2,\omega_2).\]
Let $\bv_i\in\Knum(X_i)$ satisfy
$\ch(\bv_1)=\phi^*\ch(\bv_2)$. Then
\[\DT_{H_1}(\bv_1)=\DT_{H_2}(\bv_2).\]
\end{corollary}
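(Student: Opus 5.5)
The symplectic invariance will follow from Theorem~\ref{thm:rk-r}, which writes $\DT_{H}(\bv)$ as a universal polynomial in rank one invariants, together with the known symplectic invariance of rank one DT invariants. The plan is to apply Theorem~\ref{thm:rk-r} to $(X_1,H_1,\bv_1)$ and to $(X_2,H_2,\bv_2)$ and compare the two sides term by term.

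First I will check that the universal polynomial $F_1$, and the list of rank one classes $\alpha_i$ it involves, depend only on topological data. This should come out of the Feyzbakhsh--Thomas wall-crossing argument. The walls, the classes that appear, and the wall-crossing coefficients are determined by the Chern character $\ch(\bv)$, the class $H$, and the intersection numbers and Mukai/Euler pairing, computed via Riemann--Roch from $\ch$ and $\td(X)$. The symplectomorphism $\phi$ identifies all of these:
\begin{enumerate}
\item $\phi^*$ is a ring isomorphism on $\rH^*(X,\QQ)$, so it preserves cup products;
\item $\phi^*[\omega_2]=[\omega_1]$, so $\phi^*H_2=H_1$;
\item $\phi$ preserves the Chern classes of the tangent bundle, because these are defined through a compatible almost complex structure, which is unique up to homotopy for a given symplectic form.
\end{enumerate}
Since $\td$ is determined by these Chern classes, the Euler form $\chi(\bv,\bw)=\int \ch(\bv)^\vee\ch(\bw)\td(X)$ is preserved as well. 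Hence with $\ch(\bv_1)=\phi^*\ch(\bv_2)$ we obtain the same polynomial $F_1$, and $\phi^*$ matches the rank one classes $\alpha_i^{(1)}$ bijectively with $\alpha_i^{(2)}$.

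Second, I need $\DT_{H_1}(\alpha^{(1)}_i)=\DT_{H_2}(\alpha^{(2)}_i)$ for the rank one classes. In rank one, Gieseker semistable sheaves are twists of ideal sheaves by line bundles. So these invariants are ideal sheaf DT invariants, which by the GW/DT (MNOP) correspondence proved by Pandharipande--Pixton for Calabi--Yau threefolds equal Gromov--Witten invariants. Gromov--Witten invariants are symplectic invariants. Matching the line bundle twist also requires $\phi^*$ to preserve integral $\rH^2$, which holds since $\phi$ is a diffeomorphism.

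I expect the main obstacle to be the first step. One has to confirm that the Feyzbakhsh--Thomas formula is genuinely universal, meaning it depends only on intersection numbers, $\ch_i$, $c_2(X)$ and $H$, and not on finer data such as the complex structure or the effective cone. I would address this by tracing their proof: the constants there come from Bogomolov-type inequalities, and here these come from Theorem~\ref{thm:main1.1}. The constant $\mathsf{C}(X,H)$ could a priori depend on $X$, but it only affects where the wall-crossing terminates, not the resulting expression. If needed, one can choose a single common bound valid for both $X_1$ and $X_2$.
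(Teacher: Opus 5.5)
Your outline is the paper's argument: Theorem~\ref{thm:rk-r}, the fact (taken from \cite{FT1,FT2}) that the coefficients of $F_1$ depend only on $\rH^*(X,\QQ)$ with its pairing, $\ch(\bv)$, the Chern classes of $X$ and $H$, and symplectic invariance of the rank one invariants. Your points (a)--(c) correctly show that $\phi^*$ preserves all of this data.

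The step that fails as written is the source of the last input. Pandharipande--Pixton proved the GW/pairs correspondence only in special cases, namely Calabi--Yau complete intersections in products of projective spaces and toric threefolds. They did not prove it for arbitrary Calabi--Yau threefolds, so citing them leaves the corollary unproved for general $X_i$. The result that works in full generality, and the one the paper invokes, is Pardon's theorem \cite{pardon}, which establishes the GW/PT correspondence for all Calabi--Yau threefolds. To reach ideal-sheaf DT invariants you then also use the DT/PT correspondence and the degree-zero factor, which depends only on $\chi(X)$. Together these give symplectic invariance of the rank one invariants, which count line-bundle twists of ideal sheaves.

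A smaller point concerns your claim that $\mathsf C$ ``only affects where the wall-crossing terminates, not the resulting expression''. This is not accurate. In the Feyzbakhsh--Thomas argument, $\mathsf C$ determines how large $n$ (resp.\ $n_0$) must be, and hence which rank one classes appear in $F_1$. The paper avoids the issue by taking $\mathsf C=96m^2$, with $m$ chosen via Matsusaka depending only on $H^3$ and $c_2(X).H$, both of which are preserved by $\phi$. Your fallback also works. Since $v_1\ge 0$ on the tilted heart, \eqref{eq:quad} only gets weaker as $\mathsf C$ increases. So $\max\{\mathsf C(X_1,H_1),\mathsf C(X_2,H_2)\}$ is valid for both $X_1$ and $X_2$, and you can run the argument with a common $n$.
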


Another application of Theorem~\ref{thm:main1.1} concerns the Castelnuovo Bound Conjecture. Roughly speaking, for a Calabi--Yau threefold $X$ of Picard number one, physics predicts that its Gopakumar--Vafa invariants are zero when the genus is larger than an explicit bound. We refer to \cite{liu-ruan} for more background and the relation to Gromov--Witten invariants.

By \cite{liu-ruan,fey:modularity}, the Castelnuovo Bound Conjecture holds when $X$ satisfies \eqref{eq:quad} with $\mathsf C=0$. Moreover, the original conjecture is reduced to the following form.

\begin{conjecture}
Let $X$ be a Calabi--Yau threefold of Picard number one and $H$ be the ample generator of $\NS(X)$. Then for any one-dimensional closed subscheme $C\subset X$ of degree $d$ and arithmetic genus $g$, we have
\[g\leq \frac{1}{2H^3}d^2+\frac{1}{2}d+1.\]
\end{conjecture}

Without assuming \eqref{eq:quad}, \cite{liu} proves an unconditional upper bound for $g$  when $d\gg 0$ with the optimal quadratic term. Following the algorithm in \cite{liu}, the linear term of such an asymptotic bound was later improved in \cite{GRS} when $d\gg 0$, while leaving a large constant term.

Using Theorem~\ref{thm:main1.1} and arguments from \cite{liu-ruan,liu}, we can prove the Castelnuovo Bound Conjecture for any $d>0$, up to an explicit constant.

\begin{theorem}[{Corollary~\ref{cor:cast-cy}}]
Let $X$ be a Calabi--Yau threefold of Picard number one and $H$ be the ample generator of $\NS(X)$. Then for any one-dimensional closed subscheme $C\subset X$ of degree $d>0$ and arithmetic genus $g$, we have
\[g\leq \frac{1}{2H^3}d^2+\frac{1}{2}d+1+660m^{\frac{8}{3}}H^3,\]
where $m\geq 1$ is an integer such that $mH$ is very ample.
\end{theorem}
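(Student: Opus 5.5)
The plan is to deduce the Castelnuovo-type bound from Theorem~\ref{thm:main1.1}, following the wall-crossing algorithm of \cite{liu-ruan,liu}. The idea is to keep track of how the error term $\mathsf{C}\sqrt{\Del^H_B}$ propagates, so that it only adds a constant rather than spoiling the quadratic or linear term.

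First, given a one-dimensional subscheme $C\subset X$ of degree $d$ and arithmetic genus $g$, consider the ideal sheaf $I_C$, or its derived dual / shift as in \cite{liu-ruan}. This object has $\ch=(1,0,-d/H^3\cdot H^2\text{-class},\,\ast)$ with $\ch_3$ determined by $g$ via Riemann--Roch, using $\td_2(X)=c_2(X)/12$; this contributes a term controlled by $H.c_2(X)$. Next, move along the $\nu_{\alpha,B}$-walls for the class of $I_C$ in the $(B,\alpha)$-plane with $B=bH$, starting from the large volume limit where $I_C$ is tilt-stable. There are two possibilities:
\begin{itemize}
\item[] $I_C$ stays semistable up to the boundary of the region where \eqref{eq:quad} bites, in which case \eqref{eq:quad} directly bounds $\ch_3^B$, hence $g$;
\item[] $I_C$ is destabilized along a wall, in which case the standard analysis of \cite{liu} splits $I_C$ into a subobject like $\cO(-kH)$ (or a rank-one torsion-free piece) and a quotient supported in lower rank.
\end{itemize}
In the second case, one applies Theorem~\ref{thm:main1.1} inductively to the destabilizing factors. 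The main structural point is that $\Del^H_B$ decreases strictly across walls, so the induction terminates.

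For the bound itself, at each stage I would substitute the optimal choice of $(\alpha,b)$ into \eqref{eq:quad}. Writing $\sqrt{\Del}\lesssim$ a linear function of $d$, the error term appears as $6H^2.\ch_1^B\cdot\mathsf{C}\sqrt{\Del}$. After dividing by $H^2.\ch_1^B$ and optimizing $b$ near the critical value, it contributes $O(\mathsf{C}\cdot\text{const})$ to $g$. The key is to choose $b$ so that $H^2.\ch_1^B$ and $\sqrt{\Del}$ are balanced, which is what made the $\mathsf{C}=0$ case in \cite{liu-ruan} give exactly $\frac{d^2}{2H^3}+\frac d2+1$. Finally, the explicit form $660m^{8/3}H^3$ would come from inserting the explicit value of $\mathsf{C}$ from Remark~\ref{rmk:constant}, which presumably scales like $m^{8/3}H^3$-ish via the embedding by $|mH|$, together with bounding $H.c_2(X)$ (Miyaoka gives $H.c_2\ge0$, and the upper bound is in terms of $m$).

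The main obstacle I expect is the small-$d$ and intermediate-wall regime. There the destabilizing quotients may have rank $\geq 2$ or the relevant $b$ lies where $\sqrt{\Del}$ is not small relative to $H^2.\ch_1^B$, so the error term can a priori be of order $d$ rather than constant. For that step I would first try to use the Bogomolov--Gieseker inequality and the bound $\Del\geq0$ to restrict the possible walls to a bounded list with $b\in[-1,0]$-type windows. On that bounded range, $\sqrt{\Del}$ is bounded by a constant multiple of $H^2.\ch_1^B$ plus a constant, so the error becomes absorbed into the additive constant.
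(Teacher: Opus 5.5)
Your framework, wall-crossing for $\cI_C$ as in \cite{liu-ruan,liu} combined with Theorem~\ref{thm:main1.1}, is the one behind the paper's Theorem~\ref{thm:cast-bound}; the statement is that theorem with $s=1$ and $\mathsf C=96m^2$. However, the quantitative step you rely on is false, and the whole argument hinges on it. For $B=bH$ one has $\Del^H_{bH}(\cI_C)=2dH^3$, independent of $b$ and $\alpha$. So at any zero-slope point \eqref{eq:quad} reads $\ch_3^B(\cI_C)\le\frac{\alpha^2}{6}H^2.\ch_1^B(\cI_C)+\mathsf C\sqrt{2dH^3}$, and the error on $\ch_3(\cI_C)=g-1$ has size $\mathsf C\sqrt d$ for every choice of $(\alpha,b)$. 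There is nothing to ``balance'' by choosing $b$. A bounded $b$-window does not help either: there $H^2.\ch_1^B$ stays bounded while $\sqrt{\Del}$ grows. Executed as written, your plan gives at best $g\le\frac{d^2}{2H^3}+\frac d2+1+O(\mathsf C\sqrt d)$.

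So the real difficulty is large $d$, not small $d$; for bounded $d$ the error is trivially bounded. Nor can the constant come from inserting $\mathsf C$ linearly. Remark~\ref{rmk:constant} gives $\mathsf C=96m^2$, and $m^{8/3}=(m^2)^{4/3}$ is the trace of absorbing a $\sqrt d$-size error into a quadratic deficit. Also, no $c_2$ term appears: $c_1(X)=0$ and $\ch_1(\cO_C)=0$, so Riemann--Roch gives $\ch_3(\cI_C)=g-1$ exactly.

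That absorption is the missing idea. Normalize $t=d/H^3$, $h=\ch_3(\cI_C)/H^3$, $f(t)=\frac{t^2+t}{2}$ and $c=\mathsf C$. The paper tests $\cI_C$ at the zero-slope point $(\alpha_C,-\beta_CH)$ with $\alpha_C=\max\{\sqrt t/2,\,t/2-1\}$ and $\beta_C=\sqrt{\alpha_C^2+2t}$; note $\alpha_C$ is large, not near a critical value.

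\emph{Semistable case.} Theorem~\ref{thm:main1.1} gives $h\le\frac23\beta_Ct+c\sqrt{2t}$. For $t\ge4$ one has $\frac23\beta_Ct\le f(t)-\frac{t^2}{8}$, so $h-f(t)\le\max_t\bigl(c\sqrt{2t}-\frac{t^2}{8}\bigr)=\frac32c^{4/3}$.

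\emph{Destabilized case.} The outermost wall is cut out by $\cI_{C_1}(-D)\subset\cI_C$ with $D\equiv aH$ effective and $1\le a<2$, hence integral. The quotient $\cI_{C_2/D}$ is controlled by the surface bound $h_{C_2}\le\frac{t_{C_2}^2}{2a}+\frac a2t_{C_2}+ca$. This holds because it is semistable for all $\alpha$ at a fixed $B$, and one lets $\alpha\to0$ in \eqref{eq:quad}; its error is only $ca$. The wall numerics then leave a deficit of at least $t_{C_1}^2/(2-a)^2$.

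To handle $C_1$, you first need the auxiliary bound $h-f(t)\le c\sqrt{2t}$, proved by induction on the degree rather than on $\Del$ of arbitrary factors. With it, $h_C-f(t_C)\le ac+c\sqrt{2t_{C_1}}-\frac{t_{C_1}^2}{(2-a)^2}\le ac+\frac34(2-a)^{2/3}c^{4/3}$. Altogether $h-f(t)\le\max\{2\sqrt2c,\frac32c^{4/3}\}$, and $\frac32(96m^2)^{4/3}=288\sqrt[3]{12}\,m^{8/3}<660\,m^{8/3}$.

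Finally, Miyaoka's inequality is needed to get $s=1$, not for a $c_2$ correction. Indeed $h^0(\cO_X(H))=\chi(\cO_X(H))=\frac{H^3}{6}+\frac{c_2.H}{12}>0$ by Kodaira vanishing and $c_2.H\ge0$. This is what produces the coefficients $\frac{1}{2H^3}$ and $\frac12$.
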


For a more general version concerning any smooth projective threefold of Picard number one, see Theorem~\ref{thm:cast-bound}.

We expect Theorem~\ref{thm:main1.1} to have further applications, particularly in dimensions $n\geq 4$, which we leave for future work.

\subsection{Outline of the proof}
The proof of Theorem~\ref{thm:main1.1} combines the induced stability conditions of \cite{Li,LLL} and the canonical filtration of \cite{Sun} with estimates for Frobenius pullbacks.

By Lemma~\ref{lem:rationalparametertoreal} and Proposition~\ref{prop:ch3boundimplyquadform}, it suffices to prove
\[H^{n-3}.\ch_3^B(E)-\frac{\alpha^2}{6}H^{n-1}.\ch_1^B(E) \le\mathsf C\sqrt{\Del^H_B(E)}\]
for rational $B$ and $\alpha^2$ and every $\nu^H_{\alpha,B}$-semistable object $E\in\Coh^{H,B}(X)$ with $\nu^H_{\alpha,B}(E)=0$. According to \cite{LLSZ}, we have a weaker bound on $H^{n-3}.\ch_3^B$ for $\nu^{H}_{\alpha,B}$-semistable objects of slope zero when $\alpha$ is sufficiently large (see~Proposition~\ref{prop:weak}). We also give a direct proof of this result when $n=3$ in Section~\ref{sec:geometry}.

In order to strengthen this inequality and extend it to all $\alpha>0$, we spread out $E$ and the related data and consider the specializations $E_s$ in characteristic $p>0$. Using the canonical filtration of \cite{Sun} (see Lemma~\ref{lem:graded}), we bound the tilt-slopes of the Harder--Narasimhan factors of the normalized Frobenius pullback of $E_s$ in Theorem~\ref{thm:twoendpoints}. We then obtain an upper bound for $H^{n-3}_s.\ch_3^{B_s}(E_s)$ by applying the weak inequality obtained from Proposition~\ref{prop:weak} to these factors after moving each factor to slope zero. Comparing the growth of the terms in this bound and letting $p\to\infty$, we obtain the estimate for $H^{n-3}.\ch^B_3(E)$ in Lemma~\ref{lem:jointmoments}. Finally, optimizing the constants in Lemma~\ref{lem:jointmoments} and extending the resulting estimate to all $\alpha>0$ gives Theorem~\ref{thm:main1.1} (see~Section~\ref{sec:proof}).

\begin{remark}
The idea of applying Frobenius to the BMT inequality has circulated in the Bridgeland stability community for more than a decade. 
For example, special cases of the conjectural inequality were previously established in \cite{BMS}, where multiplication maps on abelian threefolds played a role analogous to Frobenius; 
in \cite{BMSZ, Naoki:Stabnefbundle}, which used toric Frobenius; and in \cite{sun2021stabilityconditionsthreefoldsvanishing}, which directly used the Frobenius morphism. These methods have antecedents in Gieseker's proof of the classical Bogomolov inequality via reduction to positive characteristic \cite{Gieseker}, Shepherd-Barron's study of the instability of Frobenius pullbacks \cite{ShepherdBarron:Frobenius}, and Langer's work on Bogomolov inequalities in positive characteristic \cite{Langer,Langer:mixed}.
\end{remark}

\subsection{Notation}
For a Noetherian scheme $X$, we denote by $\Db(X)$ the bounded derived category of coherent sheaves on $X$.  For the heart $\cA$ of a bounded t-structure on $\Db(X)$, we denote by $(\D_{\cA}^{\leq 0},\D^{\geq 0}_{\cA})$ the associated t-structure. For an object $E\in \Db(X)$, we write $\cH^i_{\cA}(E)$ for its $i$-th cohomology object with respect to $\cA$. When $\cA=\Coh(X)$, we simply write $\cH^{i}(E)\coloneqq \cH^i_{\Coh(X)}(E)$. All pushforward and pullback functors in the paper are derived.

\subsection{Acknowledgments}

We want to thank Soheyla Feyzbakhsh, Laura Pertusi, Yukinobu Toda, and Zhixian Zhu for useful discussion on this subject. We also  thank Arend Bayer, Emanuele Macr\`i, Paolo Stellari, Richard Thomas, and Yukinobu Toda for sharing their opinions, comments, and enlightening thoughts on the usage of AI.

During the preparation of this paper, C.L.~was supported by the Royal Society University Research Fellowship URF\textbackslash R1\textbackslash 251025 ``Stability condition and application in algebraic geometry''; Zh.L.~was supported by NSFC Grant 123B2002;  A.P.~was partially supported by the NSF CAREER grant DMS-2143271;  and X.Z.~was partially supported by the NSF FRG grant DMS-2052665.

\subsection{AI disclosure} 
The conjectural BMT inequality has long been a central focus of our research. We began using GPT-5.6 Sol to assist our investigations in the middle of July, 2026. In our initial interactions, GPT produced several new families of Calabi--Yau threefolds satisfying the BMT conjecture, following the approaches developed in \cite{li:quintic,FKLR}. 

The project gained new momentum when we supplied GPT-6 Astra several of our drafts together with a heuristic strategy for approaching the BMT inequality on all Calabi--Yau threefolds. This strategy included establishing an adjusted inequality for sufficiently large $\alpha$ by pulling back stability conditions, following \cite{Liu,Li,LLL,CF}, as well as using Frobenius pullback and mimicking the method of Xiaotao Sun in \cite{Sun,Sun:Frobenius-survey} to control Harder--Narasimhan factors.
With these inputs and additional prompting, GPT-6 Astra eventually produced a BMT-type inequality on all smooth projective threefolds. 
We then substantially simplified this argument, as well as generalized it to varieties of arbitrary dimension, yielding the stated form of Theorem~\ref{thm:main1.1}. 

The argument developed here depends essentially on the recent existence results \cite{Liu,Li,LLL} for stability conditions on varieties. We doubt that GPT-6 Astra would have obtained a proof without this input. We believe that a human mathematician could also have arrived at the proof in the paper; our subjective impression, however, is that doing so without the assistance of LLMs might have required significantly more time, perhaps several years or more. 

\subsection{Related work} 
We first circulated a complete draft of this paper with experts on September 23, 2026. 
While we were revising the paper, 
OpenAI released on October~6, 2026 a collection of mathematical manuscripts generated by an internal model. 
The manuscript \cite{OAI-BMT} contains a proof of a weaker version of Theorem \ref{thm:main1.1} for $n=3$. 
We have found the paper's exposition difficult to follow and have not yet been able to verify many of its technical details or assess the correctness of its arguments.

On a preliminary reading, however, its proof strategy appears substantially different from ours. There is some overlap between Section~\ref{sec:geometry} of our paper and Sections 3 and 4 of the OpenAI manuscript. This is natural, since both arguments take the existence of stability conditions, and more specifically the support property induced from powers of an elliptic curve, as a starting point. Beyond this common input, the approaches appear to diverge considerably. The OpenAI paper uses Frobenius in a more mild and completely different way, combining it with tools in geometric analysis such as heat kernels and Hermitian--Einstein metrics.  We may include a more detailed comparison in a future version, although this will require significant further work.

\section{Tilt-stability and BMT inequalities}\label{sec:parameters}

In this section, we first review the theory of tilt-stability, following \cite{BMS}. Then we discuss the conjectural inequalities proposed in \cite{BMT,BMS,BMSZ} and some reductions of them.

\subsection{Stability conditions}

We first recall definitions and basic properties of (weak) stability conditions on triangulated categories. We follow \cite{bridgeland:stability,Bay:deformation,LMPSZ}. Fix a triangulated category $\cD$.

\begin{definition}
A \emph{slicing} $\cP$ on $\cD$ is a collection $\{\cP(\phi)\}_{\phi\in \mathbb{R}}$ of full additive subcategories of $\cD$ that satisfies

\begin{itemize}
    \item $\cP(\phi+1)=\cP(\phi)[1]$ for all $\phi\in \mathbb{R}$,

    \item $\Hom(\cP(\phi_1), \cP(\phi_2))=0$ for $\phi_1>\phi_2$, and

    \item {(Harder--Narasimhan filtrations)} for any non-zero $E\in \cD$, there exists a finite sequence of morphisms
    \[0=E_0\xra{s_1}E_1\to \cdots \xra{s_m} E_m=E\]
    such that the cone $\mathrm{cone}(s_i)$ is in $\cP(\phi_i)$ for a sequence $\phi_1>\phi_2>\cdots>\phi_m$ of real numbers.
\end{itemize}
\end{definition}

For an interval $I\subset \mathbb{R}$, we define $$\cP(I)\coloneqq\{E \mid \phi_{\cP}^+(E), \phi_{\cP}^-(E)\in I\}\cup \{0\}=\langle \cP(\phi) \rangle_{\phi\in I},$$ where $\langle -\rangle$ denotes the extension closure.

\begin{definition}
Let $\Lambda$ be a finite rank lattice and $v\colon \KK(\cD)\to \Lambda$ be a group homomorphism.

\begin{itemize}
    \item A \emph{weak pre-stability condition} on $\cD$ (with respect to $(\Lambda,v)$) is a pair $\sigma=(Z, \cP)$ where $\cP$ is a slicing of $\cD$ and $Z\colon \Lambda\to \CC$ is a group homomorphism, called the \emph{central charge}, such that for all non-zero objects $E\in \cP(\phi)$, we have $Z(v(E))\in \mathbb{R}_{> 0}\cdot e^{\mathfrak{i}\pi \phi}$ for $\phi\notin \ZZ$ and $Z(v(E))\in \mathbb{R}_{\geq 0}\cdot e^{\mathfrak{i}\pi \phi}$ for $\phi\in \ZZ$.

    If, moreover, for all $\phi\in \RR$ and non-zero objects $E\in \cP(\phi)$, we have $Z(v(E))\neq 0$, then $\sigma$ is called a \emph{pre-stability condition}.

    By abuse of notation, we write $Z(E)$ for $Z(v(E))$. The non-zero objects of $\cP(\phi)$ are called \emph{$\sigma$-semistable of phase $\phi$}. Simple objects of $\cP(\phi)$ are called \emph{$\sigma$-stable}.

    \item A weak pre-stability condition $\sigma=(Z, \cP)$ satisfies the \emph{support property} if there exists a quadratic form $Q$ on $(\Lambda/\Lambda_0)_{\RR}$ such that $Q|_{(\ker(Z)/\Lambda_0)_{\RR}}$ is negative definite and $Q(\overline{v}(E))\geq 0$ for any $\sigma$-semistable object $E\in \cD$. Here, $$\Lambda_0\coloneqq\langle v(E) \mid E\in \cP(0,1], Z(E)=0 \rangle\subset \Lambda,$$
where $\langle S \rangle$ denotes the saturated sublattice generated by a subset $S\subset \Lambda$, and $\overline{v}$ is the induced homomorphism $\overline{v}\colon \KK(\cD)\to \Lambda/\Lambda_0$.

    \item A (weak) pre-stability condition $\sigma=(Z, \cP)$ is called a \emph{(weak) stability condition} if $\sigma$ satisfies the support property.
\end{itemize}
\end{definition}

We denote by $\Stab^{\mathsf{w}}_{(\Lambda,v)}(\cD)$ the set of all weak stability conditions on $\cD$ with respect to $(\Lambda,v)$. In the following, we equip $\Stab^{\mathsf{w}}_{(\Lambda,v)}(\cD)$ with the coarsest topology such that the maps
\[\Stab^{\mathsf{w}}_{(\Lambda,v)}(\cD)\to \Hom_{\ZZ}(\Lambda, \CC), \quad \sigma \mapsto Z\]
and
\[\Stab^{\mathsf{w}}_{(\Lambda,v)}(\cD)\to \RR^2, \quad \sigma \mapsto (\phi_{\sigma}^+(E), \phi_{\sigma}^-(E))\]
are continuous for any $0\neq E\in \cD$. The corresponding subspace consisting of stability conditions is written as $\Stab_{(\Lambda,v)}(\cD)$.

We will also use a generalized distance between slicings defined as follows:
\[\mathrm{dist}(\cP_1,\cP_2)\coloneqq \sup_{0\neq F\in \cD}\left\{\left|\phi^+_{\cP_1}(F)-\phi^+_{\cP_2}(F)\right|,\left|\phi^-_{\cP_1}(F)-\phi^-_{\cP_2}(F)\right|\right\}\in [0,+\infty].\]
If $\sigma_i=(Z_i,\cP_i)\in \Stab_{(\Lambda,v)}(\cD)$ for $i=1,2$, we define
\[\mathrm{dist}(\sigma_1,\sigma_2)\coloneqq \max\{\mathrm{dist}(\cP_1,\cP_2), \|Z_1-Z_2\|\}\]
where $\|-\|$ is a fixed norm on $\Hom_{\ZZ}(\Lambda,\CC)$.

Now, we recall how to describe a weak stability condition in terms of hearts.

\begin{definition}
    Let $\cA$ be an abelian category. 
    Let $\Lambda$ be a finite rank lattice and fix a homomorphism $v \colon \KK_0(\cA) \to \Lambda$. 
    Consider a homomorphism $Z \colon \Lambda \to \CC$. 
    \begin{enumerate}
        \item We say that $Z$ is a \emph{weak stability function} on $\cA$ with respect to $(\Lambda, v)$ if for all objects $0 \neq F \in \cA$, we have 
        \[Z(v(F)) \in \{{ z \in \CC \mid \Im z > 0, \text{ or } \Im z = 0 \text{ and } \Re z \leq 0}\}.\]
        In this case, we define the \emph{phase} of a nonzero object $0 \neq F \in \cA$ by 
        \[\phi(F) \coloneqq \frac{1}{\pi} \arg Z(F) \in (0,1],\]
        and we say $F$ is \emph{$Z$-semistable} if for all subobjects $0 \neq A \hookrightarrow F$ we have $\phi(A) \leq \phi(F)$. 

\item If $Z$ is a weak stability function and $Z(v(F))\neq 0$ for any $0\neq F\in \cA$, then we say $Z$ is a \emph{stability function}.
        
        \item If $Z$ is a weak stability function, we say that it satisfies the \emph{Harder--Narasimhan (HN) property} if for every nonzero object $F \in \cA$ there exists a sequence
        \[0 = F_0 \hookrightarrow F_1 \hookrightarrow \dots \hookrightarrow F_m = F\]
        of subobjects in $\cA$ such that each factor $F_i/F_{i-1}$ is $Z$-semistable and 
        \[\phi(F_1/F_0) > \phi(F_2/F_1) > \cdots > \phi(F_m/F_{m-1}).\] 
    \end{enumerate}
\end{definition}

\begin{lemma}[{\cite[Proposition~5.3]{bridgeland:stability}}]
    Let $\Lambda$ be a finite rank lattice and fix a homomorphism
    $v \colon \KK_0(\cD) \to \Lambda$. 
    Then the following data are equivalent: 
    \begin{enumerate}
        \item \label{correspondence-P} A weak stability condition on $\cD$ with respect to $(\Lambda, v)$. 
        \item \label{correspondence-A} A pair $(Z, \cA)$ where $\cA \subset \cD$ is the heart of a bounded t-structure and $Z$ is a weak stability function on $\cA$ with respect to $(\Lambda, v)$ which satisfies the Harder--Narasimhan property and support property.
    \end{enumerate}
    Explicitly, given a weak stability condition $\sigma = (Z, \cP)$ as in~\ref{correspondence-P}, we get a pair $(Z, \cA)$ as in~\ref{correspondence-A} by taking $\cA = \cP((0,1])$. 
    Conversely, given a pair $(Z, \cA)$ as in~\ref{correspondence-A}, we get a weak stability condition $(Z, \cP)$ as in~\ref{correspondence-P} by taking $\cP(\phi)$ to consist of $0$ and $Z$-semistable objects of phase $\phi$ in $\cA$ for $\phi \in (0,1]$ and extending to all $\phi \in \RR$ via the rule $\cP(\phi+1) = \cP(\phi)[1]$. 
\end{lemma}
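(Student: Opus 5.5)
The statement is Bridgeland's correspondence between weak stability conditions and hearts with weak stability functions. The plan is to construct the two maps explicitly, check that each lands in the right class, and then check that they are mutually inverse. The support property is a condition on the set of semistable objects and their central charges. Both constructions preserve exactly this data, so the support property transfers verbatim in both directions and can be ignored in the core argument.

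\emph{From a slicing to a heart.} Given $\sigma=(Z,\cP)$, set $\cA=\cP((0,1])$.
\begin{itemize}
\item To see that $\cA$ is the heart of a bounded t-structure, take $\D^{\le 0}=\cP((0,\infty))$ and $\D^{\ge 0}=\cP((-\infty,1])$.
\item Hom-vanishing between these follows from the second slicing axiom applied to HN factors.
\item The truncation triangles come from cutting the HN filtration of an object at phase $0$ (respectively $1$).
\item Boundedness holds because each object has only finitely many HN factors.
\item For $0\ne F\in\cA$, $Z(F)$ is a sum of the $Z$-values of its HN factors. These lie in $\RR_{>0}e^{i\pi\phi}$ for $\phi\in(0,1)$ and in $\RR_{\ge0}\cdot(-1)$ for $\phi=1$, so $Z(F)$ lies in the required region and $Z$ is a weak stability function.
\item For objects of $\cA$, the HN filtration of the slicing gives short exact sequences in $\cA$, because all cones lie in $\cA$. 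It is therefore an HN filtration for $Z$.
\item One must check that objects of $\cP(\phi)$ are $Z$-semistable of phase $\phi$. A destabilizing subobject $A\hookrightarrow F$ with $\phi(A)>\phi$ would force some HN factor of $A$ of phase $>\phi$ to map nontrivially to $F$, contradicting Hom-vanishing.
\end{itemize}

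\emph{From a heart to a slicing.} Given $(Z,\cA)$, define $\cP(\phi)$ for $\phi\in(0,1]$ as $0$ together with the $Z$-semistable objects of phase $\phi$, and extend by $\cP(\phi+1)=\cP(\phi)[1]$.
\begin{itemize}
\item \emph{Hom-vanishing for $\phi_1>\phi_2$.} If both phases lie in $(0,1]$, the standard see-saw argument applies: the image of a nonzero map $A\to B$ would have phase $\ge\phi(A)$ as a quotient of $A$ and $\le\phi(B)$ as a subobject of $B$. In the weak setting one must handle objects with $Z=0$, which are assigned phase $1$; these are exactly the cases where the see-saw inequalities need care. If the phases differ by shifts, vanishing of negative Ext between objects of a heart applies.
\item \emph{HN filtrations.} For a general object $E$, take its cohomology objects $\cH^i_\cA(E)$ with respect to the bounded t-structure and their $Z$-HN filtrations. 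Splice these together along the truncation triangles, ordering the pieces by $\phi+k$ for factors sitting in $\cA[k]$. The phases are strictly decreasing because the shifts separate different cohomological degrees.
\item \emph{Compatibility with $Z$.} The phase condition $Z(E)\in\RR_{>0}e^{i\pi\phi}$ (or $\RR_{\ge0}e^{i\pi\phi}$ at integer phases) holds by the definition of phase.
\end{itemize}

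\emph{Mutually inverse.} Applying the second construction and then the first recovers $\cA=\cP((0,1])$, since $\cP((0,1])$ is the extension closure of the semistable objects of $\cA$, which is all of $\cA$ by the HN property. In the other order, $\cP(\phi)$ is recovered because semistable objects of phase $\phi$ in $\cP((0,1])$ are precisely those with a single HN factor.

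The main obstacle is the weak case, where semistable objects $E$ with $Z(E)=0$ must be placed in phase $1$. All Hom-vanishing and see-saw arguments need to be rechecked in the presence of such objects. I would treat $Z(E)=0$ as maximal phase and verify that the inequalities remain non-strict in the correct direction.
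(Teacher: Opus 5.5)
Your sketch is correct and is the standard argument behind Bridgeland's Proposition~5.3; the paper does not reprove the lemma but simply cites that result and applies it in the weak setting. The weak-case checks you flag do go through. A $Z$-semistable object of phase $<1$ has no nonzero subobject with $Z=0$, since such a subobject would have phase $1$. Every quotient of a phase-$1$ object again has phase $1$, because its imaginary part is forced to be $0$. Hence the see-saw and Hom-vanishing arguments survive unchanged.
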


When we want to emphasize the heart $\cA$ instead of the slicing $\cP$, we use the notation $\sigma = (Z, \cA)$ for a (weak) stability condition on $\cD$.

\begin{definition}[\cite{Li:real}]
Let $\cP_1$ and $\cP_2$ be slicings on $\cD$. We define

\begin{itemize}
    \item $\cP_1\prec\cP_2$ if $\cP_1(\phi)\subset\cP_2(<\phi)$ for all $\phi\in\RR$, and

    \item $\cP_1\preceq\cP_2$ if $\cP_1(\phi)\subset\cP_2(\leq \phi)$ for all $\phi\in\RR$.
\end{itemize}
For stability conditions $\sigma_1=(Z_1,\cP_1)$ and $\sigma_2=(Z_2,\cP_2)$ on $\cD$, we write
\[\sigma_1\prec \sigma_2\]
if $\cP_1\prec \cP_2$ and similarly for $\preceq$.
\end{definition}

\subsection{Tilt-stability}

Let $(X,H)$ be a polarized smooth projective variety of dimension $n\geq 3$ over a field $\kk$. 
We recap the tilt-heart as that from \cite{bridgeland:K3,AB}
Recall that for any $E\in \Coh(X)$, we define
\[\mu_H(E)\coloneqq \frac{H^{n-1}.\ch_1(E)}{H^n\rk(E)}\]
if $\rk(E)\neq 0$, and to be $+\infty$ otherwise. For $B\in\NS(X)_\RR$, we define a torsion pair $(\mathcal F_{H,B}, \mathcal T_{H,B})$ of $\Coh(X)$ as 
\[\begin{aligned}
 \mathcal F_{H,B}&=\left\{F\in\Coh(X)\;\middle|\;\mu_H^+(F)\le \frac{B.H^{n-1}}{H^n}\right\},\\
 \mathcal T_{H,B}&=\left\{F\in\Coh(X)\;\middle|\;\mu_H^-(F)>\frac{B.H^{n-1}}{H^n}\right\}.
 \end{aligned}\]
By the general theory on tilting heart \cite{RS:tilting}, the extension $\Coh^{H,B}(X)\coloneqq \langle\mathcal F_{H,B}[1],\mathcal T_{H,B}\rangle$ is the heart of a bounded t-structure on $\Db(X)$.

For simplicity, we set
\[v_i^{H,B}(E)\coloneqq H^{n-i}.\ch^B_i(E)\]
for any $E\in \Db(X)$. If $B=0$, then we omit $B$ from the notation.

For any object $E\in \Coh^{H,B}(X)$ and any $\alpha>0$, we set
\[\nu^H_{\alpha,B}(E)\coloneqq \frac{v_2^{H,B}(E)-\frac{\alpha^2}{2}v_0^{H,B}(E)} {v_1^{H,B}(E)}\]
with value $+\infty$ when the denominator is zero. Then we say $E$ is $\nu^H_{\alpha,B}$-semistable if \[\nu^H_{\alpha,B}(F)\leq \nu^H_{\alpha,B}(E/F)\] for any subobject $0\ne F\subsetneq E$ in $\Coh^{H,B}(X)$. We say $E$ is $\nu^H_{\alpha,B}$-stable if the above inequality is strict.

 For any $\epsilon\in\RR_{\ge0}$, we define
\[\Del^H_{B,\epsilon}(E)\coloneqq (v_1^{H,B}(E))^2-2v_0^{H,B}(E)v_2^{H,B}(E) +\epsilon\bigl(v_0^H(E)\bigr)^2\]
and $\Del^H_B(E)\coloneqq \Del^H_{B,0}(E)$. Note that $\Del^H_{B,\epsilon}(E)=\Del^H_{B+t H,\epsilon}(E)$ for any $t\in \RR$.

We need the following version of Bogomolov--Gieseker inequality.

\begin{theorem}\label{thm:uniformBG}
For any $B\in \NS(X)_\RR$ and $\mu_H$-semistable torsion-free sheaf $E$ on $X$, we have
\begin{equation}\label{eq:scalarBG}
 \frac{1}{H^n}\Del^H_{B,\epsilon}(E)\geq H^{n-2}.\left(\ch_1(E)^2-2\rk(E)\ch_2(E)\right)+\epsilon\rk(E)^2H^n\geq 0,
\end{equation}
where
\begin{itemize}
    \item when $\mathrm{char}(\kk)=0$, we have $\epsilon=0$, and

    \item when $\mathrm{char}(\kk)>0$, we have
    \[\epsilon=\frac{\max\{0,\,2+5m^{n-2}(K_X+(n-2)mH)^2.H^{n-2}-\chi(\cO_S)\}}{m^{n-2}H^n},\]
where $m>0$ is an integer such that $mH$ is very ample and $K_X+mH$ is ample, and $S$ is a general complete intersection by divisors in $|mH_{\overline{\kk}}|$.
\end{itemize}
\end{theorem}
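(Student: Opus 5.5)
The statement has two inequalities, and I would prove them in turn.

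\textbf{The first inequality.} I would reduce it to the Hodge index theorem. Write $r=\rk(E)$ and $c_i=\ch_i(E)$. A direct expansion gives
\[
(\ch_1^B)^2-2\ch_0\ch_2^B=c_1^2-2rc_2,
\]
because the $B$-terms cancel: $(c_1-rB)^2-2r(c_2-Bc_1+\tfrac{r}{2}B^2)=c_1^2-2rc_2$. Hence
\[
\Del^H_{B}(E)=\Del^H_{0}(E)=(H^{n-1}.c_1)^2-2H^n r\,H^{n-2}.c_2 .
\]
The $\epsilon$-terms on the two sides agree, since $\epsilon(v_0^H)^2/H^n=\epsilon r^2H^n$. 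The first inequality is therefore equivalent to
\[
(H^{n-1}.c_1)^2\ \ge\ H^n\,\bigl(H^{n-2}.c_1^2\bigr).
\]
This is the Hodge index inequality. To see it, restrict to a smooth complete intersection surface $S\in|mH|^{n-2}$; if needed, pass to $\overline{\kk}$ and use Bertini there, which is harmless for intersection numbers. On $S$, apply the Hodge index theorem to $c_1|_S$ and $H|_S$, then rescale by $m^{n-2}$. The Hodge index theorem on a surface holds in any characteristic, so this step works in both cases.

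\textbf{The second inequality, characteristic $0$.} Here $\epsilon=0$, and the claim is the classical Bogomolov--Gieseker inequality
\[
H^{n-2}.(c_1^2-2rc_2)\ge 0
\]
for $\mu_H$-semistable torsion-free sheaves. I would use the Mehta--Ramanathan restriction theorem to reduce to a general complete intersection surface in $|mH|$ with $m\gg0$, where the restriction stays $\mu$-semistable, and then apply Bogomolov's theorem on surfaces. Alternatively, one can cite Langer directly.

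\textbf{The second inequality, characteristic $p>0$.} This is the main obstacle, since Bogomolov's inequality can fail for $\mu$-semistable sheaves in positive characteristic. I would invoke Langer's effective restriction theorem and his Bogomolov-type inequality \cite{Langer,Langer:mixed} as follows.
\begin{enumerate}
\item Restrict $E$ to a general complete intersection surface $S$ cut out by divisors in $|mH_{\overline{\kk}}|$. Langer's effective results give control of $\Delta(E|_S)$ even when $E|_S$ is not semistable, or alternatively control it through its HN factors.
\item On $S$, use Langer's inequality for semistable sheaves:
\[
\Delta\ \ge\ -\,C(S)\, r^2,
\]
where $C(S)$ is expressed via $K_S^2$, $\chi(\cO_S)$ and the Frobenius-destabilizing constant $L_{\max}-L_{\min}$ of Shepherd-Barron type. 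These constants are bounded using $K_S=(K_X+(n-2)mH)|_S$, which is where the term $(K_X+(n-2)mH)^2.H^{n-2}$ and the factor $m^{n-2}$ enter, and using $\chi(\cO_S)$.
\item Divide by $m^{n-2}H^n$ to convert surface intersection numbers into $H^{n-2}$-intersection numbers on $X$. This yields
\[
H^{n-2}.(c_1^2-2rc_2)\ \ge\ -\epsilon\, r^2 H^n
\]
with exactly the stated $\epsilon$. The $\max\{0,\cdot\}$ accounts for the case where the error constant is nonpositive.
\end{enumerate}

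I expect the real difficulty to be in step 2: tracking Langer's constants precisely enough to get the form $2+5K_S^2-\chi(\cO_S)$. I would take this step directly from the literature rather than reprove it, since Langer's bound for surfaces is of exactly this shape, using that $K_X+mH$ is ample so that $K_S$ is nef and big.
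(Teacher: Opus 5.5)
\textbf{Characteristic $p$.} This case has a genuine gap, and the hard part is your step 1, not step 2. The statement fixes $m$, so for a $\mu_H$-semistable $E$ the restriction $E|_S$ to a general $S\in|mH|^{n-2}$ need not be $\mu$-semistable. No surface inequality then applies to it directly.

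You cannot let the degree depend on $E$, as Mehta--Ramanathan would require. For $S\in|kH|^{n-2}$, both $K_S^2$ and $\chi(\cO_S)$ grow like $k^n$, and the normalized constant $\max\{0,2+5K_S^2-\chi(\cO_S)\}/(k^{n-2}H^n)$ grows like $k^2$. So the bound you get is not uniform.

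Passing to the HN factors $F_i$ of $E|_S$ does not help either. Write $\Delta(F)=\ch_1(F)^2-2\rk(F)\ch_2(F)$, $r_i=\rk(F_i)$ and $\xi_i=\ch_1(F_i)/r_i$. Then
\[
\Delta(E|_S)=\sum_i\frac{r}{r_i}\Delta(F_i)-\sum_{i<j}r_ir_j(\xi_i-\xi_j)^2 ,
\]
and the cross terms have the wrong sign for a lower bound. You would therefore need a bound on the instability of $E|_S$ that is uniform in $E$. Langer's effective restriction theorem does not give this at a fixed degree, because its threshold depends on $\rk(E)$ and $\Delta(E)$.

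Step 2 is also misattributed. Langer's error terms are governed by the Frobenius instability $L_{\max}-L_{\min}$, bounded through $\mu_{\max}(\Omega_X)$ or global generation of $T_X(r_0H)$. This is how the paper itself uses \cite{Langer} in Lemma~\ref{lem:geometricBG} and Section~\ref{sec:main-proof}. The constant $2+5K_S^2-\chi(\cO_S)$ cannot simply be read off from those bounds.

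The paper gets exactly this $\epsilon$ by citing \cite[Theorems 1.1 and 1.3]{Kos}, and that citation is the missing ingredient. If only some $\epsilon$ were needed, Lemma~\ref{lem:geometricBG} already gives a rank-independent one on $X$ with no restriction to $S$. The paper notes the precise $\epsilon$ is irrelevant later, but Lemma~\ref{lem:geometricBG} does not give the stated one.

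\textbf{First inequality.} There is a smaller slip here. Your cycle identity $(\ch_1^B)^2-2\ch_0\ch_2^B=c_1^2-2rc_2$ is correct, but it does not give $\Del^H_B(E)=\Del^H_0(E)$. The first term of $\Del^H_B$ is $(H^{n-1}.\ch_1^B)^2$, the square of a number, and $\Del^H_B$ is only invariant under $B\mapsto B+tH$. What the identity actually gives is
\[
\frac{1}{H^n}\Del^H_{B,\epsilon}(E)-\Bigl(H^{n-2}.\bigl(c_1^2-2rc_2\bigr)+\epsilon r^2H^n\Bigr)=\frac{\bigl(H^{n-1}.\ch_1^B(E)\bigr)^2}{H^n}-H^{n-2}.\ch_1^B(E)^2 .
\]
So you need the Hodge index inequality for the real class $c_1-rB$ rather than for $c_1$. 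Your surface argument proves that equally well. The characteristic $0$ case is fine and agrees with the paper.
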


\begin{proof}
When $\mathrm{char}(\kk)=0$, this is the classical result of Bogomolov--Gieseker. When $\mathrm{char}(\kk)>0$, this follows from \cite[Theorems 1.1 and 1.3]{Kos}. The accurate form of $\epsilon$ will not be relevant to the argument later on.
\end{proof}

For any $B\in\NS(X)_\RR$ and $\alpha>0$, we define a homomorphism $Z^{\alpha,B}_H\colon \mathrm{K}_0(X)\to \CC$ by 
\[Z^{\alpha,B}_H\coloneqq -v_2^{H,B}+\frac{\alpha^2}{2}v_0^H+\mathfrak{i} v_1^{H,B}.\]

Now, we fix a class $B_0\in \NS(X)_{\RR}$, and assume that $B_0\in \NS(X)_{\QQ}$ when $\mathrm{char}(\kk)>0$. The following result describes basic properties of tilt-stability.

\begin{proposition}\label{prop:tiltBG}
There is a continuous family of weak stability conditions on $\Db(X)$, parameterized by $\RR_{>\sqrt{\epsilon}}\times \RR$, given by
\[(\alpha,\beta)\in \RR_{>\sqrt{\epsilon}}\times \RR\mapsto (Z^{\alpha,B_0+\beta H}_H,\Coh^{H, B_0+\beta H}(X)).\]

Moreover, for any $(\alpha,\beta)\in \RR_{>\sqrt{\epsilon}}\times \RR$ and any $\nu^H_{\alpha,B_0+\beta H}$-semistable object $E$, we have 

\begin{enumerate}
    \item $\Del^H_{B_0+\beta H,\epsilon}(E)\ge0$, and
\item there is a collection $\{W_i\}_{i\in I}$ of circles in $\RR^2$ centered on the $\beta$-axis and possibly a vertical line, such that $\{W_i\cap \RR_{>\sqrt{\epsilon}}\times \RR\}_{i\in I}$ (called ``walls") form a locally finite collection and satisfy

\begin{itemize}
    \item walls are disjoint with each other,

    \item $\nu^H_{\alpha,B_0+\beta H}$-(semi)stability of $E$ is unchanged as $(\alpha,\beta)$ varies within
any connected component (called a ``chamber") of $\RR_{>\sqrt{\epsilon}}\times \RR\setminus \cup_{i\in I}W_i$, and

\item for each nonvertical wall $W_i\cap \RR_{>\sqrt{\epsilon}}\times \RR$, there is a subobject $F\subset E$ in $\Coh^{H, B_0+\beta H}(X)$ such that both $E$ and $F$ are $\nu^H_{\alpha,B_0+\beta H}$-semistable when $(\alpha,\beta)\in W_i\cap \RR_{>\sqrt{\epsilon}}\times \RR$, and $F\subset E$ strictly destabilizes $E$ for $(\alpha,\beta)$ in one of the two chambers adjacent to the wall $W_i\cap \RR_{>\sqrt{\epsilon}}\times \RR$.
\end{itemize}
\end{enumerate}
\end{proposition}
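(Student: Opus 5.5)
The plan follows the standard argument of \cite{BMS} (cf.\ \cite{bridgeland:K3,BMT}), with the Bogomolov--Gieseker input replaced by Theorem~\ref{thm:uniformBG}. The order of steps is: first the heart and the weak stability function, then the Harder--Narasimhan property, then the support property and continuity, and finally wall structure.

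\emph{Heart and stability function.} Set $B=B_0+\beta H$. The torsion pair $(\mathcal F_{H,B},\mathcal T_{H,B})$ exists by $\mu_H$-HN filtrations, so $\Coh^{H,B}(X)$ is the heart of a bounded t-structure. For $E\in\Coh^{H,B}(X)$ we have $v_1^{H,B}(E)\ge 0$. We must show $Z^{\alpha,B}_H(E)\in\RR_{\le0}$ whenever $v_1^{H,B}(E)=0$. Such $E$ is an extension of $F[1]$ by $T$, where $T$ is supported in codimension $\ge2$ and $F$ is $\mu_H$-semistable with $\mu_H(F)=B.H^{n-1}/H^n$; the latter holds by reducing to HN factors. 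For $T$ we have $-v_2^{H,B}(T)\le 0$ by positivity of $H^{n-2}.\ch_2$ on codimension-two sheaves. For $F[1]$, Theorem~\ref{thm:uniformBG} applied to $F$ gives $2v_0v_2^{H,B}(F)\le (v_1^{H,B})^2+\epsilon v_0^2=\epsilon v_0^2$. Hence
\[
-\operatorname{Re}Z(F)=v_2^{H,B}(F)-\tfrac{\alpha^2}{2}v_0(F)\le \tfrac{\epsilon-\alpha^2}{2}v_0(F)<0,
\]
so $\operatorname{Re}Z(F[1])\le 0$. This is exactly where $\alpha>\sqrt{\epsilon}$ is used.

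\emph{HN property, support property, continuity.} The HN property follows as in \cite[Lemma 3.2.4]{BMT} and \cite{BMS}. The image of $v_1^{H,B}$ is discrete when $B$ is rational; for real $\beta$ one uses Noetherianity of the tilted heart together with discreteness of $v_1^H$ and shifts in $\beta$. The key step is the inequality $\Del^H_{B,\epsilon}(E)\ge0$ for $\nu$-semistable $E$. I would prove it by the argument of \cite[Theorem 3.5]{BMS} (cf.\ \cite[Corollary 7.3.2]{BMT}), which has three parts:
\begin{enumerate}
\item For fixed $E$, the quantity $\Del^H_{B,\epsilon}$ is independent of $\beta$.
\item For $\alpha\gg0$, semistable objects are, up to a finite list of exceptions, $\mu_H$-semistable sheaves (or shifts of their duals, or torsion sheaves). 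These satisfy the inequality by Theorem~\ref{thm:uniformBG}.
\item Moving $\alpha$ down, at each wall $E$ has semistable factors $E_i$ of equal slope. The form $\Del_{\epsilon}$ is of signature $(1,\ast)$ on the relevant lattice and nonpositive on $\ker Z$. It is then subadditive in the sense that $\Del_\epsilon(E)\ge\sum\Del_\epsilon(E_i)$ whenever the $Z(E_i)$ are aligned, so the inequality propagates by induction on $v_1^{H,B}$.
\end{enumerate}
Since $\Del_\epsilon$ is $\beta$-independent and negative definite on $\ker Z^{\alpha,B}_H$ modulo $\Lambda_0$ (which requires $\alpha^2>\epsilon$), it is a support-property form locally uniformly in $(\alpha,\beta)$. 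Continuity of the family then follows from Bridgeland's deformation theorem in its weak form \cite{Bay:deformation}. Part~(a) of the proposition is exactly this inequality.

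\emph{Walls.} For a fixed class with $\Del_\epsilon\ge0$, a destabilizing subobject $F$ with $\nu(F)=\nu(E)$ defines the locus $v_1(F)v_2(E)-v_1(E)v_2(F)-\tfrac{\alpha^2}{2}(\dots)=0$ in $(\alpha,\beta)$. Expanding in $\beta$, this locus is a circle centered on the $\beta$-axis, or a vertical line when $v_0(E)$ and $v_1(E)$ are proportional appropriately. Two such numerical walls for $E$ are nested or disjoint, by the standard computation of \cite{maciocia-piyaratne:abelian-I,schmidt:quadric}. Local finiteness follows because the support property bounds the classes of possible destabilizers in compact subsets: $\Del_\epsilon(F),\Del_\epsilon(E/F)\le\Del_\epsilon(E)$ and $v_1$ is bounded, so only finitely many classes occur. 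Constancy of (semi)stability within chambers, and the existence of an $F$ which is semistable on the wall and destabilizes on one side, are the usual consequences.

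I expect the main obstacle to be the induction proving $\Del_\epsilon\ge0$ in positive characteristic. The extra term $\epsilon v_0^2$ must remain compatible with subadditivity under wall-crossing. It is also essential that $\alpha$ stays above $\sqrt\epsilon$ along the path, so that $\Del_\epsilon$ remains negative on $\ker Z$; this is precisely why the parameter space is $\RR_{>\sqrt\epsilon}\times\RR$.
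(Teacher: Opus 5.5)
Your outline is correct, but it reaches the result differently from the paper. For $\epsilon=0$ the paper simply cites \cite[Appendix B]{BMS}. For positive characteristic it does not rerun that argument with $\epsilon$ carried along. Instead it observes that the $\epsilon$-family is an ordinary tilt family for a twisted character. Set $\widetilde v_2\coloneqq v_2^{H,B}-\frac{\epsilon}{2}v_0^H$ and $\widetilde\alpha^2\coloneqq\alpha^2-\epsilon$. Then
\[
Z^{\alpha,B}_H=-\widetilde v_2+\tfrac{\widetilde\alpha^2}{2}v_0^H+\mathfrak{i}\,v_1^{H,B},\qquad \Del^H_{B,\epsilon}=(v_1^{H,B})^2-2v_0^H\,\widetilde v_2,
\]
and the heart $\Coh^{H,B}(X)$ is unchanged, since it depends only on $\mu_H$.

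Consequences of this reparametrization:
\begin{itemize}
\item Theorem~\ref{thm:uniformBG} is exactly the classical BG inequality for the character twisted by $\gamma=e^{-B_0}.(1,0,-\frac{\epsilon}{2}H^2,0)$.
\item The condition $\alpha>\sqrt\epsilon$ becomes $\widetilde\alpha>0$, i.e.\ $w=\frac{\beta^2+\alpha^2-\epsilon}{2}>\frac{\beta^2}{2}$.
\item The general results \cite[Theorems 9.8, 9.9]{LM} for twisted characters then give the whole proposition at once.
\end{itemize}

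The reduction makes the points you flag as the main obstacle automatic, because each becomes the $\epsilon=0$ statement for the twisted character:
\begin{itemize}
\item compatibility of the extra term $\epsilon(v_0^H)^2$ with the superadditivity of $\Del_\epsilon$ at walls;
\item negativity of $\Del_\epsilon$ on $\ker Z$;
\item disjointness of walls inside $\alpha>\sqrt\epsilon$, which needs $\Del_\epsilon\ge0$ rather than $\Delta\ge0$.
\end{itemize}
Your direct route is self-contained and avoids relying on \cite{LM}. The cost is that every step must be re-verified by hand.

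One step is imprecise as written. At irrational $\beta$ the image of $v_1^{H,B_0+\beta H}$ is not discrete. So ``Noetherianity plus shifts in $\beta$'' does not by itself give the HN property there, and your induction on $v_1$ does not terminate. Both should be carried out at rational $\beta$ first. They can then be extended to real $\beta$ using the support property and the deformation theorem, as in \cite[Appendix B]{BMS}.
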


\begin{proof}
When $\mathrm{char}(\kk)=0$, we have $\epsilon=0$ and the result follows from \cite[Appendix B]{BMS}. When $\mathrm{char}(\kk)>0$, applying \cite[Theorems 9.8, 9.9]{LM} to $\gamma=e^{-B_0}.\left(1,0,-\frac\epsilon2H^2,0\right)$, $b=\beta$, and $w=\frac{\beta^2+\alpha^2-\epsilon}{2}$ gives the result.
\end{proof}

We shall use the following standard lemmas.

\begin{lemma}[Bertram Nested Wall]\label{lem:nestedwall}
Let $(\alpha,\beta)\in \RR_{>\sqrt{\epsilon}}\times \RR$ and $E$ be a $\nu^H_{\alpha,B_0+\beta H}$-semistable object with $\nu^H_{\alpha,B_0+\beta H}(E)=\mu <+\infty$. Then $E$ is  $\nu^H_{\alpha',B_0+\beta' H}$-semistable for $\alpha'>\sqrt{\epsilon}$ with
\[(\alpha')^2+(\beta'-\beta-\mu)^2=\alpha^2+\mu^2.\]
In particular, $E$ is $\nu^H_{\alpha',B_0+\beta' H}$-semistable with $\nu^H_{\alpha',B_0+\beta' H}(E)=0$ for 
\[(\alpha',\beta')=\left(\sqrt{\alpha^2+\mu^2},\beta+\mu\right).\]
\end{lemma}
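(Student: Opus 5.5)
The plan is to reduce the statement to the structure theorem for walls in Proposition~\ref{prop:tiltBG}(b). Write $\beta$-coordinates relative to $B_0$, and set $v_i=v_i^{H,B_0}(E)$.

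\emph{Step 1: the numerical wall is a circle.} First I check that the curve
\[\mathcal{C}\colon\quad (\alpha')^2+(\beta'-\beta-\mu)^2=\alpha^2+\mu^2\]
is exactly a numerical wall for $E$. Using $\ch^{B_0+\beta'H}=e^{-\beta'H}\ch^{B_0}$, we have $v_1^{H,B_0+\beta'H}=v_1-\beta'v_0$ and $v_2^{H,B_0+\beta'H}=v_2-\beta'v_1+\tfrac{\beta'^2}{2}v_0$.

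For any $F$, the condition $\nu_{\alpha',\beta'}(F)=\nu_{\alpha',\beta'}(E)$ is a determinant condition. Expanding it gives an equation of the form $\tfrac12\bigl((\alpha')^2+\beta'^2\bigr)\cdot(\text{linear in }v(F),v(E))+\beta'\cdot(\dots)+(\dots)=0$. These are circles centered on the $\beta$-axis (or vertical lines), and any two such curves for a fixed $E$ are disjoint or equal. This is the standard Bertram/Maciocia computation.

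The point $(\alpha,\beta)$ lies on $\mathcal C$. A direct computation shows that the locus of $(\alpha',\beta')$ with $\nu_{\alpha',\beta'}(E)=\nu_{\alpha',\beta'}$ of a hypothetical destabilizer through $(\alpha,\beta)$ is centered at $\beta+\mu$. To see this, note that the center $c$ of the numerical wall through $(\alpha,\beta)$ satisfies $\partial_\alpha$-tangency relations. Equivalently, $\mathcal C$ is the level set $\{\nu_{\alpha',\beta'}(E)=\beta'-\beta-\mu+\dots\}$. More cleanly, I verify directly that $\nu_{\alpha',\beta'}(E)=\beta+\mu-\beta'$ along $\mathcal C$ when $v_0\neq0$, and I handle $v_0=0$ separately, where numerical walls are semicircles with the same center.

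\emph{Step 2: semistability propagates along the wall.} Suppose $E$ were unstable at some point of $\mathcal C\cap\RR_{>\sqrt\epsilon}\times\RR$. Moving along the connected arc $\mathcal C\cap\RR_{>\sqrt\epsilon}\times\RR$ from $(\alpha,\beta)$, semistability can only change when the arc crosses an actual wall $W_i$ of Proposition~\ref{prop:tiltBG}(b). Each such wall is a numerical wall for $E$ defined by a destabilizing subobject $F$, and it is semistable-type: both $E$ and $F$ are semistable on it.

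Walls for $E$ are pairwise disjoint, and they are also disjoint from or equal to the numerical wall $\mathcal C$. So the arc cannot cross any wall transversally: either it meets no $W_i$, or it is contained in one. In the first case, the whole arc lies in a single chamber, or in the closure of one, together with $(\alpha,\beta)$. Semistability is closed, since it is preserved under limits by local finiteness. Hence $E$ is semistable along all of $\mathcal C$. In the second case, Proposition~\ref{prop:tiltBG}(b) says $E$ is semistable on the entire wall $W_i\supset\mathcal C$.

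The one subtle point is that $(\alpha,\beta)$ itself might lie on a wall and the arc might be that wall. This is covered by the second case. If it lies on a wall different from $\mathcal C$, the two would intersect, which contradicts disjointness.

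\emph{Step 3: the special point.} Finally, the point $(\sqrt{\alpha^2+\mu^2},\beta+\mu)$ is the top of the circle $\mathcal C$, so it lies on $\mathcal C$. By Step 1's formula, $\nu(E)=0$ there.

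The main obstacle is Step 1: verifying by explicit expansion that the center of the numerical wall through $(\alpha,\beta)$ is $\beta+\mu$, including the rank-zero case. The rest follows formally from Proposition~\ref{prop:tiltBG}.
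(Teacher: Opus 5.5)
Your proposal is correct and follows essentially the paper's argument, which invokes the wall structure of Proposition~\ref{prop:tiltBG} and repeats \cite[Lemma 4.3]{BMS}. In both, $\mathcal C$ is the numerical wall for $E$ through $(\alpha,\beta)$, and numerical walls are nested in the region $\alpha'>\sqrt\epsilon$ (this uses $\Del^H_{B_0+\beta H,\epsilon}(E)\ge 0$); hence the arc either lies in a single chamber or is itself an actual wall, and in both cases $E$ stays semistable on it. One correction to Step~1: your substitution gives $\nu^H_{\alpha',B_0+\beta' H}(G)=\beta+\mu-\beta'$ on $\mathcal C$ for \emph{every} class $G$ with $\nu^H_{\alpha,B_0+\beta H}(G)=\mu$, and without assuming $v_0\neq 0$. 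It is this class-independence, not the identity for $E$ alone, that shows $\mathcal C$ is the numerical wall, so you can drop the tangency remark.
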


\begin{proof}
Using Proposition~\ref{prop:tiltBG}, the argument is the same as \cite[Lemma 4.3]{BMS}.
\end{proof}

\begin{lemma}\label{lem:deltacontrol}
Let $(\alpha,\beta)\in \RR_{>\sqrt{\epsilon}}\times \RR$ and $E$ be a $\nu^H_{\alpha,B_0+\beta H}$-semistable object with $\nu^H_{\alpha,B_0+\beta H}(E)<+\infty$. Then $E$ admits a Jordan--H\"older filtration such that factors $E_1,\cdots,E_k$ are $\nu^H_{\alpha,B_0+\beta H}$-stable and 
\[\sum_{i=1}^k\sqrt{\Del^H_{B_0+\beta H,\epsilon}(E_i)} \le\sqrt{\Del^H_{B_0+\beta H,\epsilon}(E)}.\]
\end{lemma}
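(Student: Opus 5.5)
Write $B=B_0+\beta H$ and $\sigma=\nu^H_{\alpha,B}$. The plan has two parts: build the Jordan--H\"older filtration in the finite-length abelian category of $\sigma$-semistable objects of slope $\mu=\sigma(E)$, and then prove the square-root inequality by induction on the number of factors. The inductive step rests on a reverse Cauchy--Schwarz inequality for the quadratic form $\Del^H_{B,\epsilon}$.

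\emph{Existence.} Consider the category $\mathcal{A}_\mu$ of $\sigma$-semistable objects with slope $\mu$, together with $0$. It is abelian. We need it to have finite length. Since $\mu<+\infty$, every nonzero object of $\mathcal{A}_\mu$ has $v_1^{H,B}>0$. If $B$ is rational, $v_1^{H,B}$ takes values in a discrete set, so each strict subobject chain strictly decreases $v_1^{H,B}$ and must terminate. For irrational $B$, I would argue as in \cite[Appendix B]{BMS}: use the local finiteness of walls from Proposition~\ref{prop:tiltBG}, or perturb $B$ within the chamber, which does not change the stable factors. Either way, a filtration with $\sigma$-stable factors $E_1,\dots,E_k$ of slope $\mu$ exists.

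\emph{Inequality.} Write $w(F)=(v_0^H(F),v_1^{H,B}(F),v_2^{H,B}(F))$, and let $Q=\Del^H_{B,\epsilon}$, viewed as a quadratic form on $\RR^3$ with associated bilinear form $Q(\cdot,\cdot)$. By Proposition~\ref{prop:tiltBG}(a), $Q(w(E_i))\ge 0$ for every $i$. Also, $Q$ has signature $(2,1)$: in the coordinates $(v_0,v_1,v_2)$ it is $v_1^2-2v_0v_2+\epsilon v_0^2$. All the classes $w(E_i)$ lie in the plane-sector where $v_1>0$ and $Z^{\alpha,B}_H$ has phase $\phi(\mu)$. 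Equivalently, they lie in the 2-plane $P=\{v_2-\frac{\alpha^2}{2}v_0=\mu v_1\}$, intersected with $v_1>0$.

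Restrict $Q$ to $P$, parametrizing $P$ by $(v_0,v_1)$: the restriction is $v_1^2-2\mu v_0v_1-(\alpha^2-\epsilon)v_0^2$. Since $\alpha^2>\epsilon$, this form has signature $(1,1)$ on $P$. For a Lorentzian form of signature $(1,1)$, any two vectors $x,y$ in the closed positive cone with the same time orientation satisfy the reverse Cauchy--Schwarz inequality $Q(x,y)\ge\sqrt{Q(x)Q(y)}$. The condition $v_1>0$ selects one component of the cone $\{Q\ge0\}\cap P$: the null lines of $Q|_P$ have $v_1\ne0$ unless $v_0=0$, so the half-plane $v_1>0$ meets the cone in a single component.

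With this, expand
\[Q\Bigl(\sum_i x_i\Bigr)=\sum_i Q(x_i)+2\sum_{i<j}Q(x_i,x_j)\ge\Bigl(\sum_i\sqrt{Q(x_i)}\Bigr)^2\]
and take square roots to get the claim.

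\emph{Main obstacle.} The one point needing care is the cone-component claim: that all $w(E_i)$ lie in the same component of the positive cone of $Q|_P$. A factor lying on the boundary is harmless, since there $Q=0$. I would check the claim directly: in $P$, the positive cone $\{Q\ge0\}$ is a double cone whose two nappes are exchanged by $x\mapsto -x$ and are separated by the line $v_1=0$, on which $Q=-(\alpha^2-\epsilon)v_0^2\le 0$. Hence $v_1>0$ picks out exactly one nappe, and the inequality follows.
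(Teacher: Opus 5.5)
Your proof is correct and is essentially the argument the paper relies on: it cites \cite[Lemmas 4.16, 9.10]{LM} and \cite[Appendix A]{BMS}, which is the same strategy of restricting $\Del^H_{B,\epsilon}$ to the plane of slope-$\mu$ classes (Lorentzian of signature $(1,1)$ because $\alpha^2>\epsilon$) and applying reverse Cauchy--Schwarz on the nappe $v_1>0$. One small remark on existence, where you offer two routes for irrational $B$. Drop the perturbation alternative, since a strictly semistable $E$ need not remain semistable after perturbing $B$, so that route is shaky. Instead, your own cone description already gives $|v_0|\le C\,v_1^{H,B}$ on $\mathcal{A}_\mu$, and $(\ch_0, H^{n-1}.\ch_1)$ is integral, so only finitely many classes can occur among subobjects of $E$ in $\mathcal{A}_\mu$; hence $\mathcal{A}_\mu$ has finite length for every real $B$.
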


\begin{proof}
This follows from \cite[Lemma 4.16 and Lemma 9.10]{LM}, see also \cite[Appendix A]{BMS}.
\end{proof}

\begin{lemma}\label{lem:2.5gap}
Let $(\alpha,\beta)\in \RR_{>\sqrt{\epsilon}}\times \RR$ and $E\in \Coh^{H,B_0+\beta H}(X)$ be a $\nu^H_{\alpha,B_0+\beta H}$-semistable object with $\nu^H_{\alpha,B_0+\beta H}(E)=\mu <+\infty$. Then we have
\begin{equation}\label{eq:2.5}
\begin{aligned}
 \mu_{H}^+(\cH^{-1}(E))
 &\le \frac{(B_0+\beta H). H^{n-1}}{H^n}+\mu-\sqrt{\alpha^2-\epsilon+\mu^2},\\
 \mu_{H}^-(\cH^0(E))
 &\ge \frac{(B_0+\beta H). H^{n-1}}{H^n}+\mu+\sqrt{\alpha^2-\epsilon+\mu^2}.
\end{aligned}
\end{equation}
\end{lemma}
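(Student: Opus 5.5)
The plan is to reduce to the slope-zero case with Lemma~\ref{lem:nestedwall}, and then read the bound off the Bogomolov--Gieseker inequality in the form $\Del^H_{B,\epsilon}\ge 0$ of Proposition~\ref{prop:tiltBG}(a), applied to the objects $E$ generates along the vertical wall. This is the standard argument of \cite[Lemma~2.7]{BMS}, adapted to the $\epsilon$-shifted setting.

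\emph{Step 1: reduce to slope zero.} Put $(\alpha',\beta')=(\sqrt{\alpha^2+\mu^2},\beta+\mu)$. By Lemma~\ref{lem:nestedwall}, $E$ is $\nu^H_{\alpha',B_0+\beta'H}$-semistable of slope zero. The right-hand sides of \eqref{eq:2.5} are $\frac{(B_0+\beta' H).H^{n-1}}{H^n}\mp\sqrt{\alpha'^2-\epsilon}$, and $\alpha'^2-\epsilon=\alpha^2-\epsilon+\mu^2$. So it suffices to prove the case $\mu=0$.

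We must also check that $E$ still lies in the heart $\Coh^{H,B_0+\beta'H}(X)$. It does, because $E$ stays semistable along the whole semicircle. Alternatively, one runs the argument below directly at $(\alpha,\beta)$ using $\nu$-slopes rather than slope zero. Since the tilting parameter shifts as $\beta$ moves, I would rather do the latter.

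\emph{Step 2: the $\cH^{-1}$ bound.} Write $b=\frac{(B_0+\beta H).H^{n-1}}{H^n}$ and take $\mu=0$. Let $F$ be the first $\mu_H$-HN factor of $\cH^{-1}(E)$, which is $\mu_H$-semistable and torsion-free, with slope $\mu_H(F)=\mu^+_H(\cH^{-1}(E))\le b$. If $\mu_H(F)=b$, then $v_1^{H,B}(F[1])=0$ and $\Hom(F[1],E)\neq 0$. Here the map $F[1]\to \cH^{-1}(E)[1]\to E$ is injective in the heart, since $\cH^{-1}(E)[1]\subset E$. This gives $\nu(F[1])=+\infty$, contradicting semistability. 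So $\mu_H(F)<b$.

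Now $F[1]\hookrightarrow E$ is a subobject in the heart, so semistability gives $\nu_{\alpha,B}(F[1])\le 0$. Because $v_1^{H,B}(F[1])>0$, this means $v_2^{H,B}(F[1])-\frac{\alpha^2}{2}v_0^{H,B}(F[1])\le0$. Write $r=H^n\rk F$ and $x=\mu_H(F)-b<0$, so that $v_0^{H,B}(F[1])=-r$ and $v_1^{H,B}(F[1])=-rx$. Then $v_2^{H,B}(F)\ge -\frac{\alpha^2}{2}r$.

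Theorem~\ref{thm:uniformBG} gives $(rx)^2-2r\,v_2^{H,B}(F)+\epsilon r^2\ge0$, that is, $v_2^{H,B}(F)\le \frac{r}{2}(x^2+\epsilon)$. Comparing the two bounds on $v_2^{H,B}(F)$ does not yet contradict anything. The correct comparison instead uses $\Hom$-vanishing from the other side. Since $\Hom(F[1],E)\ne0$ and $F[1]$ lies in the heart, the HN filtration of $F[1]$ must satisfy $\nu^-_{\alpha,B}(F[1])\le 0$.

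$F$ is $\mu_H$-semistable, and large-volume-limit arguments show that $F[1]$ is $\nu_{\alpha,B}$-semistable for $\alpha$ large. I expect the cleanest route to be the following. The sheaf $F$ stays $\nu$-semistable as long as $(\alpha,\beta)$ lies outside a semicircle of $\Del_\epsilon$-type. Concretely, $F[1]$ is $\nu_{\alpha'',B_0+\beta''H}$-semistable for all $(\alpha'',\beta'')$ with $\beta''>\mu_H(F)$ near the vertical wall $\beta''=\mu_H(F)$. By Step 1 the walls are nested, so its destabilizing region is bounded by a semicircle that Proposition~\ref{prop:tiltBG}(a) forces to lie in $\alpha''^2\le\epsilon$ near $\beta''=\mu_H(F)$. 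The numerical wall $\nu_{\alpha'',\beta''}(F[1])=0$ is then the circle of radius $\sqrt{x^2-\epsilon}$-type around the point $\mu_H(F)$. So $(\alpha,\beta)$ outside it forces $x^2\ge\alpha^2-\epsilon$, that is, $\mu_H(F)\le b-\sqrt{\alpha^2-\epsilon}$.

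\emph{Step 3: the $\cH^0$ bound.} The $\cH^0$ bound is dual. Take the last HN factor $G$ of $\cH^0(E)$, which is a quotient $E\twoheadrightarrow G$ in the heart; torsion parts have $\mu=+\infty$ and are harmless. Then argue symmetrically with $\nu(G)\ge0$.

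The main obstacle is Step 2: making rigorous that the $\mu_H$-semistable sheaf $F[1]$ (respectively $G$) has no walls between the vertical line $\beta=\mu_H(F)$ and the point $(\alpha,\beta)$. The tool here is Proposition~\ref{prop:tiltBG}(b) together with $\Del_\epsilon\ge0$ for all destabilizing factors. Otherwise the argument follows \cite[Lemma~2.7]{BMS} verbatim, with $\alpha^2$ replaced by $\alpha^2-\epsilon$.
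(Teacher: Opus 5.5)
There is a genuine gap in Step~2, and it comes from a sign slip.

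\textbf{The sign error.} From $\nu^H_{\alpha,B}(F[1])\le 0$, with $v_1^{H,B}(F[1])=-rx>0$ and $v_0^{H}(F[1])=-r$, you get $v_2^{H,B}(F[1])\le -\frac{\alpha^2}{2}r$. Hence
\[v_2^{H,B}(F)=-v_2^{H,B}(F[1])\ge +\tfrac{\alpha^2}{2}r,\]
not $\ge -\frac{\alpha^2}{2}r$.

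\textbf{With the sign corrected, your comparison finishes the proof.} Bogomolov--Gieseker gives $v_2^{H,B}(F)\le\frac r2(x^2+\epsilon)$. Together with the corrected lower bound this yields $x^2\ge\alpha^2-\epsilon$, i.e.\ $\mu_H(F)\le b-\sqrt{\alpha^2-\epsilon}$. That is the entire slope-zero case. The quotient $G$ in Step~3 is handled the same way: with $e=\mu_H(G)-b>0$ one gets $\frac{\alpha^2}{2}r\le v_2^{H,B}(G)\le\frac r2(e^2+\epsilon)$.

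This is exactly the paper's proof. The paper skips the nested-wall reduction, which is valid but unnecessary, and works at general $\mu$. It obtains
\[\frac{\alpha^2-\epsilon-d^2}{2d}\le\nu^H_{\alpha,B}(U[1])\le\mu\le\nu^H_{\alpha,B}(V)\le\frac{e^2-(\alpha^2-\epsilon)}{2e}\]
and then solves the two quadratic inequalities.

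\textbf{The wall-crossing substitute does not work.} First, no tilt-stability of $F[1]$ is needed, since Bogomolov--Gieseker applies directly to the $\mu_H$-semistable sheaf $F$. Second, the premise you invoke is false in general: you claim that $F[1]$ is $\nu$-semistable near the vertical wall (or for $\alpha\gg0$) whenever $F$ is $\mu_H$-semistable. If $F$ is not reflexive, then $F^{\vee\vee}/F$ is supported in codimension $\ge2$. The nonsplit extension $0\to F\to F^{\vee\vee}\to F^{\vee\vee}/F\to0$ gives a nonzero map $F^{\vee\vee}/F\to F[1]$ in the heart, and its image has $\nu=+\infty$. So $F[1]$ is never tilt-semistable. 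Third, the passage from ``a numerical wall of radius $\sqrt{x^2-\epsilon}$-type'' to $x^2\ge\alpha^2-\epsilon$ is not justified.

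As written, then, the key inequality is left unproved. Step~3, which you describe as ``symmetric'', inherits the same gap. Correcting the sign closes both at once.
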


\begin{proof}
The first HN factor (with respect to $\mu_H$) $U$ of $\cH^{-1}(E)$ gives a subobject $U[1]\subset E$ in $\Coh^{H,B_0+\beta H}(X)$, while the last HN factor $V$ of
$\cH^0(E)$ gives a quotient object
$E\twoheadrightarrow V$ in $\Coh^{H,B_0+\beta H}(X)$. Set $d=\frac{(B_0+\beta H). H^{n-1}}{H^n}-\mu_{H}(U)>0$ and
$e=\mu_{H}(V)-\frac{(B_0+\beta H). H^{n-1}}{H^n}>0$. Applying \eqref{eq:scalarBG} to $U$ and $V$, we then get
\[\frac{\alpha^2-\epsilon-d^2}{2d} \le\nu^{H}_{\alpha,B_0+\beta H}(U[1])\le \mu \le\nu^{H}_{\alpha,B_0+\beta H}(V) \le\frac{e^2-(\alpha^2-\epsilon)}{2e}.\]
Solving the quadratic inequalities for $d$ and $e$ gives the bound for $\mu_H(U)$ and $\mu_H(V)$ as that in \eqref{eq:2.5}.
\end{proof}

\subsection{BMT-type inequalities and reductions}
We continue to denote by $(X,H)$ a polarized smooth projective variety of dimension $n\geq 3$ over a field $\kk$. 
We end this section by discussing various BMT-type inequalities for $H^{n-3}.\ch_3$ of tilt-semistable objects in \cite{BMT,BMS,BMSZ}. The key takeaways are Lemma~\ref{lem:smallAimplieslargeA} and Proposition~\ref{prop:ch3boundimplyquadform}, namely, the quadratic form bound for semistable characters as that in \eqref{eq:quad} can be reduced to the bound for zero-slope $\nu^H_{\alpha,B}$-semistable $\ch_3$ with small $\alpha$. Such an inequality will be our main focus in the rest of the paper.

We first record the passage from rational to real divisor twists in characteristic $0$.

\begin{lemma}\label{lem:rationalparametertoreal}
Assume $\operatorname{char}(\kk)=0$ and $\mathsf{C}\ge0$. Suppose
\begin{equation}\label{eq:dense-bound}
 v_3^{H,B}(E)-\frac{\alpha^2}{6}v_1^{H,B}(E)
       \le \mathsf{C}\sqrt{\Del^{H}_{B}(E)}
\end{equation}
for every $B\in \NS(X)_{\QQ}$, every $\alpha>0$ with $\alpha^2\in\QQ$,
and every $\nu^{H}_{\alpha,B}$-stable object $E$ with $\nu^{H}_{\alpha,B}(E)=0$.
Then \eqref{eq:dense-bound} holds for every $B\in \NS(X)_{\RR}$, every
$\alpha>0$, and every $\nu^{H}_{\alpha,B}$-semistable object $E$ with $\nu^{H}_{\alpha,B}(E)=0$.
\end{lemma}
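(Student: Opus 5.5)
We argue in two steps. First we pass from semistable to stable objects at a fixed real $(\alpha,B)$. Then we pass from real to rational parameters by a small deformation that keeps the slope equal to zero and keeps the object stable.

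\emph{Reduction to stable objects.} Let $E$ be $\nu^H_{\alpha,B}$-semistable with $\nu^H_{\alpha,B}(E)=0$. By Lemma~\ref{lem:deltacontrol}, $E$ has a Jordan--H\"older filtration whose factors $E_i$ are $\nu^H_{\alpha,B}$-stable. Each $E_i$ has slope $0$; otherwise $v_1^{H,B}(E_i)=0$, which would force the slope to be infinite, and that cannot occur in a JH filtration of a finite-slope object. The factors satisfy $\sum_i\sqrt{\Del^H_B(E_i)}\le\sqrt{\Del^H_B(E)}$. The left-hand side of \eqref{eq:dense-bound} is additive in $E$ and $\mathsf C\ge0$. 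Hence it suffices to prove \eqref{eq:dense-bound} for $\nu^H_{\alpha,B}$-stable $E$ of slope $0$, with $(\alpha,B)$ real.

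\emph{Approximation.} Fix such an $E$. We approximate $(\alpha,B)$ by rational $(\alpha_k,B_k)$ with $\alpha_k^2\in\QQ$, $(\alpha_k,B_k)\to(\alpha,B)$, $E$ still $\nu^H_{\alpha_k,B_k}$-stable, and $\nu^H_{\alpha_k,B_k}(E)=0$. Once this is done, the inequality for $(\alpha_k,B_k)$ passes to the limit, because both sides are continuous in $(\alpha,B)$ for the fixed class $\ch(E)$; note that $\Del^H_B$ is polynomial in $B$.

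To find the approximants, write $B=B'+\beta H$, where $B'$ ranges over a complement of $\RR H$. Proposition~\ref{prop:tiltBG}, applied with $B_0=B'$, describes stability in the $(\alpha,\beta)$-half-plane. The slope-zero condition $v_2^{H,B}(E)-\frac{\alpha^2}{2}v_0^H(E)=0$ cuts out a curve in the $(\alpha,\beta)$-plane, which is the numerical wall through the point. Stability of $E$ is open in the parameters. Openness in $(\alpha,\beta)$ follows from local finiteness of walls, and $E$ lies in the interior of a chamber, or on a wall along which stability persists, since it is stable rather than strictly semistable. Openness in $B'$ follows from continuity of the family of weak stability conditions in \cite{BMS}, which are parametrized continuously by $B\in\NS(X)_\RR$. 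So we take $B'_k$ rational near $B'$ and $\beta_k$ rational near $\beta$, and then solve for $\alpha_k^2=2v_2^{H,B_k}(E)/v_0^H(E)$, which is rational because $\ch(E)$ is rational. If $v_0^H(E)=0$, the slope-zero condition reads $v_2^{H,B_k}(E)=0$ instead; in that case we solve for a rational $\beta_k$ and choose $\alpha_k^2$ rational freely. We must check that $\alpha_k^2>0$ and that $\alpha_k$ is close to $\alpha$, which follows by continuity.

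\emph{Main obstacle.} The delicate point is the openness of stability of $E$ jointly in $(\alpha,B)$ with $B$ varying in all of $\NS(X)_\RR$, not just along the $H$-direction. Proposition~\ref{prop:tiltBG} only gives wall structure in the $(\alpha,\beta)$-plane. We would instead use the support property, namely the classical Bogomolov inequality, together with the deformation result for weak stability conditions (Bridgeland's deformation theorem as in \cite{Bay:deformation}, or \cite[Appendix B]{BMS}) to get local finiteness of walls for the fixed class $\ch(E)$ in a neighbourhood of $(\alpha,B)$. Stability of $E$ then holds on an open neighbourhood, and the rational points on the slope-zero hypersurface inside it are dense.
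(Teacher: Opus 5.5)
Your proof is correct and matches the paper's argument: reduce to stable objects via Lemma~\ref{lem:deltacontrol}, use openness of tilt-stability jointly in $(\alpha,B)$ (the paper cites \cite[Proposition~B.5]{BMS}, which is the input your last paragraph identifies), approximate by rational parameters, and pass to the limit. The one difference is minor: you put the rational approximants directly on the slope-zero locus by solving for $\alpha_k^2$ (or for $\beta_k$ when $v_0^H(E)=0$). The paper instead takes arbitrary rational $(\alpha_i,B_i)$ and then slides to the rational slope-zero point $\bigl(\sqrt{\alpha_i^2+\mu_i^2},B_i+\mu_iH\bigr)$ via Lemma~\ref{lem:nestedwall}.
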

\begin{proof}
By Lemma~\ref{lem:deltacontrol}, we may assume that $E$ is $\nu^{H}_{\alpha,B}$-stable. Therefore, $E$ is $\nu^{H}_{\alpha',B'}$-stable for any $|\alpha'-\alpha|\ll 1$ and $B'$ sufficiently close to $B$ by \cite[Proposition~B.5]{BMS}. Now, choose rational $B_i\in \NS(X)_\QQ$ and $\alpha_i$ such that $B_i\to B$ and $\alpha_i\to \alpha$. Hence $\mu_i\coloneqq \nu^H_{\alpha_i, B_i}(E)\in \QQ$, $\mu_i\to \nu^{H}_{\alpha,B}(E)=0$ and $E$ is $\nu^{H}_{\alpha_i,B_i}$-stable. By Lemma~\ref{lem:nestedwall}, $E$ is $\nu^H_{\sqrt{\alpha_i^2+\mu_i^2},B_i+\mu_iH}$-semistable with $\nu^H_{\sqrt{\alpha_i^2+\mu_i^2},B_i+\mu_iH}(E)=0$. Thus, by our assumption, we have
\[v_3^{H,B_i+\mu_iH}(E)-\frac{\alpha_i^2+\mu_i^2}{6}v_1^{H,B_i+\mu_iH}(E) \le \mathsf{C}\sqrt{\Del^{H}_{B_i}(E)}.\]
Taking the limit gives
\[v_3^{H,B}(E)-\frac{\alpha^2}{6}v_1^{H,B}(E) \le \mathsf{C}\sqrt{\Del^{H}_{B}(E)}.\]
\end{proof}

\begin{proposition}\label{prop:propagate}
Fix $B_0\in\NS(X)_\QQ$. Suppose there exist $A>\sqrt\epsilon$ and $\mathsf{C}\ge0$ that satisfy
\begin{equation}\label{eq:slice}
 v_3^{H,B_0+\beta H}(E)-\frac{A^2+2\epsilon}{6}v_1^{H,B_0+\beta H}(E)
 \le \mathsf{C}\sqrt{\Del^H_{B_0+\beta H,\epsilon}(E)}
\end{equation}
for every $\beta\in\RR$ and every $\nu^H_{A,B_0+\beta H}$-semistable object $E$ with 
$\nu^H_{A,B_0+\beta H}(E)=0$. Then, for every $\alpha>\sqrt\epsilon$,
$\beta\in\RR$, and $\nu^H_{\alpha,B_0+\beta H}$-semistable object $E$ with $\nu^H_{\alpha,B_0+\beta H}(E)=0$, we have
\begin{equation}\label{eq:allzero}
 v_3^{H,B_0+\beta H}(E)-\frac{\alpha^2+2\epsilon}{6}v_1^{H,B_0+\beta H}(E)
 \le\left(\mathsf{C}+\frac{\max\{A^2-\alpha^2,0\}}6\right)
            \sqrt{\Del^H_{B_0+\beta H,\epsilon}(E)}.
\end{equation}
\end{proposition}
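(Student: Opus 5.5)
The plan is to argue by induction on $\Del^H_{B_0+\beta H,\epsilon}(E)$, moving $E$ along the curve in the $(\alpha,\beta)$-plane where its tilt-slope vanishes. Since $B_0\in\NS(X)_\QQ$ and $\epsilon\in\QQ$, the values of $\Del^H_{B_0+\beta H,\epsilon}$ on classes of objects lie in a discrete subset of $\RR_{\ge0}$. This value is independent of $\beta$, and by Proposition~\ref{prop:tiltBG} it is nonnegative on semistable objects. So induction on it is legitimate. Fix $E$ that is $\nu^H_{\alpha,B_0+\beta H}$-semistable with $\nu^H_{\alpha,B_0+\beta H}(E)=0$, and consider the locus
\[\gamma_E\coloneqq\set{(\alpha',\beta') \sth v_2^{H,B_0+\beta' H}(E)-\tfrac{\alpha'^2}{2}v_0^{H}(E)=0,\ \alpha'>\sqrt\epsilon}.\]
This is a branch of a hyperbola in $\beta'$ if $v_0\ne0$, and a vertical line if $v_0=0$. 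Along $\gamma_E$ we have $v_1>0$, since the slope is finite. We follow $\gamma_E$ from $(\alpha,\beta)$ towards the point with $\alpha'=A$.

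\emph{Step 1 (no wall).} Suppose $E$ stays semistable along the portion of $\gamma_E$ between $\alpha$ and $A$. Set $f\coloneqq v_3-\frac{\alpha'^2+2\epsilon}{6}v_1$ along $\gamma_E$. I will compute $df$ using $\frac{d}{d\beta'}v_i^{B_0+\beta'H}=-v_{i-1}^{B_0+\beta'H}$ together with the constraint $\alpha'^2=2v_2/v_0$. This gives $df/d\beta'=\frac{v_1^2-2v_0v_2}{3v_0}$ and $d(\alpha'^2)/d\beta'=-2v_1/v_0$, hence
\[\frac{df}{d(\alpha'^2)}=-\frac{\Del^H_{B_0+\beta'H}(E)}{6\,v_1(E)}.\]
The vertical case $v_0=0$ gives the same formula directly. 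On $\gamma_E$ one has $\Del=v_1^2-\alpha'^2v_0^2\le v_1^2$, and also $\Del\le\Del_\epsilon$. Therefore $0\le \Del/v_1\le\sqrt{\Del_\epsilon}$.

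It follows that $f$ is nonincreasing in $\alpha'^2$, and that its rate of change has absolute value at most $\frac16\sqrt{\Del_\epsilon}$ per unit of $\alpha'^2$. If $\alpha\ge A$, then $f(\alpha)\le f(A)\le \mathsf{C}\sqrt{\Del_\epsilon}$ by \eqref{eq:slice}. If $\alpha<A$, then $f(\alpha)\le f(A)+\frac{A^2-\alpha^2}{6}\sqrt{\Del_\epsilon}$. Either way we obtain \eqref{eq:allzero}.

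\emph{Step 2 (walls).} Otherwise, let $(\alpha_1,\beta_1)$ be the first point on $\gamma_E$ where $E$ meets a wall, which exists by local finiteness in Proposition~\ref{prop:tiltBG}. At that point $E$ is still semistable, with $\nu=0$, and it has a Jordan--H\"older filtration into stable factors $E_i$, all of slope $0$ there. By Lemma~\ref{lem:deltacontrol}, $\sum_i\sqrt{\Del_\epsilon(E_i)}\le\sqrt{\Del_\epsilon(E)}$. Since the filtration is nontrivial, the factors have strictly smaller $\Del_\epsilon$, except in degenerate cases where some factor has $\Del_\epsilon=0$. Those degenerate cases have to be treated directly, or absorbed by noting that such factors contribute no walls.

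Apply the induction hypothesis to each $E_i$ at $(\alpha_1,\beta_1)$ and add. Additivity of $v_3-\frac{\alpha_1^2+2\epsilon}{6}v_1$ and subadditivity of the right-hand side give the bound for $E$ at $(\alpha_1,\beta_1)$, with constant $\mathsf{C}+\frac{\max\{A^2-\alpha_1^2,0\}}6$. Then Step 1 on the wall-free segment from $(\alpha,\beta)$ to $(\alpha_1,\beta_1)$ transports the bound back. The monotonicity computed above ensures the constants combine correctly: the segment from $\alpha_1$ to $\alpha$ costs exactly the difference of the $\max$ terms, or costs nothing when moving upward past $A$.

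\emph{Main obstacle.} I expect the main difficulty to be the $\epsilon$-bookkeeping. One must check that the shift $\alpha^2+2\epsilon$ in \eqref{eq:slice} is exactly what makes the derivative identity and the bound $\Del/v_1\le\sqrt{\Del_\epsilon}$ hold with $\Del_\epsilon$ in place of $\Del$. One must also check that $\gamma_E$ stays in the region $\alpha'>\sqrt\epsilon$, and that the factors in Step 2 genuinely have smaller discriminant. If the naive derivative computation leaves an $\epsilon$-error, I would first try to reparametrize by $\alpha'^2-\epsilon$, which is the variable $w$ used in Proposition~\ref{prop:tiltBG}.
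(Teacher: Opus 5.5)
Your proposal is correct and follows the paper's argument: a minimal-$\Del_\epsilon$ (equivalently, strong induction) argument along the zero-slope curve $\nu^H_{\sqrt t,B(t)}(E)=0$. Monotonicity of $v_3-\frac{t+2\epsilon}{6}v_1$ in $t=\alpha^2$ reduces everything to the first wall between $\alpha$ and $A$, where the Jordan--H\"older factors have smaller $\Del_\epsilon$ by Lemma~\ref{lem:deltacontrol}. The one correction is the $\epsilon$-bookkeeping you flagged: the $2\epsilon$ shift contributes $+\frac{\epsilon v_0}{3}$ to $df/d\beta'$, so in fact $\frac{df}{dt}=-\frac{\Del^H_{B_0,\epsilon}(E)}{6v_1}$ exactly (your $-\Del/(6v_1)$ has no known sign when $\epsilon>0$), and the rate bound then follows from $v_1^2=\Del^H_{B_0,\epsilon}(E)+(t-\epsilon)v_0^2\ge\Del^H_{B_0,\epsilon}(E)$ on the curve.
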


\begin{proof}
Let $E$ be an object that has the minimal value of $\Del^H_{B,\epsilon}(E)$ among tilt-semistable objects that violates the statement. Set $t=\alpha^2$ and we consider points on the curve $\nu^H_{\sqrt t,B(t)}(E)=0$, where $B(t)=B_0+\beta(t)H$. Write $\delta=\Del^H_{B_0,\epsilon}(E)$, which is constant along this curve. Then
\[v^{H,B(t)}_1=\sqrt{\delta+(t-\epsilon)(v^H_0)^2}.\]
Moreover, we have
\[\beta(t)=\frac{v_1^{H,B_0}(E)-v^{H,B(t)}_1}{v^H_0}\]
if $v^H_0(E)\neq 0$ and $\beta(t)$ is a constant otherwise.

By the formula
\begin{equation}\label{eq:shift}
 v_i^{H,B+tH}=\sum_{j=0}^i\frac{(-t)^j}{j!}v_{i-j}^{H,B}
\end{equation}
for any $0\le i\le3$, we get $\beta'(t)=-\frac{v^H_0(E)}{2v^{H,B(t)}_1(E)}$, $(v^{H,B(t)}_1(E))'=\frac{(v^H_0)^2(E)}{2v^{H,B(t)}_1(E)}$, and
\[\frac{d}{dt}\left(v^{H,B(t)}_3(E)-\frac{t+2\epsilon}{6}v^{H,B(t)}_1(E)\right) =-\frac{\delta}{6v^{H,B(t)}_1(E)}.\]
Since $v_1^{H,B(t)}(E)\ge\sqrt\delta$, the difference between the two sides of
\eqref{eq:allzero} is nondecreasing for $t\le A^2$ and nonincreasing for $t\ge A^2$. Therefore, there is a wall for $E$ that meets the curve $\nu^H_{\sqrt t,B(t)}(E)=0$ at a point $(\alpha_1,\beta_1)$ as $E$ does not satisfy \eqref{eq:slice}. In particular, $\Del^H_{B_0,\epsilon}(E)>0$. On the other hand, every Jordan--H\"older factor $E_i$ of $E$ with respect to $\nu^H_{\alpha_1,B_0+\beta_1H}$ satisfies $\nu^H_{\alpha_1,B_0+\beta_1H}(E_i)=\nu^H_{\alpha_1,B_0+\beta_1H}(E)=0$ and $\Del^H_{B_0,\epsilon}(E_i)<\Del^H_{B_0,\epsilon}(E)$ by Lemma~\ref{lem:deltacontrol}. Since there exists $E_i$ that violates the statement, we get a contradiction.
\end{proof}

We also need the following reduction of \cite[Question 2.4]{BMSZ}.

\begin{lemma}\label{lem:smallAimplieslargeA}
Fix $B_0\in\NS(X)_\QQ$. Let $\Gamma\in \mathrm{CH}_1(X)_{\QQ}$ with
$H.\Gamma/H^n\ge\epsilon/3$, and let $A_0>\sqrt\epsilon$.
Suppose
\[v_3^{H,B_0+\beta H}(E)\le\frac{A_0^2}{6}v_1^{H,B_0+\beta H}(E)+\Gamma.\ch_1^{B_0+\beta H}(E)\]
for every $\beta\in\RR$ and every $\nu^H_{A_0,B_0+\beta H}$-semistable
object $E\in\Coh^{H,B_0+\beta H}(X)$ with $\nu^H_{A_0,B_0+\beta H}(E)=0$.
Then the same inequality holds with $A_0^2$ replaced by $\alpha^2$ for every $\alpha\ge A_0$, $\beta\in\RR$, and every $\nu^H_{\alpha,B_0+\beta H}$-semistable object $E$ with $\nu^H_{\alpha,B_0+\beta H}(E)=0$.
\end{lemma}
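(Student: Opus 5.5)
The plan is to repeat the wall-crossing and monotonicity argument from the proof of Proposition~\ref{prop:propagate}, with the extra term $\Gamma.\ch_1^{B}$ included in the function being tracked. The hypothesis $H.\Gamma/H^n\ge\epsilon/3$ is exactly what keeps that function monotone along the curve of slope-zero points.

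\textbf{Monotonicity along the curve.} Fix $E$. Put $t=\alpha^2$, $B(t)=B_0+\beta(t)H$ and $\delta=\Del^H_{B_0,\epsilon}(E)\ge0$, and parametrize the curve $\nu^H_{\sqrt t,B(t)}(E)=0$ as in that proof. Consider
\[
f(t)\coloneqq v_3^{H,B(t)}(E)-\frac{t}{6}v_1^{H,B(t)}(E)-\Gamma.\ch_1^{B(t)}(E).
\]
The proof of Proposition~\ref{prop:propagate} gives the derivatives
\[
\frac{d}{dt}\Bigl(v_3^{H,B(t)}-\tfrac{t+2\epsilon}{6}v_1^{H,B(t)}\Bigr)=-\frac{\delta}{6v_1^{H,B(t)}},\qquad (v_1^{H,B(t)})'=\frac{(v_0^H)^2}{2v_1^{H,B(t)}},\qquad \beta'(t)=-\frac{v_0^H}{2v_1^{H,B(t)}}.
\]
For the $\Gamma$-term, $\Gamma.\ch_1^{B(t)}=\Gamma.\ch_1^{B_0}-\beta(t)\,(\Gamma.H)\,\ch_0$. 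Its derivative is therefore $\frac{\Gamma.H}{H^n}\frac{(v_0^H)^2}{2v_1^{H,B(t)}}$. Combining these,
\[
f'(t)=\frac{1}{v_1^{H,B(t)}}\left(-\frac{\delta}{6}+\Bigl(\frac{\epsilon}{6}-\frac{\Gamma.H}{2H^n}\Bigr)(v_0^H)^2\right)\le -\frac{\delta}{6v_1^{H,B(t)}}\le 0 .
\]
The first inequality is where $H.\Gamma/H^n\ge\epsilon/3$ is used. So $f$ is nonincreasing in $t$. When $v_0^H(E)=0$, $\beta$ is constant and the same computation applies.

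\textbf{Induction on $\Del$.} Suppose the statement fails. Choose a counterexample $E$, semistable at a point $t_*=\alpha^2\ge A_0^2$ of its curve with $f(t_*)>0$, such that $\Del^H_{B_0,\epsilon}(E)$ is minimal. Minimality is legitimate because $B_0$ and $\epsilon$ are rational, so these values lie in a discrete subset of $\RR_{\ge0}$.
\begin{itemize}
\item Let $t_1\ge A_0^2$ be the infimum of those $s\in[A_0^2,t_*]$ such that $E$ is semistable on the whole segment of the curve over $[s,t_*]$. By local finiteness of walls and closedness of semistability (Proposition~\ref{prop:tiltBG}), $E$ is semistable at $t_1$.
\item If $t_1=A_0^2$, the hypothesis gives $f(A_0^2)\le0$, and monotonicity yields $f(t_*)\le0$, a contradiction.
\item Otherwise, $t_1>A_0^2$ lies on a nonvertical wall for $E$. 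Take the Jordan--H\"older factors $E_i$ of $E$ at $t_1$. They all have slope zero there, and by Lemma~\ref{lem:deltacontrol} each satisfies $\Del^H_{B_0,\epsilon}(E_i)<\Del^H_{B_0,\epsilon}(E)$.
\item Since $t_1\ge A_0^2$, minimality gives the inequality for each $E_i$ at $t_1$. At the fixed point $t_1$ the expression $f$ is additive in the class, so $f(t_1)\le0$ for $E$. Monotonicity on $[t_1,t_*]$ then gives $f(t_*)\le0$, again a contradiction.
\end{itemize}

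\textbf{Main obstacle.} The delicate point is the strict decrease of $\Del$ for the Jordan--H\"older factors, that is, ruling out a single factor carrying all of $\Del$. As in the proof of Proposition~\ref{prop:propagate}, $t_1$ lies on an actual wall, so at least two factors have non-proportional classes. The sum-of-square-roots bound in Lemma~\ref{lem:deltacontrol} then forces each $\Del_i<\Del$, provided $\delta>0$. In the degenerate case $\delta=0$, monotonicity alone gives $f(t_*)\le f(A_0^2)$. Then $f(t_*)\le0$ follows from the hypothesis, once we know $E$ cannot be destabilized along the curve when $\delta=0$, which holds since walls require $\Del>0$.
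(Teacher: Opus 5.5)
Your proposal is correct and follows the paper's proof. Both take a counterexample minimizing $\Del^H_{B_0,\epsilon}$, use the same derivative $-\bigl(\Del^H_{B_0,\epsilon}(E)+(3H.\Gamma/H^n-\epsilon)(v_0^H(E))^2\bigr)/\bigl(6v_1^{H,B(t)}(E)\bigr)\le0$ along the slope-zero curve, and reach the wall-crossing contradiction of Proposition~\ref{prop:propagate}; you spell this out more carefully by taking the wall closest to $t_*$.

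One minor point concerns the strict drop $\Del^H_{B_0,\epsilon}(E_i)<\Del^H_{B_0,\epsilon}(E)$. It does not follow from the sum-of-square-roots bound alone when some other factor has $\Del^H_{B_0,\epsilon}=0$. It holds because on the slope-zero plane $\Del^H_{B,\epsilon}=(v_1^{H,B})^2-(\alpha^2-\epsilon)(v_0^H)^2$ is Lorentzian and all factors lie in its forward cone. Hence the drop is strict whenever $\Del^H_{B_0,\epsilon}(E)>0$ and there are at least two factors.
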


\begin{proof}
Let $E$ be an object that has the minimal value of $\Del^H_{B,\epsilon}(E)$ among tilt-semistable object that violates the statement. As in Proposition~\ref{prop:propagate}, we consider the curve $\nu^H_{\sqrt t,B(t)}(E)=0$, where $B(t)=B_0+\beta(t)H$ and $t=\alpha^2$. Note that
\[\begin{aligned}
 &\frac{d}{dt}\left(v^{H,B(t)}_3(E)-\frac t6v^{H,B(t)}_1(E)-\Gamma.\ch_1^{B(t)}(E)\right)\\
 &\qquad=-\frac{\Del^H_{B_0,\epsilon}(E)
               +(3H.\Gamma/H^n-\epsilon)(v^H_0(E))^2}{6v^{H,B(t)}_1(E)}\le0,
 \end{aligned}\]
so we can get a contradiction as in Proposition~\ref{prop:propagate}.
\end{proof}

Analogously to \cite{BMS,BMSZ,LM}, we generalize \eqref{eq:dense-bound} to a quadratic inequality.

\begin{proposition}\label{prop:ch3boundimplyquadform}
Let $B_0\in\NS(X)_\RR$, with $B_0$ rational if
$\operatorname{char}(\kk)>0$, and let $\mathsf{C}\ge0$.
Suppose
\[v_3^{H,B}(E)-\frac{\alpha^2+2\epsilon}{6}v_1^{H,B}(E) \le \mathsf{C}\sqrt{\Del^H_{B,\epsilon}(E)}\]
for every $\alpha>\sqrt\epsilon$, $B\in B_0+\RR H$, and every
$\nu^H_{\alpha,B}$-semistable object $E$ with $\nu^H_{\alpha,B}(E)=0$.
Then for every $\nu^H_{\alpha,B}$-semistable object $E$ with $\alpha>\sqrt\epsilon$ and $B\in B_0+\RR H$, we have
\begin{equation}\label{eq:rootQ}
 \begin{aligned}
 0\le{}&(\alpha^2-\epsilon)\Del^H_{B,\epsilon}(E)
       +4\left(v_2^{H,B}(E)-\frac\epsilon2v_0^H(E)\right)^2\\
       &-6v_1^{H,B}(E)\left(v_3^{H,B}(E)-\frac\epsilon2v_1^{H,B}(E)\right)
       +6\mathsf{C}v_1^{H,B}(E)\sqrt{\Del^H_{B,\epsilon}(E)}.
 \end{aligned}
\end{equation}
\end{proposition}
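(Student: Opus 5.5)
The plan is the standard reduction of BMS/BMSZ: move a semistable object along its wall to slope zero, apply the hypothesis there, and check that the resulting inequality is exactly \eqref{eq:rootQ}. Let $E$ be $\nu^H_{\alpha,B}$-semistable with $\mu\coloneqq\nu^H_{\alpha,B}(E)$.

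First I dispose of the case $v_1^{H,B}(E)=0$, i.e.\ $\mu=+\infty$. Then the last two terms of \eqref{eq:rootQ} vanish. The first term is nonnegative by Proposition~\ref{prop:tiltBG}(a), and the second is a square, so the inequality is immediate.

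Now assume $\mu<+\infty$, so $v_1^{H,B}(E)>0$. By Lemma~\ref{lem:nestedwall}, $E$ is $\nu^H_{\alpha',B'}$-semistable with slope zero, where $\alpha'=\sqrt{\alpha^2+\mu^2}>\sqrt\epsilon$ and $B'=B+\mu H\in B_0+\RR H$. The hypothesis then gives
\[v_3^{H,B'}(E)-\tfrac{\alpha^2+\mu^2+2\epsilon}{6}v_1^{H,B'}(E)\le\mathsf C\sqrt{\Del^H_{B',\epsilon}(E)}.\]
Note that $\Del^H_{B',\epsilon}(E)=\Del^H_{B,\epsilon}(E)$, since $\Del$ is invariant under shifting $B$ by multiples of $H$.

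Next I rewrite the left side in terms of the $v_i^{H,B}$ using \eqref{eq:shift} with $t=\mu$. Abbreviate $v_i=v_i^{H,B}(E)$, so $\mu v_1=v_2-\tfrac{\alpha^2}{2}v_0$. The shifted classes are
\[v_1'=v_1-\mu v_0,\qquad v_3'=v_3-\mu v_2+\tfrac{\mu^2}{2}v_1-\tfrac{\mu^3}{6}v_0.\]
I multiply the inequality by $6v_1>0$ and substitute $\mu=(v_2-\tfrac{\alpha^2}{2}v_0)/v_1$. The expected outcome is an identity of the form
\[6v_1\Bigl(v_3'-\tfrac{\alpha'^2+2\epsilon}{6}v_1'\Bigr)=6v_1\bigl(v_3-\tfrac\epsilon2 v_1\bigr)-(\alpha^2-\epsilon)\Del_{B,\epsilon}-4\bigl(v_2-\tfrac\epsilon2v_0\bigr)^2+R,\]
where $R$ is a remainder I hope is zero. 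If $R=0$, moving terms across gives exactly \eqref{eq:rootQ}. This is the same algebra as in \cite[Theorem 4.2]{BMS} and \cite[Proposition 2.8]{BMSZ}, with $\epsilon$ inserted.

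The main obstacle is exactly this algebraic verification, in particular checking that the $\epsilon$-terms match. If $R$ does not vanish, I expect it to be a nonpositive multiple of $\mu^2\Del_{B,\epsilon}$ or similar, which would still yield the inequality. Failing a clean identity, I would fall back on the structure used in Proposition~\ref{prop:propagate}. There, along the curve $\nu^H_{\sqrt t,B(t)}(E)=0$, $v_1^{H,B(t)}=\sqrt{\delta+(t-\epsilon)v_0^2}$. This says that the quadratic form is constant along the nested semicircle through $(\alpha,B)$. So it would suffice to verify \eqref{eq:rootQ} at the single point $(\alpha',B')$, where $\mu=0$ and the identity is straightforward to check directly.
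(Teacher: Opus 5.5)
Your argument is the paper's proof: dispose of $v_1^{H,B}(E)=0$, move to the slope-zero point $(\sqrt{\alpha^2+\mu^2},B+\mu H)$ by Lemma~\ref{lem:nestedwall}, apply the hypothesis, expand with \eqref{eq:shift}, and multiply by $6v_1$. The remainder you hope for is indeed $R=0$. Using $v_2=\frac{\alpha^2}{2}v_0+\mu v_1$, the hypothesis becomes
\[v_3\le\frac{\alpha^2+2\epsilon+4\mu^2}{6}v_1+\frac{\alpha^2-\epsilon}{3}\mu v_0+\mathsf C\sqrt{\Del^H_{B,\epsilon}(E)}.\]
Multiplying by $6v_1$ and substituting $\mu v_1=v_2-\frac{\alpha^2}{2}v_0$ and $\Del^H_{B,\epsilon}(E)=v_1^2-2v_0v_2+\epsilon v_0^2$ gives \eqref{eq:rootQ} exactly.

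You should carry this computation out rather than hedge, because both of your contingencies are faulty. First, with your sign convention the identity reads $-Q+R\le 6\mathsf C v_1\sqrt{\Del^H_{B,\epsilon}(E)}$, where $Q$ is the right-hand side of \eqref{eq:rootQ} without its $\mathsf C$-term. So a remainder $R\le0$ would only show that the right-hand side of \eqref{eq:rootQ} is $\ge R$; you would need $R\ge0$. Second, the right-hand side of \eqref{eq:rootQ} is not constant along the nested semicircle. Only its ratio to $v_1^{H,B}(E)$ is constant, because every point of the semicircle has the same slope-zero point. Moreover, the formula for $v_1^{H,B(t)}$ in Proposition~\ref{prop:propagate} is about the slope-zero curve, not the semicircle. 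Establishing that constancy is the same computation, so the fallback is not a shortcut.
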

\begin{proof}
Write $v_i=v_i^{H,B}(E)$ and
$\Delta=\Del^H_{B,\epsilon}(E)\ge0$.
If $v_1=0$, then \eqref{eq:rootQ} is immediate. Otherwise,
$v_1>0$, and we set $\mu=\nu^H_{\alpha,B}(E)$, $B'=B+\mu H$, and $(\alpha')^2=\alpha^2+\mu^2$. By Lemma~\ref{lem:nestedwall}, $E$ is
$\nu^H_{\alpha',B'}$-semistable of slope zero. Therefore, the hypothesis gives
\[v_3^{H,B'}(E) \le \frac{(\alpha')^2+2\epsilon}{6}v_1^{H,B'}(E) +\mathsf{C}\sqrt{\Delta}.\]
Expanding by \eqref{eq:shift} and using
$v_2=\alpha^2v_0/2+\mu v_1$, we obtain
\[v_3\le \frac{\alpha^2+2\epsilon+4\mu^2}{6}v_1 +\frac{\alpha^2-\epsilon}{3}\mu v_0 +\mathsf{C}\sqrt{\Delta}.\]
Multiplying by $6v_1$ and substituting
$\mu v_1=v_2-\alpha^2v_0/2$ and
$\Delta=v_1^2-2v_0v_2+\epsilon v_0^2$
gives \eqref{eq:rootQ}.
\end{proof}

\section{BMT-type inequalities at large volume} 
\label{sec:geometry}
The goal of this section is to explain Proposition~\ref{prop:weak}, which gives a BMT-type inequality for all sufficiently large parameters. 
This result is a corollary stated in \cite{LLSZ}, but here we combine the induced stability conditions of \cite{Pol,Liu,FLZ,LMPSZ,Li,LLL} with the heart comparison of \cite{CF} to give a direct proof in dimension $3$. 

Throughout this section, the base field $\kk$ is algebraically closed.

\subsection{Powers of an elliptic curve}
Let $E$ be an elliptic curve over $\kk$ and let $n \geq 0$ be an integer. Following \cite[Section 4.1]{LLL},
write $H_i$ for the pullback of a degree one divisor from the $i$th
factor of $E^n$, and set $H\coloneqq\sum_iH_i$. For any subset $I \subset \{1, \dots, n\}$, we write $H_I \coloneqq \prod_{i \in I} H_i$. 
We set $\Lambda_n\coloneqq\ZZ^{2^n}$ and consider the homomorphism 
\[v_n\colon \KK_0(E^n)\to \Lambda_n\]
given by 
\[v_n(F)\coloneqq\bigl(H_I.\ch_{n-|I|}(F)\bigr)_{I\subset\{1,\ldots,n\}}.\]

For $a>0$ and $\mathbf b=(b_1,\dots, b_n)\in\RR^n$, we define
\[Z_n^{a,\mathbf b}(F)\coloneqq -\int_{E^n}e^{-\mathfrak{i}aH-\sum_i b_iH_i}\ch(F).\]
By \cite[Theorems 4.1(2) and 4.2]{LLL}, there is a unique stability
condition 
\[\sigma_n^{1,0}=(Z_n^{1,0},\cP_n^{1,0}) \in \Stab_{(\Lambda_n, v_n)}(\Db(E^n))\]
with
$\phi_{\sigma_n^{1,0}}(\cO_x)=1$ for any closed point $x\in E^n$.

\begin{lemma}\label{lem:support}
There exists a neighborhood $U\subset\RR^n$ of $0$, depending only on $n$, such that for any $\mathbf b\in U$, there is a geometric stability  condition
\[\sigma_n^{1,\mathbf b}=(Z_n^{1,\mathbf b},\cP_n^{1,\mathbf b}) \in \Stab_{(\Lambda_n, v_n)}(\Db(E^n))\]
with $\phi_{\sigma_n^{1,\mathbf b}}(\cO_x)=1$ for all closed points $x \in E^n$ and 
\begin{equation}
\label{dist-Pn1b-Pn10}
 \mathrm{dist}(\cP_n^{1,\mathbf b},\cP_n^{1,0})< 1/2. 
\end{equation}
\end{lemma}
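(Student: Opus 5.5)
The plan is to deform $\sigma_n^{1,0}$ using Bridgeland's deformation theorem, in the quantitative form that comes with the support property. By \cite[Theorems 4.1(2) and 4.2]{LLL}, $\sigma_n^{1,0}$ satisfies the support property with respect to $(\Lambda_n,v_n)$. So there is a quadratic form $Q$ on $(\Lambda_n)_\RR$ that is negative definite on $\ker Z_n^{1,0}$ and satisfies $Q(v_n(F))\ge 0$ for every $\sigma_n^{1,0}$-semistable $F$. Fix any norm $\|\cdot\|$ on $(\Lambda_n)_\RR$. Negative definiteness on the kernel yields a constant $C_Q>0$ with $\|v_n(F)\|\le C_Q|Z_n^{1,0}(F)|$ for all $\sigma_n^{1,0}$-semistable $F$.

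The deformation result \cite[Theorem 1.2]{Bay:deformation} (or \cite[Theorem 7.1]{bridgeland:stability}) then gives the following. For any central charge $W$ with
\[
\|W-Z_n^{1,0}\|_{\mathrm{op}} < \delta(Q),
\]
where $\delta(Q)$ is explicit in terms of $Q$ and $Z_n^{1,0}$, there is a unique stability condition $(W,\cP_W)$ with the same support property. Moreover, $\mathrm{dist}(\cP_W,\cP_n^{1,0})$ is bounded by a quantity like $\frac{1}{\pi}\arcsin(\|W-Z\|\, C_Q)$, which tends to $0$ as $W\to Z_n^{1,0}$.

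The map $\mathbf b\mapsto Z_n^{1,\mathbf b}$ is polynomial and hence continuous. Expanding $e^{-\mathfrak iH-\sum b_iH_i}$ shows that $Z_n^{1,\mathbf b}$ factors through $v_n$: it only involves the pairings $H_I.\ch_{n-|I|}$, because all products of the $H_i$ are monomials $H_I$ with $H_i^2=0$. So we may choose a neighborhood $U$ of $0$ on which $\|Z_n^{1,\mathbf b}-Z_n^{1,0}\|<\delta(Q)$ and on which the phase-distance bound is $<1/2$. The only input here is $\sigma_n^{1,0}$, which depends only on $n$. Since all elliptic curves have the same numerical data $(\Lambda_n,v_n,Q)$ via \cite{LLL}, $U$ depends only on $n$. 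This gives $\sigma_n^{1,\mathbf b}$ together with \eqref{dist-Pn1b-Pn10}.

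It remains to check that $\sigma_n^{1,\mathbf b}$ is geometric with $\phi(\cO_x)=1$. Skyscraper sheaves are $\sigma_n^{1,0}$-stable of phase $1$. Their class $v_n(\cO_x)=(1,0,\dots,0)$ (only $I=\{1,\dots,n\}$ sees $\ch_n$) is fixed, and $Z_n^{1,\mathbf b}(\cO_x)=-1$ for all $\mathbf b$. Since the phase moves by less than $1/2$ and $Z$ is real negative, the phase stays exactly $1$ and $\cO_x$ stays semistable. Stability persists along the deformation because the class is primitive with $Q$-bounded destabilizers. More precisely, the set of $\mathbf b$ where $\cO_x$ is stable is open and closed in the connected set $U$, by the local finiteness of walls for a fixed class, which is uniform in $x$ by the support property. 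This step is the main obstacle. I would handle it by noting that any destabilizing subobject $A\subset\cO_x$ of phase $1$ in $\cP^{1,\mathbf b}(1)$ has $\sigma_n^{1,0}$-phases in $(1/2,3/2)$. This would force $Z_n^{1,0}(A)$ to be a proper real piece of $-1$. That contradicts the $\sigma_n^{1,0}$-stability of $\cO_x$ after shrinking $U$ uniformly via the finiteness of classes with bounded $Q$ and $|Z|$.
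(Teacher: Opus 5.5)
Your deformation step is essentially the paper's. There one writes $Z_n^{1,\mathbf b}(F)=-\sum_I\prod_{j\in I}(-b_j-\mathfrak{i})\,v_n(F)_I$ and applies \cite[Theorem A.10 and Proposition A.5]{BMS}. However, the paper uses the explicit quadratic form $Q^E_{n,1}$ from \cite[proof of Lemma 6.6]{LLL}, whose expression in the coordinates of $\Lambda_n$ does not involve $E$. That explicit form is what makes $U$ depend only on $n$; the mere existence of some $Q$ for $\sigma_n^{1,0}$ on a given $E^n$ does not. (A small slip: $v_n(\cO_x)$ is the unit vector at $I=\emptyset$, since $H_I.\ch_{n-|I|}$ involves $\ch_n$ only for $I=\emptyset$.)

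The genuine gap is geometricity. First, $\mathrm{dist}<1/2$ together with $Z_n^{1,\mathbf b}(\cO_x)=-1$ does not make $\cO_x$ semistable. It only places $\phi^\pm(\cO_x)$ in $(1/2,3/2)$, which allows HN factors of phases on both sides of $1$ whose central charges sum to $-1$. Second, your open-and-closed argument needs closedness of the \emph{stable} locus, which fails in general. The mechanism you propose to exclude a strictly semistable point also does not work. For a destabilizing $A$ at $\sigma_n^{1,\mathbf b}$, it is $Z_n^{1,\mathbf b}(A)$ that is real, not $Z_n^{1,0}(A)$. Moreover, $A\in\cP_n^{1,0}(1/2,3/2)$ need not be a subobject of $\cO_x$ in $\cP_n^{1,0}(0,1]$, nor $\sigma_n^{1,0}$-semistable, so the $\sigma_n^{1,0}$-stability of $\cO_x$ is never contradicted. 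Stability at the single point $\mathbf b=0$ cannot by itself exclude a numerical wall through $0$.

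The paper avoids all of this by invoking \cite[Theorem 4.1(1)]{LLL} for geometricity. Then $Z_n^{1,\mathbf b}(\cO_x)=-1$ forces the phase of the (stable) skyscrapers to be an odd integer, and \eqref{dist-Pn1b-Pn10} pins it to $1$.

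If you want a direct argument, the missing idea is integrality. Let $t_0$ be the first point of the segment $t\mathbf b$ at which $\cO_x$ fails to be stable; there $\cO_x$ is strictly semistable of phase $1$. A Jordan--H\"older subobject then has class $w$ with $Q(w)\ge0$, $Q(v_n(\cO_x)-w)\ge0$, and $Z_n^{1,t_0\mathbf b}(w)\in(-1,0)$. The uniform support property bounds $\|w\|$ by a constant depending only on $n$. On the other hand, $Z_n^{1,0}(w)\in\ZZ[\mathfrak{i}]\setminus\{0,-1\}$: the values $0$ and $-1$ are excluded because $Q$ is negative definite on $\ker Z_n^{1,0}$. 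Every point of that set has distance at least $1$ from $[-1,0]$. Shrinking $U$ (still depending only on $n$) so that $Z_n^{1,t_0\mathbf b}(w)$ lies within distance $<1$ of $Z_n^{1,0}(w)$ then gives a contradiction.
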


\begin{proof}
Note that for any object $F$, we have
\[Z_n^{1,\mathbf b}(F) =-\sum_I\prod_{j\in I}(-b_j-\mathfrak{i})\,v_n(F)_I.\]
Hence, using the quadratic form $Q^E_{n,1}$ constructed in \cite[proof of Lemma 6.6]{LLL}, \cite[Theorem A.10 and Proposition A.5]{BMS} gives the desired neighborhood $U$ that depends only on $n$. By \cite[Theorem 4.1(1)]{LLL}, each $\sigma_n^{1,\mathbf b}$ is geometric. 
Moreover, as $Z_n^{1,\mathbf b}(\cO_x) = -1$, the bound~\eqref{dist-Pn1b-Pn10} gives $\phi_{\sigma_n^{1,\mathbf b}}(\cO_x)=1$.
\end{proof}

For $\mathbf d = (d_1, \dots, d_n) \in \ZZ^n$, we let $L_{\mathbf d}\in \Pic(E^n)$ denote any line bundle with 
\[c_1(L_{\mathbf d})=\sum_i d_iH_i.\]

\begin{lemma}\label{lem:enlarge}
Let $K\subset\RR^n$ be a compact subset and $S\subset\ZZ^n$ be a finite subset.
Then there exists an integer $m_0$,  depending only on $K, S$, and $n$, such that for every integer $m\geq m_0$ and $\mathbf b \in K$, there is a geometric
stability condition
\[\sigma_n^{m^2,\mathbf b}=(Z_n^{m^2,\mathbf b},\cP_n^{m^2,\mathbf b}) \in \Stab_{(\Lambda_n, v_n)}(\Db(E^n))\] 
with $\phi_{\sigma_n^{m^2,\mathbf b}}(\cO_x)=1$ for all closed points $x \in E^n$ and 
\begin{equation}\label{eq:smalltwist}
 \mathrm{dist}(\cP_n^{m^2,\mathbf b},
 \cP_n^{m^2,\mathbf b}\otimes L_{\mathbf d})< 1
\end{equation}
for all $\mathbf d \in S$. 
In particular, $\cP_n^{m^2,\mathbf b}\prec
\cP_n^{m^2,\mathbf b}\otimes L_{\mathbf d}[1]$.
\end{lemma}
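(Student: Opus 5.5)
The plan is to rescale the central charge so that the twist by $L_{\mathbf d}$ becomes a small perturbation of $\mathbf b$, and then invoke Lemma~\ref{lem:support}. Concretely, we compare $Z_n^{m^2,\mathbf b}$ with $Z_n^{1,\mathbf b/m^2}$ through the $\CC^*$-action together with an ``inhomogeneous rescaling'' of the cohomology grading. The natural device is the pullback/pushforward along the multiplication map $[m]\colon E^n\to E^n$, or more simply the isogeny $\prod_i [m_i]$. We have $[m]^*H_i \equiv m^2H_i$ numerically, and $[m]^*$ multiplies $\ch_k$ restricted to $H_I$-classes by $m^{2k}$ in the appropriate sense. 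As a result, $Z_n^{1,\mathbf b'}([m]^*F)$ equals a fixed nonzero constant (a power of $m$) times $Z_n^{m^2,\,m^2\mathbf b'}(F)$, up to renormalising $a$.

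The first step is therefore to define $\sigma_n^{m^2,\mathbf b}$ as the pullback stability condition $[m]^{*}\sigma_n^{1,\mathbf b/m^2}$, i.e.\ the one whose slicing is $\{F : [m]^*F\in \cP_n^{1,\mathbf b/m^2}(\phi)\}$. Inducing stability conditions along finite étale covers (as in \cite{Pol,MMS}-type arguments, or via \cite{LLL} directly) gives a genuine stability condition with central charge a positive multiple of $Z_n^{m^2,\mathbf b}$, which we rescale. The support property is inherited, and so is geometricity. Since $[m]^*\cO_x$ is a direct sum of skyscrapers, the phase of $\cO_x$ is $1$. For this to work we need $\mathbf b/m^2\in U$, which holds for $m\ge m_0$ because $K$ is compact.

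The second step is the twist estimate. We have $[m]^*L_{\mathbf d}\cong L'$ with $c_1(L')=m^2\sum d_iH_i$; after adjusting by a translation, $L'$ is numerically $L_{\mathbf d}^{\otimes m^2}$, but this is the wrong direction. Instead we use that $L_{\mathbf d}$ is, up to translation-invariant line bundles of degree $0$ (which preserve all slicings here, since they act trivially on $v_n$ and preserve geometric stability conditions by uniqueness), a pullback $[m]^*M$ only when $m^2\mid \mathbf d$. This is the obstacle.

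The cleaner route is to bypass this divisibility issue by noting that $\cP^{m^2,\mathbf b}\otimes L_{\mathbf d}=\cP^{m^2,\mathbf b+\mathbf d}$. This follows from uniqueness of the geometric stability condition with the given central charge and $\cO_x$ of phase $1$ in the connected family, using \cite[Theorem 4.2]{LLL}. Transporting through $[m]^*$, this becomes $\cP^{1,(\mathbf b+\mathbf d)/m^2}$, where $\|\mathbf d\|/m^2\to 0$ uniformly over the finite set $S$. Both $\mathbf b/m^2$ and $(\mathbf b+\mathbf d)/m^2$ lie in $U$, so Lemma~\ref{lem:support} yields
\[\mathrm{dist}\bigl(\cP_n^{1,\mathbf b/m^2},\cP_n^{1,(\mathbf b+\mathbf d)/m^2}\bigr)<\tfrac12+\tfrac12=1,\]
via the triangle inequality through $\cP_n^{1,0}$. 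Pullback along $[m]$ preserves phases, so \eqref{eq:smalltwist} follows.

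Finally, the ``in particular'' is formal. If $\mathrm{dist}(\cP,\cP\otimes L)<1$, then any $F\in\cP(\phi)$ has $\phi^\pm_{\cP\otimes L}(F)>\phi-1$, and hence
\[\phi^\pm_{\cP\otimes L[1]}(F)=\phi^\pm_{\cP\otimes L}(F)+1>\phi,\]
which gives $\cP\prec\cP\otimes L_{\mathbf d}[1]$. The main obstacle we expect is the identification $\cP\otimes L_{\mathbf d}=\cP^{\,\mathbf b+\mathbf d}$, which requires the uniqueness statement of \cite{LLL} to apply along the whole path.
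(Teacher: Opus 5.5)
Your skeleton is the paper's: choose $m_0$ with $m^{-2}(K\cup(K+S))\subset U$, realize $\sigma_n^{m^2,\mathbf b}$ as induced along the multiplication map $\pi_m=[m]$ from $\sigma_n^{1,\mathbf b/m^2}$, and identify $\sigma_n^{m^2,\mathbf b}\otimes L_{\mathbf d}=\sigma_n^{m^2,\mathbf b+\mathbf d}$ by uniqueness \cite[Theorem 4.1(2)]{LLL}. That uniqueness is for geometric stability conditions with given charge and points of phase $1$; it is pointwise, so no path is needed. You then bound the distance by $\tfrac12+\tfrac12$ through $\cP_n^{1,0}$.

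However, the rescaling step, which is the whole point, is set up in the wrong direction and fails as written. Take the slicing $\{F : [m]^*F\in\cP_n^{1,\mathbf t}(\phi)\}$ with charge $Z_n^{1,\mathbf t}\circ[m]^*$. Then $v_n([m]^*F)_I=m^{2(n-|I|)}v_n(F)_I$ (as you note), hence
\[Z_n^{1,\mathbf t}([m]^*F)=m^{2n}\,Z_n^{m^{-2},\,m^{-2}\mathbf t}(F).\]
For $\mathbf t=\mathbf b/m^2$ you therefore get a multiple of $Z_n^{m^{-2},m^{-4}\mathbf b}$: the volume is divided by $m^2$, not multiplied. Your claimed proportionality with $Z_n^{m^2,m^2\mathbf t}$ is false, since the coefficient ratio on $v_n(F)_I$ is $m^{2n-4|I|}$, which depends on $I$. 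No renormalization of $a$ repairs this. The divisibility ``obstacle'' you hit is the same phenomenon: under $[m]^*$, a twist by $L_{\mathbf d}$ corresponds to a twist of class $m^2\mathbf d$ on the other side, a large rather than small perturbation.

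The remedy is to induce along the pushforward, which is what $\pi_m^\sharp$ of \cite[Lemma 4.3]{LLL} does: $F\in\cP_n^{m^2,\mathbf b}(\phi)$ iff $\pi_{m*}F\in\cP_n^{1,\mathbf b/m^2}(\phi)$, with charge $Z_n^{1,\mathbf b/m^2}\circ\pi_{m*}$. Then:
\begin{itemize}
\item Grothendieck--Riemann--Roch and the projection formula give $v_n(\pi_{m*}F)_I=m^{2|I|}v_n(F)_I$, hence $Z_n^{1,\mathbf t}(\pi_{m*}F)=Z_n^{m^2,m^2\mathbf t}(F)$ on the nose.
\item $\pi_{m*}\cO_x$ is a single skyscraper, so points have phase $1$.
\item The $\phi^\pm$ of $F$ for the induced slicing equal the $\phi^\pm$ of $\pi_{m*}F$, so induction does not increase $\mathrm{dist}$ \cite[Lemma 3.5(3)]{LLL}.
\end{itemize}
With this change the rest of your argument goes through.

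Separately, your ``in particular'' has the inequality reversed. $\cP\prec\cP\otimes L_{\mathbf d}[1]$ requires $\phi^+_{\cP\otimes L_{\mathbf d}[1]}(F)<\phi$ for $F\in\cP(\phi)$, but you concluded $>\phi$. Since $(\cP\otimes L_{\mathbf d}[1])(\psi)=(\cP\otimes L_{\mathbf d})(\psi+1)$, one has $\phi^+_{\cP\otimes L_{\mathbf d}[1]}(F)=\phi^+_{\cP\otimes L_{\mathbf d}}(F)-1$. It is the upper half of $\mathrm{dist}<1$, namely $\phi^+_{\cP\otimes L_{\mathbf d}}(F)<\phi+1$, that gives the claim.
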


\begin{proof}
Let $U$ be as in Lemma~\ref{lem:support} and choose $m_0$ such that 
$m^{-2}(K\cup(K+S))\subset U$ for all integers $m \geq m_0$. 
Then for any such $m$ 
and $\mathbf b \in K$, we have $\mathbf{b}/m^2 \in U$, so that Lemma~\ref{lem:support} gives a stability condition $\sigma^{1,\mathbf b/m^2}_n$. 
If 
$\pi_m \colon E^n \to E^n$ denotes the multiplication by $m$ map, then  by \cite[Lemma 4.3]{LLL} the pullback  
\[\sigma_n^{m^2,\mathbf b}\coloneqq \pi_m^\sharp\sigma_n^{1,\mathbf b/m^2}\]
is a stability condition on $\Db(E^n)$. 
As $\sigma^{1,\mathbf{b}/m^2}_{n}$ is geometric with skyscraper sheaves of points of phase $1$, the same is true of $\sigma_n^{m^2, \mathbf{b}}$. Note that by Grothendieck--Riemann--Roch and the projection formula, for any $\mathbf t\in \RR^n$ we have $v_n(\pi_{m*}F)_I=m^{2|I|}v_n(F)_I$ and $Z_n^{1,\mathbf t}(\pi_{m*}F)=Z_n^{m^2,m^2\mathbf t}(F)$. In particular the central charge of $\sigma_n^{m^2, \mathbf{b}}$ is indeed $Z^{m^2, \mathbf b}_n$. 

For any $\mathbf d \in S$ we have $(\mathbf{b} + \mathbf{d})/m^2 \in U$, so as above 
\[\sigma_n^{m^2,\mathbf b+\mathbf d} \coloneqq   \pi_m^\sharp\sigma_n^{1,(\mathbf b+\mathbf d)/m^2}\]
is a geometric stability condition on $\Db(E^n)$ with skyscraper sheaves of points of phase $1$ and central charge $Z^{m^2, \mathbf{b} + \mathbf{d}}_n$. 
Since the stability condition 
\[\sigma_{n}^{m^2, \mathbf b} \otimes L_{\mathbf d} = (Z_n^{m^2,\mathbf b} \circ (- \otimes L_{\mathbf d}^{-1}), \cP_{n}^{m^2, \mathbf b}\otimes L_{\mathbf d})\]
has the same properties, by \cite[Theorem 4.1(2)]{LLL} we conclude that 
\[\sigma_n^{m^2,\mathbf b}\otimes L_{\mathbf d} =\sigma_n^{m^2,\mathbf b+\mathbf d}.\] 
Thus, if $\cP_n^{m^2,\mathbf b}$ and $\cP_n^{m^2,\mathbf b+\mathbf d}$ denote the slicings of 
$\sigma_n^{m^2,\mathbf b}$ and $\sigma_n^{m^2, \mathbf b + \mathbf d}$, then 
\[\cP_n^{m^2,\mathbf b} \otimes L_{\mathbf d} = \cP_n^{m^2, \mathbf b + \mathbf d}.\]
On the other hand, by \cite[Lemma 3.5(3)]{LLL} and~\eqref{dist-Pn1b-Pn10}, 
we have 
\[\mathrm{dist}(\cP_n^{m^2,\mathbf b}, \cP_n^{m^2,\mathbf b+\mathbf d}) \le\mathrm{dist}(\cP_n^{1,\mathbf b/m^2}, \cP_n^{1,(\mathbf b+\mathbf d)/m^2})<\frac12 + \frac12 = 1,\]
so~\eqref{eq:smalltwist} holds. 
\end{proof}

\subsection{Products of projective spaces} 
Let $n_1,\dots,n_r$ be positive integers. We set
\[\PP\coloneqq\prod_{j=1}^r\PP^{n_j},\]
$n\coloneqq\sum_jn_j$, and $h_j\coloneqq c_1(\cO_{\PP^{n_j}}(1))$. Choose a partition $I_1\sqcup\cdots\sqcup I_r=\{1,\ldots,n\}$ with
$|I_j|=n_j$. 
The quotient of $E$ by $\pm 1$ is $\bP^1$. 
The product of the quotient maps gives a finite flat map
\[q\colon E^n\longrightarrow(\PP^1)^n\longrightarrow \prod_{j=1}^r\operatorname{Sym}^{n_j}\PP^1=\PP\]
such that $q^*h_j=2\sum_{i\in I_j}H_i$ and $\deg q=2^n\prod_jn_j!$.

\begin{proposition}\label{prop:product}
Fix a finite subset $T \subset \ZZ_{>0}$. Then there exists an integer $m_1$, depending only on $T$ and the partition $I_1,\dots,I_r$,  such that for every integer $m\geq m_1$ and element $\mathbf b=(b_j)_{j=1}^r\in\RR^r$, there is a geometric stability condition
\[\sigma_\PP^{m^2/2,\mathbf b}=(Z_\PP^{m^2/2,\mathbf b},\cP_\PP^{m^2/2,\mathbf b})\] 
on $\Db(\PP)$ with $\phi_{\sigma_\PP^{m^2/2,\mathbf b}}(\cO_x)=1$ for all closed points $x \in \PP$ and central charge 
\[Z_\PP^{m^2/2,\mathbf b}(F)\coloneqq -\int_\PP e^{-\sum_j(b_j+\mathfrak{i}m^2/2)h_j}.\ch(F).\]
Moreover, we have
\begin{equation}\label{eq:bayer}
 \begin{aligned}
 \cP_\PP^{m^2/2,\mathbf b}&\prec
 \cP_\PP^{m^2/2,\mathbf b}\otimes \cO_{\PP}(-d,\dots, -d)[1], \\ 
 \cP_\PP^{m^2/2,\mathbf b}&\preceq
 \cP_\PP^{m^2/2,\mathbf b}\otimes\cO_\PP(\mathbf k)
 \end{aligned} 
\end{equation}
for any $d\in T$ and $\mathbf k\in\ZZ_{\ge0}^r$.
\end{proposition}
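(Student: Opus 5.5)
The plan is to construct $\sigma_\PP^{m^2/2,\mathbf b}$ as an induced stability condition via the finite flat map $q\colon E^n\to\PP$. We use the push-forward/pull-back formalism of \cite{Pol,LMPSZ,LLL}: the heart is defined by $q^*F\in\cP_n^{m^2,\mathbf b'}(0,1]$, for a suitable $\mathbf b'\in\RR^n$ adapted to the partition. Concretely, take $b'_i=2b_j$ for $i\in I_j$. Since $q^*h_j=2\sum_{i\in I_j}H_i$, projection formula and Grothendieck--Riemann--Roch give
\[
Z_n^{m^2,\mathbf b'}(q^*F)=\deg(q)\,Z_\PP^{m^2/2,\mathbf b}(F).
\]
Here we check that $e^{-\mathfrak i m^2H-\sum b'_iH_i}$ pulled back along $q$ matches $e^{-\sum_j(b_j+\mathfrak i m^2/2)h_j}$. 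This uses $H=\sum_i H_i$ and $q^*h_j=2\sum_{I_j}H_i$, so the $\mathfrak i m^2$ terms match after halving.

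\textbf{Step 1: existence and geometricity.} We would invoke the induction result \cite[Lemma 4.3 / Theorem 4.1]{LLL}. It says that if $\sigma$ on $E^n$ is invariant under the Galois-type symmetry of $q$ (the finite group $(\ZZ/2)^n\rtimes\prod_j S_{n_j}$), then $q^{-1}\sigma$ is a stability condition on $\PP$ with support property. Invariance follows from the uniqueness statement \cite[Theorem 4.1(2)]{LLL}, as in the proof of Lemma~\ref{lem:enlarge}: the symmetric group permutes the $H_i$ within each $I_j$, and $\pm1$ fixes each $H_i$, so the central charge is preserved. The geometric stability with skyscraper sheaves of phase $1$ then transfers, since $q^*\cO_x$ is a finite-length sheaf on $E^n$. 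The only issue is that $\mathbf b$ ranges over all of $\RR^r$ while Lemma~\ref{lem:enlarge} needs $\mathbf b'$ in a compact $K$. To handle this, tensor by line bundles $\cO_\PP(\mathbf a)$, which shifts $\mathbf b$ by integers, so we may reduce to $\mathbf b\in[0,1]^r$ and take $K=[0,2]^n$.

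\textbf{Step 2: the inequalities \eqref{eq:bayer}.} For the first, $q^*\cO_\PP(-d,\dots,-d)=L_{\mathbf d}$ with $d_i=-2d$. We take $S=\{-2d(1,\dots,1): d\in T\}$ and choose $m_1\ge m_0(K,S,n)$. Then Lemma~\ref{lem:enlarge} gives $\cP_n\prec\cP_n\otimes L_{\mathbf d}[1]$. Since the slicing on $\PP$ is characterized by $q^*$ landing in the corresponding slices, and $q^*$ commutes with tensoring, the relation $\prec$ descends.

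For the second inequality we cannot use a finite $S$, since $\mathbf k\in\ZZ_{\ge0}^r$ is arbitrary. Instead we argue directly on $\PP$ by reducing to the generators $\mathbf k=e_j$ and composing, since $\preceq$ is transitive. For $\cO(e_j)$, we use the standard Koszul/Beilinson resolution, or more simply the short exact sequence realizing $\cO_\PP$ as a subsheaf of $\cO_\PP(e_j)$ with cokernel supported on a hyperplane $D_j$. The hard part is to show that for $F\in\cP(\phi)$, $F\otimes\cO(e_j)$ lies in $\cP(\le\phi)$. I would try to obtain this from the triangle $F\to F(e_j)\to F|_{D_j}(e_j)$ together with the fact that, for a geometric stability condition with points of phase $1$, pushforwards of objects from a divisor only contribute phases at most $\phi$ plus the first inequality. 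This Step~2 monotonicity under positive twists is where I expect the main obstacle. A possible alternative is to realize $\otimes\cO(e_j)$ on $E^n$ as a twist by an effective divisor class and use phase comparison for $\sigma_n^{m^2,\mathbf b}$ versus $\sigma_n^{m^2,\mathbf b+\mathbf d}$ along the straight path of $\mathbf b$, using that $\mathrm{Im}$ of the central charge is unchanged.
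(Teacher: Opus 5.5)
Your set-up matches the paper's in these respects:
\begin{itemize}
\item the substitution $t_i=2b_j$ for $i\in I_j$;
\item the reduction to $\mathbf b\in[0,1]^r$ by twisting with $\cO_\PP(\mathbf k)$;
\item the choice $S=\{-2d(1,\dots,1)\mid d\in T\}$ in Lemma~\ref{lem:enlarge};
\item $G$-invariance via the uniqueness in \cite[Theorem~4.1(2)]{LLL};
\item the descent of $\prec$, because phases on $\PP$ are computed after applying $q^*$.
\end{itemize}

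The genuine gap is the second inequality in \eqref{eq:bayer}. You leave it open, and the target you set yourself is reversed. The convention is $(\cP\otimes L)(\phi)=\cP(\phi)\otimes L$, forced by $\sigma\otimes L=(Z\circ(-\otimes L^{-1}),\cP\otimes L)$ in the proof of Lemma~\ref{lem:enlarge}. With it, $\cP\preceq\cP\otimes\cO_\PP(\mathbf k)$ says $\cP(\phi)\otimes\cO_\PP(-\mathbf k)\subset\cP(\le\phi)$. By $\Hom$-vanishing against extreme Harder--Narasimhan factors, this is equivalent to $\cP(\phi)\otimes\cO_\PP(\mathbf k)\subset\cP(\ge\phi)$. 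So effective twists must \emph{raise} phases. They raise slopes on a curve, and they raise $\arg Z_\PP^{m^2/2,\mathbf b}(\cO_\PP(\mathbf k))=\arg\bigl(-\prod_j(k_j-b_j-\mathfrak i m^2/2)^{n_j}/n_j!\bigr)$ in each $k_j$. Your goal, $F\otimes\cO_\PP(e_j)\in\cP(\le\phi)$ for $F\in\cP(\phi)$, is the reverse inclusion; its analogue for slope stability on a curve is plainly false. So the triangle $F\to F(e_j)\to F|_{D_j}(e_j)$ is aimed at the wrong statement. Your fallback also rests on a false premise: for $n\ge2$ the imaginary part of $Z_n^{m^2,\mathbf b}$ depends on $\mathbf b$. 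For $|I|=2$, the coefficient $\prod_{i\in I}(-b_i-\mathfrak i m^2)$ has imaginary part $m^2\sum_{i\in I}b_i$.

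The missing idea is to prove the monotonicity on $E^n$, where it holds for all effective twists at once. The paper quotes \cite[Lemma~4.7]{LLL}, which gives $\cP_n^{m^2,\mathbf t}\preceq\cP_n^{m^2,\mathbf t}\otimes L_{\boldsymbol\ell}$ for every $\boldsymbol\ell\in\ZZ^n_{\ge0}$. Lemma~\ref{lem:enlarge} cannot supply this: a two-sided distance bound for finitely many twists yields $\cP\prec\cP\otimes L[1]$, never the one-sided $\preceq$. Once you have it, your worry about unbounded $\mathbf k$ disappears. Indeed $q^*\cO_\PP(\mathbf k)=L_{\boldsymbol\ell}$ with $\ell_i=2k_j\ge0$ for $i\in I_j$, and the relation descends exactly as your first inequality does.

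Relatedly, Step~1 attributes the descent along $q$ to $G$-invariance alone, and to \cite[Lemma~4.3]{LLL}, which in the paper is only used for the pullback $\pi_m^\sharp$. The paper instead applies \cite[Proposition~3.21]{LLL}. Before doing so, it verifies that $q$ satisfies the filtration property of \cite[Definition~3.19]{LLL} with respect to automorphisms in $G$ and twists by $L_{\boldsymbol\ell}$, $\boldsymbol\ell\in\ZZ^n_{\ge0}$, by combining \cite[Example~3.20(2)--(3)]{LLL}. Invariance is one input, but without such a hypothesis nothing guarantees that Harder--Narasimhan filtrations of $q^*F$ descend to $\PP$.
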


\begin{proof}
Let $K \subset \RR^n$ be the image of the cube $[0,1]^r$ under the map $\mathbf b \mapsto \mathbf t$  where $\mathbf t=(t_i)^n_{i=1}$ with  $t_i = 2b_j$ for $i\in I_j$, and let  $S \coloneqq \set{ -2d(1,\dots, 1) \sth d \in T} \subset \ZZ^n$. 
Applying Lemma~\ref{lem:enlarge} to $K \subset \RR^n$ and $S \subset \ZZ^{n}$, we obtain an integer $m_1$ depending only on $T$ and the partition $I_1, \dots, I_r$, such that for every integer $m \geq m_1$ and $\mathbf t \in K$, we have a stability condition $\sigma_n^{m^2,\mathbf t}=(Z_n^{m^2,\mathbf t},\cP_n^{m^2,\mathbf t})$ on $\Db(E^n)$ such that 
\begin{equation}
\label{Pm2bn-O-d-d}
    \cP_{n}^{m^2, \mathbf t} \prec \cP_{n}^{m^2, \mathbf t} \otimes q^* \cO_{\PP}(-d, \dots, -d)[1]
\end{equation}
for any $d \in T$. By \cite[Lemma 4.7]{LLL}, we also have
\begin{equation}
\label{Pm2tn-Lell}
\cP_n^{m^2,\mathbf t}\preceq
 \cP_n^{m^2,\mathbf t}\otimes L_{\boldsymbol \ell}
\end{equation}
for any $\boldsymbol{\ell} \in \ZZ^n_{\geq 0}$. 

The next step is to push forward to $\PP$. 
The group $G \coloneqq (\bZ/2)^n \rtimes \prod_{j} \mathfrak{S}_{I_j}$ acts on $E^n$, where each copy of $\bZ/2$ acts by $\pm 1$ on the corresponding factor and the symmetric group $\mathfrak{S}_{I_j}$ acts by permutations on the $I_j$-block of the product. 
Combining the filtrations of \cite[Example 3.20(2)--(3)]{LLL} by pullback and refinement, we find that $q$ satisfies the filtration property (in the sense of \cite[Definition~3.19]{LLL}) with respect to suitable automorphisms $g \in G$ and line bundles of the form $L_{\boldsymbol{\ell}}$ where $\boldsymbol{\ell} \in \ZZ^{n}_{\geq 0}$. 
Moreover, as for $\mathbf t = (t_i) \in K$ the coordinates $t_i$ are all equal for $i \in I_j$, the action of $G$ preserves the central charge of $\sigma_n^{m^2, \mathbf t}$, and hence by \cite[Theorem 4.1(2)]{LLL}, the stability condition $\sigma_n^{m^2, \mathbf t}$ itself. Consequently, we may apply \cite[Proposition 3.21]{LLL} to obtain a stability condition $\sigma_\PP^{m^2/2,\mathbf b}=(Z_\PP^{m^2/2,\mathbf b},\cP_\PP^{m^2/2,\mathbf b})$ on $\Db(\PP)$ defined by $\cP_\PP^{m^2/2,\mathbf b}\coloneqq q_\sharp\cP_n^{m^2,\mathbf t}$ and
\[Z_\PP^{m^2/2,\mathbf b}(F) \coloneqq\frac1{\deg q}Z_n^{m^2,\mathbf t}(q^*F) =-\int_\PP e^{-\sum_j(b_j+\mathfrak{i}m^2/2)h_j}\ch(F).\]
By \cite[Remark 3.11]{LLL}, $\sigma_\PP^{m^2/2,\mathbf b}$ is geometric, and it follows from the construction that skyscraper sheaves of points have phase $1$. 
In view of \cite[Remark~3.14(3) and Lemma~3.9(1)]{LLL}, it follows from~\eqref{Pm2bn-O-d-d} and~\eqref{Pm2tn-Lell} (for $L_{\boldsymbol{\ell}} = q^*\cO_{\PP}(\mathbf k)$) that 
$\sigma_{\PP}^{m^2/2, \mathbf b}$ satisfies~\eqref{eq:bayer}.

Finally, for each $\mathbf k\in\ZZ^r$, we define
\[\sigma_\PP^{m^2/2,\mathbf b+\mathbf k} \coloneqq \sigma_\PP^{m^2/2,\mathbf b}\otimes\cO_\PP(\mathbf k)\]
which extends the construction to all $\mathbf b\in\RR^r$.
\end{proof}

\begin{lemma}\label{lem:embedding}
Let $\iota\colon X\hookrightarrow\PP$ be a closed embedding of a smooth projective variety. 
Set 
\[J \coloneqq \iota^*(h_1 + \cdots + h_r).\]
Then there exists an integer $m_{\iota}$, depending only on the partition $I_1,\dots,I_r$ and the Hilbert polynomial of $X$ with respect to $J$, such that for every integer $m\geq m_{\iota}$ and element $\mathbf b=(b_j)_{j=1}^r\in\RR^r$, 
there is a  geometric stability
condition 
\[\widetilde\sigma_X^{m^2/2,\mathbf b} \coloneqq \iota^\sharp\sigma_\PP^{m^2/2,\mathbf b}\]
on $\Db(X)$ 
with 
$\phi_{\widetilde\sigma_X^{m^2/2,\mathbf b}}(\cO_x) = 1$ for all closed points $x \in X$ 
and central charge
\[\widetilde Z_X^{m^2/2,\mathbf b}(F) \coloneqq -\int_Xe^{-\sum_j(b_j+\mathfrak{i}m^2/2)\iota^*h_j} \td(N_{X/\PP})^{-1}\ch(F).\]
Moreover, we have
\begin{equation}\label{eq:phaseJ}
 \begin{aligned}
  \widetilde\sigma_X^{m^2/2,\mathbf b} & \prec \widetilde\sigma_X^{m^2/2,\mathbf b}\otimes\cO_X(-J)[1], \\ 
 \widetilde\sigma_X^{m^2/2,\mathbf b} & \preceq\widetilde\sigma_X^{m^2/2,\mathbf b}\otimes\cO_X(J). 
  \end{aligned}
\end{equation}
\end{lemma}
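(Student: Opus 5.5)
We apply Proposition~\ref{prop:product} with the finite set $T$ of degrees needed to resolve $\iota_*\cO_X$, and then pull the resulting stability conditions back along $\iota$ using the induced stability construction of \cite{Li,LLL}. The key point is that the restriction machinery of \cite{Li} (see also \cite[Section~3]{LLL}) produces a stability condition $\iota^\sharp\sigma$ on $\Db(X)$ from a stability condition $\sigma$ on $\Db(\PP)$. This construction needs $\sigma\prec\sigma\otimes\cO_\PP(-d,\dots,-d)[1]$ for the degrees $d$ occurring in a Koszul-type or Castelnuovo--Mumford resolution of $\iota_*\cO_X$, together with the monotonicity $\sigma\preceq\sigma\otimes\cO_\PP(\mathbf k)$ for $\mathbf k\ge 0$. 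Both are supplied by \eqref{eq:bayer}.

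\textbf{Choosing $T$.} The ideal sheaf of $X$ in $\PP$ is generated in multidegrees bounded by a regularity bound $d_0$. By Gotzmann/Mumford-type bounds, $d_0$ depends only on the Hilbert polynomial of $X$ with respect to $J$ and on the ambient $\PP$, i.e.\ on the partition. The relevant resolution of $\iota_*\cO_X$ then involves only finitely many twists $\cO_\PP(-d,\dots,-d)$, up to a positive twist that we absorb using the $\preceq$ part of \eqref{eq:bayer}. Let $T=\{1,\dots,D\}$ for some $D$ depending on $d_0$ and $n$. Setting $m_\iota\coloneqq m_1(T)$ from Proposition~\ref{prop:product} gives, for all $m\ge m_\iota$ and all $\mathbf b$, the geometric $\sigma_\PP^{m^2/2,\mathbf b}$ satisfying \eqref{eq:bayer} for all $d\in T$.

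\textbf{Pullback and central charge.} Invoking the restriction theorem of \cite{LLL} (the pullback along closed embeddings satisfying the filtration/Bayer condition), we obtain $\widetilde\sigma_X^{m^2/2,\mathbf b}=\iota^\sharp\sigma_\PP^{m^2/2,\mathbf b}$, whose slicing is characterized by $\iota_*$ sending semistable objects into the appropriate slices. Its central charge is $F\mapsto Z_\PP^{m^2/2,\mathbf b}(\iota_*F)$. Grothendieck--Riemann--Roch gives $\ch(\iota_*F)=\iota_*(\ch(F)\td(N_{X/\PP})^{-1})$, and the projection formula then yields the stated $\widetilde Z_X^{m^2/2,\mathbf b}$. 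Geometricity and the phase of $\cO_x$ transfer because $\iota_*\cO_x=\cO_{\iota(x)}$ is $\sigma_\PP$-stable of phase $1$, and pullback preserves phases of objects pushed forward (\cite[Remark 3.11]{LLL}-type statement).

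\textbf{The comparisons \eqref{eq:phaseJ}.} By the projection formula, $\iota_*(F\otimes\cO_X(\pm J))=\iota_*F\otimes\cO_\PP(\pm1,\dots,\pm1)$. Hence if $F\in\widetilde\cP_X(\phi)$, then $\iota_*(F\otimes\cO_X(-J))$ lies in $\cP_\PP(<\phi+1)$ by \eqref{eq:bayer} with $d=1\in T$, and similarly with $\preceq$ for $\cO(J)$ using $\mathbf k=(1,\dots,1)$. Since phases for $\iota^\sharp$ are detected by the phases of the pushforward, this gives \eqref{eq:phaseJ}. The main obstacle is the first step: the precise hypothesis of the \cite{LLL} restriction theorem and the uniformity of the bound. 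We must check that $D$, and hence $m_\iota$, depends only on the Hilbert polynomial and partition rather than on $X$ itself. Uniform regularity bounds for ideals with given Hilbert polynomial (Gotzmann) are the tool we would use first for this.
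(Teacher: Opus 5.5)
Your proposal follows the paper's proof essentially step for step. Both arguments apply Proposition~\ref{prop:product} with $T$ consisting of the degrees in a uniformly bounded resolution of $\cI_{X/\PP}$ together with $1$, and then pull back along $\iota$ via the \cite{LLL} restriction result. Both also obtain the central charge by Grothendieck--Riemann--Roch, and derive \eqref{eq:phaseJ} from \eqref{eq:bayer}; your projection-formula argument spells out what the paper cites as \cite[Remark 3.14(3) and Lemma 3.5(1)]{LLL}.

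You omit one hypothesis of \cite[Proposition 3.24]{LLL}: the heart of $\sigma_\PP^{m^2/2,\mathbf b}$ must lie in $\D^{[-\dim\PP+1,0]}_{\Coh(\PP)}$. The paper obtains this from Lemma~\ref{lem:point}\ref{lem:point:heart}, using geometricity with points of phase $1$, and it accordingly takes the resolution to have length at least $\dim\PP$.
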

\begin{proof}
Choose a resolution of the ideal sheaf $\cI_{X/\PP}$ of length at least $n$ whose terms, except for the leftmost one, are of the form $\cO_{\PP}(-d_i, \dots, -d_i)^{\oplus a_i}$ for some integers $d_i, a_i > 0$. 
Note that the integers $d_i$ appearing in the resolution can be taken to only depend the ambient product $\PP$ and the Hilbert polynomial of $X$ with respect to $J$. 
We apply Proposition~\ref{prop:product} where $T = \{d_i\} \cup \{1\}$ is the collection of degrees which appear, union with $1$. 
This gives an integer $m_{\iota}$ such that for every integer $m \geq m_{\iota}$ and $\mathbf b \in \RR^r$, we have a stability condition $\sigma_{\PP}^{m^2/2, \mathbf b}$ satisfying the properties listed in Proposition~\ref{prop:product}. 

Now we pull back $\sigma_{\PP}^{m^2/2, \mathbf b}$ along $\iota \colon X \hookrightarrow \PP$ by applying \cite[Proposition 3.24]{LLL}. 
Note that by \cite[Example 3.23(3)]{LLL}, $\iota \colon X \hookrightarrow \PP$ satisfies the cofiltration property (in the sense of \cite[Definition~3.22]{LLL}). 
Moreover, hypothesis (i) in \cite[Proposition 3.24]{LLL} follows from~\eqref{eq:bayer} and hypothesis (ii) follows from Lemma~\ref{lem:point}\ref{lem:point:heart} below. We conclude that $\sigma_\PP^{m^2/2,\mathbf b}$ pulls back to a stability condition $\widetilde\sigma_X^{m^2/2,\mathbf b}$ on $X$. 
By \cite[Remark 3.11]{LLL}, $\widetilde\sigma_X^{m^2/2,\mathbf b}$ is geometric with skyscraper sheaves of points of phase $1$. 
By construction and Grothendieck--Riemann-Roch, the central charge of $\widetilde\sigma_X^{m^2/2,\mathbf b}$ is given by the claimed formula. 
Finally, in view of \cite[Remark 3.14(3) and Lemma 3.5(1)]{LLL}, it follows from~\eqref{eq:bayer} that $\widetilde\sigma_X^{m^2/2,\mathbf b}$ satisfies~\eqref{eq:phaseJ}. 
\end{proof}

The following standard observation was used above. 
\begin{lemma}\label{lem:point}
Let $Y$ be a smooth projective variety of dimension $d\ge1$, and let
$\sigma=(Z,\cP)$ be a geometric stability condition on $\Db(Y)$ with $\phi_{\sigma}(\cO_y)=1$ for each closed point $y\in Y$.  
\begin{enumerate}
    \item \label{lem:point:heart} 
    $\cP(0,1]\subset\D_{\Coh(Y)}^{[-d+1,0]}$.

    \item If $F\in \cP(0,1]$ and $d\geq 2$, then $\cH^{-d+1}(F)$ is torsion-free. 
    \item If $F\in \cP(1)$, then $\cH^0(F)\in \Coh_{\le0}(Y)$.
\end{enumerate}
\end{lemma}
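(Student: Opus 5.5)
The three parts rest on two standard properties of a geometric stability condition with points of phase $1$: every skyscraper $\cO_y$ is $\sigma$-stable of phase exactly $1$, and Serre duality gives $\Hom(F,\cO_y[k])=\Hom(\cO_y,F[d-k])^\vee$. For $F\in\cP(0,1]$ this yields vanishing in two directions. First, $\Hom(\cO_y[j],F)=0$ for $j\geq 1$, since $\cO_y[j]\in\cP(1+j)$ has phase larger than $1$. Second, $\Hom(F,\cO_y[k])=0$ for $k<0$, since $\cO_y[k]$ has phase $1+k\leq 0$, which is below every HN phase of $F$. Serre duality turns the second vanishing into $\Hom(\cO_y,F[i])=0$ for $i>d$.

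For (a), I would use the standard characterization of cohomological amplitude by such Hom vanishings with all skyscrapers. Let $q$ be the largest index with $\cH^q(F)\neq 0$. Choosing a point $y$ in the support of $\cH^q(F)$ gives a nonzero map $F\to\cH^q(F)[-q]\to\cO_y[-q]$. Since $\Hom(F,\cO_y[k])=0$ for $k<0$, we get $-q\geq 0$, that is, $q\leq 0$.

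For the lower bound, let $p$ be the smallest index with $\cH^p(F)\neq 0$. Take a point $y$ in the support of $\cH^p(F)$ at which it has maximal depth among its associated points; equivalently, I would use the local-cohomology criterion. This produces a nonzero class in $\Ext^{e}(\cO_y,\cH^p(F))$ with $e=\operatorname{depth}\le d$. That class gives a nonzero map $\cO_y\to F[p+e]$, where I have used that the lower cohomology of $F$ vanishes. The first vanishing then forces $p+e\geq 0$, hence $p\geq -d$. This bound is one short of what (a) claims.

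To get $p\geq -d+1$, I would exclude $p=-d$ directly. If $p=-d$, the only way to have $p+e\geq 0$ is $e=d$, which means $\cH^{-d}(F)$ has depth $d$ at some point. A sharper version: $\Ext^{d}(\cO_y,G)\neq0$ for any nonzero coherent $G$ at a suitable $y$. This is Serre duality again, $\Ext^d(\cO_y,G)\cong\Hom(G,\cO_y)^\vee$, which is nonzero for $y$ in the support of $G$. Take $G=\cH^{-d}(F)$. Because $-d$ is the lowest degree, $\Hom(\cO_y,F[0])=\Hom(\cO_y,\cH^{-d}(F)[d])=\Ext^d(\cO_y,\cH^{-d}(F))\neq 0$. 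This map is a nonzero morphism $\cO_y\to F$ with $F\in\cP(0,1]$. It does not contradict phases, so I would refine the argument. Since $\cO_y$ is stable of phase $1$ and the map is nonzero, it is injective in the heart $\cP(0,1]$, with cokernel $C\in\cP(0,1]$. Applying the long exact sequence of cohomology sheaves to $\cO_y\to F\to C$ shows that $C$ has $\cH^{-d-1}(C)\supset\cH^{-d}(F)\neq0$. This contradicts the bound $p\geq -d$ just proved for $C$. Hence $p\geq -d+1$.

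For (b), assume $d\geq 2$ and that $\cH^{-d+1}(F)$ has a nonzero torsion subsheaf $T$. Choose a point $y$ in the support of $T$, where the depth is at most $d-1$. Then $\Ext^{d-1}(\cO_y,\cH^{-d+1}(F))\neq 0$ by the same duality argument applied on the support of $T$, whose dimension is less than $d$. This gives a nonzero map $\cO_y\to F$. Repeating the cokernel argument produces an object of $\cP(0,1]$ with cohomology in degree $-d$, which contradicts (a).

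For (c), let $F\in\cP(1)$ and suppose $\cH^0(F)$ has a quotient $\cO_y\otimes$ (a positive-dimensional sheaf) rather than just skyscrapers. More cleanly, suppose its support has dimension at least $1$. Choose a quotient $\cH^0(F)\to Q$ with $Q$ pure of positive dimension, say $\cO_C(D)$ on a curve $C$. The composite $F\to Q$ is a nonzero map. I would then compare phases through the central charge. Twisting $Q$ by a very negative power of an ample line bundle yields quotients $F\to Q'$ in which $Q'$ admits, by HN, a factor of phase less than $1$. This contradicts $F$ being semistable of phase $1$.

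The main obstacle is this last phase comparison. A cleaner route would be to show directly that $\cP(1)$ objects are extensions of skyscrapers and shifts, using $\Im Z=0$ together with the geometric hypothesis. I would try that first.
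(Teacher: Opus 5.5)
Your two Hom-vanishings and the upper bound $\cH^{>0}(F)=0$ are fine, and a nonzero map $\cO_y\to F$ is indeed a monomorphism in $\cP(0,1]$. The gap is in the step meant to give the crucial bound $\cH^{-d}(F)=0$.

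In the triangle $\cO_y\to F\to C$, the object $\cO_y$ is a sheaf in degree $0$. So the long exact sequence of cohomology sheaves only changes degrees $-1$ and $0$: $\cH^j(C)\cong\cH^j(F)$ for $j\le -2$, and $\cH^{-1}(F)\hookrightarrow\cH^{-1}(C)$. Hence $\cH^{-d-1}(C)=0$ and $\cH^{-d}(F)\subset\cH^{-d}(C)$. Thus $C$ is just another object of $\cP(0,1]$ with nonzero $\cH^{-d}$, and nothing is contradicted. Part (b), which repeats this argument, fails for the same reason.

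This is not a bookkeeping slip. For an arbitrary $F\in\cP(0,1]$ a nonzero map $\cO_y\to F$ is perfectly possible, since $F$ may contain $\cO_y$ as a subobject. The missing idea, which is how the paper argues, is to reduce to $\sigma$-stable objects. This reduction is legitimate for three reasons:
\begin{itemize}
\item $\cP(0,1]$ is the extension closure of stable objects of phase in $(0,1]$;
\item $\D^{[-d+1,0]}_{\Coh(Y)}$ is extension-closed;
\item once (a) holds, $\cH^{-d+1}$ is left exact on short exact sequences in $\cP(0,1]$, so torsion-freeness of $\cH^{-d+1}$ passes to extensions.
\end{itemize}
So it suffices to treat a stable $F$ that is not a skyscraper. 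For such $F$ one has $\Hom(\cO_y,F)=0$ for all $y$: by phases if $\phi(F)<1$, and if $\phi(F)=1$ because a nonzero map between stable objects of the same phase is an isomorphism. Serre duality then sharpens your window to $\Hom(F,\cO_y[k])=0$ for $k\notin[0,d-1]$. Bridgeland--Maciocia's Proposition~5.4 gives a locally free resolution of length $d-1$. This yields (a), and also (b), because $\cH^{-d+1}(F)$ becomes a subsheaf of a locally free sheaf.

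There are two further problems.

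First, your identification $\Hom(\cO_y,F)=\operatorname{Ext}^d(\cO_y,\cH^{-d}(F))$, and the injectivity of $\operatorname{Ext}^e(\cO_y,\cH^p(F))\to\Hom(\cO_y,F[p+e])$, are false in general. In the spectral sequence $\operatorname{Ext}^a(\cO_y,\cH^b(F))\Rightarrow\Hom(\cO_y,F[a+b])$, the higher cohomology sheaves can kill these classes. For example, on $\mathbb{A}^2$ take $F=[\cO^{\oplus2}\xrightarrow{(s\ t)}\cO]$ in degrees $-2,-1$ and $y$ the origin. Then $d_2$ maps $\Hom(\cO_y,\cH^{-1}(F))$ onto the one-dimensional space $\operatorname{Ext}^2(\cO_y,\cH^{-2}(F))$. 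What is true, and is the right tool, is that $\cH^{-k}(F)_y\neq0$ forces $\Hom(F,\cO_y[k])\neq0$; to see this, use a minimal free complex at $y$. This is essentially the mechanism behind Bridgeland--Maciocia's result.

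Second, your sketch for (c) does not work. There is no reason for $F$ to map to $Q\otimes L^{-m}$ for $m\gg0$, and you have no control over where such sheaves sit in $\cP$. Here too stability does the job. For stable $F\in\cP(1)$ not a skyscraper, $\Hom(\cH^0(F),\cO_y)=\Hom(F,\cO_y)=0$ for all $y$, since both are stable of phase $1$. Hence $\cH^0(F)=0$. A Jordan--H\"older filtration in $\cP(1)$, together with right exactness of $\cH^0$ on $\cP(0,1]\subset\D^{\le0}_{\Coh(Y)}$, then gives $\cH^0(F)\in\Coh_{\le0}(Y)$ in general.
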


\begin{proof}
Let $F\in\cP(\phi)$ be a $\sigma$-stable object with $0<\phi\le1$ that is not the skyscraper sheaf of a point. For each $y\in Y$, the stability of $\cO_y$ and Serre duality give $\Hom^j(F,\cO_y)=0$ when $j<0$ or $j\geq d$ and $\Hom(F,\cO_y)=0$ when $\phi=1$. Therefore, all parts follow from \cite[Proposition 5.4]{Bridgeland–Maciocia}.
\end{proof}

\subsection{A BMT-type inequality}
Let $(X,H)$ be a polarized smooth projective variety of dimension $n\geq3$ over $\kk$. We choose line bundles $D_1,\ldots,D_\rho$ such that $D_1=K_X$, and an integer $N>0$ such that $NH\pm D_i$ are very ample. The complete linear systems $|NH \pm D_i|$ give an embedding $\iota\colon X\hookrightarrow\PP$ into a product 
\[\PP = \prod_{j=1}^{r} \PP^{n_j}\]
of $r = 2\rho$ projective spaces. 
As before, we write $h_j$ for the hyperplane class of the $j$th factor of $\PP$. 
Further, we set $H_j\coloneqq \iota^*h_j$ and $J \coloneqq \sum_jH_j = 2 \rho NH$. Note that 
\[\operatorname{span}_{\RR}\{H_1, \dots, H_r\} =\operatorname{span}_{\RR}\{H,D_1,\ldots,D_\rho\};\]
in particular, $D_1 = K_X$ is contained in the above span.

\begin{lemma}\label{lem:charge}
Assume that $n=3$. Let $m_{\iota}$ be the integer associated to the embedding $\iota \colon X \hookrightarrow \PP$ by Lemma~\ref{lem:embedding}, and set  
\begin{equation}\label{eq:theta}
 \Theta\coloneqq\frac1{12}\ch_2(N_{X/\PP}). 
\end{equation}
For every $B\in\operatorname{span}_{\RR}\{H_j\}$ and every integer $m\geq m_{\iota}$, 
if we set $a = m^2/2$ then the construction of Lemma~\ref{lem:embedding} gives a stability condition
$\sigma_X^{a,B}=(Z^{a,B}_X, \cP^{a,B}_X)$ with central charge
\begin{equation}\label{eq:expanded}
 \begin{aligned}
 Z_X^{a,B}(E)
 &=-v_3^{J,B}(E)-\Theta.\ch_1^B(E)+\frac{a^2}{2}v_1^{J,B}(E)\\
 &  \quad+\mathfrak{i}a\left(v_2^{J,B}(E)- \left( \frac{a^2}{6} - \frac{J.\Theta}{J^3} \right) v_0^J(E)\right). 
 \end{aligned}
\end{equation}
\end{lemma}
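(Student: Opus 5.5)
The plan is to identify the stability condition $\sigma_X^{a,B}$ with one of the conditions $\widetilde\sigma_X^{a,\mathbf b}$ of Lemma~\ref{lem:embedding}, for a suitably shifted parameter $\mathbf b$. The shift absorbs the first Chern class of $N_{X/\PP}$ coming from $\td(N_{X/\PP})^{-1}$. After that, \eqref{eq:expanded} follows by expanding the integral in degrees.

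\emph{Step 1: the inverse Todd class.} Write $x_1,\dots,x_c$ for the Chern roots of $N\coloneqq N_{X/\PP}$. Since $\td(L)=\frac{x}{1-e^{-x}}=e^{x/2}\cdot\frac{x/2}{\sinh(x/2)}$ and $\frac{x/2}{\sinh(x/2)}=1-\frac{x^2}{24}+O(x^4)$, we get
\[\td(N)^{-1}=e^{-c_1(N)/2}\prod_i\Bigl(1+\tfrac{x_i^2}{24}+O(x_i^4)\Bigr).\]
The second factor contains only even degrees, and $\sum_i x_i^2=2\ch_2(N)$. On a threefold it therefore equals $1+\frac1{12}\ch_2(N)=1+\Theta$. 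The degree-$3$ part vanishes and higher degrees do not contribute.

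\emph{Step 2: choice of parameters.} By adjunction, $c_1(N)=K_X-\iota^*K_\PP=K_X+\sum_j(n_j+1)H_j$. Since $K_X=D_1$ lies in $\operatorname{span}_\RR\{H_j\}$, so does $c_1(N)$. Given $B\in\operatorname{span}_\RR\{H_j\}$, write $B-\frac12c_1(N)=\sum_j b_jH_j$ with $\mathbf b\in\RR^r$; the coefficients need not be unique, and any choice will do. Define $\sigma_X^{a,B}\coloneqq\widetilde\sigma_X^{m^2/2,\mathbf b}$ from Lemma~\ref{lem:embedding}, which applies for $m\ge m_\iota$. Using $\sum_j\iota^*h_j=J$ and Step 1, its central charge becomes
\[Z_X^{a,B}(E)=-\int_X e^{-\mathfrak i aJ}(1+\Theta)\,\ch^B(E).\]

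\emph{Step 3: expansion.} First expand the main term by degree:
\[-\int_X e^{-\mathfrak i aJ}\ch^B(E)=-v_3^{J,B}+\mathfrak i a\,v_2^{J,B}+\tfrac{a^2}{2}v_1^{J,B}-\mathfrak i\tfrac{a^3}{6}v_0^{J}.\]
Next, the $\Theta$-part contributes $-\Theta.\ch_1^B(E)+\mathfrak i a\,\Theta.J\,\ch_0(E)$. Finally, use $\ch_0(E)=v_0^J(E)/J^3$ to rewrite the last term. Collecting real and imaginary parts gives exactly \eqref{eq:expanded}.

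The only point needing care is Step 2. The $c_1(N)$ contribution of the Todd class has to be absorbed into the $B$-field rather than carried along. This works precisely because $K_X$ lies in the span of the $H_j$, which is why $D_1=K_X$ was included among the divisors used to build the embedding. The parity observation in Step 1 is what kills any degree-$3$ correction.
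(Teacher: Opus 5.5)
Your proposal is correct and follows the paper's proof. The paper likewise chooses $\mathbf b$ with $B=\sum_j b_jH_j+\frac12 c_1(N_{X/\PP})$, written there as $B=\frac12K_X+\sum_j\bigl(b_j+\frac{n_j+1}{2}\bigr)H_j$, uses $\td(N_{X/\PP})^{-1}=e^{-c_1(N_{X/\PP})/2}(1+\Theta)$, and expands $-\int_X e^{-B-\mathfrak{i}aJ}(1+\Theta)\ch(E)$ by degree. The only difference is that you justify the inverse Todd class formula via Chern roots and the evenness of $\frac{x/2}{\sinh(x/2)}$, whereas the paper simply asserts it.
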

\begin{proof}
The normal and Euler sequences give
\[
 \begin{aligned}
 c_1(N_{X/\PP})&=K_X+\sum_i(n_i+1)H_i,\\
 2\ch_2(N_{X/\PP})&=\sum_i(n_i+1)H_i^2-K_X^2+2c_2(X),\\
 \td(N_{X/\PP})^{-1}&=e^{-c_1(N_{X/\PP})/2}(1+\Theta).
 \end{aligned}
\]
Now, we choose $\mathbf b=(b_j)_{j=1}^r \in \RR^r$ such that
\[B=\frac12K_X+\sum_j\left(b_j+\frac{n_j+1}{2}\right)H_j,\]
and set $\sigma_X^{a,B}\coloneqq\widetilde\sigma_X^{a,\mathbf b}$ to be the stability condition from Lemma~\ref{lem:embedding} with parameters $a = m^2/2$ and $\mathbf b$. 
Then
\[Z_X^{a,B}(E)=-\int_Xe^{-B-\mathfrak{i}aJ}(1+\Theta)\ch(E),\]
which expands to \eqref{eq:expanded}.
\end{proof}

\begin{lemma}\label{lem:window}
Assume that $n=3$. In the notation of Lemma~\ref{lem:charge}, 
we have $ \Coh^{J,B}(X)\subset\cP^{a,B}_X(-1,1]$ and $\Coh^{J,B}(X)\cap\cP^{a,B}_X(1)=\Coh_{\le0}(X)$.
\end{lemma}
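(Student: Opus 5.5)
The plan is to check the two halves of the tilted heart separately. Recall $\Coh^{J,B}(X)=\langle\mathcal F_{J,B}[1],\mathcal T_{J,B}\rangle$, and $\cP^{a,B}_X(-1,1]$ is extension closed. It therefore suffices to prove
\[\mathcal T_{J,B}\subset \cP^{a,B}_X(-1,1]\quad\text{and}\quad \mathcal F_{J,B}\subset\cP^{a,B}_X(-2,0],\]
and then to identify which objects can have phase exactly $1$.

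\emph{Upper bounds.} Let $A\in\cP^{a,B}_X(1,2]=\cP^{a,B}_X(0,1][1]$. By Lemma~\ref{lem:point}\ref{lem:point:heart}, $A\in\D^{[-3,-1]}_{\Coh(X)}$, so $\Hom(A,G)=0$ for every coherent sheaf $G$. Hence every sheaf lies in $\cP^{a,B}_X(\le 1)$, which already gives the upper bound for $\mathcal T_{J,B}$.

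For $F\in\mathcal F_{J,B}$ we need the stronger statement $\Hom(A,F)=0$ for all stable $A\in\cP^{a,B}_X(0,1]$. Since $A\in\D^{[-2,0]}$, we have $\Hom(A,F)=\Hom(\cH^0(A),F)$. I would argue by contradiction, using the explicit central charge \eqref{eq:expanded}. A nonzero map $\cH^0(A)\to F$ has nonzero image $I\subset F$, a torsion-free sheaf with $\mu_J(I)\le\mu_J^+(F)\le B.J^2/J^3$. The leading term of $\Im Z$ is $-\tfrac{a^3}{6}v_0^J$, and the twisted leading term of $\Re Z$ is $\tfrac{a^2}{2}v_1^{J,B}$. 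For $a$ large these force such an $I$ to have phase in $(-1,0]$ in the limiting sense. To make this rigorous for all $m\ge m_\iota$, I would use \eqref{eq:phaseJ} to iterate: $\cP\prec\cP\otimes\cO(-J)[1]$ and $\cP\preceq\cP\otimes\cO(J)$ bound how phases move under twisting by $\cO(\pm J)$. Combining this with the Serre-type vanishing of Lemma~\ref{lem:point} for $\cH^{-2}$ (torsion-free) and $\cH^0$ (on the boundary), the map should be killed once $I$ has slope $\le B.J^2/J^3$.

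\emph{Lower bounds.} For $\mathcal T_{J,B}\subset\cP(>-1)$ we need $\Hom(G,A[-1])=0$ for $G\in\mathcal T_{J,B}$ and $A\in\cP(0,1]$ stable. For $\mathcal F_{J,B}\subset\cP(>-2)$ we need the analogous vanishing against $A[-2]$. I would use Serre duality, $\Hom(G,A[-k])=\Hom(A,G\otimes\omega_X[3+k])^\vee$. Since $K_X=D_1\in\operatorname{span}\{H_j\}$, tensoring by $\omega_X$ replaces $\sigma^{a,B}_X$ by $\sigma^{a,B+K_X}_X$, up to phase shifts controlled by \eqref{eq:phaseJ}. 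Lemma~\ref{lem:point}\ref{lem:point:heart} applied to this twisted condition then gives the required cohomological-degree vanishing. Where the degree bound alone is not enough, namely the $\cH^{-2}$ part paired with torsion-free sheaves of the wrong slope, I would use the same slope comparison with $\Im Z$ as above.

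\emph{Phase one.} Zero-dimensional sheaves lie in $\cT_{J,B}$. By geometricity, skyscrapers $\cO_x$ have phase $1$, so $\Coh_{\le0}(X)\subset\cP(1)$. Conversely, take $E\in\Coh^{J,B}(X)\cap\cP(1)$. Lemma~\ref{lem:point}(c) gives $\cH^0(E)\in\Coh_{\le 0}$. It remains to show $\cH^{-1}(E)=0$, where $\cH^{-1}(E)\in\mathcal F_{J,B}$. I would get this from the strict version of the bound above, namely $\mathcal F_{J,B}\subset\cP(-2,0)$, together with $\Im Z(\cH^{-1}(E)[1])>0$ when its rank is positive, which follows from the leading term $-\tfrac{a^3}{6}v_0^J$ in $\Im Z$.

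I expect the main obstacle to be the slope-sensitive vanishing $\Hom(\cH^0(A),F)=0$ for $F\in\mathcal F_{J,B}$ (and its Serre dual). This is the only step where the tilt parameter $B$ genuinely enters, and it is where the twisting inequalities \eqref{eq:phaseJ} have to be turned into a quantitative slope statement, rather than using only the cohomological amplitude bounds of Lemma~\ref{lem:point}.
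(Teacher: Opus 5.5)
Your reduction to $\mathcal T_{J,B}\subset\cP(-1,1]$ and $\mathcal F_{J,B}\subset\cP(-2,0]$ is correct, and so is your proof that every sheaf lies in $\cP(\le1)$. The lower-bound step, however, is off by one. Since $\cP(-2,-1]=\cP(0,1][-2]$, the subcategory $\cP(\le-1)$ is generated by the $A[-k]$ with $A\in\cP(0,1]$ and $k\ge2$, not $k\ge1$. Two consequences follow:
\begin{itemize}
\item $\mathcal F_{J,B}\subset\cP(>-2)$ is automatic. For $k\ge3$ one has $A[-k]\in\D^{\ge1}$ by Lemma~\ref{lem:point}\ref{lem:point:heart}, so $\Coh(X)\subset\cP(-2,1]$.
\item $\mathcal T_{J,B}\subset\cP(>-1)$ means $\Hom(G,A[-2])=\Hom(G,\cH^{-2}(A))=0$.
\end{itemize}
The vanishings you set out to prove are not the right ones, and they fail in general. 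First, $\Hom(F,A[-2])=\Hom(F,\cH^{-2}(A))\neq0$ for $F=\cO_X(-kJ)\in\mathcal F_{J,B}$ with $k\gg0$, as soon as $\cH^{-2}(A)\neq0$. Second, $\Hom(G,A[-1])=0$ for all $A\in\cP(0,1]$, combined with $\phi^-(G)>-1$, would put $G$ in $\cP(0,1]$ and force $\Im Z(G)\ge0$. This fails for $G=\cI_C(J)\in\mathcal T_{J,0}$ with $C$ a curve of large degree, since $\Im Z(G)=a\bigl(\tfrac{J^3}{2}-J.C-(\tfrac{a^2}{6}-\tfrac{J.\Theta}{J^3})J^3\bigr)<0$ once $J.C\gg0$. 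Bookkept correctly, and using $\cP(0,1]\subset\D^{[-2,0]}$, the first assertion of the lemma is \emph{equivalent} to
\[\cH^0(A)\in\mathcal T_{J,B}\quad\text{and}\quad\cH^{-2}(A)\in\mathcal F_{J,B}\qquad\text{for every }A\in\cP^{a,B}_X(0,1].\]

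These two slope conditions are the entire content of the lemma, and your proposal proves neither. The leading-term heuristic cannot be made rigorous. Here $a=m^2/2$ is fixed while $\ch(F)$ is unbounded, and $Z$ of a sheaf that is not $\sigma$-semistable says nothing about its extremal phases. The $\preceq$ half of \eqref{eq:phaseJ} (twisting by $\cO_X(J)$ does not decrease $\phi^\pm$) only reduces each condition to $\mu_J$-semistable sheaves whose slope lies in a window of length one next to $B.J^2/J^3$. On its own it does not produce the sharp threshold. The Serre-duality detour gains nothing either, since \eqref{eq:phaseJ} gives no control over $-\otimes\omega_X$. This threshold argument is exactly what the paper imports, via Lemma~\ref{lem:point} and \eqref{eq:phaseJ}, from \cite[Proposition~3.3 and Lemma~3.6]{CF`}; your plan leaves it at ``the map should be killed''.

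The phase-one part has the same defect. The strict inclusion $\mathcal F_{J,B}\subset\cP(-2,0)$ is assumed, not proved. The claimed positivity $\Im Z(F[1])>0$ for positive-rank $F\in\mathcal F_{J,B}$ is false, for example for $F=\cO_X(-kJ)$ with $k\gg0$, because $v_2^{J,B}(F)$ grows like $k^2J^3/2$. If you did have the strict inclusion, no sign of $\Im Z$ would be needed: $\cH^{-1}(E)[2]$ would then be the cokernel in $\cP(1)$ of $E\to\cH^0(E)$, hence an iterated extension of skyscrapers, forcing $\cH^{-1}(E)=0$.
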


\begin{proof}
Using Lemma~\ref{lem:point} and \eqref{eq:phaseJ}, the result follows from the same argument as in \cite[Proposition 3.3 and Lemma 3.6]{CF}.
\end{proof}

Now we can finally state the BMT-type inequality at sufficiently large parameters, which is the main result of this section. 

\begin{proposition}\label{prop:weak}
Fix $B_0\in\operatorname{span}_{\QQ}\{H_j\}$ and
$\epsilon\in\QQ_{\ge0}$ such that
\[\frac{1}{J^n}\Del^J_{B_0,\epsilon}(E)\geq J^{n-2}.\left(\ch_1(E)^2-2\rk(E)\ch_2(E)\right)+\epsilon\rk(E)^2J^n\geq 0\] 
holds for all $\mu_J$-semistable torsion-free sheaves $E$ on $X$. 
Choose $m$ as in Lemma~\ref{lem:charge} such that 
\[\frac{m^4}{12} - \frac{2 J^{n-2}.\Theta}{J^n} > \epsilon,\]
and set 
\[A_0 \coloneqq \sqrt{\frac{m^4}{12} - \frac{2J^{n-2}.\Theta}{J^n}} > \sqrt{\epsilon}.\]
Then for every $B\in B_0+\RR J$ and every
$\nu^J_{A_0,B}$-semistable object $E\in\Coh^{J,B}(X)$ with
$\nu^J_{A_0,B}(E)=0$, we have
\[J^{n-3}.(\ch_3^B(E)+\Theta.\ch_1^B(E)) <\frac{m^4}{8}J^{n-1}.\ch_1^B(E).\]
\end{proposition}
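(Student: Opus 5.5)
The idea is to read the inequality off the stability condition $\sigma_X^{a,B}$ of Lemma~\ref{lem:charge}, with $a=m^2/2$, here with $n=3$. First I check that its imaginary part is exactly a multiple of the tilt-slope $\nu^J_{A_0,B}$. Then I show that a tilt-semistable object of slope zero lies in a single phase $0$ of $\sigma_X^{a,B}$. Positivity of the central charge on that phase is then precisely the claimed strict inequality.

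\emph{Step 1: the central charge.} For $a=m^2/2$ we have $a^2/6=m^4/24$. The definition of $A_0$ gives $A_0^2/2=m^4/24-J.\Theta/J^3$. Hence \eqref{eq:expanded} becomes
\[
Z_X^{a,B}(F)=-v_3^{J,B}(F)-\Theta.\ch_1^B(F)+\frac{m^4}{8}v_1^{J,B}(F)+\mathfrak{i}a\Bigl(v_2^{J,B}(F)-\frac{A_0^2}{2}v_0^J(F)\Bigr).
\]
So $\Im Z_X^{a,B}(F)=a\,v_1^{J,B}(F)\,\nu^J_{A_0,B}(F)$ for every $F\in\Coh^{J,B}(X)$ with $v_1^{J,B}(F)\neq 0$. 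In particular, for our $E$ with $\nu^J_{A_0,B}(E)=0$ we get $\Im Z_X^{a,B}(E)=0$. The desired inequality is then exactly $\Re Z_X^{a,B}(E)>0$.

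\emph{Step 2: $E\in\cP^{a,B}_X(-1,0]$.} Write $\cA\coloneqq\cP^{a,B}_X(0,1]$. By Lemma~\ref{lem:window}, $\Coh^{J,B}(X)\subset\cP^{a,B}_X(-1,1]=\langle\cA,\cA[-1]\rangle$. By the standard tilting argument, $\bigl(\Coh^{J,B}(X)\cap\cA,\ \Coh^{J,B}(X)\cap\cA[-1]\bigr)$ is then a torsion pair in $\Coh^{J,B}(X)$. Concretely, the truncation triangle $E'\to E\to E''$ with $E'\in\cA$ and $E''\in\cA[-1]$ is a short exact sequence in $\Coh^{J,B}(X)$.

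Suppose $E'\neq0$. Since $E'\in\cA$, we have $\Im Z_X^{a,B}(E')\ge0$. On the other hand, $\nu^J_{A_0,B}$-semistability of $E$ gives $\nu^J_{A_0,B}(E')\le0$, i.e.\ $\Im Z_X^{a,B}(E')\le 0$, whenever $v_1^{J,B}(E')\ne0$. If $v_1^{J,B}(E')=0$, then $\nu^J_{A_0,B}(E')=+\infty>0$, which already contradicts semistability. Thus $\Im Z_X^{a,B}(E')=0$ with $E'\in\cA$. The HN factors of $E'$ have phases in $(0,1]$ and nonnegative imaginary parts summing to zero, so all of them have phase $1$, and $E'\in\cP^{a,B}_X(1)\cap\Coh^{J,B}(X)=\Coh_{\le0}(X)$ by Lemma~\ref{lem:window}. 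A nonzero zero-dimensional subobject has $v_1=0$, hence slope $+\infty>0=\nu^J_{A_0,B}(E)$, contradicting semistability. (Here $E$ itself is not zero-dimensional, since its slope is $0$.) Hence $E'=0$ and $E\in\cP^{a,B}_X(-1,0]$.

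\emph{Step 3: conclusion.} Now $E\neq0$ lies in $\cP^{a,B}_X(-1,0]$, and $\Im Z_X^{a,B}(E)=0$. The same argument applied to the HN factors forces $E\in\cP^{a,B}_X(0)$. Since $\sigma_X^{a,B}$ is a genuine stability condition, $Z_X^{a,B}(E)\in\RR_{>0}$. By Step 1 this is the strict inequality $v_3^{J,B}(E)+\Theta.\ch_1^B(E)<\frac{m^4}{8}v_1^{J,B}(E)$.

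The main obstacle I expect is in Step 2: justifying that the truncation of $E$ with respect to $\cA$ stays inside $\Coh^{J,B}(X)$, so that it really produces a subobject there. This is where the full force of Lemma~\ref{lem:window}, i.e.\ the heart comparison of \cite{CF}, is needed. I would handle it by the standard fact that a heart contained in $\langle\cA,\cA[-1]\rangle$ is a tilt of $\cA$.
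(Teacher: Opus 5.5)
Your proposal is correct and follows the paper's proof essentially step for step. The paper, which also treats only $n=3$ and cites \cite{LLSZ} for general $n$, uses the torsion pair from Lemma~\ref{lem:window} to split off $E_1\in\Coh^{J,B}(X)\cap\cP^{a,B}_X(0,1]$, forces $\Im Z_X^{a,B}(E_1)=0$ and hence $E_1\in\cP^{a,B}_X(1)\cap\Coh^{J,B}(X)=\Coh_{\le0}(X)$, concludes $E_1=0$, and then reads off $\Re Z_X^{a,B}(E)>0$ from \eqref{eq:expanded}, exactly as you do (your argument, like the paper's, covers $n=3$, where Lemmas~\ref{lem:charge} and~\ref{lem:window} are available).
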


\begin{proof}
The general case is proved in \cite{LLSZ}. Here, we give a proof for $n=3$. By Lemma~\ref{lem:window}, the intersections of
$\Coh^{J,B}(X)$ with $\cP^{a,B}_X(0,1]$ and $\cP^{a,B}_X(-1,0]$ form a
torsion pair. 
The corresponding decomposition of $E$ takes the form 
\[0\longrightarrow E_1\longrightarrow E\longrightarrow E_2 \longrightarrow0\]
where $E_1\in \Coh^{J,B}(X) \cap \cP^{a,B}_X(0,1]$ and $E_2\in \Coh^{J,B}(X) \cap \cP^{a,B}_X(-1,0]$. 
If $E_1\ne0$, the semistability of $E$
gives $v_1^{J,B}(E_1)>0$ and $\nu^J_{A_0,B}(E_1)\leq 0$. Therefore, we get
\[0\le\Im Z_X^{a,B}(E_1)=a v_1^{J,B}(E_1) \nu^J_{A_0,B}(E_1)\leq 0.\]
Hence \[E_1\in\Coh^{J,B}(X)\cap\cP^{a,B}_X(1).\] 
By Lemma~\ref{lem:window}, we get $E_1\in \Coh_{\le0}(X)$, contradicting $v_1^{J,B}(E_1)>0$. Thus $E_1=0$ and $E[1]\in\cP_X^{a,B}(0,1]$. Since $\Im Z_X^{a,B}(E)=0$, \eqref{eq:expanded} gives \[0<\Re Z_X^{a,B}(E) =\frac{m^4}{8}J^2.\ch_1^B(E)-\ch_3^B(E)-\Theta.\ch_1^B(E). \qedhere \]
\end{proof}

\section{Frobenius estimates}\label{sec:frobenius}

Let $(X,H)$ be a polarised smooth projective variety of dimension $n\geq3$ over an algebraically closed field $\kk$ of characteristic $p\geq5$. In this section, we prove bounds on the slope of HN factors of Frobenius pullback of a tilt-semistable object (cf.~Theorem~\ref{thm:twoendpoints}), which is the most technical part of our paper.

\subsection{Normalized Frobenius pullback}\label{sec:normalization}
Fix $B_0\in\NS(X)_{\QQ}$ and a line bundle $L$ with
$c_1(L)=qB_0$, where $q\in\mathbb Z_{>0}$. For any $B\in \NS(X)_{\QQ}$, we write
\[B^\sharp\coloneqq K_{X}-B\in \NS(X)_{\QQ}.\]
Fix $\epsilon\in \QQ_{\geq 0}$ such that
\[\Del^H_{B,\epsilon}(E)\geq 0\]
for any $B\in \NS(X)_{\QQ}$ and any $\mu_H$-semistable sheaf $E$.

Suppose $p\equiv1\pmod{q}$, and let $F\colon X\to X$ be the absolute
Frobenius morphism. We set $M_p\coloneqq L^{-(p-1)/q}$ and $\widehat B_{\beta}\coloneqq B_0+p\beta H$.
For any $E\in \Db(X)$, we define
\[\Phi_p(E)\coloneqq F^*E\otimes M_p,\]
$N_p\coloneqq \omega_{X}^{1-p}\otimes M_p^{-1}$, and $\Psi_{p}(E)\coloneqq F^*E\otimes N_p$. The functor $\Phi_p$ is called the \emph{normalized Frobenius pullback associated with $B_0$.}

Note that $qB_0^\sharp=c_1(\omega_X^q\otimes L^{-1})$ and $N_p=(\omega_X^q\otimes L^{-1})^{-(p-1)/q}$. In particular, $c_1(N_p)=(1-p)B_0^\sharp$.

Recall that we can factor $F$ as 
\[F\colon X\xrightarrow{F_X}X^{(p)}\xrightarrow{h}X,\]
where $X^{(p)}\coloneqq X\times_{\operatorname{Spec}\kk,F_{\kk}}
\operatorname{Spec}\kk$ and $F_X$ is the relative Frobenius morphism of $X$, $h$ is the first projection, and $F_{\kk}\colon \Spec(\kk)\to \Spec(\kk)$ is the Frobenius map of $\kk$. Since $\kk$ is perfect, $h$ is an isomorphism of schemes, although
generally not of $\kk$-schemes. According to \cite[Tags 0CC6 and 0EC0]{Stacks}, both $F$ and $F_X$ are finite flat.

We define an anti-autoequivalence of $\Db(X)$ by
\[\DD(E)\coloneqq \mathbf R\mathcal Hom(E,\omega_X)[1]\]
for any $E\in \Db(X)$.

\begin{proposition}
We have:
\begin{enumerate}
    \item the left adjoint of $\Phi_p$ is
\begin{equation}\label{eq:left}
\mathsf L_p\coloneqq F_*(-\otimes N_p),
\end{equation}

\item there is a natural isomorphism
\begin{equation}\label{eq:dual}
\DD\circ \Phi_p\simeq\Psi_p\circ \DD,
\end{equation}
and

\item for any $E\in\Db(X)$ and $0\leq i\leq n$, we have
\begin{equation}\label{eq:scaling}
\ch_i^{\widehat B_{\beta}}(\Phi_pE)=p^i\ch_i^{B_0+\beta H}(E).
\end{equation}
\end{enumerate}
\end{proposition}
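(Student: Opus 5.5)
The plan is to derive all three statements from formal properties of the finite flat morphism $F$: Grothendieck duality for (a), compatibility of pullback with derived duals for (b), and the splitting principle for (c). Throughout I use that $F^*L\simeq L^{\otimes p}$ for every line bundle $L$. In particular $F^*\omega_X\simeq\omega_X^{p}$.

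\textbf{(a)} Since $F$ is finite and flat, $F_*$ has the right adjoint $F^!$ with $F^!(-)\simeq F^*(-)\otimes F^!\cO_X$. First I compute the relative dualizing sheaf $F^!\cO_X$. Factor $F=h\circ F_X$ and note that $h$ is an isomorphism of schemes with $h^*\omega_X\simeq\omega_{X^{(p)}}$. This gives
\[F^!\cO_X\simeq\omega_X\otimes F^*\omega_X^{-1}\simeq\omega_X^{1-p}.\]
Then for $A,B\in\Db(X)$ the adjunctions give
\[\Hom(F_*(A\otimes N_p),B)\simeq\Hom(A\otimes\omega_X^{1-p}\otimes M_p^{-1},\,F^*B\otimes\omega_X^{1-p})\simeq\Hom(A,F^*B\otimes M_p).\]
The right-hand side is $\Hom(A,\Phi_p(B))$, so $\mathsf L_p$ is the left adjoint of $\Phi_p$. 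Naturality in $A$ and $B$ is inherited from the adjunctions used. This step is the main point of care: one must verify that duality applies to the absolute Frobenius, which is not $\kk$-linear. That is the reason for passing through $F_X$ and the scheme isomorphism $h$, and for invoking finiteness and flatness from \cite[Tags 0CC6 and 0EC0]{Stacks}.

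\textbf{(b)} Since $F$ is flat, $F^*$ commutes with derived duals of perfect complexes, so $F^*(E^\vee)\simeq(F^*E)^\vee$. Compute both sides:
\[\DD(\Phi_pE)\simeq (F^*E)^\vee\otimes M_p^{-1}\otimes\omega_X[1],\]
\[\Psi_p(\DD E)\simeq F^*(E^\vee)\otimes\omega_X^{p}\otimes\omega_X^{1-p}\otimes M_p^{-1}[1].\]
The right-hand sides agree, and the isomorphism is natural in $E$.

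\textbf{(c)} By the splitting principle, and since $F^*$ raises each line bundle to its $p$-th power, the Chern roots of $F^*E$ are $p$ times those of $E$. Hence $\ch_i(F^*E)=p^i\ch_i(E)$ in rational Chow groups, or numerically. Next, $c_1(M_p)=-\frac{p-1}{q}c_1(L)=-(p-1)B_0$. It follows that
\[\ch^{\widehat B_\beta}(\Phi_pE)=e^{-B_0-p\beta H}e^{-(p-1)B_0}\ch(F^*E)=e^{-p(B_0+\beta H)}\ch(F^*E).\]
The degree $i$ part of the right-hand side is $p^i$ times the degree $i$ part of $e^{-(B_0+\beta H)}\ch(E)$, which is \eqref{eq:scaling}. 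All these identities are taken with rational coefficients, so the denominators in $\ch$ cause no problem in characteristic $p$.
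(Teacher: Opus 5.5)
Your proposal is correct and follows the paper's proof essentially step by step. For (a) you compute $F^!\cO_X\simeq\omega_X^{1-p}$ through the factorization $F=h\circ F_X$ and combine Grothendieck duality with $N_p\otimes M_p\simeq\omega_X^{1-p}$. For (b) you identify both sides with $F^*(E^\vee)\otimes\omega_X\otimes M_p^{-1}[1]$. For (c) you use $c_1(M_p)-\widehat B_\beta=-p(B_0+\beta H)$ together with $\ch_i(F^*E)=p^i\ch_i(E)$. Your splitting-principle justification of this last identity is the only addition, since the paper takes it for granted.
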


\begin{proof}
We know that $F_X^!E\simeq F_X^*E\otimes F_X^!\cO_{X^{(p)}}$ and $F_X^!\cO_{X^{(p)}}\simeq
\omega_X\otimes\bigl(F_X^*\omega_{X^{(p)}}\bigr)^{-1}$ for any object $E\in\Db(X^{(p)})$. Moreover, since $h$ is an isomorphism, we have $\omega_{X^{(p)}}\simeq h^*\omega_X$ and $h^!=h^*$ and hence
\[\begin{aligned}
F^!E&\simeq F_X^!h^*E\\
&\simeq F^*E\otimes\omega_X\otimes(F^*\omega_X)^{-1}
\simeq F^*E\otimes\omega_X^{1-p}.
\end{aligned}\]
Combining with Serre duality and $N_p\otimes M_p\simeq\omega_X^{1-p}$
give natural isomorphisms
\[\begin{aligned}
\Hom(F_*(G\otimes N_p),E)
&\simeq\Hom(G\otimes N_p,F^*E\otimes\omega_X^{1-p})\\
&\simeq\Hom(G,F^*E\otimes M_p).
\end{aligned}\]
for any $E,G\in \Db(X)$. This proves \eqref{eq:left}.

For any $E\in \Db(X)$, we have
\[\mathbf R\mathcal Hom(F^*E,\cO_X) \simeq F^*\mathbf R\mathcal Hom(E,\cO_X).\]
Since $\DD(E)=\mathbf R\mathcal Hom(E,\omega_X)[1]$ and
$\omega_X^p\otimes N_p\simeq\omega_X\otimes M_p^{-1}$,
both $\DD\Phi_p(E)$ and $\Psi_p\DD(E)$ identify naturally with
\[F^*\mathbf R\mathcal Hom(E,\cO_X) \otimes\omega_X\otimes M_p^{-1}[1]\]
which proves \eqref{eq:dual}.

Finally, $c_1(M_p)-\widehat B_{\beta}=-p(B_0+\beta H)$ and
$\ch_i(F^*E)=p^i\ch_i(E)$. Taking the degree-$i$ component of
\[\ch^{\widehat B_{\beta}}(\Phi_pE) =e^{-p(B_0+\beta H)}.\ch(F^*E)\]
proves \eqref{eq:scaling}.
\end{proof}

Now, we fix some notations and state our main result in this section. Choose integers $u,r_0\in \ZZ_{>0}$ and $v\in \ZZ_{\geq 0}$ such that
\begin{equation}\label{eq:gg}
\Omega_{X}(uH),\quad T_{X}(r_0H),\quad
\omega_{X}^{-1}(vH)
\quad\text{are globally generated}.
\end{equation}
Write $D=nu+v$.
We also choose $k_{\Omega}$ and $k_{\omega^{-1}}$ such that $K_{\Omega}(k_{\Omega}H)$ and $K_{\omega^{-1}}(k_{\omega^{-1}}H)$ are globally generated, where $$K_{\Omega}\coloneqq \ker(\cO_X(-uH)^{\oplus h^0(\Omega_X(uH))}\to \Omega_{X})$$ and $$K_{\omega^{-1}}\coloneqq \ker(\cO_X(-vH)^{\oplus h^0(\omega^{-1}_X(vH))}\to \omega_{X}^{-1})$$
are kernels of evaluation maps. For simplicity, we define
\[k_*=\max\{u,v,k_\Omega,k_{\omega^{-1}}\}.\]

\begin{theorem}\label{thm:upper}\label{thm:twoendpoints}
\setlength{\emergencystretch}{2em}
Let $A\ge2D$ and $\frac{2D(p-1)}{p}<a\le A$. Suppose that $p\equiv1\pmod{q}$ and
\begin{equation}\label{eq:upper-explicitprime}
 p>\max\left\{4,\frac{k_*+\sqrt\epsilon}{A-D},
                   \frac12+\frac{\epsilon}{8D^2}\right\}.
\end{equation}
Let $\beta\in\RR$ and let
$E\in\Coh^{H,B_0+\beta H}(X)$ be a
$\nu^H_{A,B_0+\beta H}$-semistable object with $\nu^H_{A,B_0+\beta H}(E)=0$. Suppose that
\begin{equation}\label{eq:sourcegaps}
 \begin{aligned}
 \mu_H^+(\cH^{-1}(E))
 &\le\frac{(B_0+\beta H).H^{n-1}}{H^n}-a,\\
 \mu_H^-(\cH^0(E))
 &\ge\frac{(B_0+\beta H).H^{n-1}}{H^n}+a.
 \end{aligned}
\end{equation}
Then $\Phi_pE\in\Coh^{H,\widehat B_\beta}(X)$ and
\[-pB_p(A)\le\nu^{H,-}_{pA,\widehat B_\beta}(\Phi_pE) \le\nu^{H,+}_{pA,\widehat B_\beta}(\Phi_pE)\le pB_p(A),\]
where
\begin{equation}\label{eq:upper}
 B_p(A)\coloneqq D(1-p^{-1})+
 \frac{D^2(1-p^{-1})^2}
 {2\bigl(\sqrt{A^2-\epsilon p^{-2}}-D(1-p^{-1})\bigr)}.
\end{equation}
\end{theorem}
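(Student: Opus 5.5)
The plan is to prove the upper bound $\nu^{H,+}_{pA,\widehat B_\beta}(\Phi_pE)\le pB_p(A)$ directly, and then deduce the lower bound by duality. By \eqref{eq:dual} we have $\DD\Phi_p\simeq\Psi_p\DD$. Up to the usual modification by torsion in codimension $\ge 2$, $\DD$ sends $\nu^H_{A,B_0+\beta H}$-semistable objects of slope $0$ to $\nu^H_{A,B_0^\sharp-\beta H}$-semistable objects of slope $0$. The functor $\Psi_p$ is the normalized Frobenius pullback associated with $B_0^\sharp$, since $c_1(N_p)=(1-p)B_0^\sharp$. The hypotheses \eqref{eq:sourcegaps} are symmetric under this exchange: $\cH^{-1}$ and $\cH^0$ swap roles and the slopes are negated. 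So the same argument applied to $\DD E$ gives $\nu^{H,-}\ge -pB_p(A)$.

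First I show $\Phi_pE\in\Coh^{H,\widehat B_\beta}(X)$. Since $F$ is flat, $\cH^i(\Phi_pE)=F^*\cH^i(E)\otimes M_p$. I control the $\mu_H$-HN filtration of $F^*G$ for a $\mu_H$-semistable sheaf $G$ via Sun's canonical filtration (Lemma~\ref{lem:graded}). This filtration on $F^*F_*(-)$ has graded pieces built from the $\ell$-th symmetric/tensor powers of $\Omega_X$, with $0\le\ell\le n(p-1)$, twisted by $\omega_X^{1-p}$. Using that $\Omega_X(uH)$ and $\omega_X^{-1}(vH)$ are globally generated, each graded piece shifts $\mu_H$ by at most $\ell u+(p-1)v\le D(p-1)$. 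This gives
\[\mu_H^{\max}(F^*G)\le p\mu_H(G)+D(p-1)\quad\text{and}\quad \mu_H^{\min}(F^*G)\ge p\mu_H(G)-D(p-1).\]
Combined with \eqref{eq:scaling} and \eqref{eq:sourcegaps}, the $\widehat B_\beta$-twisted slopes of $F^*\cH^{-1}(E)\otimes M_p$ are at most $-pa+D(p-1)<0$. Those of the $\cH^0$ part are at least $pa-D(p-1)>0$. Both inequalities hold because $a>2D(p-1)/p$.

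For the upper bound, let $G\subset\Phi_pE$ be the first $\nu^H_{pA,\widehat B_\beta}$-HN factor, of slope $\mu$. Suppose for contradiction that $\mu>pB_p(A)$. By adjunction \eqref{eq:left}, the inclusion gives a nonzero map $\mathsf L_pG=F_*(G\otimes N_p)\to E$. By \eqref{eq:scaling} applied in reverse, $F_*$ divides $\ch_i$ by roughly $p^i$ up to the twist. So it suffices to show that $F_*(G\otimes N_p)$ lies in $\Coh^{H,B_0+\beta H}(X)$, or at least in $\D^{\le0}$ of that heart, and that all its $\nu^H_{A,B_0+\beta H}$-HN factors have slope $>0$. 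This contradicts the semistability of $E$ at slope $0$.

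To get this, I pull back again: $F^*F_*(G\otimes N_p)$ carries the canonical filtration, whose graded pieces are $G\otimes N_p\otimes(\text{pieces of }\Omega^{\otimes\ell})\otimes\omega_X^{\ast}$. Each of these is a quotient of a direct sum of copies of $G\otimes\cO_X(-sH)$ with $0\le s\le D(p-1)$, up to globally generated corrections. The kernels $K_\Omega$ and $K_{\omega^{-1}}$ and the bound $p>(k_*+\sqrt\epsilon)/(A-D)$ ensure that these corrections stay in the correct heart. A $\nu_{pA}$-semistable object of slope $\mu$, when twisted by $-sH$, has slope bounded below via the Bertram circle geometry. Lemma~\ref{lem:nestedwall} and Lemma~\ref{lem:2.5gap} at $\alpha=pA$, including the $\epsilon$-correction, give the lower bound
\[\mu-s-\frac{s^2}{2\bigl(\sqrt{p^2A^2-\epsilon}-s\bigr)}.\]
With $s=D(p-1)$ and after rescaling by $p$, this is exactly the expression $\mu-pB_p(A)$, which is positive by assumption. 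The conditions $A\ge2D$ and $p>\tfrac12+\epsilon/(8D^2)$ keep the denominator positive and the circle inside the region $\alpha>\sqrt\epsilon$.

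The main obstacle is this last step. The canonical filtration lives on $F^*F_*$, whereas stability is tested for $F_*$ itself. I need to transfer positivity of tilt-slopes along the finite flat map $F$: $F^*$ scales $\nu$ by $p$ and preserves the heart up to the twist, so a destabilizing quotient of $F_*(\cdot)$ would pull back to one violating the filtration bound. I also need to make the heart bookkeeping uniform, so that every graded piece is tilt-positive rather than only having the correct $\mu_H$-slopes. This is where the explicit constants in \eqref{eq:upper-explicitprime} enter, and it is the step I would check most carefully.
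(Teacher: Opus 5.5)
Your architecture matches the paper's. You use duality via $\Psi_p$ for the lower bound and the adjoint map $\mathsf L_pG\to E$. You use Sun's filtration of $\Phi_p\mathsf L_pG$, with graded pieces presented as quotients of $G(-(\ell u+(p-1)v)H)^{\oplus N_\ell}$ whose kernels $R_\ell$ are controlled through $K_\Omega$, $K_{\omega^{-1}}$ and $\alpha_0-c_p>k_*$. And you use the twisting criterion of Lemma~\ref{lem:tensor}, whose threshold is exactly $pB_p(A)$. (Your displayed expression is not actually a lower bound for the tilt slope of $G(-sH)$, which can stay bounded as $\mu\to\infty$, but only its positivity threshold is used.)

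The gap is the step you flag yourself, and the reason you give for it is false: $\Phi_p$ does \emph{not} preserve the tilted hearts up to twist. Frobenius pullback destroys $\mu_H$-semistability. So for $P\in\Coh^{H,B_0+\beta H}(X)$, the sheaf $F^*\cH^{-1}(P)\otimes M_p$ may have HN factors of slope up to $e_p$ above $\widehat B_\beta.H^{n-1}/H^n$, and $F^*\cH^0(P)\otimes M_p$ may have factors up to $e_p$ below it. Hence $\Phi_pP$ can have $\mathcal A$-cohomology in degrees $-1$, $0$, $1$. For a destabilizing quotient $\mathsf L_pG\twoheadrightarrow Q$ with kernel $K$, the morphism $\Phi_p\mathsf L_pG\to\Phi_pQ$ then need not be an epimorphism in $\mathcal A$, and $\Phi_pQ$ need not lie in $\mathcal A$. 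So the identity $\nu(\Phi_pQ)=p\,\nu(Q)\le0$ contradicts nothing, and positivity of $\cH^0_{\mathcal A}(\Phi_p\mathsf L_pG)$ does not transfer back along $F$ as you describe.

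The missing ingredient is control of these defects, together with a better choice of test object.
\begin{enumerate}
\item Lemma~\ref{lem:defect} shows that $\Phi_p$ of any object of $\Coh^{H,B_0+\beta H}(X)$ lies in $\D^{\le1}_{\mathcal A}$, with $\cH^1_{\mathcal A}=S[1]$ for a torsion-free $S$ whose slopes lie in $[\widehat B_\beta.H^{n-1}/H^n-e_p,\widehat B_\beta.H^{n-1}/H^n]$. Hence $S[1]\in\mathcal T$ by Lemma~\ref{lem:strip}, because $p^2A^2>e_p^2+\epsilon$.
\item The paper removes the degree $-1$ defect by testing not an arbitrary quotient of $\mathsf L_pG$ but the image $E'$ of $\mathsf L_pG\to E$. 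Since $\cH^{-1}(E')\subset\cH^{-1}(E)$ and $\cH^0(E')$ is a quotient of $\cH^0(\mathsf L_pG)$, the gaps \eqref{eq:sourcegaps} and \eqref{eq:Ppositive} give $\Phi_pE'\in\mathcal A$.
\item Then $\Phi_p\mathsf L_pG\in\D^{\le0}_{\mathcal A}$ yields an exact sequence $\cH^0_{\mathcal A}(\Phi_p\mathsf L_pG)\to\Phi_pE'\to\cH^1_{\mathcal A}(\Phi_pK)\to0$. So $\Phi_pE'\in\mathcal T$, contradicting $\nu^H_{pA,\widehat B_\beta}(\Phi_pE')=p\,\nu^H_{A,B_0+\beta H}(E')\le0$.
\end{enumerate}
Your route via an arbitrary destabilizing quotient $Q$ could be salvaged by additionally showing that the degree $-1$ defect of $\Phi_pQ$ is a torsion-free sheaf in the strip of width $e_p$ above $\widehat B_\beta$. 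That sheaf then has negative tilt slope and enters with the right sign.

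Two smaller points are also missing.
\begin{itemize}
\item You need $\mathsf L_pG\in\Coh^{H,B_0+\beta H}(X)$, which follows from \eqref{eq:push} and the gaps of $\cH^{-1}(G)$ and $\cH^0(G)$. This is exactly where $pa>2e_p-(p-1)K_X.H^{n-1}/H^n$, i.e.\ the factor $2$ in $a>2D(p-1)/p$, is used; your proposal only uses $pa>D(p-1)$.
\item You should check $\nu^{H,+}_{pA,\widehat B_\beta}(\Phi_pE)<+\infty$. Adjunction gives $\Hom(T,\Phi_pE)=0$ for sheaves $T$ with $\dim T\le n-2$. This is needed because Lemma~\ref{lem:tensor} requires finite $\mu$ and the duality step requires finite slopes.
\end{itemize}
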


We will prove the above upper bound in Section~\ref{sec:upper-proof} and
deduce the lower bound by duality in Section~\ref{sec:duality}.

\subsection{Filtrations induced by Frobenius}

Motivated by \cite{Sun}, in the following, we define a filtration of $\Phi_p\mathsf L_p(E)$ for any $E\in \Db(X)$. This is the starting point of our estimates. For any bundle $E$ on $X$ and $\ell\in \ZZ_{\ge 0}$, we denote by $\T^{\ell}(E)\subset E^{\otimes \ell}$ the \emph{truncated symmetric power} defined in \cite[Definition 3.4]{Sun}, which is also a vector bundle. According to \cite[Section 3]{Sun}, we have a quotient map
\[\Sym^{\ell}(E)\twoheadrightarrow \T^{\ell}(E)\]
which is an isomorphism for $0<\ell<p$. Moreover, $\T^{\ell}(E)=0$ for $\ell>\rk(E)(p-1)$.

\begin{lemma}\label{lem:graded}
For every $E\in\Db(X)$, there is a sequence of morphisms
\[0=\V_{n(p-1)+1}(E)\to \V_{n(p-1)}(E)\to \cdots \to \V_1(E)\to \V_0(E)=\Phi_p\mathsf L_p(E)\]
such that
\[\mathrm{cone}(\V_{j+1}(E)\to \V_{j}(E))\simeq E\otimes\omega_{X}^{1-p}\otimes \T^j(\Omega_{X})\]
for any $0\le j\le n(p-1)$.
\end{lemma}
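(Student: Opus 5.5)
We will compute $\Phi_p\mathsf L_p(E)=F^*F_*(E\otimes N_p)\otimes M_p$. The plan is to reduce to the case $E=\cO_X$ by the projection formula, and then filter $F^*F_*\cO_X$ using Sun's filtration.

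\textbf{Step 1: reduction to $E=\cO_X$.} By the projection formula for the finite flat morphism $F$,
\[F^*F_*(E\otimes N_p)\simeq F^*F_*\bigl(F^*(-)\otimes\ldots\bigr).\]
This form does not apply directly, since $E$ is not itself a Frobenius pullback. Instead, we use the base change description $F^*F_*G\simeq (\mathrm{pr}_2)_*\bigl(\mathrm{pr}_1^*G\otimes\cO_{X\times_{X}X}\bigr)$, where $X\times_X X$ is the fibre product along $F$. This fibre product is the first-order $p$-neighbourhood $\Delta^{(p)}$ of the diagonal: it is the closed subscheme of $X\times X$ cut out by the ideal $I^{[p]}$ generated by $p$-th powers of the local generators of the diagonal ideal $I$.

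We therefore get a Fourier--Mukai description
\[F^*F_*G\simeq \mathrm{pr}_{2*}\bigl(\mathrm{pr}_1^*G\otimes\cO_{\Delta^{(p)}}\bigr).\]
The kernel $\cO_{\Delta^{(p)}}$ carries the filtration by the images of $I^j$, as in \cite{Sun}. For $0\le j\le n(p-1)$, the $j$-th graded piece is $I^j/(I^{j+1}+I^{[p]}\cap I^j)$. It is supported on the diagonal and isomorphic to $\Delta_*\T^j(\Omega_X)$, because $I/I^2\simeq\Omega_X$ and the truncated symmetric algebra is exactly $\Sym^\bullet(\Omega_X)$ modulo $p$-th powers \cite[Section 3]{Sun}. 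The filtration terminates at $j=n(p-1)$, since $\T^j=0$ beyond $\rk(\Omega_X)(p-1)$.

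\textbf{Step 2: filtering the kernel.} The short exact sequences of the kernel filtration give, via the exact functor $\mathrm{pr}_{2*}(\mathrm{pr}_1^*G\otimes-)$, a sequence of morphisms
\[\V_{j+1}\to\V_j\]
in $\Db(X)$. Their cones are
\[\mathrm{pr}_{2*}\bigl(\mathrm{pr}_1^*G\otimes\Delta_*\T^j(\Omega_X)\bigr)\simeq G\otimes\T^j(\Omega_X).\]
This is the one point to check: $\Delta_*\T^j(\Omega_X)$ is a module over $\cO_{X\times X}$ on which both projections induce the same structure, so the identification holds.

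\textbf{Step 3: untwisting.} Take $G=E\otimes N_p$ and tensor the resulting filtration with $M_p$. Using $N_p\otimes M_p\simeq\omega_X^{1-p}$, the cones become $E\otimes\omega_X^{1-p}\otimes\T^j(\Omega_X)$, and $\V_0(E)=\Phi_p\mathsf L_p(E)$, as claimed.

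\textbf{Main obstacle.} The delicate point is that the filtration must be functorial, or at least well defined, for complexes $E$. Taking the filtration on the kernel handles this. What remains is to verify carefully the identification of the graded pieces of $\cO_{\Delta^{(p)}}$ with $\Delta_*\T^j(\Omega_X)$. This is a local computation: in local coordinates $x_i$ with $t_i=x_i\otimes1-1\otimes x_i$, the ring $\cO_{\Delta^{(p)}}$ is $\cO_X[t_1,\dots,t_n]/(t_i^p)$, free over $\cO_X$ via either projection. Its $t$-adic graded pieces are the degree-$j$ parts of $\Sym^\bullet/(t^p)$, which is the truncated symmetric power $\T^j(\Omega_X)$ of \cite[Definition 3.4]{Sun}. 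One must also check that these local isomorphisms glue, which follows since the $t$-adic filtration is intrinsic (it is the $I$-adic filtration).
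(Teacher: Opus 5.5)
Your proposal is correct and is essentially the paper's proof. The paper also realizes $F^*F_*$ as push--pull through the fibre product $Z=X\times_{F,X,F}X$. It sets $\V_j(E)=\pi_{1*}(\cI^j\otimes\pi_2^*(E\otimes N_p))\otimes M_p$, where $\cI$ is the ideal of the diagonal in $Z$; this $\cI^j$ is exactly your image of $I^j$ in $\cO_{\Delta^{(p)}}$. It then untwists using $M_p\otimes N_p\simeq\omega_X^{1-p}$, just as you do.

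The only differences are minor. You verify $\cI^j/\cI^{j+1}\simeq\Delta_*\T^j(\Omega_X)$ by a local coordinate computation, whereas the paper simply cites \cite[Lemma 3.6, Theorem 3.7]{Sun}. The paper also passes explicitly to the relative Frobenius using that $h$ is an isomorphism. That step uses the same perfectness of $\kk$ that your identification of $X\times_{F,X,F}X$ with a closed subscheme of $X\times X$ relies on implicitly.
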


\begin{proof}
Since $h$ is an isomorphism, we have $F^*F_*\simeq F_X^*F_{X,*}$ and $Z\coloneqq X\times_{F,X,F}X
\simeq X\times_{F_X,X^{(p)},F_X}X$. Write $\pi_1,\pi_2\colon Z\to X$ for the projections,
$\Delta\colon X\to Z$ for the diagonal, and
$\mathcal I=\ker(\cO_Z\to\Delta_*\cO_X)$. 

For any $E\in \Db(X)$, we define $$\V_j(E)\coloneqq \pi_{1*}(\cI^j\otimes \pi_2^*(E\otimes N_p))\otimes M_p.$$
Then the exact sequence $0\to \cI^{j+1}\to \cI^j\to \cI^j/\cI^{j+1}\to 0$ induces a morphism $\V_{j+1}(E)\to \V_j(E)$ with
\[\mathrm{cone}(\V_{j+1}(E)\to \V_j(E))\simeq \pi_{1*}(\cI^j/\cI^{j+1}\otimes \pi_2^*(E\otimes N_p))\otimes M_p.\]
Now the result follows from \cite[Lemma 3.6, Theorem 3.7]{Sun} as $M_p\otimes N_p\simeq \omega_X^{1-p}$.
\end{proof}

Using the above filtration, we have the following bounds.

\begin{lemma}\label{lem:coarse}
Set
\[e_p\coloneqq\frac{r_0\bigl(nu+\frac{K_X.H^{n-1}}{H^n}\bigr)}{u+r_0}(p-1).\]
Then $0\le e_p\le\min\{nr_0,nu+\frac{K_X.H^{n-1}}{H^n}\}(p-1)
\le D(p-1)$, and for every nonzero torsion-free sheaf $E$, we have
\begin{align}
\mu_{H}^+(F^*E)&\le p\mu_{H}^+(E)+e_p,
&\mu_{H}^-(F^*E)&\ge p\mu_{H}^-(E)-e_p,
\label{eq:pull}\\
\mu_{H}^+(F_*E)&\le\frac{\mu_{H}^+(E)+e_p}{p},
&\mu_{H}^-(F_*E)&\ge
\frac{\mu_{H}^-(E)-e_p+(p-1)\frac{K_X.H^{n-1}}{H^n}}{p}.
\label{eq:push}
\end{align}
\end{lemma}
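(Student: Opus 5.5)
The plan is to prove the numerical bounds on $e_p$ first, then the pullback estimate \eqref{eq:pull} (the real content), and finally deduce the pushforward estimate \eqref{eq:push} from \eqref{eq:pull} by adjunction and duality.

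\emph{Numerics.} Put $x\coloneqq nu+\frac{K_X.H^{n-1}}{H^n}$.
\begin{itemize}
\item Since $\Omega_X(uH)$ is globally generated, so is its determinant $\omega_X(nuH)$. Hence $x\ge0$.
\item Since $T_X(r_0H)$ is globally generated, $\omega_X^{-1}(nr_0H)$ is globally generated. Hence $\frac{K_X.H^{n-1}}{H^n}\le nr_0$, so $x\le n(u+r_0)$.
\item Since $\omega_X^{-1}(vH)$ is globally generated, $\frac{K_X.H^{n-1}}{H^n}\le v$, so $x\le D$.
\end{itemize}
Then $\frac{r_0x}{u+r_0}\le x$ is trivial, and $\frac{r_0x}{u+r_0}\le nr_0$ is equivalent to $x\le n(u+r_0)$. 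This gives $0\le e_p\le \min\{nr_0,x\}(p-1)\le D(p-1)$.

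\emph{Pullback.} The two inequalities in \eqref{eq:pull} are exchanged by $E\mapsto E^\vee$ on the locus where $E$ is locally free, using $F^*(E^\vee)=(F^*E)^\vee$. Reflexive hulls and codimension-two modifications do not change $\mu^\pm$, so it suffices to prove the upper bound. The HN filtration of $E$ pulls back to a filtration of $F^*E$ whose factors are $F^*$ of semistable sheaves, so we may also assume $E$ is $\mu_H$-semistable.

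Let $G\subset F^*E$ be the maximal destabilizing subsheaf, and use the canonical connection $\nabla$ on $F^*E$. If $G$ is not $\nabla$-stable, we get a nonzero $\cO_X$-linear map $G\to (F^*E/G)\otimes\Omega_X$. I will use the Joshi--Sun canonical filtration, equivalently the filtration of Lemma~\ref{lem:graded} (Sun's $\T^\ell(\Omega_X)$-filtration) applied to the counit $F^*F_*\to\mathrm{id}$. Its graded pieces are quotients of $\mathrm{gr}_0\otimes\T^\ell(\Omega_X)$, and the filtration has length at most $n(p-1)+1$.

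We then bound the slope drop across this filtration in two ways.
\begin{enumerate}
\item Using that $T_X(r_0H)$ is globally generated: each step $\mathrm{gr}_\ell\to\mathrm{gr}_{\ell+1}\otimes\Omega_X$ yields a nonzero map $\mathrm{gr}_\ell(-r_0H)\to\mathrm{gr}_{\ell+1}$. So the slope drops by at most $r_0$ per step.
\item Using that $\Omega_X(uH)$ is globally generated: $\T^\ell(\Omega_X)$ is a quotient of $\cO(-\ell uH)^{\oplus N}$, together with the symmetry $\T^\ell\leftrightarrow\T^{n(p-1)-\ell}\otimes\omega_X^{p-1}$. This controls the total spread by $\bigl(nu+\frac{K_X.H^{n-1}}{H^n}\bigr)(p-1)$, split between the top and the bottom of the filtration.
\end{enumerate}
Since $\mu(F^*E)=p\mu(E)$ is an average of the slopes of the pieces, each bound gives an estimate of the form $\mu^+(F^*E)-p\mu(E)\le(\text{spread})$. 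The weighted combination with weights $\frac{u}{u+r_0}$ and $\frac{r_0}{u+r_0}$ should yield exactly $e_p=\frac{r_0x}{u+r_0}(p-1)$. Getting this optimal convex combination is the main obstacle. I would first try to write the slope of $G$ as a weighted average over the filtration steps and optimize the linear program in the two constraints.

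\emph{Pushforward.} Let $G\subset F_*E$ be the maximal destabilizing subsheaf. The composite $F^*G\to F^*F_*E\to E$ is nonzero by adjunction, so $\mu^-(F^*G)\le\mu^+(E)$. Combined with $\mu^-(F^*G)\ge p\mu(G)-e_p$ from \eqref{eq:pull}, this gives the first inequality in \eqref{eq:push}. For the second, apply this to the dual: by Grothendieck duality and $F^!=F^*(-)\otimes\omega_X^{1-p}$, we have $(F_*E)^\vee\simeq F_*(E^\vee\otimes\omega_X^{1-p})$ up to reflexive hull. Then
\[
\mu^-(F_*E)=-\mu^+\bigl(F_*(E^\vee\otimes\omega_X^{1-p})\bigr)\ge -\frac{-\mu^-(E)-(p-1)\frac{K_X.H^{n-1}}{H^n}+e_p}{p},
\]
which is the claimed bound.
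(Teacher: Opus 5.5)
Your bounds on $e_p$ are correct, and so is your deduction of \eqref{eq:push} from \eqref{eq:pull} via adjunction and $(F_*E)^\vee\simeq F_*(E^\vee\otimes\omega_X^{1-p})$. The gap is the central step, \eqref{eq:pull} with the specific constant $e_p$. You leave the combination as ``the main obstacle'', and the mechanism you sketch cannot produce $e_p$. Write $x=nu+\frac{K_X.H^{n-1}}{H^n}$. Two separate bounds on the total spread of a filtration of $F^*E$ (at most $n(p-1)$ steps of size $r_0$, or a total of $x(p-1)$) only give $\mu_H^+(F^*E)-p\mu_H(E)\le\min\{nr_0,x\}(p-1)$. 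No convex combination of two global upper bounds goes below their minimum, and this is strictly weaker than $e_p=\frac{r_0x}{u+r_0}(p-1)$ in general (e.g.\ $K_X.H^{n-1}=0$, $u=r_0$).

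What is needed is to apply both constraints to the same graded piece. For each $0\le j\le n(p-1)$, use the embeddings $\T^j(\Omega_X)\hookrightarrow\cO_X(jr_0H)^{\oplus n_j}$ and $\T^j(\Omega_X)\hookrightarrow\bigl(\omega_X^{p-1}((n(p-1)-j)uH)\bigr)^{\oplus m_j}$. Both have locally free cokernel; the second comes from the perfect pairing $\T^j(\Omega_X)\otimes\T^{n(p-1)-j}(\Omega_X)\to\omega_X^{p-1}$. Together they give $\mu_H^+(E\otimes\T^j(\Omega_X))-\mu_H^+(E)\le\min\{jr_0,(p-1)x-ju\}$. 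The weights $\frac{u}{u+r_0}$ and $\frac{r_0}{u+r_0}$ make the $j$-dependence cancel, so this is at most $e_p$ uniformly in $j$. No linear program is needed.

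Your filtration is also not set up correctly. The $\T^\ell(\Omega_X)$-filtration of Lemma~\ref{lem:graded} lives on $F^*F_*(-)$, not on $F^*E$. The filtration of $F^*E$ generated from $G$ by the canonical connection has pieces that are quotients of $G\otimes\T^\ell(T_X)$, not of $G\otimes\T^\ell(\Omega_X)$. And a nonzero map $\mathrm{gr}_\ell(-r_0H)\to\mathrm{gr}_{\ell+1}$ between non-semistable sheaves gives no slope comparison by itself.

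The simplest repair is to reverse your order, as the paper does. Lemma~\ref{lem:graded} gives $F^*F_*E$ a filtration with factors $E\otimes\T^j(\Omega_X)$. Pulling back the first HN factor of $F_*E$ and using the per-piece bound gives $p\mu_H^+(F_*E)\le\mu_H^+(F^*F_*E)\le\mu_H^+(E)+e_p$. The lower bound in \eqref{eq:push} then follows by your duality argument. Finally, \eqref{eq:pull} follows from adjunction with $F^!=F^*(-)\otimes\omega_X^{1-p}$: for $G$ the first HN factor of $F^*E$ there is a nonzero map $F_*(G\otimes\omega_X^{1-p})\to E$, so $\frac{\mu_H(G)-e_p}{p}\le\mu_H^-(F_*(G\otimes\omega_X^{1-p}))\le\mu_H^+(E)$.

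If you prefer a direct argument, take $E$ semistable and let $Q$ be the last HN quotient of $F^*E$. Pull back the image of the adjoint map $E\to F_*Q$ into $F^*F_*Q$ and restrict Sun's filtration. Its pieces embed into $Q\otimes\T^\ell(\Omega_X)$, and averaging gives $p\mu_H(E)\le\mu_H(Q)+e_p$.
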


\begin{proof}
Taking determinants in
\eqref{eq:gg} gives
\[-nu\le \frac{K_X.H^{n-1}}{H^n}\le\min\{v,nr_0\},\]
which proves the asserted bounds on $e_p$. For
$0\le j\le n(p-1)$, the global generation of $T_X(r_0H)$ gives
inclusions
\[\T^j(\Omega_X)\hookrightarrow(\Omega_X)^{\otimes j} \hookrightarrow\cO_X(jr_0H)^{\oplus n_j}\]
with locally free cokernels. On the other hand, multiplication in the truncated symmetric algebra gives a perfect pairing
\[\T^j(\Omega_X)\otimes \T^{n(p-1)-j}(\Omega_X) \longrightarrow\omega_X^{p-1}\]
by \cite[Proof of Lemma 3.4]{Donkin-Geranios}. Since
$\T^{n(p-1)-j}(\Omega_X)((n(p-1)-j)uH)$ is globally
generated, dualizing its evaluation map gives an inclusion
\[\T^j(\Omega_X)\hookrightarrow \bigl(\omega_X^{p-1}((n(p-1)-j)uH)\bigr)^{\oplus m_j}\]
with locally free cokernel. Consequently,
\[
 \begin{aligned}
 \mu_H^+(E\otimes\T^j(\Omega_X))-\mu_H^+(E)
 &\le\min\left\{jr_0,(p-1)\left(nu+\frac{K_X.H^{n-1}}{H^n}\right)-ju\right\}\\
 &\le\frac{u}{u+r_0}jr_0
 +\frac{r_0}{u+r_0}\left((p-1)\left(nu+\frac{K_X.H^{n-1}}{H^n}\right)-ju\right)
 =e_p.
 \end{aligned}
\]

By Lemma~\ref{lem:graded}, $F^*F_*E$ has a filtration with
factors $E\otimes\T^j(\Omega_X)$. Pulling back the first HN
factor of $F_*E$ therefore gives
\[p\mu_H^+(F_*E)\le\mu_H^+(F^*F_*E) \le\mu_H^+(E)+e_p,\]
proving the upper bound in \eqref{eq:push}. Since
\[(F_*E)^\vee\simeq F_*(E^\vee\otimes\omega_X^{1-p}),\]
applying the upper bound to $E^\vee\otimes\omega_X^{1-p}$
and using $\mu_H^+(E^\vee)=-\mu_H^-(E)$ proves the lower bound.

Let $G$ be the first HN factor of $F^*E$. Adjunction gives a nonzero map $F_*(G\otimes\omega_X^{1-p})\to E$. Since $G$
is $\mu_H$-semistable, the lower bound in \eqref{eq:push} gives
\[\frac{\mu_H(G)-e_p}{p} \le\mu_H^-(F_*(G\otimes\omega_X^{1-p})) \le\mu_H^+(E).\]
This proves the upper bound in \eqref{eq:pull}. Applying it
to $E^\vee$ and using $F^*(E^\vee)\simeq(F^*E)^\vee$ proves
the lower bound.
\end{proof}

\begin{lemma}\label{lem:geometricBG}
With the fixed choices in \eqref{eq:gg}, every slope-semistable
torsion-free sheaf $W$ satisfies
\[H^{n-2}.\bigl(\ch_1(W)^2-2\rk(W)\ch_2(W)\bigr) +\left(\frac{r_0\bigl(nu+\frac{K_X.H^{n-1}}{H^n}\bigr)}{u+r_0}\right)^2 H^n\rk(W)^2\ge0.\]
\end{lemma}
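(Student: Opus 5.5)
The plan is to follow the classical argument of Shepherd-Barron and Langer: iterate Frobenius pullback, use Lemma~\ref{lem:coarse} to control how far the Harder--Narasimhan slopes can spread, and then let the number of iterations go to infinity. Throughout, write
\[
e\coloneqq\frac{e_p}{p-1}=\frac{r_0\bigl(nu+\frac{K_X.H^{n-1}}{H^n}\bigr)}{u+r_0}.
\]
This number is independent of $p$, so the claimed inequality reads $H^{n-2}.\Delta(W)+e^2H^n\rk(W)^2\ge0$, where $\Delta(W)=\ch_1(W)^2-2\rk(W)\ch_2(W)$.

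\textbf{Step 1: spread of the HN slopes of $F^{k*}W$.} Let $W$ be $\mu_H$-semistable and torsion-free of rank $r$ and slope $\mu$. Each $F^{k*}W$ is torsion-free, since $F$ is flat. Applying \eqref{eq:pull} $k$ times gives
\[
p^k\mu-e(p^k-1)\le\mu_H^-(F^{k*}W)\le\mu_H^+(F^{k*}W)\le p^k\mu+e(p^k-1).
\]
So all HN slopes of $F^{k*}W$ lie in an interval of half-width $c_k\coloneqq e(p^k-1)$ centred at $p^k\mu$, which is the slope of $F^{k*}W$ itself.

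\textbf{Step 2: reduce to strongly semistable factors.} By Langer's theorem there is a $k$, which we may take as large as we like, such that every HN factor $W_i$ of $F^{k*}W$ is strongly $\mu_H$-semistable. For strongly semistable sheaves the Bogomolov inequality $H^{n-2}.\Delta(W_i)\ge0$ holds in positive characteristic, again by Langer.

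\textbf{Step 3: discriminant of an extension.} Write $r_i=\rk W_i$, $\xi_i=\ch_1(W_i)/r_i$, and $\mu_i=H^{n-1}.\xi_i/H^n$. The standard identity
\[
\frac{\Delta(F^{k*}W)}{r}=\sum_i\frac{\Delta(W_i)}{r_i}-\frac1r\sum_{i<j}r_ir_j(\xi_i-\xi_j)^2
\]
holds for a filtration. By the Hodge index theorem on a complete intersection surface, $H^{n-2}.(\xi_i-\xi_j)^2\le H^n(\mu_i-\mu_j)^2$. Moreover,
\[
\sum_{i<j}r_ir_j(\mu_i-\mu_j)^2=r\sum_i r_i(\mu_i-\bar\mu)^2\le r^2c_k^2,
\]
where $\bar\mu=p^k\mu$ and the bound uses Step 1. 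Hence
\[
H^{n-2}.\Delta(F^{k*}W)\ge -r^2H^nc_k^2 .
\]

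\textbf{Step 4: conclude.} Since $\ch_i(F^{k*}W)=p^{ki}\ch_i(W)$, we have $\Delta(F^{k*}W)=p^{2k}\Delta(W)$. Dividing Step 3 by $p^{2k}$ gives
\[
H^{n-2}.\Delta(W)\ge -r^2H^ne^2(1-p^{-k})^2\ge -e^2H^nr^2,
\]
which is the claimed inequality.

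The main obstacle is Step 2. It requires Langer's results that HN factors become strongly semistable after finitely many Frobenius pullbacks, and that the Bogomolov inequality holds for strongly semistable sheaves. If I want to avoid citing the first of these, I would instead argue by induction on rank, using a minimal counterexample. The other steps are formal once the slope bounds of Lemma~\ref{lem:coarse} are available.
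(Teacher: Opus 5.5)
Your proposal is correct and takes the same route as the paper. The paper also iterates \eqref{eq:pull} to show $\lim_k p^{-k}\mu_H^{+}((F^k)^*W)-\mu_H(W)\le e$, together with the symmetric lower bound, and then cites Langer's Theorem~5.1 (equation (5.1.1)) directly; your Steps~2--4 unpack the proof of that theorem (strongly semistable HN factors after finitely many Frobenius pullbacks, Bogomolov for strongly semistable sheaves, the discriminant identity for filtrations, and Hodge index), and the computations there are correct.
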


\begin{proof}
Iterating \eqref{eq:pull} for a slope-semistable $W$ gives
\[
 \begin{aligned}
 \lim_{e\to\infty}\frac{\mu_H^+((F^e)^*W)}{p^e}-\mu_H(W)
 &\le\frac{r_0\bigl(nu+\frac{K_X.H^{n-1}}{H^n}\bigr)}{u+r_0},\\
 \mu_H(W)-\lim_{e\to\infty}\frac{\mu_H^-((F^e)^*W)}{p^e}
 &\le\frac{r_0\bigl(nu+\frac{K_X.H^{n-1}}{H^n}\bigr)}{u+r_0}.
 \end{aligned}
\]
Thus \cite[Theorem 5.1, equation (5.1.1)]{Langer}, with slopes
divided by $H^n$, proves the asserted inequality.
\end{proof}

Since
\[D\geq \frac{r_0\bigl(nu+\frac{K_X.H^{n-1}}{H^n}\bigr)}{u+r_0},\]
in the following, we fix $\epsilon$ so that $D\geq \sqrt{\epsilon}$.

\subsection{Frobenius and the tilted heart}

Now, we study the action of the Frobenius map on the tilted heart.

\begin{lemma}\label{lem:strip}
Let $(\alpha,\beta)\in \RR_{>0}\times \RR$, $c\ge0$, and let $E$ be a nonzero torsion-free sheaf with
\[\frac{(B_0+\beta H). H^{n-1}}{H^n}-c\le\mu_{H}^-(E)\le\mu_{H}^+(E)\le \frac{(B_0+\beta H). H^{n-1}}{H^n}.\]
If $\alpha^2>c^2+\epsilon$, then $E[1]\in\Coh^{H,B_0+\beta H}(X)$ and $\nu^{H,-}_{\alpha, B_0+\beta H}(E[1])>0$.
\end{lemma}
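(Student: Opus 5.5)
First, $E[1]\in\Coh^{H,B_0+\beta H}(X)$ holds because $\mu_H^+(E)\le \frac{(B_0+\beta H).H^{n-1}}{H^n}$, so $E\in\mathcal F_{H,B_0+\beta H}$. Write $b\coloneqq \frac{(B_0+\beta H).H^{n-1}}{H^n}$. The real content is the claim $\nu^{H,-}_{\alpha,B_0+\beta H}(E[1])>0$. I would argue by contradiction: let $E[1]\twoheadrightarrow Q$ be the last HN factor with respect to $\nu^H_{\alpha,B_0+\beta H}$, and suppose $\nu(Q)\le 0$.

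The plan is to show that $Q$ must be of the form $G[1]$ for a torsion-free sheaf $G$ whose slopes lie in the strip $[b-c,b]$. Then I derive a contradiction from the Bogomolov inequality and the hypothesis $\alpha^2>c^2+\epsilon$.

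The long exact cohomology sequence of $0\to K\to E[1]\to Q\to 0$ in the tilted heart gives:
\begin{itemize}
\item $\cH^0(Q)=0$, since $\cH^0(E[1])=0$ forces $\cH^0(Q)$ to be a quotient of $\cH^0$-terms that vanish. The only subtlety is that $\cH^0(K)\to\cH^0(E[1])=0$, so $\cH^0(Q)$ is a quotient of $0$.
\item Therefore $Q=G[1]$ with $G\in\mathcal F$.
\item There is an exact sequence $0\to\cH^{-1}(K)\to E\to G\to \cH^0(K)\to 0$.
\end{itemize}
Since $\cH^0(K)\in\mathcal T$, the image $I$ of $E\to G$ is a quotient of $E$ with $\mu^-_H(I)\ge \mu_H^-(E)\ge b-c$, and $G/I\in\mathcal T$. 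As $G\in\mathcal F$, every HN factor of $G$ has slope $\le b$. The HN factors of $G$ should have slope $\ge b-c$ as well: the last HN factor $G_{\min}$ of $G$ satisfies $\mu(G_{\min})\ge\min\{\mu^-(I),\mu^-(G/I)\}\ge b-c$, because $\mu^-(G/I)>b$.

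Rather than keeping all of $G$, it is cleaner to pass to the last $\mu_H$-HN factor $W$ of $G$. Then $W[1]$ is a quotient of $G[1]$, and $\nu(W[1])\le$ ... this step is messy, so instead I would directly bound $\nu$ of an arbitrary $F[1]$ with $F\in\mathcal F$ torsion-free and all slopes in $[b-c,b]$. Put $x_i\coloneqq b-\mu_H(F_i)\in[0,c]$ for the HN factors $F_i$. Using Bogomolov (Theorem~\ref{thm:uniformBG} with $\epsilon$), each factor satisfies
\[v_2^{B}(F_i)\le \frac{(v_1^B(F_i))^2+\epsilon (v_0(F_i))^2}{2v_0(F_i)}.\]
Hence
\[v_2^B(F_i)-\frac{\alpha^2}{2}v_0(F_i)\le \frac{v_0(F_i)}{2}\bigl(x_i^2H^{n}{}^2/\!\cdot\ +\epsilon-\alpha^2\bigr)<0\]
after normalizing $v_0=H^n\rk$, since $x_i^2\le c^2<\alpha^2-\epsilon$. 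For $F_i[1]$ the numerator of $\nu$ is $-(v_2-\frac{\alpha^2}{2}v_0)>0$, and the denominator $-v_1^B(F_i)=x_iv_0\ge0$. Summing over the factors, $F[1]$ has positive numerator and nonnegative denominator, so $\nu(F[1])>0$ (possibly $+\infty$).

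Applying this to $F=G$ contradicts $\nu(Q)\le0$. The main obstacle I expect is correctly establishing that $G$ has all slopes $\ge b-c$, i.e. the cohomology-sequence bookkeeping. In particular I need to check that $\mu^-$ of the quotient $I$ of $E$ is at least $\mu^-(E)$ when $I$ may have torsion. Torsion would only make slopes larger, and $I\subset G$ is torsion-free anyway.
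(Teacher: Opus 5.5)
Your proof is correct, but it takes a different route from the paper's. The paper uses the semistability of the last HN factor $Q$. If $\nu^H_{\alpha,B_0+\beta H}(Q)=\mu\le 0$, then Lemma~\ref{lem:2.5gap} applied to $Q$ gives $\mu_H^+(\cH^{-1}(Q))\le b+\mu-\sqrt{\alpha^2-\epsilon+\mu^2}\le b-\sqrt{\alpha^2-\epsilon}<b-c\le\mu_H^-(E)$. Hence $\Hom(E[1],Q)=\Hom(E,\cH^{-1}(Q))=0$, which contradicts the surjection $E[1]\twoheadrightarrow Q$.

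So the paper bounds the slopes of $\cH^{-1}(Q)$ from above, using semistability of $Q$, and compares them with $E$. You bound them from below, using only that $Q$ is a quotient of $E[1]$ in the heart. The long exact sequence shows that $\cH^{-1}(Q)$ is an extension of a sheaf in $\mathcal T_{H,B_0+\beta H}$ by a quotient of $E$. You then evaluate $\nu^H_{\alpha,B_0+\beta H}$ directly by applying the Bogomolov inequality to each $\mu_H$-HN factor.

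What each approach buys: your argument never uses semistability of $Q$. It therefore proves the slightly more transparent statement that every nonzero quotient of $E[1]$ in $\Coh^{H,B_0+\beta H}(X)$ has positive tilt slope, and it does not rely on Lemma~\ref{lem:2.5gap}. The paper's proof is a two-line consequence of a lemma already established. The underlying estimate is the same in both: your per-factor computation is exactly the Bogomolov step inside the proof of Lemma~\ref{lem:2.5gap}.

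One cosmetic fix: your displayed inequality is garbled. With the paper's normalization $\mu_H=H^{n-1}.\ch_1/(H^n\rk)$, one has $v_1^{H,B}(F_i)=-x_iv_0^H(F_i)$ exactly. The line should therefore read $v_2^{H,B}(F_i)-\frac{\alpha^2}{2}v_0^H(F_i)\le\frac{v_0^H(F_i)}{2}(x_i^2+\epsilon-\alpha^2)<0$.
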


\begin{proof}
By definition, it is clear that $E[1]\in\Coh^{H,B_0+\beta H}(X)$. If the last HN factor $Q$ of $E[1]$ with respect to $\nu^{H}_{\alpha, B_0+\beta H}$ satisfies $\nu^{H}_{\alpha, B_0+\beta H}(Q)=\mu\leq 0$, then Lemma~\ref{lem:2.5gap} gives
\[\begin{aligned}
\mu_H^+(\cH^{-1}(Q))
\le& \frac{(B_0+\beta H).H^{n-1}}{H^n}+\mu-\sqrt{\alpha^2-\epsilon+\mu^2}\\
\le& \frac{(B_0+\beta H).H^{n-1}}{H^n}-\sqrt{\alpha^2-\epsilon}\\
<&\frac{(B_0+\beta H).H^{n-1}}{H^n}-c\\
\le&\mu_H^-(E)
\end{aligned}\]
when $\cH^{-1}(Q)\ne0$. Hence, we always have
\[\Hom(E[1],Q)=\Hom(E,\cH^{-1}(Q))=0,\]
which gives a contradiction.
\end{proof}

\begin{lemma}\label{lem:defect}
Let $\beta\in \RR$. Then for any $E\in\Coh^{H,B_0+\beta H}(X)$, we have
\[\cH^i_{\Coh^{H,\widehat B_{\beta}}(X)}(\Phi_pE)=0\]
for $i\geq 2$ and 
\[\cH^1_{\Coh^{H,\widehat B_{\beta}}(X)}(\Phi_pE)=G[1],\]
where $G$ is torsion-free and, if nonzero,
\[\frac{\widehat B_{\beta}. H^{n-1}}{H^n}-e_p\le\mu_{H}^-(G)\le\mu_{H}^+(G)\le \frac{\widehat B_{\beta}. H^{n-1}}{H^n}.\]
In particular, if $\alpha^2>e_p^2+\epsilon$, then
\[\nu^{H,-}_{\alpha,\widehat B_{\beta}}(\cH^1_{\Coh^{H,\widehat B_{\beta}}(X)}(\Phi_pE))>0.\]
\end{lemma}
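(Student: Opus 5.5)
Since $E\in\Coh^{H,B_0+\beta H}(X)$, there is a short exact sequence in the tilted heart
\[0\to \cH^{-1}(E)[1]\to E\to \cH^0(E)\to 0,\]
with $\cH^{-1}(E)\in\mathcal F_{H,B_0+\beta H}$ torsion-free and $\cH^0(E)\in \mathcal T_{H,B_0+\beta H}$. The functor $\Phi_p$ is exact on $\Coh(X)$, because $F$ is flat and $M_p$ is a line bundle. So $\Phi_pE$ sits in a triangle $\Phi_p\cH^{-1}(E)[1]\to\Phi_pE\to\Phi_p\cH^0(E)$, where both outer terms are shifts of sheaves. The plan is to locate each of the two sheaves $\Phi_p\cH^{-1}(E)$ and $\Phi_p\cH^0(E)$ relative to the torsion pair $(\mathcal F_{H,\widehat B_\beta},\mathcal T_{H,\widehat B_\beta})$, using the slope bounds \eqref{eq:pull}. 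Throughout write $b=\frac{(B_0+\beta H).H^{n-1}}{H^n}$. By \eqref{eq:scaling}, or directly since $c_1(M_p)=-\frac{p-1}{q}qB_0$, we have
\[\mu_H(\Phi_pW)=p\mu_H(W)-(p-1)\mu_H(B_0)\quad\text{and}\quad \frac{\widehat B_\beta.H^{n-1}}{H^n}=pb-(p-1)\mu_H(B_0),\]
where $\mu_H(B_0)=B_0.H^{n-1}/H^n$.

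\emph{Step 1: the $\cH^0$ part.} Let $T=\cH^0(E)$ and let $T_{\mathrm{tor}}\subset T$ be its torsion subsheaf. Then $F^*T_{\mathrm{tor}}$ is again torsion, so it lies in $\mathcal T_{H,\widehat B_\beta}$. For the torsion-free quotient $T/T_{\mathrm{tor}}$ we have $\mu^-_H>b$, and \eqref{eq:pull} gives
\[\mu_H^-\bigl(\Phi_p(T/T_{\mathrm{tor}})\bigr)\ge p\mu_H^-(T/T_{\mathrm{tor}})-e_p-(p-1)\mu_H(B_0)>\frac{\widehat B_\beta.H^{n-1}}{H^n}-e_p.\]
So $\Phi_pT$ need not lie in $\mathcal T_{H,\widehat B_\beta}$. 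Its $\mathcal F$-part $G$, the maximal torsion-free quotient with $\mu^+\le \widehat B_\beta.H^{n-1}/H^n$, has $\mu^-_H(G)\ge \mu^-_H(\Phi_pT)$ by HN properties. Hence $\mu_H^-(G)$ lies in the window $\bigl[\tfrac{\widehat B_\beta.H^{n-1}}{H^n}-e_p,\tfrac{\widehat B_\beta.H^{n-1}}{H^n}\bigr]$. One should double check the non-strict endpoint, but the closed window is what is claimed.

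\emph{Step 2: the $\cH^{-1}$ part.} For $U=\cH^{-1}(E)$ we have $\mu^+_H(U)\le b$, and \eqref{eq:pull} gives
\[\mu_H^+(\Phi_pU)\le \frac{\widehat B_\beta.H^{n-1}}{H^n}+e_p.\]
Thus $\Phi_pU$ splits into a $\mathcal T$-part $U'$ and an $\mathcal F$-part $U''$, and $\Phi_pU[1]$ has tilted cohomology $U'$ in degree $-1$ and $U''[1]$ in degree $0$. Now use the long exact sequence of tilted cohomology for the triangle above. In degrees $\ge 2$ both outer terms vanish, so $\cH^i(\Phi_pE)=0$ for $i\ge2$. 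In degree $1$, $\Phi_pU[1]$ contributes nothing, so $\cH^1(\Phi_pE)=\cH^1(\Phi_pT)=G[1]$.

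\emph{Step 3: the final claim.} This follows immediately from Lemma~\ref{lem:strip}, applied with $c=e_p$ and $B=\widehat B_\beta$. Strictly speaking, the lemma is stated for $B_0+\beta H$, so it should be applied with $B_0$ replaced by $\widehat B_\beta$; it holds for any rational $B$ by the same proof.

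\emph{Main obstacle.} The main subtlety is not the vanishing statements but the bookkeeping of the slope shift by $M_p$. One must check that $c_1(M_p)$ shifts the slope by exactly $-(p-1)\mu_H(B_0)$, so that the thresholds match $\widehat B_\beta$. A second point is that the nonzero $\cH^{-1}$ of the tilted cohomology causes no trouble, since only degrees $\ge1$ are asserted. Finally, torsion in $\cH^0(E)$ must be handled separately, since \eqref{eq:pull} is only for torsion-free sheaves.
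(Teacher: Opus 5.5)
Your proposal is correct and follows the paper's argument: identify the degree-one tilted cohomology with the $\mathcal F_{H,\widehat B_\beta}$-part of $\cH^0(\Phi_pE)=\Phi_p\cH^0(E)$, shifted by one, bound its slopes via \eqref{eq:pull}, and conclude with Lemma~\ref{lem:strip}; your separate handling of torsion and the explicit long exact sequence only spell out what the paper leaves implicit. No generalization of Lemma~\ref{lem:strip} is needed, since $\widehat B_\beta=B_0+p\beta H$ means it applies directly with $\beta$ replaced by $p\beta$. Also, your lower bound on $\mu_H^-(G)$ is in fact strict, so the endpoint worry is moot.
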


\begin{proof}
From the definition of $\Phi_p$, it is clear that $\cH^i(\Phi_pE)=0$ for $i\notin \{0,-1\}$. Thus, from the construction of $\Coh^{H,\widehat B_{\beta}}(X)$, we see that $\cH^i_{\Coh^{H,\widehat B_{\beta}}(X)}(\Phi_pE)=0$ for $i\geq 2$. Moreover, $\cH^1_{\Coh^{H,\widehat B_{\beta}}(X)}(\Phi_pE)=G[1]$, where $G$ is the part of the HN filtration of $\cH^0(\Phi_pE)$ such that $\mu_H^+(G)\leq \frac{\widehat B_{\beta}. H^{n-1}}{H^n}$. Now, the remaining part follows from \eqref{eq:pull} and Lemma~\ref{lem:strip}.
\end{proof}

\subsection{Tensor products}

\begin{lemma}\label{lem:tensor}
Let $\alpha>\sqrt\epsilon$, $\beta\in \RR$ and let
$E\in\Coh^{H,B_0+\beta H}(X)$ be
$\nu^H_{\alpha,B_0+\beta H}$-semistable with $\infty>\nu^H_{\alpha,B_0+\beta H}(E)=\mu>0$.
If $t\in\ZZ_{\ge0}$ satisfies
\[\sqrt{\alpha^2-\epsilon+\mu^2}+t-\mu<\sqrt{\alpha^2-\epsilon},\]
then $E(-tH)\in \Coh^{H,B_0+\beta H}(X)$ and $\nu^{H,-}_{\alpha,B_0+\beta H}(E(-tH))>0$.
\end{lemma}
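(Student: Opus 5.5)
Throughout, write $B=B_0+\beta H$ and $b=\frac{B.H^{n-1}}{H^n}$. Set $\rho=\sqrt{\alpha^2-\epsilon+\mu^2}$. The hypothesis on $t$ then reads
\[t<\mu+\sqrt{\alpha^2-\epsilon}-\rho .\]
The proof has two parts: membership of $E(-tH)$ in the heart, and positivity of its minimal tilt slope. Only the second needs real work.

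\emph{Membership in the heart.} This should follow directly from Lemma~\ref{lem:2.5gap} applied to $E$. On the $\cH^{-1}$ side,
\[\mu_H^+(\cH^{-1}(E)(-tH))\le b+\mu-\rho-t\le b,\]
since $\mu<\rho$ (as $\alpha^2>\epsilon$) and $t\ge0$. So $\cH^{-1}(E)(-tH)\in\mathcal F_{H,B}$. On the $\cH^0$ side, the torsion part of $\cH^0(E)$ stays torsion after twisting. For the torsion-free part we have
\[\mu_H^-(\cH^0(E)(-tH))\ge b+\mu+\rho-t>b+\rho-\sqrt{\alpha^2-\epsilon}+\rho>b,\]
using the hypothesis on $t$ and $\rho\ge\sqrt{\alpha^2-\epsilon}$. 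Hence $\cH^0(E)(-tH)\in\mathcal T_{H,B}$, and $E(-tH)\in\Coh^{H,B}(X)$.

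\emph{Positivity of $\nu^{H,-}_{\alpha,B}$.} I would argue as in Lemma~\ref{lem:strip}. Let $Q$ be the last $\nu^H_{\alpha,B}$-HN factor of $E(-tH)$, and suppose $\nu^H_{\alpha,B}(Q)=\mu'\le0$. It suffices to show $\Hom(E(-tH),Q)=\Hom(E,Q(tH))=0$. Lemma~\ref{lem:2.5gap} applied to $Q$ gives
\[\mu_H^+(\cH^{-1}(Q))\le b+\mu'-\sqrt{\alpha^2-\epsilon+\mu'^2}\le b-\sqrt{\alpha^2-\epsilon}.\]
After twisting, $\mu_H^+(\cH^{-1}(Q(tH)))\le b+t-\sqrt{\alpha^2-\epsilon}$, and by hypothesis this is strictly less than $b+\mu-\rho$. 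That is exactly the upper bound for $\mu_H^+(\cH^{-1}(E))$ from Lemma~\ref{lem:2.5gap}.

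To exploit this, I would pass to the parameter $(\alpha,B+tH)$. The object $Q(tH)$ lies in $\Coh^{H,B+tH}(X)$ and is $\nu^H_{\alpha,B+tH}$-semistable of slope $\mu'\le 0$, because twisting by $tH$ translates the whole $(\alpha,\beta)$-picture by $t$. On the other side, I would use Lemma~\ref{lem:nestedwall}: $E$ stays semistable along its nested semicircle, which is centered at $\beta+\mu$ and has radius$^2$ equal to $\alpha^2+\mu^2$, in the shifted coordinates where $\epsilon$ is subtracted. The inequality $t<\mu+\sqrt{\alpha^2-\epsilon}-\rho$ should say precisely that the vertical line $\beta'=\beta+t$ meets this semicircle (or the relevant region) at some $\alpha_1$. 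At that point $E$ is semistable, and it should have strictly positive slope there, since $t<\mu$.

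Having both objects at a common parameter, I would conclude as follows. $E$ would be semistable of positive slope, while every HN factor of $Q(tH)$ would have slope $\le0$; the latter needs to be checked by a second application of Lemma~\ref{lem:2.5gap}, or by Lemma~\ref{lem:nestedwall} for $Q(tH)$, whose nested semicircle is centered to the left of $\beta+t$. Once both are in the same heart, the slope comparison gives $\Hom(E,Q(tH))=0$. This contradicts $Q$ being a quotient of $E(-tH)$.

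\emph{Expected main obstacle.} The hard step is matching the parameters exactly. One must verify that the semicircles of $E$ and of $Q(tH)$, with the $\epsilon$-shift from Proposition~\ref{prop:tiltBG}, meet the line $\beta'=\beta+t$ at a common $\alpha_1>\sqrt\epsilon$. At that point $E$ must have positive slope and $Q(tH)$ nonpositive slope. If the geometry is not clean, I would fall back on a direct cohomological argument. This would split $\Hom(E,Q(tH))$ using the triangle $\cH^{-1}(E)[1]\to E\to\cH^0(E)$, and kill each piece using the slope gaps above together with Lemma~\ref{lem:2.5gap} for the $\cH^0$ parts.
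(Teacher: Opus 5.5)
Your membership argument is correct. The positivity step, however, has a genuine gap.

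For $t\ge1$ the two nested semicircles cannot meet on the line $\beta'=\beta+t$. The one for $Q(tH)$ crosses that line only at height $\alpha$, while the one for $E$ crosses it at $\alpha_1=\sqrt{\alpha^2+2\mu t-t^2}>\alpha$. So at $(\alpha_1,\beta+t)$ you know nothing about the stability of $Q(tH)$. Your claim that all its HN factors there have slope $\le0$ is false in general. Along a vertical line,
$\nu^H_{\alpha_1,B+tH}(F)=\nu^H_{\alpha,B+tH}(F)+\frac{(\alpha_1^2-\alpha^2)(-v_0^H(F))}{2v_1^{H,B+tH}(F)}$.
Hence if $\mu'=0$ and $v_0^H(Q)<0$, already $\nu^{H,+}_{\alpha_1,B+tH}(Q(tH))\ge\nu^H_{\alpha_1,B+tH}(Q(tH))>0$, and nothing in the contradiction hypothesis excludes such a $Q$.

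Relatedly, the hypothesis on $t$ is not what makes $\beta'=\beta+t$ meet the semicircle of $E$; that only needs $t<\mu+\rho$. So in your sketch the hypothesis is never actually used. The cohomological fallback does not close either. $\Hom(E,Q(tH))$ involves terms such as $\Hom(\cH^0(E),\cH^{-1}(Q(tH))[1])$, which the one-sided slope bounds of Lemma~\ref{lem:2.5gap} do not control.

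The paper instead intersects the two semicircles with each other. Set $\theta=\rho-\mu$.
At $\beta'=\beta-\theta$, the semicircle of $E$ reaches $\alpha'=\sqrt\epsilon$. The semicircle of $Q(tH)$ there has $(\alpha')^2=\alpha^2-2\mu'(t+\theta)-(t+\theta)^2>\epsilon$, precisely because $t+\theta<\sqrt{\alpha^2-\epsilon}$; this is where the hypothesis enters.
At $\beta'=\beta+t$ the order is reversed, since $t<\mu$.
Hence the semicircles cross at some $(\alpha_2,\beta_2)$ with $\beta-\theta<\beta_2\le\beta+t$ and $\alpha_2>\sqrt\epsilon$. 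There both objects are semistable, with slopes $\mu-(\beta_2-\beta)>\mu'+t-(\beta_2-\beta)$, which kills $\Hom(E,Q(tH))$.

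Alternatively, your vertical-line idea can be repaired by proving only $\nu^{H,+}_{\alpha_1,B+tH}(Q(tH))<\mu-t$ rather than $\le0$.
Subobjects $F\subset Q(tH)$ with $v_0^H(F)\ge0$ satisfy $\nu^H_{\alpha_1,B+tH}(F)\le\nu^H_{\alpha,B+tH}(F)\le\mu'\le0$.
For those with $v_0^H(F)<0$, your bound $\mu_H^+(\cH^{-1}(Q(tH)))\le b+t-\sqrt{\alpha^2-\epsilon}$ gives $-v_0^H(F)\le v_1^{H,B+tH}(F)/\sqrt{\alpha^2-\epsilon}$. Hence $\nu^H_{\alpha_1,B+tH}(F)\le\mu'+\frac{2\mu t-t^2}{2\sqrt{\alpha^2-\epsilon}}$.
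For $0\le t<\mu$, the inequality $\frac{2\mu t-t^2}{2\sqrt{\alpha^2-\epsilon}}<\mu-t$ is equivalent to the hypothesis $t<\sqrt{\alpha^2-\epsilon}-\theta$, which closes the argument.
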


\begin{proof}
Set
\[\theta\coloneqq \sqrt{\alpha^2-\epsilon+\mu^2}-\mu.\]
Note that the hypothesis implies
$t<\sqrt{\alpha^2-\epsilon}$ and $t<\mu$. Therefore, Lemma~\ref{lem:2.5gap} gives $E\in\Coh^{H,B_0+(\beta+t)H}(X)$, hence we have $E(-tH)\in\Coh^{H,B_0+\beta H}(X)$. It remains to show $\nu^{H,-}_{\alpha,B_0+\beta H}(E(-tH))>0$.

Let $G$ be the last HN factor of $E(-tH)$ with respect to $\nu^{H}_{\alpha,B_0+\beta H}$. If $\nu^{H}_{\alpha,B_0+\beta H}(G)=\lambda\leq 0$, then $G(tH)$ is $\nu^{H}_{\alpha,B_0+(\beta+t) H}$-semistable with $\nu^{H}_{\alpha,B_0+(\beta+t) H}(G(tH))=\lambda\leq 0$. Therefore, by Lemma~\ref{lem:nestedwall}, $E$ and $G(tH)$ remain
semistable along their respective semicircles
\[\begin{aligned}
 (\alpha')^2&=\alpha^2+2\mu(\beta'-\beta)-(\beta'-\beta)^2,\\
 (\alpha')^2&=\alpha^2+2\lambda(\beta'-\beta-t)-(\beta'-\beta-t)^2,
\end{aligned}\]
where $(\alpha')^2>\epsilon$. By computing $\alpha'$ at $\beta'=\beta-\theta$ and $\beta'=\beta+t$, we see that these semicircles intersect at a point $(\alpha_1,\beta_1)$ with
$\beta-\theta<\beta_1\le \beta+t$ and  $\alpha_1>\sqrt\epsilon$. In particular, $E$ and $G(tH)$ are both $\nu^H_{\alpha_1,B_0+\beta_1H}$-semistable with slopes 
\[\nu^H_{\alpha_1,B_0+\beta_1H}(E)=\mu-\beta_1+\beta>\nu^H_{\alpha_1,B_0+\beta_1H}(G(tH))=\lambda+t-\beta_1+\beta,\]
which contradicts the existence of a nonzero map $E\to G(tH)$.
\end{proof}

For any $0\le\ell\le n(p-1)$, the tensor product induces an exact
sequence of vector bundles
\[0\longrightarrow R_\ell\longrightarrow \cO_X(-(\ell u +(p-1)v)H)^{\oplus N_\ell} \longrightarrow \T^\ell(\Omega_X)\otimes\omega_X^{1-p}\longrightarrow0,\]
where
$N_\ell=h^0(\Omega_X(uH))^\ell \cdot 
h^0(\omega_X^{-1}(vH))^{p-1}$. Next, we estimate the change of slopes after tensoring with $R_{\ell}$.

For $E\in \Coh(X)$, we denote its torsion-free part as $E_{\mathrm{tf}}$.

\begin{lemma}\label{lem:truncatedkernel}
For every $0\le\ell\le n(p-1)$ and every nonzero coherent sheaf $E$, if $R_\ell\ne0$, then we have
\[\begin{split}
\mu_H^-(E\otimes R_\ell)&\ge\mu_H^-(E)-\ell u -(p-1)v-k_*,\\
\mu_H^+(E_{\mathrm{tf}}\otimes R_\ell)&\le\mu_H^+(E_{\mathrm{tf}})-\ell u -(p-1)v.
\end{split}\]
\end{lemma}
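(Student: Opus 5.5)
The plan is to treat the two inequalities separately, using the defining exact sequence
\[0\to R_\ell\to \cO_X(-(\ell u+(p-1)v)H)^{\oplus N_\ell}\to \T^\ell(\Omega_X)\otimes\omega_X^{1-p}\to 0.\]
Set $c_\ell\coloneqq \ell u+(p-1)v$. All three terms are vector bundles, so tensoring with $E$ (or with $E_{\mathrm{tf}}$) preserves exactness.

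\emph{Upper bound.} First, $R_\ell$ is a subbundle of $\cO_X(-c_\ell H)^{\oplus N_\ell}$. Hence $E_{\mathrm{tf}}\otimes R_\ell\hookrightarrow E_{\mathrm{tf}}(-c_\ell H)^{\oplus N_\ell}$ is injective, since tensoring with a locally free sheaf is exact. Next, every $\mu_H$-semistable subsheaf $S$ of $E_{\mathrm{tf}}(-c_\ell H)^{\oplus N_\ell}$ has nonzero image under some projection. That image is a subsheaf of the torsion-free sheaf $E_{\mathrm{tf}}(-c_\ell H)$, so
\[\mu_H(S)\le \mu_H^+(E_{\mathrm{tf}})-c_\ell.\]
This is exactly the claimed upper bound. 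It is immediate once we note that $\mu^+$ of a direct sum of copies of a sheaf equals $\mu^+$ of the sheaf.

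\emph{Lower bound.} I would use the second presentation coming from the kernels $K_\Omega$ and $K_{\omega^{-1}}$. The goal is to show that $R_\ell(c_\ell H+k_*H)$ is a quotient of a direct sum of copies of $\cO_X$, i.e.\ is globally generated. Granting this, $E\otimes R_\ell$ is a quotient of $E(-(c_\ell+k_*)H)^{\oplus N}$. For any quotient $Q$ of such a direct sum, $\mu_H^-(Q)\ge \mu_H^-(E)-c_\ell-k_*$, because a surjection from a direct sum forces the minimal HN slope of the target to be at least the minimal slope of the source; the torsion part of $Q$ has slope $+\infty$ and is harmless. This gives the claimed lower bound, which holds for any coherent $E$, including torsion.

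To get global generation of $R_\ell(c_\ell H+k_*H)$, I would describe $R_\ell$ as the kernel of the composite
\[(\cO(-uH)^{\oplus a})^{\otimes\ell}\otimes(\cO(-vH)^{\oplus b})^{\otimes(p-1)}\twoheadrightarrow \Omega_X^{\otimes\ell}\otimes\omega_X^{1-p}\twoheadrightarrow \T^\ell(\Omega_X)\otimes\omega_X^{1-p}.\]
The kernel of the first map is filtered by terms of the form (tensor factors each equal to $\cO(-uH)^a$, $\cO(-vH)^b$, $\Omega_X$ or $\omega_X^{-1}$) $\otimes K$, where $K\in\{K_\Omega,K_{\omega^{-1}}\}$ occurs in one slot. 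Each such term twisted by $(c_\ell+k_*)H$ is a tensor product of globally generated bundles, hence globally generated. The kernel of $\Omega^{\otimes\ell}\to\T^\ell(\Omega)$ should also be handled this way, since $\T^\ell$ is a quotient of $\Sym^\ell$, whose kernel is generated by images of $\Omega^{\otimes\ell}$-type tensors.

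The main obstacle is that extensions of globally generated sheaves need not be globally generated. To avoid this, I would instead argue slope-wise. It suffices that every quotient of $E\otimes R_\ell$ has large minimal slope, and for that a filtration whose graded pieces are quotients of $E(-(c_\ell+k_*)H)^{\oplus N}$ is enough, since $\mu^-$ of an extension is at least the minimum of the $\mu^-$ of its pieces. So I would produce a filtration of $R_\ell$ whose factors are quotients of sums of $\cO(-(c_\ell+k_*)H)$ and tensor with $E$. Tensoring is right exact, so quotients stay quotients, and the resulting filtration of $E\otimes R_\ell$ yields the bound.
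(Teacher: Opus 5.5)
Your route is the paper's. The upper bound is verbatim its argument. For the lower bound, the paper also filters $R_\ell$ by the kernels of the successive evaluation maps from $\cO_X(-(\ell u+(p-1)v)H)^{\oplus N_\ell}$ down to $\T^\ell(\Omega_X)\otimes\omega_X^{1-p}$. The graded pieces are $\Omega_X^{\otimes(i-1)}\otimes K_\Omega\otimes\cdots$, then $\Omega_X^{\otimes\ell}\otimes\omega_X^{-(j-1)}\otimes K_{\omega^{-1}}\otimes\cdots$, and finally $C_\ell\otimes\omega_X^{1-p}$ with $C_\ell=\ker(\Omega_X^{\otimes\ell}\twoheadrightarrow\T^\ell(\Omega_X))$. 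It then bounds $\mu_H^-$ piece by piece, exactly as you propose; your remark that right exactness of $E\otimes-$ suffices, since $\mu_H^-$ can only go up on quotients, is correct.

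The gap is the last piece. You need $C_\ell(\ell uH)$ to be globally generated, or at least filtered by pieces with globally generated $\ell uH$-twist. This is the only non-formal input of the lemma, and ``the kernel is generated by images of $\Omega^{\otimes\ell}$-type tensors'' does not establish it. In particular, the truncation relations are $p$-th powers $v^pw$, and $v\mapsto v^{\otimes p}$ is not $\cO_X$-linear. So they are not visibly the image of a bundle map out of a tensor power of $\Omega_X$.

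The paper's argument avoids any description of generators. The kernel $K_\ell(V)\coloneqq\ker(V^{\otimes\ell}\to\T^\ell(V))$ is functorial in $V$, and the evaluation surjection $q\colon\cO_X^{\oplus n_\Omega}\twoheadrightarrow\Omega_X(uH)$ has local sections $s_U$. Then $K_\ell(q)\circ K_\ell(s_U)=\mathrm{id}$, so $K_\ell(q)$ is a surjection from the trivial bundle $K_\ell(\cO_X^{\oplus n_\Omega})$ onto $K_\ell(\Omega_X(uH))=C_\ell(\ell uH)$.

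Your route can also be completed. Write $C_\ell$ as an extension of $\ker(\Sym^\ell\Omega_X\to\T^\ell(\Omega_X))$ by $\ker(\Omega_X^{\otimes\ell}\to\Sym^\ell\Omega_X)$:
\begin{itemize}
\item the first is the image of the $\cO_X$-linear map $F^*\Omega_X\otimes\Sym^{\ell-p}\Omega_X\to\Sym^\ell\Omega_X$, $f\otimes v\otimes w\mapsto fv^pw$;
\item the second is the image of $\sum_i(1-\tau_i)$ on $\bigoplus_i\Omega_X^{\otimes\ell}$, where $\tau_i$ swaps the $i$th and $(i+1)$st factors.
\end{itemize}
Both become globally generated after twisting by $\ell uH$, because $F^*(\Omega_X(uH))=(F^*\Omega_X)(puH)$ is globally generated.

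A small correction: the pieces $V_i$ have $V_i(b_iH)$ globally generated only for specific $b_i\le\ell u+(p-1)v+k_*$, e.g.\ $b_i=\ell u+(p-1)v-u+k_\Omega$. This need not hold for the full twist $(\ell u+(p-1)v+k_*)H$ unless $\cO_X(H)$ is globally generated. It is harmless, since you only use $\mu_H^-(E\otimes V_i)\ge\mu_H^-(E)-b_i$.
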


\begin{proof}
Only in this proof, we write $P_\Omega=\cO_X(-uH)^{\oplus h^0(\Omega_X(uH))}$, $P_\omega=\cO_X(-vH)^{\oplus h^0(\omega_X^{-1}(vH))}$, and $d_\ell=\ell u+(p-1)v$, $m=\ell+p-1$. Consider the successive evaluation maps
\[W_0\twoheadrightarrow W_1\twoheadrightarrow\cdots \twoheadrightarrow W_m\twoheadrightarrow W_{m+1},\]
where
\[
\begin{aligned}
W_i&=(\Omega_X)^{\otimes i}
 \otimes P_\Omega^{\otimes(\ell-i)}
 \otimes P_\omega^{\otimes(p-1)},\\
W_{\ell+j}&=(\Omega_X)^{\otimes\ell}
 \otimes\omega_X^{-j}
 \otimes P_\omega^{\otimes(p-1-j)},\\
W_{m+1}&=\T^\ell(\Omega_X)\otimes\omega_X^{1-p},
\end{aligned}
\]
for each $0\le i\le\ell$ and $0\le j\le p-1$.
Thus $W_0=\cO_X(-d_\ell H)^{\oplus N_\ell}$ and $R_\ell=\ker(W_0\twoheadrightarrow W_{m+1})$. If we set $F_i\coloneqq \ker(W_0\twoheadrightarrow W_i)$ and $V_i\coloneqq F_i/F_{i-1}$, then we have a filtration of subbundles
\[0=F_0\subset F_1\subset\cdots\subset F_{m+1}=R_\ell\]
with $V_i\simeq\ker(W_{i-1}\twoheadrightarrow W_i)$. Explicitly, for each $1\le i\le\ell$ and $1\le j\le p-1$, we have
\[
\begin{aligned}
V_i&\simeq
(\Omega_X)^{\otimes(i-1)}
\otimes K_\Omega
\otimes P_\Omega^{\otimes(\ell-i)}
\otimes P_\omega^{\otimes(p-1)},\\
V_{\ell+j}&\simeq
(\Omega_X)^{\otimes\ell}
\otimes\omega_X^{-(j-1)}
\otimes K_{\omega^{-1}}
\otimes P_\omega^{\otimes(p-1-j)},\\
V_{m+1}&\simeq
C_\ell\otimes\omega_X^{1-p},
\end{aligned}
\]
where $C_\ell\coloneqq \ker\bigl(
(\Omega_X)^{\otimes\ell}\twoheadrightarrow
\T^\ell(\Omega_X)\bigr).$

We claim that $C_\ell(\ell uH)$ is globally generated. To check this, for any bundle $V$, we set $K_\ell(V)
\coloneqq \ker\bigl(V^{\otimes\ell}\twoheadrightarrow\T^\ell(V)\bigr)$ and $n_{\Omega}\coloneqq h^0(\Omega_X(uH))$, and let $q\colon\cO_X^{\oplus n_{\Omega}}\twoheadrightarrow\Omega_X(uH)$ be the evaluation map. Note that from the functoriality of $\T^\ell$, $K_{\ell}$ is also functorial. Therefore, if we choose an affine open cover $\{U\}$ of $X$ that trivializes $\Omega_X$, we get a splitting of $q|_U$ on each $U$:
\[s_U\colon\Omega_X(uH)|_U\longrightarrow\cO_U^{\oplus n_{\Omega}}.\]
Now, the functoriality of $K_{\ell}$ gives
\[K_\ell(q)|_U\circ K_\ell(s_U) =\mathrm{id}.\]
Consequently,
\[K_{\ell}(q)\colon \cO_X\otimes_{\kk} \ker\bigl((\kk^{n_{\Omega}})^{\otimes\ell} \twoheadrightarrow\T^\ell(\kk^{n_{\Omega}})\bigr) \twoheadrightarrow K_\ell(\Omega_X(uH)) \simeq C_\ell(\ell uH)\]
is surjective and proves the claim.

Next, we define
\[b_i\coloneqq 
\begin{cases}
d_\ell-u+k_\Omega,&1\le i\le\ell,\\
d_\ell-v+k_{\omega^{-1}},&\ell+1\le i\le m,\\
d_\ell,&i=m+1.
\end{cases}\]
The assumed global generation and the preceding claim show that each $V_i(b_iH)$ is globally generated, hence we obtain a surjection
\[H^0(X,V_i(b_iH))\otimes_{\kk}\cO_X \twoheadrightarrow V_i(b_iH)\]
and $b_i\le d_\ell+k_*$.
Therefore, tensoring with $E$ implies
\[\mu_H^-(E\otimes V_i) \ge \mu_H^-(E)-b_i \ge \mu_H^-(E)-d_\ell-k_*\]
for each $V_i\neq 0$. Since each $V_i$ is locally free,
\[0\longrightarrow E\otimes F_{i-1} \longrightarrow E\otimes F_i \longrightarrow E\otimes V_i \longrightarrow0\]
is exact. Thus
\[\mu_H^-(E\otimes R_\ell) \ge\min_{V_i\ne0}\mu_H^-(E\otimes V_i) \ge\mu_H^-(E)-d_\ell-k_*.\]

Finally, local freeness of $W_{m+1}$ gives an exact sequence
\[0\longrightarrow E_{\mathrm{tf}}\otimes R_\ell \longrightarrow E_{\mathrm{tf}}(-d_\ell H)^{\oplus N_\ell} \longrightarrow E_{\mathrm{tf}}\otimes W_{m+1} \longrightarrow0.\]
Therefore,
\[\mu_H^+(E_{\mathrm{tf}}\otimes R_\ell) \le\mu_H^+\bigl(E_{\mathrm{tf}}(-d_\ell H)^{\oplus N_\ell}\bigr) =\mu_H^+(E_{\mathrm{tf}})-d_\ell.\]
This finishes the proof.
\end{proof}

\subsection{The upper bound}\label{sec:upper-proof}

Now, we are ready to prove the upper bound in Theorem~\ref{thm:upper}.

\begin{proof}[Proof of the upper bound in Theorem~\ref{thm:upper}]
In this proof, we set
\[\alpha_0=\sqrt{p^2A^2-\epsilon},\qquad c_p=D(p-1), \qquad \mathcal A=\Coh^{H,\widehat B_\beta}(X).\]
Let $\mathcal T\subset\mathcal A$ be the full subcategory consist of zero and the objects $E$ with $\nu^{H,-}_{pA,\widehat B_\beta}(E)>0$.
Thus, $\cT$ is closed under quotients and extensions in $\mathcal A$.

The explicit bound \eqref{eq:upper-explicitprime} gives
\[\alpha_0-c_p\ge p(A-D)+D-\sqrt\epsilon>k_*,\]
and
\[\alpha_0^2-4D^2(p-1)^2 =p^2(A^2-4D^2)+4D^2(2p-1)-\epsilon>0.\]
Moreover, Lemma~\ref{lem:coarse} and \eqref{eq:gg} give
\[2e_p-(p-1)\frac{K_X.H^{n-1}}{H^n} \le(p-1)(2nu+\frac{K_X.H^{n-1}}{H^n}) \le(p-1)(2nu+v)<pa.\]
Consequently,
\begin{equation}\label{eq:prime}
 \alpha_0-c_p>k_*,\qquad
 \alpha_0>2c_p\ge2e_p,\qquad
 pa>2e_p-(p-1)\frac{K_X.H^{n-1}}{H^n}.
\end{equation}

\medskip

\textbf{Step 1}. \emph{$\Phi_pE\in \cA$.}

The assumed inequalities \eqref{eq:sourcegaps},
together with \eqref{eq:pull} and $c_1(M_p)=(1-p)B_0$, give
\begin{equation}\label{eq:Mgaps}
\begin{split}
\mu_H^+(\cH^{-1}(\Phi_pE))
&\le\frac{\widehat B_\beta. H^{n-1}}{H^n}
-pa+e_p
<\frac{\widehat B_\beta. H^{n-1}}{H^n},\\
\mu_H^-(\cH^0(\Phi_pE))
&\ge\frac{\widehat B_\beta. H^{n-1}}{H^n}
+pa-e_p
>\frac{\widehat B_\beta. H^{n-1}}{H^n}.
\end{split}
\end{equation}
Thus, $\Phi_pE\in\mathcal A$.

\medskip

\textbf{Step 2}. \emph{$\nu^{H,+}_{pA,\widehat B_\beta}(\Phi_pE)<+\infty$ and $\nu^H_{pA,\widehat B_\beta}(\Phi_pE)=0$.}

For every sheaf $T$ with $\dim T\le n-2$, adjunction gives
\[\Hom(T,\Phi_pE)\simeq\Hom(F_*(T\otimes N_p),E)=0.\]
The first inequality in \eqref{eq:Mgaps} also excludes maps from $\mu_H$-semistable sheaves of slope $\widehat B_\beta. H^{n-1}/H^n$ shifted by one.
Hence, $\nu^{H,+}_{pA,\widehat B_\beta}(\Phi_pE)<+\infty$, and \eqref{eq:scaling}
gives
\[\nu^H_{pA,\widehat B_\beta}(\Phi_pE)=0.\]

\medskip

Let $G$ be the first HN factor of $\Phi_pE$ with respect to $\nu^H_{pA,\widehat B_\beta}$. If $\nu^H_{pA,\widehat B_\beta}(G)\leq 0$, then the desired upper bound follows. So we may assume that $\nu^H_{pA,\widehat B_\beta}(G)=\mu>0$. In the following, suppose for a contradiction that
\[\mu>pB_p(A).\]
Then we have
\begin{equation}\label{eq:bad}
\sqrt{\alpha_0^2+\mu^2}-\mu+c_p<\alpha_0.
\end{equation}

\medskip

\textbf{Step 3}. \emph{We have $\mathsf L_pG\in \Coh^{H, B_0+\beta H}(X)$ and
\begin{equation}\label{eq:positiveP}
\Phi_p\mathsf L_pG\in \D_{\mathcal A}^{\le0},\qquad
\cH^0_{\mathcal A}(\Phi_p\mathsf L_pG)\in\mathcal T.
\end{equation}}

Since $\cH^{-1}(G)\hookrightarrow\cH^{-1}(\Phi_pE)$ and $\cH^i(\Phi_p E)=F^*\cH^i(E)\otimes M_p$, using \eqref{eq:Mgaps} and applying Lemma~\ref{lem:2.5gap} to $G$ gives
\[\begin{aligned}
\mu_H^+(\cH^{-1}(G))
&\le \mu_H^+(\cH^{-1}(\Phi_pE)) \le\frac{\widehat B_\beta. H^{n-1}}{H^n}
-pa+e_p,\\
\mu_H^-(\cH^0(G))
&\ge\frac{\widehat B_\beta. H^{n-1}}{H^n}
+\mu+\sqrt{\alpha_0^2+\mu^2}
>\frac{\widehat B_\beta. H^{n-1}}{H^n}+\alpha_0.
\end{aligned}\]
Combining with Lemma~\ref{lem:truncatedkernel}, we have
\[
\begin{aligned}
\mu_H^-\bigl(\cH^0(G\otimes R_\ell)\bigr)
&\ge \mu_H^-(\cH^0(G))
     -\ell u-(p-1)v-k_*\\
&>\frac{\widehat B_\beta. H^{n-1}}{H^n}
   +\alpha_0-\ell u-(p-1)v-k_*\\
&>\frac{\widehat B_\beta. H^{n-1}}{H^n},
\end{aligned}
\]
where the last inequality follows from
$\ell u+(p-1)v\le c_p$ and \eqref{eq:prime}.
Since $R_\ell$ is locally free, we have $\cH^j(G\otimes R_\ell)=0$ for $j\notin\{-1,0\}$. Consequently,
\[G\otimes R_\ell\in \D_{\cA}^{\le0}.\]

By Lemma~\ref{lem:tensor} and \eqref{eq:bad}, we have $G(-(\ell u+(p-1)v) H)\in\mathcal T$. Tensoring the defining sequence of $R_\ell$ with $G$, we get an exact triangle
\[G\otimes R_\ell\longrightarrow G(-(\ell u+(p-1)v) H)^{\oplus N_\ell}\longrightarrow G\otimes \T^\ell(\Omega_X)\otimes\omega_X^{1-p}\]
for each $0\le\ell\le n(p-1)$. As $G\otimes R_\ell\in \D^{\leq 0}_{\cA}$, we get $$G\otimes \T^\ell(\Omega_X)\otimes\omega_X^{1-p}\in \D^{\leq 0}_{\cA}.$$ 
Hence, $\V_i(G)\in \D^{\leq 0}_{\cA}$. Moreover, the cohomology long exact sequence with respect to $\cA$ gives a surjection
\[G(-(\ell u+(p-1)v) H)^{\oplus N_\ell} \twoheadrightarrow\cH^0_{\mathcal A}(G\otimes \T^\ell(\Omega_X)\otimes\omega_X^{1-p})\]
in $\Coh^{H,\widehat B_\beta}(X)$, which gives $\cH^0_{\mathcal A}(G\otimes \T^\ell(\Omega_X)\otimes\omega_X^{1-p})\in \cT$. Thus, each triangle
\[\V_{i+1}(G)\to \V_i(G)\to G\otimes \T^i(\Omega_X)\otimes\omega_X^{1-p}\]
associated with the filtration in Lemma~\ref{lem:graded} gives an exact sequence
\[\cH^0_{\cA}(\V_{i+1}(G))\to \cH^0_{\cA}(\V_{i}(G))\to \cH^0_{\mathcal A}(G\otimes \T^i(\Omega_X)\otimes\omega_X^{1-p})\to 0\]
in $\cA$. By induction on $i$, we see that $\cH^0_{\cA}(\V_{i}(G))\in \cT$ for each $i$, and \eqref{eq:positiveP} follows.

It remains to prove $\mathsf L_pG\in \Coh^{H, B_0+\beta H}(X)$. Adjunction gives a nonzero map
$\mathsf L_pG\to E$. Since $c_1(N_p)=(1-p)(K_X-B_0)$, the bounds
\eqref{eq:push} and \eqref{eq:prime} imply
\begin{align}
\mu_H^+(\cH^{-1}(\mathsf L_pG))
-\frac{(B_0+\beta H).H^{n-1}}{H^n}
&\le-a
+\frac{2e_p-(p-1)\frac{K_X.H^{n-1}}{H^n}}p<0,
\notag\\
\mu_H^-(\cH^0(\mathsf L_pG))
-\frac{(B_0+\beta H).H^{n-1}}{H^n}
&>\frac{\alpha_0-e_p}p>0.
\label{eq:Ppositive}
\end{align}
Thus $\mathsf L_pG\in\Coh^{H,B_0+\beta H}(X)$.

\medskip

\textbf{Step 4}.
Let $E'\ne0$ be the image of $\mathsf L_pG\to E$ in $\Coh^{H,B_0+\beta H}(X)$. Therefore, we have morphisms between cohomology objects $\cH^{-1}(E')\hookrightarrow\cH^{-1}(E)$ and $\cH^0(\mathsf L_pG)\twoheadrightarrow\cH^0(E')$.
Thus, combining with \eqref{eq:pull}, \eqref{eq:sourcegaps}, and
\eqref{eq:Ppositive}, we obtain
\[\begin{aligned}
\mu_H^+(\cH^{-1}(\Phi_pE'))
-\frac{\widehat B_\beta. H^{n-1}}{H^n}
&\le-pa+e_p<0,\\
\mu_H^-(\cH^0(\Phi_pE'))
-\frac{\widehat B_\beta. H^{n-1}}{H^n}
&>\alpha_0-2e_p>0.
\end{aligned}\]
Hence $\Phi_pE'\in\mathcal A$.

Let $K$ be the kernel of $\mathsf L_pG\twoheadrightarrow E'$ in
$\Coh^{H,B_0+\beta H}(X)$. Applying $\Phi_p$ and taking cohomology objects with respect to $\mathcal A$, by \eqref{eq:positiveP}, we have an exact sequence
\[\cH^0_{\mathcal A}(\Phi_p\mathsf L_pG) \longrightarrow\Phi_pE' \longrightarrow\cH^1_{\mathcal A}(\Phi_pK) \longrightarrow0\]
in $\cA$. By Lemma~\ref{lem:defect}, the last term is $S[1]$, where $S=0$ or $S$ is a nonzero torsion-free sheaf with 
$$\mu^{\pm}_H(S)\in \left[\frac{\widehat B_\beta. H^{n-1}}{H^n}-e_p,\frac{\widehat B_\beta. H^{n-1}}{H^n}\right].$$
If $S\ne0$, the inequality $p^2A^2>e_p^2+\epsilon$ and Lemma~\ref{lem:strip} applied at $(pA,\widehat B_\beta)$ give $S[1]\in\mathcal T$. This also holds when $S=0$.
Together with \eqref{eq:positiveP}, this yields
$\Phi_pE'\in\mathcal T$. On the other hand, the $\nu^H_{A,B_0+\beta H}$-semistability of $E$ gives $\nu^H_{A,B_0+\beta H}(E')\le0$. Thus, we have $\nu^H_{pA,\widehat B_\beta}(\Phi_pE')
=p\nu^H_{A,B_0+\beta H}(E')\le 0$ by \eqref{eq:scaling} and contradicts $\Phi_pE'\in\mathcal T$.
\end{proof}

This completes the proof for the upper bound of Theorem~\ref{thm:upper}.

\subsection{The lower bound}\label{sec:duality}

We use the following form of tilt duality, extending
\cite[Proposition 5.1.3(b)]{BMT} to the present setting.

\begin{lemma}\label{lem:coherent}
Let $(\alpha,\beta)\in \RR_{>\sqrt\epsilon}\times \RR$ and $E\in\Coh^{H,B_0+\beta H}(X)$ be an object with $\nu^{H,+}_{\alpha,B_0+\beta H}(E)<+\infty$. Set $E^\sharp\coloneqq\tau^{\le0}\DD (E)$ and $T_E\coloneqq\tau^{\ge1}\DD (E)$, where $\tau$ is the truncated functor with respect to the standard heart $\Coh(X)$. Then there is an exact triangle
\begin{equation}\label{eq:rectified}
 E^\sharp\longrightarrow\DD (E)\longrightarrow T_E
\end{equation}
such that

\begin{itemize}
    \item $E^\sharp\in\Coh^{H,B_0^\sharp-\beta H}(X)$, $\nu^{H,+}_{\alpha,B_0^\sharp-\beta H}(E^\sharp)<+\infty$, and $\dim\cH^j(T_E)\le n-j-2$ for $j\ge1$;

    \item $E^\sharp$ is $\nu^H_{\alpha,B_0^\sharp-\beta H}$-semistable if and
only if $E$ is $\nu^H_{\alpha,B_0+\beta H}$-semistable, and
\[\nu^H_{\alpha,B_0^\sharp-\beta H}(E^\sharp) =-\nu^H_{\alpha,B_0+\beta H}(E);\]

\item if $0\to E'\to E\to E''\to0$ is an exact sequence in $\Coh^{H,B_0+\beta H}(X)$ and $\nu^{H,+}_{\alpha,B_0+\beta H}(E')<+\infty$, $\nu^{H,+}_{\alpha,B_0+\beta H}(E'')<+\infty$, then the induced morphism $(E'')^\sharp\to E^\sharp$ is injective in $\Coh^{H,B_0^\sharp-\beta H}(X)$.
\end{itemize}
\end{lemma}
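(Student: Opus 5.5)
The plan follows the proof of \cite[Proposition 5.1.3(b)]{BMT}, run with $\DD(E)=\mathbf R\mathcal Hom(E,\omega_X)[1]$. Write $B=B_0+\beta H$, so that $B^\sharp=K_X-B=B_0^\sharp-\beta H$.

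\textbf{Step 1: cohomology sheaves of $\DD(E)$.} Let $E_{-1}=\cH^{-1}(E)$ and $E_0=\cH^0(E)$. The hypothesis $\nu^{H,+}_{\alpha,B}(E)<+\infty$ says that $E$ admits no nonzero maps from $\Coh_{\le n-2}(X)$, and none from $\mu_H$-semistable sheaves of slope $B.H^{n-1}/H^n$ shifted by $[1]$. This gives two facts:
\begin{itemize}
\item $E_0$ has no subsheaf of dimension $\le n-2$;
\item $E_{-1}$ is torsion free with $\mu_H^+(E_{-1})<B.H^{n-1}/H^n$. It is also reflexive: a non-reflexive $E_{-1}$ would give an extension $E_{-1}\to E_{-1}^{\vee\vee}$ with quotient $T$ of dimension $\le n-2$, and the map $T\to E_{-1}[1]$ would lift to a nonzero map $T\to E$.
\end{itemize}

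The local-to-global spectral sequence for $\mathcal Ext^i(-,\omega_X)$, together with Grothendieck duality, then gives:
\begin{itemize}
\item $\cH^{-1}(\DD E)=\mathcal Hom(E_0,\omega_X)$, torsion free;
\item $\cH^0(\DD E)$ is an extension of $\mathcal Hom(E_{-1},\omega_X)$ by a subsheaf of $\mathcal Ext^1(E_0,\omega_X)$;
\item $\cH^j(\DD E)$ for $j\ge1$ is built from $\mathcal Ext^{j+1}(E_0,\omega_X)$ and $\mathcal Ext^{j}(E_{-1},\omega_X)$. Since $E_0$ has no subsheaf of dimension $\le n-2$, $\dim\mathcal Ext^{j+1}(E_0,\omega_X)\le n-j-2$. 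Reflexivity of $E_{-1}$ gives $\dim\mathcal Ext^j(E_{-1},\omega_X)\le n-j-2$ for $j\ge1$.
\end{itemize}
Setting $E^\sharp=\tau^{\le 0}\DD E$ and $T_E=\tau^{\ge1}\DD E$ yields the triangle \eqref{eq:rectified} and the dimension bound on $\cH^j(T_E)$.

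\textbf{Step 2: $E^\sharp\in\Coh^{H,B^\sharp}(X)$.} Dualizing HN filtrations gives $\mu_H^+(\mathcal Hom(E_0,\omega_X))\le K_X.H^{n-1}/H^n-\mu_H^-(E_0)$, which is at most $B^\sharp.H^{n-1}/H^n$. The torsion part of $\cH^0(E^\sharp)$ is fine. For the torsion-free part, $\mu_H^-(E_{-1}^\vee\otimes\omega_X)>B^\sharp.H^{n-1}/H^n$ because $\mu^+_H(E_{-1})$ is strictly below $B.H^{n-1}/H^n$. Hence $E^\sharp\in\Coh^{H,B^\sharp}(X)$.

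The classes differ only by $T_E$, whose $\ch$ begins in codimension $\ge 3$ (precisely, $\cH^j(T_E)$ sits in codimension $\ge j+2$ and in degree $j$). So $v_0,v_1,v_2$ of $E^\sharp$ are those of $\DD E$, namely
\[
v_i^{H,B^\sharp}(E^\sharp)=(-1)^{i+1}v_i^{H,B}(E)\quad\text{for } i\le 2,
\]
using $\ch^{B^\sharp}(\DD E)=-\ch^{B}(E)^\vee$ twisted by $K_X$. This gives $\nu_{\alpha,B^\sharp}(E^\sharp)=-\nu_{\alpha,B}(E)$.

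For $\nu^{H,+}_{\alpha,B^\sharp}(E^\sharp)<\infty$: a map $T\to E^\sharp$ with $\dim T\le n-2$, or a map from a shifted slope-$B^\sharp$ semistable sheaf, dualizes to a map $E\to\DD(T)$ or similar. One checks that such a map lands in something of $\nu$-slope $+\infty$ in degree $\le 0$, and then $\mathcal Hom$ vanishing rules it out. I expect this step to need care.

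\textbf{Step 3: semistability equivalence and injectivity (main obstacle).} The key is the third bullet. Apply $\DD$ to $0\to E'\to E\to E''\to0$ and take long exact sequences in $\Coh^{H,B^\sharp}$. By Step 2, $(E'')^\sharp$, $E^\sharp$ and $(E')^\sharp$ lie in the heart, and the $T$'s lie in $\D^{\ge1}_{\Coh}$ with small support. The kernel $K$ of $(E'')^\sharp\to E^\sharp$ in the heart receives a map from $\cH^{-1}_{\Coh^{H,B^\sharp}}$ of $(E')^\sharp$-type terms. I plan to show $K$ is a quotient of $\cH^{-1}$ of the cone, which is built from $T_{E'}[-1]$. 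Since $T_{E'}[-1]$ lies in $\D^{\ge 2}$ with sheaves of dimension $\le n-3$, the map is forced to vanish. Concretely, $\Hom(K,-)$ is compared via $\tau^{\le 0}$ adjunction: $\Hom(X,\tau^{\le0}Y)=\Hom(X,Y)$ for $X\in\D^{\le0}_{\Coh}$, and the heart lies in $\D^{[-1,0]}$.

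Semistability then follows. A destabilizing quotient $E\twoheadrightarrow E''$ with finite-slope pieces gives a subobject $(E'')^\sharp\subset E^\sharp$ of opposite slope. Pieces of infinite slope do not affect $\nu$-semistability, since they can be split off or excluded by $\nu^{+}<\infty$. Applying the construction twice recovers $E$ because $\DD^2=\mathrm{id}$ and $(E^\sharp)^\sharp\cong E$ modulo the small-dimensional $T$'s, which yields the converse.
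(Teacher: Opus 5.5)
Your plan reproduces the argument the paper simply cites (\cite[Proposition 5.1.3(b)]{BMT} for the first two bullets, \cite[Lemma 9.5(b)]{LM} for the third). However, Step 1 rests on a false claim, and the key dimension estimate depends on it. You assert that $\cH^0(E)$ has no subsheaf of dimension $\le n-2$. This does not follow from $\Hom(\Coh_{\le n-2}(X),E)=0$. A map $T\to\cH^0(E)$ lifts to $E$ only if its composite with $\cH^0(E)\to\cH^{-1}(E)[2]$ vanishes, and this obstruction in $\Hom(T,\cH^{-1}(E)[2])$ is typically nonzero.

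Concretely, let $Z\subset X$ be a smooth codimension-two subvariety, $b<0$, and $E=\DD(\cI_Z)$. Then $\cH^{-1}(E)=\omega_X$ and $\cH^0(E)=\mathcal Ext^2(\cO_Z,\omega_X)=\omega_Z$, so $E\in\Coh^{H,K_X-bH}(X)$. Moreover $\Hom(T,E)\cong\Hom(\cI_Z,\DD T)=0$ for $T\in\Coh_{\le n-2}(X)$, since $\DD T\in\D^{\ge1}_{\Coh(X)}$. Also $\Hom(F[1],E)=\Hom(F,\omega_X)=0$ for $F$ semistable of slope $\mu_H(\omega_X)-b$, so $\nu^+(E)<+\infty$. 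Yet $\mathcal Ext^2(\cH^0(E),\omega_X)\cong\cO_Z$ has dimension $n-2$, contradicting your bound $\dim\mathcal Ext^{j+1}(\cH^0(E),\omega_X)\le n-j-2$ at $j=1$. The actual vanishing $\cH^1(\DD E)=\cH^1(\cI_Z)=0$ holds only because the connecting map $\mathcal Hom(\cH^{-1}(E),\omega_X)=\cO_X\to\mathcal Ext^2(\cH^0(E),\omega_X)=\cO_Z$ is surjective.

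Treating the two cohomology sheaves separately gives only $\dim\cH^1(T_E)\le n-2$, and that is not enough. A codimension-two $\cH^1(T_E)$ would change $H^{n-2}.\ch_2$, so your identities $v_i^{H,B^\sharp}(E^\sharp)=(-1)^{i+1}v_i^{H,B}(E)$ for $i\le 2$ and $\nu(E^\sharp)=-\nu(E)$ in Step 2 would be unproved as well. For the same reason, $\cH^0(\DD E)$ is an extension of $\ker\bigl(\mathcal Hom(\cH^{-1}(E),\omega_X)\to\mathcal Ext^2(\cH^0(E),\omega_X)\bigr)$ by $\mathcal Ext^1(\cH^0(E),\omega_X)$, not of all of $\mathcal Hom(\cH^{-1}(E),\omega_X)$. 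That misstatement is harmless for heart membership, because the cokernel has codimension $\ge 2$.

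The repair is to use the vanishing for $E$ as a whole, through $\Hom(T,E)\cong\Hom(\DD E,\DD T)$. A priori $\dim\cH^j(\DD E)\le n-j-1$ for $j\ge1$. Argue by descending induction on $j\ge 1$. Suppose the bound $n-j-2$ holds above $j$, but $Q=\cH^j(\DD E)$ has dimension $n-j-1$. Then $A\coloneqq\DD(\tau^{\ge j}\DD E)$ lies in $\D^{\ge0}_{\Coh(X)}$. Also $T\coloneqq\cH^0(A)$ is a nonzero sheaf of dimension $n-j-1\le n-2$: it is a subsheaf of $\mathcal Ext^{j+1}(Q,\omega_X)$ with cokernel of smaller dimension. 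The nonzero map $T\to A$ dualizes to a nonzero map $\tau^{\ge j}\DD E\to\DD T$. Since $\DD T\in\D^{\ge j}_{\Coh(X)}$, this is the same as a nonzero map $\DD E\to\DD T$, i.e.\ a nonzero map $T\to E$, a contradiction.

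The rest of your plan is essentially sound.
\begin{itemize}
\item $\nu^+(E^\sharp)<+\infty$ follows from $\Hom(T,E^\sharp)\hookrightarrow\Hom(T,\DD E)\cong\Hom(E,\DD T)=0$, together with the strict inequality $\mu_H^+(\mathcal Hom(\cH^0(E),\omega_X))<B^\sharp.H^{n-1}/H^n$.
\item Your adjunction argument for injectivity works, and it only needs $T_{E'},T_{E''}\in\D^{\ge1}_{\Coh(X)}$.
\item For the converse in the second bullet, note that $(E^\sharp)^\sharp$ is not $E$. It contains $E$ as a subobject with quotient a sheaf in $\Coh_{\le n-3}(X)$. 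This still suffices, since that quotient has $v_0=v_1=v_2=0$.
\end{itemize}
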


\begin{proof}
The first two properties follows from \cite[Proposition 5.1.3(b)]{BMT}. For the last statement, use \cite[Lemma 9.5(b)]{LM}.
\end{proof}

\begin{proof}[Proof of the lower bound in
Theorem~\ref{thm:twoendpoints}]
By Lemma~\ref{lem:coherent}, the object
\[E^\sharp\in\Coh^{H,B_0^\sharp-\beta H}(X)\]
is $\nu^H_{A,B_0^\sharp-\beta H}$-semistable with $\nu^H_{A,B_0^\sharp-\beta H}(E^\sharp)=0$.
Dualizing $\cH^{-1}(E)[1]\to E\to\cH^0(E)$ and using
\eqref{eq:rectified}, we obtain
\[\cH^{-1}(E^\sharp)\simeq\mathcal Hom(\cH^0(E),\omega_X)\]
and an exact sequence
\[
 \begin{aligned}
 0\longrightarrow\mathcal Ext^1(\cH^0(E),\omega_X)
 &\longrightarrow\cH^0(E^\sharp)\\
 &\longrightarrow\mathcal Hom(\cH^{-1}(E),\omega_X)
 \longrightarrow\mathcal Ext^2(\cH^0(E),\omega_X).
 \end{aligned}
\]
The first Ext sheaf is torsion, and the last is supported in codimension at least two by \cite[Lemma 2.16]{LM}. Thus $\cH^0(E^\sharp)_{\tf}$ and
$\mathcal Hom(\cH^{-1}(E),\omega_X)$ agree in codimension one, so the slopes of their HN factors with respect to $\mu_H$ coincide. Duality for torsion-free sheaves and
\eqref{eq:sourcegaps} therefore give
\[\begin{aligned}
 \mu_H^+(\cH^{-1}(E^\sharp))
 &=\frac{K_X.H^{n-1}}{H^n}-\mu_H^-(\cH^0(E))\\
 &\le\frac{(B_0^\sharp-\beta H).H^{n-1}}{H^n}-a,\\
 \mu_H^-(\cH^0(E^\sharp))
 &=\frac{K_X.H^{n-1}}{H^n}-\mu_H^+(\cH^{-1}(E))\\
 &\ge\frac{(B_0^\sharp-\beta H).H^{n-1}}{H^n}+a.
 \end{aligned}\]

Set
\[L^{\dagger}\coloneqq\omega_X^q\otimes L^{-1}.\]
Then $c_1(L^\dagger)=qB_0^\sharp$ and $(L^\dagger)^{-(p-1)/q}
=\omega_X^{1-p}\otimes M_p^{-1}=N_p$. Thus, the normalized Frobenius pullback associated with $B_0^\sharp$ is $\Psi_p$. Applying the upper bound in Theorem~\ref{thm:twoendpoints} to $E^\sharp$ gives
\[\Psi_p(E^\sharp)\in\Coh^{H,B_0^\sharp-p\beta H}(X), \qquad \nu^{H,+}_{pA,B_0^\sharp-p\beta H} \bigl(\Psi_p(E^\sharp)\bigr)\le pB_p(A).\]

Applying $\Psi_p$ to the triangle \eqref{eq:rectified}
and using \eqref{eq:dual}, we obtain
\[\Psi_p(E^\sharp)\longrightarrow\DD(\Phi_pE) \longrightarrow\Psi_p(T_E).\]
Since $F$ is flat, $\Psi_p$ commutes with the standard truncation functors. In particular, we get
\begin{equation}\label{eq:truncation}
(\Phi_pE)^\sharp\simeq\Psi_p(E^\sharp).
\end{equation}

Let $Q$ be the last HN factor of $\Phi_pE$ with respect to $\nu^H_{pA,\widehat B_\beta}$. In the exact sequence
\[0\longrightarrow K\longrightarrow\Phi_pE \longrightarrow Q\longrightarrow0\]
in $\Coh^{H,\widehat B_\beta}(X)$, all HN factors of three objects have
finite $\nu^H_{pA,\widehat B_\beta}$-slopes. Therefore, the monomorphism assertion of
Lemma~\ref{lem:coherent} and \eqref{eq:truncation} give
\[Q^\sharp\hookrightarrow(\Phi_pE)^\sharp \simeq\Psi_p(E^\sharp).\]
We then conclude that
\[\begin{aligned}
-\nu^{H,-}_{pA,\widehat B_\beta}(\Phi_pE)
&=\nu^H_{pA,B_0^\sharp-p\beta H}(Q^\sharp)\\
&\le\nu^{H,+}_{pA,B_0^\sharp-p\beta H}
       \bigl(\Psi_p(E^\sharp)\bigr)\\
&\le pB_p(A).
\end{aligned}\]
This proves the lower bound and, together with the upper bound in Section~\ref{sec:upper-proof},
establishes Theorem~\ref{thm:twoendpoints}.
\end{proof}

We end this section by recording the numerical consequences of Theorem~\ref{thm:twoendpoints} that will be used in the proof of the main theorem.

We keep the notations and assumptions in Theorem~\ref{thm:twoendpoints}. Define $\alpha_p\coloneqq \sqrt{A^2-\epsilon/p^2}$. Let $G_i$ be the HN factors of $\Phi_pE$ with respect to $\nu^H_{pA,\widehat B_\beta}$, ordered by decreasing slope. We define
\begin{equation}\label{eq:weights}
 \begin{gathered}
 w_i(E)\coloneqq \frac{v_1^{H,\widehat B_\beta}(G_i)}{pv_1^{H,B_0+\beta H}(E)},\qquad
 \xi_i(E)\coloneqq \frac{\nu^H_{pA,\widehat B_\beta}(G_i)}p,\qquad
 x_i(E)\coloneqq \frac{pv_0^H(G_i)}{v_1^{H,\widehat B_\beta}(G_i)},\\
 k(E)\coloneqq \frac{v_0^H(E)}{v_1^{H,B_0+\beta H}(E)},\qquad V_p(E)\coloneqq \sum_iw_i(E)(\xi_i(E))^2.
 \end{gathered}
\end{equation}
By \eqref{eq:scaling}, Theorem~\ref{thm:twoendpoints}, and
Proposition~\ref{prop:tiltBG}(a) applied to each $G_i$, we have
\begin{equation}\label{eq:constraints}
 \begin{gathered}
 w_i(E)>0,\quad\sum_iw_i(E)=1,\quad\sum_iw_i(E)\xi_i(E)=0,\\
 \quad\sum_iw_i(E)x_i(E)=k(E),\quad |\xi_i(E)|\le B_p(A),\\
 \alpha_p^2(x_i(E))^2+2\xi_i(E)x_i(E)\le1.
 \end{gathered}
\end{equation}
In particular, we have $0\le V_p(E)\le B_p(A)^2$, and \eqref{eq:upper} gives
\[\lim_{p\to \infty}B_p(A)= B(A)\coloneqq D+\frac{D^2}{2(A-D)}.\]

\section{Proof of the main theorem}\label{sec:main-proof}

Let $(X,H)$ be a polarized smooth projective variety of dimension $n\geq3$ over a field $\kk$ of characteristic $0$. In this section, we prove Theorem~\ref{thm:main1.1}.

\subsection{Setup}

By \cite[Lemma 9.21]{LM}, after base change to $\overline{\kk}$, we may assume that $\kk$ is algebraically closed. As in Section~\ref{sec:geometry}, we choose line
bundles $D_1,\ldots,D_\rho$ that generate
$\NS(X)_{\mathrm{tf}}$ and $D_1=K_X$, and choose an integer $N>0$ such that $NH\pm D_i$ are all very ample. The corresponding complete linear systems give an embedding $i\colon X\hookrightarrow \PP$ into a product $\PP$ of $2\rho$ projective spaces. Set $H_j\coloneqq i^*h_j$, where $h_1,\dots,h_{2\rho}$ are the hyperplane classes on factors of $\PP$. Then
\[\operatorname{span}_{\RR}\{H_j\}=\NS(X)_\RR.\]
As in Section~\ref{sec:geometry}, we set $J\coloneqq \sum_jH_j=2\rho NH=:MH$.
We fix the choices in \eqref{eq:gg} and their evaluation kernels
with respect to $H$, and keep $D=nu+v$ throughout.

Fix a $\nu^H_{A,B_0+\beta H}$-semistable object
$E\in\Coh^{H,B_0+\beta H}(X)$ with $\nu^H_{A,B_0+\beta H}(E)=0$, where
$A^2\in\QQ$, $B_0\in\NS(X)_\QQ$, $\beta\in\QQ$, and
$A\ge2D$. Choose $q\in\ZZ_{>0}$ and a line bundle $L$ with $c_1(L)=qB_0$. By the standard spread out argument, we can find a normal finitely generated $\mathbb{Z}$-domain $R\subset \kk$ with a projective flat morphism $\cX\to T=\mathrm{Spec}(R)$ such that the base change $\cX_{\kk}$ of the generic fiber of $\cX\to T$ to $\kk$ is isomorphic to $X$. Moreover, up to enlarge $R$ and shrink $T$, we can assume that

\begin{itemize}
    \item every fiber of $\cX\to T$ is smooth,

    \item there is a $T$-ample line bundle $\cH$ on $\cX$ such that $\cH_{\kk}=H$,

    \item there are line bundles $\cD_i$ on $\cX$ such that $(\cD_i)_{\kk}=D_i$,

    \item there is a line bundle $\cL$ on $\cX$ such that $c_1(\cL_{\kk})=qB_0\in \NS(X)$,

    \item there is a $T$-perfect object $\cE\in \Db(\cX)$ such that $\cE_{\kk}=E$, and

    \item there exists $\epsilon\in \QQ_{\geq 0}$ such that for any point $s\to T$, we have
    \[\Del^{\cH_s}_{B_s,\epsilon}(W)\geq 0\]
for any $B_s\in \NS(X_s)_{\RR}$ and any $\mu_{\cH_s}$-semistable sheaf $W$ on $X_s$.
\end{itemize}

Note that the last property can be achieved by Theorem~\ref{thm:uniformBG}. If $s\to T$ is a geometric point, then we set
$X_s\coloneqq\cX_s$, $H_s\coloneqq\cH_s$, $J_s\coloneqq MH_s$,
$L_s\coloneqq\cL_s$, $B_{0,s}\coloneqq \frac{1}{q}c_1(L_s)$,
$D_{i,s}\coloneqq(\cD_i)_s$.

After shrinking the base, the fixed choices $u,r_0,v$ in
\eqref{eq:gg}, their evaluation maps, and the globally generated twists
$K_{\Omega,s}(k_\Omega H_s)$ and
$K_{\omega^{-1},s}(k_{\omega^{-1}}H_s)$ of their kernels extend
uniformly to the fibers. We retain $D=nu+v$ and
$k_*=\max\{u,v,k_\Omega,k_{\omega^{-1}}\}$ from
Section~\ref{sec:frobenius}. By Lemma~\ref{lem:geometricBG},
we may choose $\epsilon\le D^2$.

\begin{lemma}\label{lem:ss-model}
For all but finitely many primes $p$, there are geometric points $s\to T$ of characteristic $p$ and objects $E_s$ such that

\begin{itemize}
    \item $E_s\in \Coh^{H_s,B_{0,s}+\beta H_s}(X_s)$ is $\nu^{H_s}_{A,B_{0,s}+\beta H_s}$-semistable with $\nu^{H_s}_{A,B_{0,s}+\beta H_s}(E_s)=0$,

    \item $v_i^{H_s,B_{0,s}+\beta H_s}(E_s)=v_i^{H,B_0+\beta H}(E)$ for $0\leq i\leq2$, and
    \[v_3^{H_s,B_{0,s}+\beta H_s}(E_s)\geq v_3^{H,B_0+\beta H}(E),\]

    \item $\ch_1(E_s)\in \CH^1(X_s)$ is the specializations of $\ch_0(E)$ and $\ch_1(E)\in \CH^1(X)$.
\end{itemize}
\end{lemma}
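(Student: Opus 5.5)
The plan is to use openness of tilt-stability in families. Concretely, I would use the relative tilt-stability and moduli theory of \cite{LM} (and the relative setting of \cite{LLL}), together with the fact that the numerical invariants of a $T$-perfect object are locally constant on $T$. The argument has three steps.

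\textbf{Step 1: spread out the semistability.} First I would reduce to the case where $E$ is $\nu^H_{A,B_0+\beta H}$-stable. Take a Jordan--H\"older filtration of $E$; by Lemma~\ref{lem:deltacontrol} it has finitely many factors, all of slope zero. Spread out each factor together with the extensions gluing them, enlarging $R$ as needed; extensions of semistable objects of the same slope $0$ remain semistable of slope $0$. So it suffices to treat a stable $E$.

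For a stable $E$, the key input is \cite[Theorem 9.8 or the openness results in Section~9]{LM}, which gives that tilt-(semi)stability is an open condition on the base for the family $\cE$ over $T$. I would apply it with respect to the relative tilt heart at the rational parameters $(A,B_0+\beta H)$; these make sense fiberwise because $B_0$ is defined via $\cL$ and $\beta, A^2\in\QQ$. Openness gives a nonempty open subset of $T$ over whose geometric points $E_s$ lies in $\Coh^{H_s,B_{0,s}+\beta H_s}(X_s)$ and is $\nu^{H_s}_{A,B_{0,s}+\beta H_s}$-semistable. Since $T$ is of finite type over $\ZZ$ and dominates $\Spec\ZZ$, a nonempty open subset contains points of all but finitely many characteristics $p$. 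Membership in the tilted heart is a constraint on HN slopes of the cohomology sheaves. By generic flatness of $\cH^i(\cE)$ and openness of $\mu$-semistability of the HN factors, it also spreads out; this is where boundedness of HN filtrations enters.

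\textbf{Step 2: compare Chern characters.} The classes $\ch_0,\ch_1$, and the intersection numbers $v_i$ for $i\le2$ are specializations of classes over a normal base, so they are preserved; this gives the equalities and, in particular, $\nu(E_s)=0$. For $v_3$, the issue is that $\ch_3$ of $E$ computed in $\CH$ of the generic fiber specializes to $\ch_3(E_s)$ numerically, so equality holds as intersection numbers against $H_s^{n-3}$. Equality is a special case of the stated inequality, so the ``$\ge$'' is harmless. The inequality is perhaps stated that way to allow replacing $E_s$ by a modification, for example the torsion-free reflexive hull of $\cH^{-1}$ in bad fibers.

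\textbf{Step 3 and the main obstacle.} The hard step is Step~1 in positive characteristic. Openness of tilt-stability requires a Bogomolov-type inequality uniform in the fibers so that walls are locally finite uniformly. This is exactly the role of the uniform $\epsilon$ from Theorem~\ref{thm:uniformBG} assumed in the setup, together with $A>\sqrt\epsilon$, which holds since $A\ge2D\ge2\sqrt\epsilon$. Given this, I would invoke \cite[Theorems 9.8, 9.9]{LM} in the family over $T$ to produce the desired open locus.
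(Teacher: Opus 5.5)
Your proof rests on one unproved assertion: that $\nu$-semistability of the fibers $\cE_s$ is an open (or at least generic) condition over the mixed-characteristic base $T$. Steps 1 and 2 are routine bookkeeping around that claim, and the references you give do not supply it. In this paper, \cite[Theorems 9.8, 9.9]{LM} are used (proof of Proposition~\ref{prop:tiltBG}) to \emph{construct} the $\epsilon$-corrected tilt-stability on a single variety in characteristic $p$. They are not used to produce open loci in families.

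The uniform $\epsilon$ does give a uniform support property. Hence you get uniform local finiteness of walls and bounded numerical classes of potential destabilizers. But to conclude that the unstable locus lies in a proper closed subset of $T$, you need more:
\begin{itemize}
\item the destabilizing subobjects themselves must form a bounded family uniformly over fibers of varying characteristic, so that the unstable locus is constructible and misses the generic point;
\item the relative tilted heart must be open.
\end{itemize}
That is a genuine relative-moduli statement in mixed characteristic, which you neither prove nor reduce to anything available. If your argument worked, it would also prove more than the lemma: $E_s=\cE_s$ with \emph{equality} of $v_3$. The one-sided inequality in the statement is a hint that this is not the mechanism.

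The paper's proof avoids openness altogether and works one prime at a time. Since $R$ is a finitely generated $\ZZ$-algebra, $(p)$ is a proper nonzero ideal of $R$ for all but finitely many $p$. Choose a height-one prime $\mathfrak p$ minimal over $(p)$. As $R$ is normal, $R_{\mathfrak p}$ is a DVR whose generic point maps to that of $T$ and whose residue field has characteristic $p$. One then applies the specialization theorem \cite[Theorem 10.10]{LM} for tilt-stability over a DVR. It produces a $\nu$-semistable object on the special fiber with the same $\ch_0,\ch_1$ and $v_0,v_1,v_2$, but this object need not be the restriction of $\cE$. Its $H^{n-3}.\ch_3$ is only bounded below by that of $E$. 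The reason is that sheaves supported in dimension $\le n-3$ lie in the kernel of the tilt central charge: they are invisible to $\nu$ but contribute nonnegatively to $H^{n-3}.\ch_3$.

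That lower bound is exactly what is used later, since $v_3(E)\le v_3(E_s)$ serves to bound $v_3(E)$ from above. Your Jordan--H\"older reduction to stable $E$ is harmless but plays no role in either argument.
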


\begin{proof}
For all but finitely many primes $p$, we know that $(p)\subset R$ is a nontrivial ideal. Thus, we can choose a prime $\mathfrak p\subset R$ of height one which is minimal over $(p)\subset R$. Then $R_{\mathfrak p}$ is a DVR. Now, the result follows from \cite[Theorem 10.10]{LM}, whose argument works for any $n\geq 3$.
\end{proof}

In the following, we always fix the above data that satisfies the properties listed above. 

\subsection{A uniform inequality}

We first apply Proposition~\ref{prop:weak} to each special fiber of $\cX\to T$.

After shrinking $T$ if necessary, we may assume that all $N\cH\pm \cD_i$ are very ample over $T$. Therefore, they induce an embedding of $\cX$ into a product of $2\rho$ projective spaces over $T$, extending $i\colon X\hookrightarrow \PP$. Therefore, the class $\Theta$ in \eqref{eq:theta} extends via the relative normal bundle and we denote its restriction to $X_s$ by $\Theta_s$.

\begin{lemma}\label{lem:seed}
There exists $m\gg 1$ such that for every geometric point $s\to T$ outside a finite set of characteristics, every rational linear combination $B_s$ of $D_{i,s}$ and $H_s$, every $\alpha\geq MA_0$ and every $\nu^{H_s}_{\alpha, B_s}$-semistable object $F\in \Coh^{H_s, B_s}(X_s)$ with $\nu^{H_s}_{\alpha, B_s}(F)=0$, we have
\[H_s^{n-3}.\ch_3^{B_{s}}(F) \le H_s^{n-3}.\left(\frac{\alpha^2}{6}H_s^2+\Gamma_{0,s}\right). \ch_1^{B_{s}}(F).\]
Here
\[A_0\coloneqq \sqrt{\frac{m^4}{12}-\frac{2J^{n-2}.\Theta}{J^n}}>\sqrt{\epsilon}\]
and
\[\Gamma_{0,s}\coloneqq \left(\frac{m^4}{8}-\frac{A_0^2}{6}\right)J_s^2-\Theta_s.\]
\end{lemma}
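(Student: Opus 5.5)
The plan is to apply Proposition~\ref{prop:weak} on each fibre $X_s$ with respect to the polarization $J_s=MH_s$. I will then translate the resulting inequality, written in $J$-normalized quantities, back into $H_s$-normalized ones. Finally, Lemma~\ref{lem:smallAimplieslargeA} will push the bound from the single value $\alpha=MA_0$ to all $\alpha\ge MA_0$.

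\textbf{Step 1: choosing $m$ uniformly.} Proposition~\ref{prop:weak} needs $m\geq m_\iota$, where $m_\iota$ comes from Lemma~\ref{lem:embedding}. That integer depends only on the partition and on the Hilbert polynomial of $X_s$ with respect to $J_s$, and both are constant in the flat family $\cX\to T$. The intersection number $J_s^{n-2}.\Theta_s$ is also constant in $s$. Since $\epsilon$ is fixed and uniform over $T$, I can choose one $m$ with $\frac{m^4}{12}-\frac{2J^{n-2}.\Theta}{J^n}>\epsilon$ that works for every fibre. Discarding finitely many characteristics lets us assume the fibres are smooth and carry the relative embedding.

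The BG hypothesis of Proposition~\ref{prop:weak} for $J_s$ follows from the $H_s$-version by rescaling. Indeed, $\mu_J$-semistability coincides with $\mu_H$-semistability, and $\Del^{J}_{B,\epsilon'}$ is a positive multiple of $\Del^H_{B,\epsilon}$ for a suitable $\epsilon'$. One caveat: Proposition~\ref{prop:weak} is only proved here for $n=3$ and quoted from \cite{LLSZ} in general, so I assume its validity over $\overline{\mathbb F}_p$ in the form stated.

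\textbf{Step 2: translating $J$ to $H$.} Because $J=MH$, we have $v_i^{J,B}=M^{n-i}v_i^{H,B}$. A direct computation then shows that $\nu^J_{\alpha',B}=\frac1M\nu^H_{M\alpha',B}$. Hence $\nu^{J_s}_{A_0,B_s}$-semistability of slope zero is the same as $\nu^{H_s}_{MA_0,B_s}$-semistability of slope zero, and the two tilted hearts agree, $\Coh^{J,B}=\Coh^{H,B}$. Also, $B_s+\RR J_s=B_s+\RR H_s$, and any rational combination of the $D_{i,s}$ and $H_s$ lies in $\operatorname{span}_{\QQ}\{H_{j,s}\}$.

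With these identifications, Proposition~\ref{prop:weak} gives $J_s^{n-3}.(\ch_3^{B_s}+\Theta_s.\ch_1^{B_s})(F)<\frac{m^4}{8}J_s^{n-1}.\ch_1^{B_s}(F)$. Dividing by $M^{n-3}$, this reads
\[
H_s^{n-3}.\ch_3^{B_s}(F)\le H_s^{n-3}.\Big(\tfrac{m^4}{8}J_s^2-\Theta_s\Big).\ch_1^{B_s}(F).
\]
Writing $\frac{m^4}{8}J_s^2=\frac{(MA_0)^2}{6}H_s^2+\big(\frac{m^4}{8}-\frac{A_0^2}{6}\big)J_s^2$, this is exactly the claimed inequality at $\alpha=MA_0$ with the stated $\Gamma_{0,s}$.

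\textbf{Step 3: extending to all $\alpha\ge MA_0$.} I will apply Lemma~\ref{lem:smallAimplieslargeA} on $X_s$ with base $B_s$, the value $A_0$ replaced by $MA_0$, and $\Gamma=H_s^{n-3}.\Gamma_{0,s}$. Its hypothesis $H_s.\Gamma/H_s^n\ge\epsilon/3$ is where $m\gg1$ is used again. The leading term of $H_s.\Gamma/H_s^n$ is $\big(\frac{m^4}{8}-\frac{m^4}{72}\big)M^2$, which grows like $m^4$, while the $\Theta$-contributions are fixed. So for large $m$ the condition holds uniformly in $s$.

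The main obstacle I expect is this bookkeeping of uniformity in $s$: that the constants ($m_\iota$, $\Theta_s$-degrees, $\epsilon$) really are constant over $T$, and that Proposition~\ref{prop:weak} is legitimately available in positive characteristic. If the latter is in doubt, I would check that the construction of Section~\ref{sec:geometry} uses only an algebraically closed base field. Its ingredients are elliptic curves, multiplication maps, and \cite{LLL,CF}. Where the multiplication map $\pi_m$ is inseparable, that is when $p\mid m$, I would simply exclude those finitely many characteristics.
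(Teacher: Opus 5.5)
Your proposal is correct and matches the paper's own two-line proof: choose $m$ uniformly over $T$ because $m_\iota$ depends only on the Hilbert polynomial, apply Proposition~\ref{prop:weak} on each fibre, and then extend to all $\alpha\ge MA_0$ via Lemma~\ref{lem:smallAimplieslargeA}. You also spell out the $J=MH$ rescaling that the paper leaves implicit, namely $\nu^J_{A_0,B}=\frac1M\nu^H_{MA_0,B}$, equal tilted hearts, and BG constant $\epsilon/M^2$ for $J$; moreover, one computes $H_s.\Gamma/H_s^n=\frac{M^2m^4}{12}+\frac{(MA_0)^2}{3}$, so the hypothesis $H_s.\Gamma/H_s^n\ge\epsilon/3$ of Lemma~\ref{lem:smallAimplieslargeA} holds automatically once $A_0>\sqrt\epsilon$, with no further enlargement of $m$ needed.
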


\begin{proof}
Note that we can choose $m$ in Lemma~\ref{lem:embedding} uniformly on each fiber (see also~\cite{LLSZ}), as it only depends on the Hilbert polynomial. Thus, the result follows from Proposition~\ref{prop:weak} and Lemma~\ref{lem:smallAimplieslargeA}.
\end{proof}

\subsection{Large \texorpdfstring{$p$}{p} limits}

By Lemma~\ref{lem:2.5gap}, we have
\[\begin{aligned}
 \mu_H^+(\cH^{-1}(E))&\le\frac{(B_0+\beta H).H^{n-1}}{H^n}-A,\\
 \mu_H^-(\cH^0(E))&\ge\frac{(B_0+\beta H).H^{n-1}}{H^n}+A.
 \end{aligned}\]
Set $a=\sqrt{A^2-\epsilon}>0$. For the objects $E_s$ in Lemma~\ref{lem:ss-model}, the proof of Lemma~\ref{lem:2.5gap} gives
\[aH^n\bigl(\rk\cH^{-1}(E_s)+\rk\cH^0(E_s)\bigr) \leq v_1^{H,B_0+\beta H}(E).\]
Therefore, we can fix an integer $R_E\geq1$ such that $$\rk\cH^{-1}(E_s)+\rk\cH^0(E_s)\leq R_E$$ for each $E_s$ in Lemma~\ref{lem:ss-model}. Since $T_{X_s}(r_0H_s)$ is globally
generated, by \cite[Corollary 2.5 and Theorem 5.1, equation (5.1.1)]{Langer}, we have
\[\Del^{H_s}_{B_{0,s}+\beta H_s,\delta_p}(W)\geq 0\]
for any HN factor $W$ of $\cH^{-1}(E_s)$ or $\cH^0(E_s)_{\tf}$ with respect to $\mu_{H_s}$, where
\[\delta_p\coloneqq \frac{(R_E-1)^2r_0^2}{(p-1)^2}.\]
Note that for $p\gg1$, we have $\delta_p<A^2$. Since $A$, $R_E$, and
$r_0$ are all independent of $p$, we have $a_p\coloneqq \sqrt{A^2-\delta_p}=A-O(p^{-2})$ and
\[A-a_p=\frac{\delta_p}{A+a_p}=O(p^{-2}).\]
Now, applying the same calculation in Lemma~\ref{lem:2.5gap} gives
\begin{equation}\label{eq:complexgaps}
 \begin{aligned}
 \mu_{H_s}^+(\cH^{-1}(E_s))&\le\frac{(B_{0,s}+\beta H_s).H_s^{n-1}}{H_s^n}-a_p,\\
 \mu_{H_s}^-(\cH^0(E_s)_{\tf})&\ge\frac{(B_{0,s}+\beta H_s).H_s^{n-1}}{H_s^n}+a_p
 \end{aligned}
\end{equation}
for $p\gg1$.

For the geometric points $s\to T$ in Lemma~\ref{lem:ss-model}, we denote by $F_s\colon X_s\to X_s$ the absolute Frobenius and use the
normalization of Section~\ref{sec:normalization}. For instance, we set $M_{p,s}\coloneqq L_s^{-(p-1)/q}$, $\widehat B_{\beta,s}\coloneqq B_{0,s}+p\beta H_s$, and 
\[\Phi_{p,s}\coloneqq (-\otimes M_{p,s})\circ F_s^*.\]

In this subsection, we write
\[v_i=v_i^{H,B_0+\beta H}(E)\]
for simplicity. We use
$w_i(E_s)$, $\xi_i(E_s)$, $x_i(E_s)$, and $V_p(E_s)$ from
\eqref{eq:weights}, with $(X,H,B_0,E)$ replaced by
$(X_s,H_s,B_{0,s},E_s)$. In particular, we have $k(E_s)=k(E)=\frac{v_0}{v_1}$ and $\alpha_p=\sqrt{A^2-\epsilon/p^2}$.
Set
\[\begin{gathered}
 \eta\coloneqq \frac{\Del^H_{B_0+\beta H}(E)}{v_1^2}
      =1-A^2k(E)^2\in[0,1],\\
 \eta_p\coloneqq1-\alpha_p^2k(E)^2
       =\eta+\frac{\epsilon k(E)^2}{p^2}.
 \end{gathered}\]

\begin{lemma}\label{lem:jointmoments}
We have
\begin{equation}\label{eq:joint-bound}
 \frac{v_3^{H,B_0+\beta H}(E)}{v_1^{H,B_0+\beta H}(E)}-\frac{A^2}{6}
 \le\frac{B(A)^2+B(A)\sqrt{A^2+B(A)^2}}3\sqrt\eta.
\end{equation}
\end{lemma}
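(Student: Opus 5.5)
The plan is to compare $E$ with the Frobenius pullbacks of its specializations, and to bound $\ch_3$ factor by factor using the uniform large-volume inequality of Lemma~\ref{lem:seed}. Fix a prime $p$ as in Lemma~\ref{lem:ss-model} with $p\equiv 1\pmod q$ and $p$ large enough that \eqref{eq:upper-explicitprime} holds. Take $E_s$ as there; by \eqref{eq:complexgaps} it satisfies the gap hypothesis \eqref{eq:sourcegaps} with $a=a_p$, and $a_p>2D(p-1)/p$ for $p\gg1$ since $A\ge 2D$. Then Theorem~\ref{thm:twoendpoints} applies to $E_s$, so $\Phi_{p,s}E_s\in\Coh^{H_s,\widehat B_{\beta,s}}(X_s)$. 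Its $\nu^{H_s}_{pA,\widehat B_{\beta,s}}$-HN factors $G_i$ have slopes $p\xi_i$ with $|\xi_i|\le B_p(A)$, and the data $w_i,\xi_i,x_i$ satisfy \eqref{eq:constraints}.

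\textbf{Step 1: the per-factor inequality.} By Lemma~\ref{lem:nestedwall}, each $G_i$ is semistable of slope $0$ at $\alpha_i'=p\sqrt{A^2+\xi_i^2}$ and $B_i'=\widehat B_{\beta,s}+p\xi_iH_s$. Here $\xi_i\in\QQ$ because $A^2,\beta\in\QQ$, so $B_i'$ is a rational combination of the $D_{i,s}$ and $H_s$. Since $\alpha_i'\ge pA\ge MA_0$ for $p\gg1$, Lemma~\ref{lem:seed} gives
\[v_3^{H_s,B_i'}(G_i)\le \tfrac{(\alpha_i')^2}{6}v_1^{H_s,B_i'}(G_i)+H_s^{n-3}.\Gamma_{0,s}.\ch_1^{B_i'}(G_i).\]

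\textbf{Step 2: shift back and sum.} I will shift back to $\widehat B_{\beta,s}$ using \eqref{eq:shift} with $t=p\xi_i$, substituting $v_2(G_i)=\tfrac{p^2A^2}{2}v_0(G_i)+p\xi_iv_1(G_i)$. Summing over $i$ uses $\sum_i v_3^{\widehat B}(G_i)=p^3v_3(E_s)\ge p^3v_3$ by \eqref{eq:scaling} and Lemma~\ref{lem:ss-model}. Dividing by $p^3v_1$ turns every main term into a polynomial in $w_i,\xi_i,x_i$, since $v_1(G_i)=pw_iv_1$ and $v_0(G_i)=w_ix_iv_1$.

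The $\Gamma_{0,s}$-contribution is lower order. Indeed, $\sum_i\Gamma_{0,s}.\ch_1^{\widehat B}(G_i)=p\,\Gamma_{0,s}.\ch_1^{B_{0,s}+\beta H_s}(E_s)$, and by Lemma~\ref{lem:ss-model} this $\ch_1$ is specialized from $E$, so the term is $O(p)$. The remaining corrections $p\xi_i(\Gamma_{0,s}.H_s)\ch_0(G_i)$ are $O(p)$ as well, because $\sum_i|\ch_0(G_i)|$ is controlled through $\sum_i w_i|x_i|$ and $|x_i|$ is bounded by the Bogomolov constraint. After dividing by $p^3$ these vanish as $p\to\infty$. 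The outcome should be an inequality of the form
\[\frac{v_3}{v_1}-\frac{A^2}{6}\le \sum_i w_i\Bigl(c_1\xi_i^2+c_2A^2\xi_ix_i+c_3\xi_i^3x_i\Bigr)+o(1)\]
with explicit rational constants $c_j$. The linear term $\sum_iw_i\xi_i$ drops out because $\sum_iw_i\xi_i=0$.

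\textbf{Step 3: extract $\sqrt\eta$.} Because $\sum_iw_i\xi_i=0$, I can replace $x_i$ by $x_i-k$ in the terms linear in $x_i$ and then apply Cauchy--Schwarz:
\[\Bigl|\sum_i w_i\xi_ix_i\Bigr|\le\sqrt{V_p}\,\Bigl(\sum_i w_i(x_i-k)^2\Bigr)^{1/2}.\]
The Bogomolov constraint $\alpha_p^2x_i^2+2\xi_ix_i\le1$, averaged with weights $w_i$, gives
\[\sum_i w_i(x_i-k)^2\le \frac{\eta_p-2\sum_i w_i\xi_ix_i}{\alpha_p^2}.\]
This is a quadratic inequality in $S\coloneqq\sum_iw_i\xi_ix_i$ of the form $\alpha_p^2S^2+2V_pS-V_p\eta_p\le 0$. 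Solving it bounds $|S|$ by roughly $\sqrt{\eta_p}\,\bigl(V_p+\sqrt{V_p^2+\alpha_p^2V_p}\bigr)/\alpha_p^2$, up to lower-order terms. The cubic terms $\xi_i^3x_i$ are handled the same way, using $|\xi_i|\le B_p(A)$.

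The $c_1\xi_i^2$ term is the delicate part. Either it cancels against the other terms in the expansion, or it must be absorbed via $V_p\le B_p(A)^2$. In the latter case I expect to pair it with the constraint so that it too carries a factor $\sqrt{\eta_p}$; the degenerate case $\eta=0$, where all $x_i=k$ and then $S=0$, is the check that this works.

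\textbf{Step 4: optimize and take the limit.} Maximize the resulting bound over $V_p\in[0,B_p(A)^2]$. The maximum is at $V_p=B_p(A)^2$, giving $\frac{B_p^2+B_p\sqrt{\alpha_p^2+B_p^2}}{3}\sqrt{\eta_p}$. Finally let $p\to\infty$, using $\eta_p\to\eta$, $\alpha_p\to A$ and $B_p(A)\to B(A)$.

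The main obstacle is the bookkeeping in Steps 2 and 3: getting the exact polynomial from the shift formula and checking that every term not carrying $\sqrt{\eta}$ either cancels or has the favorable sign. This is what produces the constant $\frac{1}{3}\bigl(B^2+B\sqrt{A^2+B^2}\bigr)$ rather than a weaker one.
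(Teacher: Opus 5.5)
Your Steps 1--2 match the paper. In fact the shift produces no cubic terms, because the $\xi_i^3x_i$ contributions cancel exactly. One gets $\frac{v_3}{v_1}-\frac{A^2}{6}\le\frac23V_p(E_s)+\frac{\alpha_p^2}{3}y_p+o(1)$ with $y_p=\sum_iw_i\xi_ix_i$, so the $\frac23V_p$ term does not cancel.

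\textbf{The gap is in Step 3.} Your quadratic inequality $\alpha_p^2S^2+2V_pS-V_p\eta_p\le0$ is correct. However, its upper root only gives
\[2V_p+\alpha_p^2S\le V_p+\sqrt{V_p^2+\alpha_p^2V_p\eta_p},\]
which equals $2V_p$ at $\eta_p=0$. You cannot factor out $\sqrt{\eta_p}$ as you do. Your formula agrees with the target only at $\eta_p=1$, which is why the final constant looks right.

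This is not a weakness of Cauchy--Schwarz. The constraints \eqref{eq:constraints}, including $|\xi_i|\le B_p(A)$, do not imply any bound of the form $C\sqrt{\eta_p}$. Take two factors of weight $\tfrac12$ with $(\xi_1,x_1)=(\alpha_pt,(1-t)/\alpha_p)$ and $(\xi_2,x_2)=(-\alpha_pt,(1+t)/\alpha_p)$, where $0<\alpha_pt\le B_p(A)$. Then:
\begin{itemize}
\item $\sum_iw_i\xi_i=0$;
\item $\alpha_p^2x_i^2+2\xi_ix_i=1-t^2\le1$ for both factors;
\item $k=1/\alpha_p$, so $\eta_p=0$;
\item yet $2V_p+\alpha_p^2y_p=\alpha_p^2t^2>0$.
\end{itemize}
This also refutes your sanity check. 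At $\eta_p=0$, averaging only yields $\alpha_p^2\sum_iw_i(x_i-k)^2\le-2S$, which does not force $x_i=k$ when $S<0$. So with only these constraints you get a bound of order $B_p(A)^2$, not one vanishing with $\eta$. That is too weak for the $\sqrt{\Del}$ form of Theorem~\ref{thm:main1.1}.

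\textbf{The missing ingredient} is an extra one-sided constraint coming from the $\mu_H$-gaps of $\Phi_{p,s}E_s$ itself. The paper combines three inputs:
\begin{itemize}
\item Langer's sharper estimate $\mu^+_{H_s}(F_s^*W)\le p\mu^+_{H_s}(W)+(\rk W-1)r_0$, whose error is $O(1)$ rather than $e_p=O(p)$;
\item the uniform rank bound $R_E$;
\item the gaps \eqref{eq:complexgaps} with $a_p\to A$.
\end{itemize}
Together these give \eqref{eq:margin} with $g_p\to A$.

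Apply this to the quotient $Q$ of $\Phi_{p,s}E_s$ formed by its nonpositive-slope HN factors when $k(E)\ge0$; otherwise use the corresponding subobject and flip signs. This yields $v_1(Q)-pg_pv_0(Q)\ge0$, i.e.\ $\sum_{\xi_i\le0}w_i(1-\alpha_px_i)\ge-o(1)$. The example above violates exactly this, since $w_2(1-\alpha_px_2)=-t/2$.

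With this constraint in hand, the paper bounds $2\xi^2+\alpha_p^2\xi x$ pointwise by chords of a convex function on $[-B_p(A),B_p(A)]$, $[-B_p(A),0]$ and $[0,B_p(A)]$. This gives the two estimates \eqref{eq:first} and \eqref{eq:second}. It then concludes with the elementary inequality $\min\{1-rt,(r+2)(1-t)\}\le\sqrt{1-t^2}$ at $t=\alpha_p|k(E)|$. Note that \eqref{eq:first} alone, which uses only \eqref{eq:constraints}, also fails to vanish at $\eta_p=0$.
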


\begin{proof}
We take a geometric point $s\to T$ as in Lemma~\ref{lem:ss-model}, with $p$ sufficiently large that $pa_p>2D(p-1)$, $p\equiv1\pmod{q}$, and
\begin{equation}\label{eq:main-explicitprime}
p>\max\Bigl\{4,\frac{k_*+\sqrt\epsilon}{A-D},
 \frac12+\frac{\epsilon}{8D^2},\frac{MA_0}{A},\frac{(R_E-1)r_0}{a_p}\Bigr\}.
\end{equation}

Let $G_i$ be the
HN factors of $\Phi_{p,s}E_s$ with respect to
$\nu^{H_s}_{pA,\widehat B_{\beta,s}}$ of slopes $p\xi_i(E_s)$.
By Lemma~\ref{lem:nestedwall}, $G_i$ is tilt-semistable of slope zero at
\[(p\sqrt{A^2+\xi_i(E_s)^2},\widehat B_{\beta,s}+p\xi_i(E_s)H_s).\]
By \eqref{eq:main-explicitprime}, we have $pA>MA_0$, so
Lemma~\ref{lem:seed} applies and gives
\[\begin{split}
 v_3^{H_s,\widehat B_{\beta,s}}(G_i)
 \le{}&H_s^{n-3}.\Gamma_{0,s}.\ch_1^{\widehat B_{\beta,s}}(G_i)\\
 &+\frac{p^2(A^2+4\xi_i(E_s)^2)}6
          v_1^{H_s,\widehat B_{\beta,s}}(G_i)\\
 &+\left(\frac{p^2A^2}{3}-\frac{H_s^{n-2}.\Gamma_{0,s}}{H_s^n}\right)
             p\xi_i(E_s)v_0^{H_s}(G_i).
 \end{split}\]
Set $y_p=\sum_iw_i(E_s)\xi_i(E_s)x_i(E_s)$.
Since $v_3^{H_s,B_{0,s}+\beta H_s}(E_s)\geq v_3$, summing over all $G_i$, applying \eqref{eq:scaling}, and dividing by $p^3v_1$ gives
\[
 \begin{aligned}
 \frac{v_3}{v_1}-\frac{A^2}{6}
 &\le\frac{H_s^{n-3}.\Gamma_{0,s}.\ch_1^{B_{0,s}+\beta H_s}(E_s)}{p^2v_1}
       +\frac23V_p(E_s)\\
 &\quad+\left(\frac{A^2}{3}-\frac{H_s^{n-2}.\Gamma_{0,s}}{p^2H_s^n}\right)y_p.
 \end{aligned}
\]
Note that by \eqref{eq:complexgaps} and Theorem~\ref{thm:twoendpoints}, we have $|\xi_i(E_s)|\le B_p(A)$. Since $\lim_{p\to \infty}\alpha_p=A$ and $\lim_{p\to \infty}B_p(A)=B(A)$, from the quadratic bound of $x_i(E_s)$ in \eqref{eq:constraints}, we see that $x_i(E_s)$ is also bounded when $p\to \infty$. In particular, $y_p$ is bounded. By Lemma~\ref{lem:ss-model}, the full first Chern class specializes, so the $\Gamma_{0,s}$-term is uniformly $O(p^{-2})$. Since $\alpha_p^2=A^2-\epsilon/p^2$, it follows that
\begin{equation}\label{eq:jointmoments}
 \frac{v_3}{v_1}-\frac{A^2}{6}
 \le\frac23V_p(E_s)+\frac{\alpha_p^2}{3}y_p+o(1)
\end{equation}
when $p\to \infty$. In the following, we estimate $V_p(E_s)$ and $y_p$.

By the global generation of $T_{X_s}(r_0H_s)$, for any nonzero torsion-free sheaf $W$ on $X_s$,  \cite[proof of Corollary 2.5, p.~258]{Langer}, applied to the HN factors of $W$ with respect to $\mu_{H_s}$, gives
\[
 \begin{aligned}
 \mu_{H_s}^+(F_s^*W)&\le p\mu_{H_s}^+(W)+(\rk W-1)r_0,\\
 \mu_{H_s}^-(F_s^*W)&\ge p\mu_{H_s}^-(W)-(\rk W-1)r_0.
 \end{aligned}
\]
Apply these bounds to
$\cH^{-1}(E_s)$ and $\cH^0(E_s)_{\tf}$ when nonzero, and tensor
by $M_{p,s}$, we obtain
\begin{equation}\label{eq:margin}
 \begin{aligned}
 \mu_{H_s}^+(\cH^{-1}(\Phi_{p,s}E_s))
 &\le\frac{\widehat B_{\beta,s}.H_s^{n-1}}{H_s^n}-pg_p,\\
 \mu_{H_s}^-(\cH^0(\Phi_{p,s}E_s)_{\tf})
 &\ge\frac{\widehat B_{\beta,s}.H_s^{n-1}}{H_s^n}+pg_p,
 \end{aligned}
\end{equation}
where
\[g_p=\min\left\{\alpha_p, a_p-\frac{(R_E-1)r_0}{p}\right\}.\]
Note that under \eqref{eq:main-explicitprime}, we have $g_p>0$ and $\lim_{p\to \infty} g_p=A$.

For any subobject $P$ and quotient $Q$ of
$\Phi_{p,s}E_s$ in $\Coh^{H_s,\widehat B_{\beta,s}}(X_s)$,
there are an inclusion $\cH^{-1}(P)\hookrightarrow\cH^{-1}(\Phi_{p,s}E_s)$ and a surjection $\cH^0(\Phi_{p,s}E_s)\twoheadrightarrow\cH^0(Q)$.
Together with \eqref{eq:margin} and the definition of $\Coh^{H_s,\widehat B_{\beta,s}}(X_s)$, these give
\begin{equation}\label{eq:p1-ineq}
 \begin{aligned}
 v_1^{H_s,\widehat B_{\beta,s}}(Q)-pg_pv_0^{H_s}(Q)&\ge0,\\
 v_1^{H_s,\widehat B_{\beta,s}}(P)+pg_pv_0^{H_s}(P)&\ge0.
 \end{aligned}
\end{equation}

For simplicity, we write $w_i,\xi_i,x_i$ for
$w_i(E_s),\xi_i(E_s),x_i(E_s)$.
When $k(E)\ge0$, take $Q$ to be the part of the HN filtration of $\Phi_{p,s}E_s$ whose factors have
nonpositive slopes. When $k(E)<0$, take $P$ to be the part of the HN filtration of $\Phi_{p,s}E_s$ whose factors have nonnegative slopes, and then replace
$(\xi_i,x_i)$ by $(-\xi_i,-x_i)$ and reverse the order of the factors.
This leaves $V_p(E_s)$ and $y_p$ unchanged. By \eqref{eq:p1-ineq} and \eqref{eq:constraints},
in both cases we have
\[\sum_iw_i\xi_i=0,\qquad \sum_iw_ix_i=|k(E)|,\qquad \sum_{\xi_i\le0}w_i(1-g_px_i)\ge0.\]
From $\lim_{p\to \infty}\alpha_p=\lim_{p\to \infty}g_p=A$, the boundedness of $x_i$, and
$\sum_iw_i=1$, we obtain
\begin{equation}\label{eq:suffix-limit}
 \sum_{\xi_i\le0}w_i(1-\alpha_px_i)\ge-o(1)
\end{equation}
when $p\to \infty$.

Set
\[r\coloneqq \frac{\sqrt{\alpha_p^2+B_p(A)^2}-B_p(A)}{\alpha_p}.\]
By \eqref{eq:prime} and \eqref{eq:upper}, we have
$B_p(A)/\alpha_p<3/4$, and hence $1/2<r<1$.
For $|\xi|\le B_p(A)$ and $\alpha_p^2x^2+2\xi x\le1$, we have
\begin{equation}\label{eq:511}
2\xi^2+\alpha_p^2\xi x
 \le\frac{\alpha_pB_p(A)}r
      -\alpha_p^2B_p(A)x+\alpha_pr\xi,
\end{equation}
and
\begin{equation}\label{eq:512}
 2\xi^2+\alpha_p^2\xi x\le
 \begin{cases}
 \alpha_pB_p(A)(1-\alpha_px)
       +(\alpha_p-2B_p(A))\xi,&\xi\le0,\\[2pt]
 \dfrac{\alpha_pB_p(A)(r+2)}r(1-\alpha_px)
       +(\alpha_p-2B_p(A))\xi,&\xi>0.
 \end{cases}
\end{equation}
Indeed, $\alpha_p^2x^2+2\xi x\le1$ gives
\[x\le\frac{\sqrt{\alpha_p^2+\xi^2}-\xi}{\alpha_p^2},\]
and the function
\[2\xi^2+(\xi+K)(\sqrt{\alpha_p^2+\xi^2}-\xi)\]
in $\xi$ is convex when $\xi\ge -K$.
Taking its chords on $[-B_p(A),B_p(A)]$ and $[-B_p(A),0]$
with $K=B_p(A)$, and on $[0,B_p(A)]$ with
$K=B_p(A)(r+2)/r$, gives \eqref{eq:511} and \eqref{eq:512}.

Now, we apply \eqref{eq:511} and \eqref{eq:512} to $(\xi_i,x_i)$ and sum with weights
$w_i$. Since $\sum_iw_i\xi_i=0$ and
\[2V_p(E_s)+\alpha_p^2y_p=\sum_i w_i(2\xi_i^2+\alpha_p^2\xi_ix_i),\]
\eqref{eq:511} gives
\begin{equation}\label{eq:first}
 2V_p(E_s)+\alpha_p^2y_p
 \le\frac{\alpha_pB_p(A)}r
       \bigl(1-r\alpha_p|k(E)|\bigr).
\end{equation}
Similarly, using \eqref{eq:suffix-limit} and \eqref{eq:512}, we obtain
\begin{equation}\label{eq:second}
 \begin{aligned}
 2V_p(E_s)+\alpha_p^2y_p
 &\le\frac{\alpha_pB_p(A)(r+2)}r
          \bigl(1-\alpha_p|k(E)|\bigr)\\
 &\quad-\frac{2\alpha_pB_p(A)}r
          \sum_{\xi_i\le0}w_i(1-\alpha_px_i)\\
 &\le\frac{\alpha_pB_p(A)(r+2)}r
          \bigl(1-\alpha_p|k(E)|\bigr)+o(1)
 \end{aligned}
\end{equation}
when $p\to \infty$.

Note that for $0\le t\le1$ and $1/2\le r<1$, we have
\[\min\{1-rt,(r+2)(1-t)\}\le\sqrt{1-t^2}.\]
Applying this with $t=\alpha_p|k(E)|\in[0,1]$ and \eqref{eq:first} and \eqref{eq:second}, and recalling that
$\eta_p=1-\alpha_p^2k(E)^2$, we obtain
\[
 \begin{aligned}
 \frac23V_p(E_s)+\frac{\alpha_p^2}{3}y_p
 &\le\frac{\alpha_pB_p(A)}{3r}\sqrt{\eta_p}+o(1)\\
 &=\frac{B_p(A)^2+B_p(A)\sqrt{\alpha_p^2+B_p(A)^2}}3
       \sqrt{\eta_p}+o(1)
 \end{aligned}
\]
when $p\to \infty$. Combining this with \eqref{eq:jointmoments} and letting
$p\to\infty$ proves \eqref{eq:joint-bound}.
\end{proof}

\subsection{Proof of the main theorem}\label{sec:proof}

In the following, we prove
\[v_3^{H,B_0+\beta H}(E)-\frac{\alpha^2}{6}v_1^{H,B_0+\beta H}(E) \le\mathsf{C}(X,H)\sqrt{\Del^H_{B_0+\beta H}(E)}\]
for any $B_0\in \NS(X)_{\RR}$, $(\alpha,\beta)\in \RR_{>0}\times \RR$, and any $\nu^{H}_{\alpha, B_0+\beta H}$-semistable object $E\in \Coh^{H,B_0+\beta H}(X)$ with $\nu^{H}_{\alpha, B_0+\beta H}(E)=0$. Then the desired quadratic
inequality in Theorem~\ref{thm:main1.1} follows from Proposition~\ref{prop:ch3boundimplyquadform}.

\begin{proof}[Proof of Theorem~\ref{thm:main1.1}]
First, we consider $\alpha\ge 2D$ with $\alpha^2\in\QQ$, $B_0\in\NS(X)_\QQ$, $\beta\in\QQ$. Let $E\in \Coh^{H, B_0+\beta H}(X)$ be a
$\nu^H_{\alpha,B_0+\beta H}$-semistable object with $\nu^H_{\alpha,B_0+\beta H}(E)=0$. We set $\eta=\Del^H_{B_0+\beta H}(E)/(v_1^{H,B_0+\beta H}(E))^2$ and
\[f_\alpha(\eta)\coloneqq \frac{B(\alpha)^2+B(\alpha)\sqrt{\alpha^2+B(\alpha)^2}}3\sqrt\eta.\]
Therefore, Lemma~\ref{lem:jointmoments} gives
\[v_3^{H,B_0+\beta H}(E)-\frac{\alpha^2}{6}v_1^{H,B_0+\beta H}(E) \le v_1^{H,B_0+\beta H}(E)f_\alpha(\eta).\]
Therefore, the same argument as in Lemma~\ref{lem:rationalparametertoreal} extends the statement to $\beta\in \RR$. If $\alpha=2D$, one has $B(2D)=\frac32D$ and $f_{2D}(\eta)=2D^2\sqrt\eta$. Thus, for any $\beta\in \RR$ and any $\nu^H_{2D,B_0+\beta H}$-semistable object $E$ with $\nu^H_{2D,B_0+\beta H}(E)=0$, we have
\[v_3^{H,B_0+\beta H}(E)-\frac{4D^2}{6}v_1^{H,B_0+\beta H}(E) \le2D^2\sqrt{\Del^H_{B_0+\beta H}(E)}.\]
Combining with Proposition~\ref{prop:propagate} and $\epsilon=0$, we see that 
\[v_3^{H,B_0+\beta H}(E)-\frac{\alpha^2}{6}v_1^{H,B_0+\beta H}(E) \le \mathsf{C}(X,H)\sqrt{\Del^H_{B_0+\beta H}(E)}\]
with 
\[\mathsf{C}(X,H)\coloneqq\frac83D^2\]
for any $(\alpha,\beta)\in \RR_{>0}\times \RR$ and any $\nu^H_{\alpha,B_0+\beta H}$-semistable object $E$ with $\nu^H_{\alpha,B_0+\beta H}(E)=0$.

Finally, Lemma~\ref{lem:rationalparametertoreal} extends the statement to every $B_0\in \NS(X)_{\RR}$. This finishes the proof.
\end{proof}

\begin{remark}\label{rmk:constant}
Here, we recall the definition of the explicit constant $\mathsf{C}(X,H)$. Choose integers $u>0$ and $v\geq 0$ such that $\Omega_{X}(uH)$ and $\omega_{X}^{-1}(vH)$ are globally generated. Then we have
\[\mathsf{C}(X,H)=\frac{8}{3}(nu+v)^2.\]
In particular, if $\omega_X=\cO_X$, we may take $v=0$ and obtain
\[\mathsf{C}(X,H)=\frac{8}{3}n^2u^2.\]
If, moreover, $mH$ is very ample, then $\Omega_X(2mH)$ is globally generated. Thus, in this case we may take $u=2m$ and obtain
\[\mathsf{C}(X,H)=\frac{32}{3}n^2m^2.\]
\end{remark}

\section{Applications}

In this final section, we discuss some applications of Theorem~\ref{thm:main1.1}. We first show that Theorem~\ref{thm:main1.1} makes results in \cite{FT1,FT2} unconditional. In particular, we prove Theorem~\ref{thm:rk-r} and Corollary~\ref{cor:sym-inv}. Then, we discuss the genus bound of curves in threefolds. Following ideas from \cite{liu-ruan,liu}, we prove the Castelnuovo Bound Conjecture up to an explicit constant caused by $\mathsf C(X,H)$ (cf.~Theorem~\ref{thm:cast-bound}).

\subsection{Higher rank DT-invariants}
Let $X$ be a  Calabi--Yau threefold over $\CC$, i.e.~a smooth projective threefold with $\omega_X=\cO_X$ and $\rH^1(X,\cO_X)=0$. We denote by $\Knum(X)$ the numerical K-group of $\Db(X)$. Thus, the homomorphism $\ch \colon \KK(X)\to \rH^*(X,\QQ)$ factors through the surjection $\KK(X)\twoheadrightarrow \Knum(X)$, and we denote the induced homomorphism $\Knum(X)\to \rH^*(X,\QQ)$ by $\ch$ as well.

For any ample divisor $H$ on $X$ and $\bv\in \Knum(X)$, we denote by $\DT_H(\bv)\in \QQ$ the generalized Donaldson--Thomas invariant counting Gieseker-semistable sheaves (with respect to $H$) of class $\bv$ constructed in \cite{JS}.

\begin{theorem}\label{thm:rk-r}
Let $(X,H)$ be a polarized Calabi--Yau threefold. Then for a fixed $\bv\in \Knum(X)$ with $\rk(\bv)\geq 0$, \[\mathsf{DT}_H(\bv)=F_1(\mathsf{DT}_H(\alpha_1),\mathsf{DT}_H(\alpha_2),\dots)\]
is a universal polynomial in invariants $\mathsf{DT}_H(\alpha_i)$ with $\rk(\alpha_i)=1$.

Similarly, for a fixed $\bv\in \Knum(X)$ with $\rk(\bv)> 0$, \[\mathsf{DT}_H(\bv)=F_0(\mathsf{DT}_H(\beta_1),\mathsf{DT}_H(\beta_2),\dots)\]
is a universal polynomial in invariants $\mathsf{DT}_H(\beta_i)$ with $\rk(\beta_i)=0$ and $\ch_1(\beta_i)\neq 0$.
\end{theorem}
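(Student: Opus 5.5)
The plan is to reduce Theorem~\ref{thm:rk-r} to the conditional results of Feyzbakhsh--Thomas \cite{FT1,FT2}. Their argument expresses $\DT_H(\bv)$ as a universal polynomial in rank one (respectively rank zero) invariants. It does this through a sequence of wall-crossings in the space of tilt-stability conditions $\nu^H_{\alpha,B}$, followed by an extension to genuine Bridgeland stability conditions on $\Db(X)$. The only unproven input is a BMT-type inequality, namely a $\Gamma$-BMT inequality of the form in Corollary~\ref{cor:gamma-bmt} for some fixed class $\Gamma$ (in \cite{FT2} they allow a correction term of exactly this shape). So the proof will consist of checking that Corollary~\ref{cor:gamma-bmt}, with $\Gamma=\mathsf{C}H^{2}$ and $n=3$, supplies precisely the hypothesis they use, and then invoking their theorems.

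First, I will verify that the quadratic form $Q^{\Gamma}_{\alpha,B}$ from Corollary~\ref{cor:gamma-bmt} matches the form in \cite[Proposition 2.8]{BMSZ}, which is the form assumed in \cite{FT1,FT2}. This needs only the fact, noted after Theorem~\ref{thm:main1.1}, that it differs from the left-hand side of \eqref{eq:quad} by a nonnegative term. Since $\Gamma$ is a positive multiple of $H^2$, it is a class in $\rH^4(X,\QQ)$ of the type \cite{BMSZ,FT2} allow. By \cite{BMS,BMSZ}, the inequality for all $(\alpha,B)$ with $B\in\NS(X)_\RR$ then yields a family of Bridgeland stability conditions $\sigma_{\alpha,B}$ on $\Db(X)$ satisfying the support property with respect to $Q^\Gamma$, for $\alpha$ in the range where $Q^\Gamma$ is negative definite on $\ker Z$. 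Since $X$ is Calabi--Yau, the support property gives a well-behaved chamber structure.

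Second, I will run the Feyzbakhsh--Thomas argument essentially verbatim. In \cite{FT1} they tensor by a large power $\cO(-nH)$ and use wall-crossing from large volume (Gieseker chamber) to a region where the only destabilizing objects have rank $1$, or rank $0$ with $\ch_1\neq0$. The Joyce--Song wall-crossing formula \cite{JS} then expresses $\DT_H(\bv)$ universally in terms of those. The inequality is used in two places: to bound the walls so that only finitely many are crossed, and to show that in the final chamber the relevant moduli are empty or consist of the claimed lower-rank objects. I will check each use, tracking how the extra $\Gamma$-terms enter the numerical estimates.

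I expect the main obstacle to be this bookkeeping. The rank-reduction step needs a sharp enough Bogomolov-type bound along specific walls, and the positive $\Gamma$-correction could break strict inequalities that \cite{FT1} obtained with $\Gamma=0$. My first move is to use the version in \cite{FT2}, which is designed for a weakened BMT with an arbitrary fixed correction class. There the constants only shift the size of the twist $n$ and the finite set of walls, and any explicit $\mathsf C$ suffices after choosing $n\gg0$ depending on $\mathsf C$ and $\bv$. The second statement, in terms of rank zero classes with $\ch_1\neq0$, follows in the same way from the corresponding theorem of \cite{FT1}, again using only Corollary~\ref{cor:gamma-bmt}.
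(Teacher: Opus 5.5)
Your overall strategy is the paper's: feed the new inequality into the Feyzbakhsh--Thomas wall-crossing arguments and re-check each place it is used. However, your plan assigns the two papers the wrong roles, and the fix you propose for the main obstacle does not reach the step where that obstacle lives.

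The second statement (rank $>0$ from rank $0$) is the one that comes from \cite{FT2}. The only input to check there is \textup{(wBG)} of \cite[Section~3.1]{FT2}: there are no tilt-semistable objects of class $\bv_n=\bv-[\cO_X(-nH)]$ along $\ell_{\bv,n}$. The paper verifies this by expanding the two intersection points of the equality line of \eqref{eq:linear} with $w=b^2/2$. It enlarges $\mathsf C$ so that $b_2<\mu_H(\bv)$, and then takes $n\gg0$.

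The first statement needs, in addition, the separate rank-$0$-to-rank-$1$ wall analysis of \cite[Section~2]{FT1}. That analysis is run for $\bv_{n_0}=\bv-[\cO_X(-n_0H)]$ with $\ch_H(\bv)=(0,c,s_0,d_0)$. For $\rk(\bv)>0$ one then composes it with the second statement. Your remark that in \cite{FT2} ``the constants only shift $n$'' says nothing about this \cite{FT1} step.

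In \cite[Propositions~2.4--2.6]{FT1} the inequality is used with constant-level precision. Concretely, the argument needs:
\begin{enumerate}
\item the endpoints $b_1^f=-n_0+\frac c3+o(1)$ and $b_2^f=\frac{n_0}2-\frac c{12}+\frac{3s_0}{2c}+o(1)$ of $\ell_f$;
\item the normalized estimate $0\le\frac{c'}2+\frac{\delta_n}{c-c'}(\frac{\delta_n}2-\frac{2c}3)+o(1)$, which forces $c'=\delta_n=0$;
\item then \eqref{eq:rankminusone}, which forces $s'=0$;
\item finally, the inequality with no correction for the factor $E_1$, which now has zero discriminant, so that \cite[Lemma~2.6]{FT2} gives $E_1\simeq T(-n_0H)[1]$.
\end{enumerate}
All of this survives only because the correction in Theorem~\ref{thm:main1.1} is $\mathsf C\,v_1\sqrt{\Delta}$ and the relevant classes have $\Delta=O(n_0)$. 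As a result, the slope of $\ell_f$ moves by only $3\mathsf C/\sqrt{2cn_0+c^2+2s_0}$. The extra terms in \cite[(40)]{FT1} are $O(n_0^{3/2})$. The correction vanishes identically once $\Delta(E_1)=0$.

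You propose instead to use Corollary~\ref{cor:gamma-bmt}. Its correction is larger than the one in \eqref{eq:quad} by $3\mathsf C((v_1-\sqrt{\Delta})^2+\alpha^2v_0^2)$. This extra term is not small for these nearly-$\cO_X(-n_0H)[1]$ objects, and it does not vanish when $\Delta=0$. For $\bv_{n_0}$ it changes the slope of $\ell_f$ by roughly $-9\mathsf C/(8c)$, and so moves $b_2^f$ by roughly $-9\mathsf C/(4c)$. Neither $n_0\gg0$ nor the estimates of \cite{FT1} absorb this shift as written.

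You should work directly with Theorem~\ref{thm:main1.1} in the form \eqref{eq:linear}. Then redo \cite[(38), (40)]{FT1} and Propositions~2.4--2.6 of \cite{FT1}, as the paper does.
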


\begin{proof}
The result essentially follows from \cite{FT1,FT2}. In the following, we briefly explain how to check inequalities and assumptions in \cite{FT1,FT2} using Theorem~\ref{thm:main1.1}. Choose $m$ such that $mH$ is very ample. By Matsusaka's theorem, we may choose $m$ that it depends only on the numbers $H^3$ and $c_2(X).H$. By Remark~\ref{rmk:constant}, we may take $\mathsf C=96m^2$.

Following \cite[Section 1]{FT1}, we write
\[\ch_H(F)\coloneqq \left(\ch_0(F),\frac{H^2.\ch_1(F)}{H^3}, \frac{H.\ch_2(F)}{H^3},\frac{\ch_3(F)}{H^3}\right) =(r,c,s,d).\]
The parameters in \cite{FT1,FT2} satisfy
\[w\coloneqq \frac{\alpha^2+b^2}{2},\qquad \nu_{b,w}(F)\coloneqq \nu^H_{\alpha,bH}(F)+b=\frac{s-wr}{c-br}.\]
By Theorem~\ref{thm:main1.1}, a $\nu^H_{\alpha, bH}$-semistable object
$F\in\Coh^{H,bH}(X)$ satisfies
\begin{equation}\label{eq:linear}
 \begin{aligned}
 0\le{}&(c^2-2rs)w+(3rd-cs)b+2s^2-3cd\\
       &+3\mathsf C(c-br)\sqrt{c^2-2rs}.
 \end{aligned}
\end{equation}
In applying \cite{FT1,FT2}, we define $\ell_f(F)$ by equality in
\eqref{eq:linear}, for $c^2-2rs>0$. When $r\ne0$, it passes
through $\Pi(F)=(c/r,s/r)$.

We first verify \textup{(wBG)} in \cite[Section 3.1]{FT2}.
Using the notation there, we write $\ch(\bv)=(r,D,-\beta,-m)$ and $\bv_n=\bv-[\cO_X(-nH)]$, where $r>0$, $D\in \rH^2(X,\ZZ)$ and $\beta\in \rH^4(X,\QQ)$. Then
\[\ch(\bv_n)=\left(r-1,D+nH, -\beta-\frac{n^2}{2}H^2,-m+\frac{n^3}{6}H^3\right).\]
The intersections of $\ell_f(\bv_n)$ with $w=b^2/2$ satisfy
\[
 \begin{aligned}
 b_1&=-n+O(n^{-1}),\\
 b_2&=\mu_H(\bv)-\frac1n\left(
 2\mu_H(\bv)^2+\frac{4\beta.H}{rH^3}
                    +\frac{6\mathsf C}{\sqrt r}\right)+O(n^{-2}).
 \end{aligned}
\]
Increasing $\mathsf C$ if necessary, and then taking $n\gg0$, gives
\[b_1<-n+\frac1{4r^2H^3},\qquad \mu_H(\bv)-\frac1{4r^2H^3}<b_2<\mu_H(\bv).\]
Thus, $\ell_f(\bv_n)$ lies strictly above the line $\ell_{\bv,n}$ in
\cite[Section 3.1]{FT2}. On the part of $\ell_{\bv,n}$ with
$w>b^2/2$, the right-hand side of \eqref{eq:linear} is negative,
so there are no semistable objects of class $\bv_n$ along $\ell_{\bv,n}$ by Theorem~\ref{thm:main1.1}. This proves \textup{(wBG)} and the second statement follows.

For the first statement, we check the rank $0$ reduction of \cite{FT1}. Fix a class $\bv$ with
$\ch_H(\bv)=(0,c,s_0,d_0)$, where $c>0$, and set
$\bv_{n_0}=\bv-[\cO_X(-n_0H)]$ for $n_0\gg0$. Thus
\[\ch_H(\bv_{n_0})= \left(-1,n_0+c,s_0-\frac{n_0^2}{2},d_0+\frac{n_0^3}{6}\right).\]
We now denote by $\ell_f$ the equality line of
\eqref{eq:linear} for this class. It passes through
$\Pi(\bv_{n_0})$ and its slope is the slope in
\cite[equation (23)]{FT1} minus
$3\mathsf C/\sqrt{2cn_0+c^2+2s_0}$. The two intersection point of $\ell_f$ with
$w=b^2/2$ have $b$-coordinates
\begin{equation}\label{eq:endpoints}
 b_1^f=-n_0+\frac c3+O(n_0^{-1}),\qquad
 b_2^f=\frac{n_0}{2}-\frac c{12}+\frac{3s_0}{2c}
                 +O(n_0^{-1/2}).
\end{equation}
The slope bound in \cite[equation (24)]{FT1} and the estimates in
\cite[Lemmas 2.1--2.2]{FT1} therefore still hold; note that we use \cite[Definition 2.3]{FT1} with this choice of $\ell_f$.

Let $\bw_n$ be close to $\bv_{n_0}$ and write, as in
\cite[equation (33)]{FT1},
\[
 \begin{aligned}
 \ch_H(\bw_n)&=\left(-1,n+c,-\frac{n^2}{2}+s,\frac{n^3}{6}+d\right),\\
 \ch_H(E_0)&=(0,c-c',s-s',d-d'),\\
 \ch_H(E_1)&=\left(-1,n+c',-\frac{n^2}{2}+s',
                                  \frac{n^3}{6}+d'\right),
 \end{aligned}
\]
where $E_0,E_1$ are the semistable factors of ranks $0,-1$ and
$\delta_n=n_0-n$. By \cite[Section 2.2]{FT1}, the remaining uses
of the BMT inequality are in its equations (38) and (40).
The proof of \cite[Lemma B.3]{FT2}, applied to $E_0$ with
\eqref{eq:linear}, replaces the former by
\begin{equation}\label{eq:rankzero}
 d-d'\le\frac{(s-s')^2}{2(c-c')}
                 +\frac{(c-c')^3}{24}+\mathsf C(c-c').
\end{equation}
For $E_1$, the additional term on the left of
\cite[equation (40)]{FT1} is
\[3\mathsf C(n+c')\sqrt{(c')^2+2nc'+2s'}.\]
In the case $c'\le0$ of \cite[Proposition 2.4]{FT1}, the bounds
$c'=O(1)$ and $s'=O(n_0)$ give corrections of orders $O(n_0)$
and $O(n_0^{3/2})$. Hence, its final estimate in the proof of \cite[Proposition 2.4]{FT1} is still
\[0\le\frac{c'}2+ \frac{\delta_n}{c-c'} \left(\frac{\delta_n}{2}-\frac{2c}{3}\right) +O(n_0^{-1}),\qquad c'\le0,\quad 0\le\delta_n\le\frac c3.\]
Since $c',\delta_n\in(H^3)^{-1}\ZZ$, this gives
$c'=\delta_n=0$ for $n_0\gg0$. Consequently, \cite[Proposition 2.4]{FT1} still holds.

For \cite[Proposition 2.5]{FT1}, we assume that $\bw_n=\bv_{n_0}$ and $c'=0$. The inequalities in our case give
\begin{equation}\label{eq:rankminusone}
 s'\left(n_0+\frac{s'}{2c}+\frac{s_0}{c}\right)
 \le\frac{3s_0^2}{2c}-3d_0+\frac{c^3}{8}
       +3\mathsf Cc+3\mathsf C\sqrt{2s'}.
\end{equation}
Here $s'\ge0$, and $s'>0$ implies $s'\ge1/(2H^3)$.
If $s'>0$, dividing \eqref{eq:rankminusone} by $s'$
gives a contradiction for $n_0\gg0$. Thus $s'=0$. The discriminant of $E_1$ is zero,
and the correction in \eqref{eq:linear} vanishes.
The argument of \cite[Lemma 2.6]{FT2} consequently gives
$E_1\simeq T(-n_0H)[1]$, where $c_1(T)=0$. This means \cite[Proposition 2.5]{FT1} also holds in our case.

Now, we treat \cite[Proposition 2.6]{FT1}. Case (i) in the proof of \cite[Proposition 2.6]{FT1} follows from \eqref{eq:endpoints}. For Case (ii), the bounds for $\widetilde s$ are unchanged. For $\widetilde d$,
\eqref{eq:rankzero} and \cite[equation (48)]{FT1} give
\[
 \begin{aligned}
 \widetilde d\ge{}&
 d-\frac{c-c'}2\left(\frac{n_0}{4}+|s_0|H^3\right)^2
 +\frac{n^2(c-c')}{2}\\
 &-\frac{n(c-c')^2}{2}
 +\frac{(c-c')^3}{8}-\mathsf C(c-c')\\
 \ge{}&
 d+\frac{15}{32}n_0^2(c-c')\\
 &-n_0(c-c')\left(\frac{|s_0|H^3}{4}
                         +\delta_n+\frac{c-c'}2\right)\\
 &-\frac{c-c'}2(s_0H^3)^2-\mathsf C(c-c').
 \end{aligned}
\]
Since $0\le\delta_n\le c/3$ and $0<c-c'\le c/3$, for
$n_0>2\mathsf C/c$, we have
\[n_0\left(\frac34|s_0|H^3+c-\delta_n-\frac{c-c'}2\right) +\frac12(s_0H^3)^2-\mathsf C \ge\frac c2n_0-\mathsf C>0.\]
It follows, using the lower bound for $d$ in
\cite[Definition 2.3]{FT1}, that
\[
 \begin{aligned}
 \widetilde d>{}&
 d+\frac{15}{32}n_0^2(c-c')\\
 &-n_0(c-c')(|s_0|H^3+c)-(c-c')(s_0H^3)^2\\
 \ge{}&
 \frac{15}{32}n_0^2\delta_{n'}
 -n_0\delta_{n'}(|s_0|H^3+c)
 -\delta_{n'}(s_0H^3)^2+d_0,
 \end{aligned}
\]
where $n'=n-c+c'$ and $\delta_{n'}=\delta_n+c-c'$.
This is the required bound for $\widetilde d$. Hence, \cite[Proposition 2.6]{FT1} holds.

Finally, \cite[Lemma 2.8]{FT1} uses only
the classical Bogomolov--Gieseker inequality, hence its proof works verbatim in our case. Thus, the wall description in \cite[Section 2]{FT1} holds. Therefore, the wall crossing arguments in \cite[Section 3--4]{FT1} express the rank $0$ invariants in terms of rank $1$ invariants. Combining this with the second statement proves the first.
\end{proof}

As mentioned in \cite{FT1,FT2}, the coefficients of $F_0$ and $F_1$ depend only on
$\rH^*(X,\QQ)$ as a graded ring with pairing, $\ch(\bv)$, the Chern classes of $X$, and the class $H\in \rH^2(X,\ZZ)$. Combining with
\cite{pardon}, we obtain the following consequence.

\begin{corollary}\label{cor:sym-inv}
Let $(X_1,H_1)$ and $(X_2,H_2)$ be polarized Calabi--Yau threefolds.
Choose K\"ahler forms $\omega_i$ with $[\omega_i]=H_i\in \rH^2(X_i,\RR)$ and a
symplectomorphism
\[\phi\colon(X_1,\omega_1)\xra{\simeq}(X_2,\omega_2).\]
Let $\bv_i\in\Knum(X_i)$ for $i=1,2$ that satisfy
$\ch(\bv_1)=\phi^*\ch(\bv_2)$. Then
\[\DT_{H_1}(\bv_1)=\DT_{H_2}(\bv_2).\]
\end{corollary}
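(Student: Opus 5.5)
Combine Theorem~\ref{thm:rk-r} with Pardon's theorem \cite{pardon} on the symplectic invariance of rank one DT (equivalently, via MNOP/PT correspondences, Gromov--Witten) invariants. Everything reduces to checking that both sides of the universal formula $\DT_H(\bv)=F_1(\DT_H(\alpha_1),\DT_H(\alpha_2),\dots)$ transform compatibly under $\phi$.

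\emph{Step 1: transport the topological data.} The symplectomorphism $\phi$ induces a ring isomorphism $\phi^*\colon \rH^*(X_2,\QQ)\to\rH^*(X_1,\QQ)$ that preserves the intersection pairing. It sends $H_2=[\omega_2]$ to $H_1=[\omega_1]$. It also identifies the Chern classes of $X_2$ and $X_1$: $\phi$ identifies the symplectic tangent bundles, and the compatible almost complex structures are unique up to homotopy. By the remark following Theorem~\ref{thm:rk-r}, the coefficients of $F_1$ (and $F_0$) depend only on $\rH^*(X,\QQ)$ as a graded ring with pairing, on $\ch(\bv)$, on the Chern classes of $X$, and on $H$. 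Hence for $\bv_1,\bv_2$ with $\ch(\bv_1)=\phi^*\ch(\bv_2)$, the polynomials for $(X_1,H_1,\bv_1)$ and $(X_2,H_2,\bv_2)$ coincide. The rank one classes $\alpha_i$ appearing on the two sides also correspond via $\phi^*$ on Chern characters. Here we use that $\Knum$ of a Calabi--Yau threefold embeds into rational cohomology via $\ch$, so matching classes are determined by their Chern characters.

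\emph{Step 2: rank one invariants.} Rank one Gieseker-semistable sheaves of class $\alpha$ are twists $I_C\otimes L$ of ideal sheaves by line bundles. Since $\rH^1(\cO_X)=0$, tensoring by $L^{-1}$ reduces $\DT_H(\alpha)$ to a classical ideal-sheaf DT invariant $I_{m,\beta}$, up to the multiple and sign mentioned in the introduction. This reduction is independent of $H$. By MNOP/Pandharipande--Pardon, these invariants are determined by Gromov--Witten invariants, and \cite{pardon} shows they depend only on the symplectic deformation class. The classes $(\beta,m)$ and the line bundle $c_1(L)$ transport along $\phi^*$, using $\rH^2(X,\ZZ)=\Pic(X)$ on both sides. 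Thus $\DT_{H_1}(\alpha_i^{(1)})=\DT_{H_2}(\alpha_i^{(2)})$.

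Combining Steps 1 and 2 gives $\DT_{H_1}(\bv_1)=\DT_{H_2}(\bv_2)$ when $\rk\geq0$. For negative rank, first apply the derived dual, which preserves DT invariants of semistable sheaves up to the standard identification.

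The main obstacle is the bookkeeping in Step 2. The normalization relating $\DT_H$ of rank one classes to the ideal-sheaf invariants must be purely topological, and Pardon's statement must apply to exactly those invariants, including the degree zero and all curve-class contributions. I would rely on the way \cite{FT1} phrases this, which asserts precisely that these rank one invariants are symplectic invariants.
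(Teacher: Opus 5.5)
Your proposal follows the paper's argument. It combines three ingredients: Theorem~\ref{thm:rk-r}; the observation, quoted from \cite{FT1,FT2}, that the coefficients of $F_1$ depend only on topological data, which $\phi$ transports (cohomology ring with pairing, Chern classes, $H$, $\ch(\bv)$); and Pardon's symplectic invariance of the rank one invariants \cite{pardon}.

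One small correction concerns negative rank. No derived dual is needed there, and it would not flip the sign of the rank anyway. There are simply no sheaves of negative rank, so both invariants vanish.
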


\subsection{Castelnuovo Bound Conjecture}

Finally, we discuss consequences of Theorem~\ref{thm:main1.1} concerning Castelnuovo Bound Conjecture. One of its general form can be stated as follows.

\begin{conjecture}[{Castelnuovo bound for threefolds}]\label{conj:general-cast}
Let $(X,H)$ be a polarized smooth projective complex threefold. Assume that $H$ generates $\NS(X)_\mathrm{tf}$ and $s\in \ZZ_{\geq 1}$ is the least integer such that there exists a nonzero effective divisor numerically equivalent to $sH$. Then for any one-dimensional closed subscheme $C\subset X$ of degree $d$ and genus $g$, we have
\[g\leq \frac{1}{2sH^3}d^2+\frac{s+\frac{K_X.H^2}{H^3}}{2}d+1.\]
\end{conjecture}

Using Theorem~\ref{thm:main1.1}, we can prove Conjecture~\ref{conj:general-cast} up to an computable constant.

\begin{theorem}\label{thm:cast-bound}
Let $(X,H)$ be a polarized smooth projective threefold over an algebraically closed field of characteristic $0$. Assume that $H$ generates $\NS(X)_\mathrm{tf}$ and $s\in \ZZ_{\geq 1}$ is the least integer such that there exists a nonzero effective divisor numerically equivalent to $sH$. Let $\mathsf C\geq0$ be any constant for which \eqref{eq:quad} holds for $(X,H)$ and $(\alpha,B)\in \RR_{>0}\times \RR H$. Then for any one-dimensional closed subscheme $C\subset X$ of degree $d$ and genus $g$, we have
\[g\leq \frac{1}{2sH^3}d^2+\frac{s+\frac{K_X.H^2}{H^3}}{2}d+1+\max\left\{2\sqrt{2}s\mathsf C,\frac{3}{2}s^{\frac{1}{3}}\mathsf C^{\frac{4}{3}} \right\}H^3.\]
\end{theorem}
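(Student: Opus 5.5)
We follow the strategy of \cite{liu-ruan,liu}: bound $\ch_3$ of the ideal sheaf $I_C$ using a wall-crossing argument along $B=bH$, with \eqref{eq:quad} replacing the original BMT inequality. Write $\ch(I_C)=(1,0,-dH^2/H^3\cdot\,\cdot,\ \ldots)$, normalized as $\ch_H(I_C)=(1,0,-\tfrac{d}{H^3},\tfrac{\ch_3}{H^3})$. Here $\ch_3(I_C)=1-g-\tfrac12 K_X.C$ by Riemann--Roch, up to the standard sign conventions. An upper bound on $g$ is therefore equivalent to a lower bound on $-\ch_3(I_C)$, that is, an upper bound on $\ch_3(\cO_C)$.

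\textbf{Large-volume region.} For $b<0$ and $\alpha\gg0$, the sheaf $I_C$ is $\nu^H_{\alpha,bH}$-stable. We decrease $\alpha$ along the vertical line, or equivalently move along the semicircles of Lemma~\ref{lem:nestedwall}, until either $I_C$ hits the curve where \eqref{eq:quad} would fail, or a wall occurs. If no wall is met before the region in which \eqref{eq:quad}, applied to $I_C$ (whose $\Del=2d/H^3\cdot H^3$-type term is explicit), yields the claimed bound, we plug in the optimal $(\alpha,b)$. With $\mathsf C=0$ this reproduces the computation of \cite{liu-ruan}. The extra term $6v_1\mathsf C\sqrt{\Del}$ is linear in $v_1$ and scales like $\sqrt d$. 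After optimizing $b$, it contributes an error that is bounded independently of $d$ only in a controlled range, and we keep track of it explicitly.

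\textbf{Walls.} A destabilizing sequence $0\to A\to I_C\to Q\to0$ has $A$ a torsion-free sheaf with $\ch_0(A)=1$ (or $A$ of higher rank), and $\ch_1(A)=-kH$ numerically, with $k\ge s$ by the minimality of $s$ (since $A\subset I_C$ gives an effective divisor of class $kH$). As in \cite{liu}, we apply \eqref{eq:quad} to both factors on the wall and use induction on $d$ for the curve part. This means treating the rank-one factors as ideal sheaves $I_{C'}(-kH)$ of curves of smaller degree, together with the classical Bogomolov--Gieseker inequality for higher-rank factors. The outcome is a bound of the form $g\le \frac{d^2}{2sH^3}+\frac{s+K_X.H^2/H^3}{2}d+1+E(d)$, where the error $E(d)$ collects the $\mathsf C$-contributions.

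\textbf{Optimization and the main obstacle.} Finally we split into two regimes of $d$ relative to $\mathsf C$ and $s$. When $d$ is large, the wall occurs at $b\approx -s$, and the $\mathsf C$-correction is bounded by $2\sqrt2 s\mathsf C H^3$. When $d$ is small, comparing the $\sqrt d$-correction with the quadratic gain and optimizing an expression of the shape $\mathsf C\sqrt d\cdot d^{-1/2}\ldots$ gives the $\frac32 s^{1/3}\mathsf C^{4/3}H^3$ term. This comes from minimizing $x^2/s$ against $\mathsf C x^{1/2}$-type terms, which produces the exponent $4/3$. The main obstacle is the wall analysis: we must ensure that the $\mathsf C$-corrections do not accumulate under the induction on $d$. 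We handle this by making the error bound nonincreasing in the induction, using the subadditivity of $\sqrt{\Del}$ from Lemma~\ref{lem:deltacontrol}, so that a single constant absorbs all the recursive steps.
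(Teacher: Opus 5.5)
Your architecture matches the paper's: wall-crossing for $\cI_C$ along $B\in\RR H$, induction on the degree, and subadditivity of $\sqrt{\Del}$ to keep the $\mathsf C$-terms from accumulating. The subadditivity point is sound. Set $A=sH$, $c=\mathsf C/s^2$, and normalize by $A^3$. A wall destabilized by $\cI_{C_1}(-D)$, with $D\equiv aA$, has quotient of class $[\cI_{C_2/D}]$. The two factors have $\sqrt{\Del}$ equal to $\sqrt{2t_{C_1}}$ and $a$, so subadditivity gives $\sqrt{2t_{C_1}}+a\le\sqrt{2t_C}$. This is exactly what makes the auxiliary bound $h_C-f(t_C)\le c\sqrt{2t_C}$ inductive. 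But the wall analysis, as you describe it, does not close.

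\emph{First gap: higher-rank destabilizers.} The classical Bogomolov--Gieseker inequality cannot dispose of them. It gives no information on $\ch_3$, and a higher-rank factor is not an ideal sheaf of a curve, so your induction does not apply to it. The paper avoids them by testing semistability at a single point $(\alpha_C,-\beta_C)$ of the slope-zero curve $\beta^2-\alpha^2=2t_C$, with $\alpha_C=\max\{\sqrt{t_C}/2,\,t_C/2-1\}$.
\begin{itemize}
\item Every wall enclosing this point has right endpoint $\beta_+>\max\{-\sqrt{t_C},-2\}$. A destabilizer $F$ of rank $r$ satisfies $r\beta<A^2.\ch_1(F)/A^3<(r-1)\beta$ along the wall, and $\beta_+\beta_-=2t_C$. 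Together these force $r=1$ and $F=\cI_{C_1}(-D)$ with $1\le a<2$, hence $D$ integral (cf.\ \cite[Lemma~3.2]{liu-ruan}, \cite[Proposition~3.2]{liu}).
\item At the same time $\beta_C$ ($=\frac32\sqrt{t_C}$ or $\frac{t_C}{2}+1$) is small enough that, if $\cI_C$ is semistable there, \eqref{eq:quad} gives $h_C\le\frac23\beta_Ct_C+c\sqrt{2t_C}\le f(t_C)+c\sqrt{2t_C}$.
\end{itemize}
The crux is choosing a point that is simultaneously deep enough and shallow enough. Your sketch (``decrease $\alpha$ until a wall occurs'') does not supply such a point.

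\emph{Second gap: the torsion quotient.} Applying \eqref{eq:quad} to the quotient on the wall is too weak. At the top of the wall (center $-K$, $K=t_{C_2}/a+a/2$, where the slope vanishes) it only yields $h_{C_2}\le\frac{2t_{C_2}^2}{3a}+\frac{2a}{3}t_{C_2}-\frac a3t_C+ca$. In the basic case $C\subset D$, $a=1$, this is $\frac23t_C^2+\cdots$, which exceeds $f(t_C)=\frac{t_C^2+t_C}{2}$. Indeed, summing \eqref{eq:quad} over both factors at the wall essentially reproduces \eqref{eq:quad} for $\cI_C$ there. What is needed is the surface bound $h_Z\le\frac{t_Z^2}{2a}+\frac a2t_Z+ca$ for $Z\subset D$. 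It holds because, when $D$ is integral, $\cI_{Z/D}$ is $\nu^A_{\alpha,-\frac{t_Z}{a}A}$-semistable for \emph{all} $\alpha>0$ (\cite[Lemma~4.5]{liu-ruan}), so one can let $\alpha\to0$ in \eqref{eq:quad}. This is why $a<2$ matters.

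\emph{The constant.} The induction preserves $c\sqrt{2t_C}$, not a constant; the constant comes from a second pass.
\begin{itemize}
\item The term $2\sqrt2c$ is just $c\sqrt{2t_C}$ for $t_C\le4$.
\item For $t_C\ge4$ one plays $c\sqrt{2t}$ against a quadratic gain: $-t_C^2/8$ in the semistable case, or $-t_{C_1}^2/(2-a)^2$ in the wall case. The latter uses $t_{C_2}>a+\frac{a}{2-a}t_{C_1}$, which again comes from the position of the test point. This produces $\frac32c^{4/3}$.
\end{itemize}
So your assignment of the two terms to large and small $d$ is not how the bound arises.

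A minor point: $\ch_3(\cI_C)=g-1-\frac12K_X.C$, so you need an upper bound on $\ch_3(\cI_C)$, which is the direction \eqref{eq:quad} gives.
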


\begin{proof}
Set $A=sH$ and $c=\mathsf C/s^2$. Thus \eqref{eq:quad} holds with $H$ replaced by $A$ and
$\mathsf C$ replaced by $c$. By our assumption, every nonzero effective
divisor is numerically equivalent to $aA$ for some $a\geq 1$.
For a closed subscheme $Z\subset X$ of dimension at most one, we write
\[t_Z=\frac{A.Z}{A^3},\qquad h_Z=\frac{\ch_3(\cI_Z)}{A^3},\qquad f(t)=\frac{t^2+t}{2}.\]
Here $t_Z=0$ if $\dim Z\leq 0$, and we allow $Z=\varnothing$. Therefore, the inequality we want to prove is equivalent to
\begin{equation}\label{eq:normalized-bound}
 h_C-f(t_C)\leq
 \max\left\{2\sqrt{2}c,\frac32c^{4/3}\right\}
\end{equation}
for any one-dimensional closed subscheme $C\subset X$.

First, let $D\equiv aA$ be an integral divisor and $Z\subset D$ a closed subscheme of dimension at most one. The argument of \cite[Lemma~4.5]{liu-ruan} shows that $\cI_{Z/D}$ is $\nu^A_{\alpha,-\frac{t_Z}{a}A}$-semistable for all $\alpha>0$. Applying \eqref{eq:quad} and letting $\alpha\to0$ as in \cite[Proposition~4.6]{liu-ruan}, we get
\begin{equation}\label{eq:surface-bound}
 h_Z\leq\frac{t_Z^2}{2a}+\frac a2t_Z+ca.
\end{equation}

We next prove the auxiliary upper bound
\begin{equation}\label{eq:degree-bound}
 h_C-f(t_C)\leq c\sqrt{2t_C}
\end{equation}
for any one-dimensional closed subscheme $C\subset X$ by induction on $H.C$. By \cite[Lemma 4.3]{liu-ruan}, we may assume that $\cO_C$ is pure. Set $\beta_C\coloneqq \sqrt{\alpha^2_C+2t_C}$ and
\[\alpha_C\coloneqq\max\left\{\frac{\sqrt t_C}{2},\frac{t_C}{2}-1\right\}.\]
If $\cI_C$ is $\nu^A_{\alpha_C,-\beta_CA}$-semistable, applying Theorem~\ref{thm:main1.1} gives
\begin{equation}\label{eq:semistable-bound}
 h_C\leq\frac23 \beta_Ct_C+c\sqrt{2t_C}.
\end{equation}
Since
\[
 \beta_C=\begin{cases}
 \frac32\sqrt t_C,&t_C\leq4,\\
 \frac{t_C}{2}+1,&t_C\geq4,
 \end{cases}
 \qquad
 \frac23\beta_Ct_C\leq f(t_C),
\]
this proves \eqref{eq:degree-bound} in this case.

Now, suppose that $\cI_C$ is not $\nu^A_{\alpha_C,-\beta_CA}$-semistable. Since $\cI_C$ is $\nu^A_{\alpha,\beta A}$-semistable for $\alpha\gg0$ and any fixed $\beta<0$, let $W$ be the outermost actual wall enclosing $(\alpha_C,-\beta_C)$.

The radius of $W$ is greater than $\alpha_C\geq\sqrt{t_C}/2$. Since the right endpoint of a wall of $\cI_C$ increases with its radius, this endpoint is greater than
\[-\beta_C+\alpha_C=\max\{-\sqrt{t_C},-2\}.\]
By the estimate of \cite[Lemma~3.2]{liu-ruan} and \cite[Proposition~3.2]{liu}, there are an effective divisor $D\equiv aA$ and closed subschemes $C_1\subset C$ and $C_2\subset C\cap D$ such that the destabilizing subobject of $\cI_C$ along $W$ is $\cI_{C_1}(-D)$ and we have an exact sequence
\[0\longrightarrow\cO_{C_1}(-D)\longrightarrow\cO_C \longrightarrow\cO_{C_2}\longrightarrow0.\]
In particular, we have $t_C=t_{C_1}+t_{C_2}$ and $h_C=h_{C_1}+h_{C_2}+at_{C_1}$.

Note that for any point $(\alpha,\beta)\in W$ with $\beta>-2$, we have $0<A^2.\ch_1^{\beta A}(\cI_{C_1}(-D))<A^2.\ch_1^{\beta A}(\cI_C)$. Hence, $0<a<2$. Moreover, the minimality of $s$ gives $a\geq1$. Therefore, $D$ is integral, since each nonzero effective component of $D$ has numerical class at least $sH$ and $D\equiv asH$ with $a<2$.

The equation of $W$ is
\[{\alpha'}^2+ \left(\beta'+\frac{t_{C_2}}a+\frac a2\right)^2 =\left(\frac{t_{C_2}}a+\frac a2\right)^2-2t_C.\]
At the top of $W$, $A^2.\ch_1^{\beta A}(\cI_{C_1}(-D))>0$ gives $t_{C_2}>a^2/2$. Since $(\alpha_C,-\beta_C)$ is inside $W$ and $\beta_C^2=\alpha_C^2+2t_C$, we obtain
\begin{equation}\label{eq:wall-numerics}
 \frac{t_{C_2}}a+\frac a2>\beta_C,\qquad
 t_{C_2}>\frac{a^2}{2}
             +a\sqrt{\alpha_C^2+2t_{C_1}}.
\end{equation}
Indeed, the first inequality follows from the equation of $W$; the second follows by using $t_{C_2}>a^2/2$. As $\alpha_C^2\geq t_C/4$, the second inequality implies
\[(t_{C_2}-a^2)(4t_{C_2}-a^2)>9a^2t_{C_1}, \qquad t_{C_2}>a^2\geq a, \qquad \sqrt{2t_{C_1}}+a<\sqrt{2t_C}.\] 
In particular, we have $t_{C_1}<t_C$. Applying \eqref{eq:surface-bound} to $C_2\subset D$ now yields
\begin{equation}\label{eq:residual-bound}
 \begin{aligned}
 h_C-f(t_C)
 &\leq h_{C_1}-f(t_{C_1})+ca\\
 &\quad -(t_{C_2}-a)
     \left(t_{C_1}+\frac{a-1}{2a}t_{C_2}\right).
 \end{aligned}
\end{equation}
Thus by $t_{C_1}<t_C$ and induction, we obtain
\[h_C-f(t_C)\leq c\sqrt{2t_{C_1}}+ca \leq c\sqrt{2t_C}.\]
This proves \eqref{eq:degree-bound}.

We next prove \eqref{eq:normalized-bound}. When $t_C\leq4$, it follows from \eqref{eq:degree-bound}. Suppose $t_C\geq4$. If $\cI_C$ is $\nu^A_{\alpha_C,-\beta_CA}$-semistable, then by $\beta_C=t_C/2+1$ and \eqref{eq:semistable-bound}, we get
\[h_C-f(t_C) \leq-\frac{t_C^2}{6}+\frac{t_C}{6}+c\sqrt{2t_C} \leq-\frac{t_C^2}{8}+c\sqrt{2t_C} \leq\frac32c^{4/3}.\]
If $\cI_C$ is not $\nu^A_{\alpha_C,-\beta_CA}$-semistable, the first inequality in \eqref{eq:wall-numerics} gives
\[t_{C_2}>a+\frac{a}{2-a}t_{C_1}.\]
Consequently,
\[(t_{C_2}-a) \left(t_{C_1}+\frac{a-1}{2a}t_{C_2}\right) \geq\frac{a(3-a)}{2(2-a)^2}t_{C_1}^2 \geq\frac{t_{C_1}^2}{(2-a)^2}.\]
Using \eqref{eq:degree-bound} for $C_1$ in
\eqref{eq:residual-bound}, we find
\[
 \begin{aligned}
 h_C-f(t_C)
 &\leq ac+c\sqrt{2t_{C_1}}
                   -\frac{t_{C_1}^2}{(2-a)^2}\\
 &\leq ac+\frac34(2-a)^{2/3}c^{4/3}.
 \end{aligned}
\]
Maximizing the last expression over $1\leq a<2$ gives
\[
 h_C-f(t_C)\leq
 \begin{cases}
  2c+c^2/16,&0\leq c\leq8,\\
  c+\frac34c^{4/3},&c\geq8
 \end{cases}
 \leq\max\left\{2\sqrt2c,\frac32c^{4/3}\right\}.
\]
Thus \eqref{eq:normalized-bound} holds in all cases and the result follows from Hirzebruch--Riemann--Roch.
\end{proof}

In particular, using Remark~\ref{rmk:constant} and Theorem~\ref{thm:cast-bound}, we have:

\begin{corollary}\label{cor:cast-cy}
Let $X$ be a Calabi--Yau threefold of Picard number one and $H$ be the ample generator of $\NS(X)$. Then for any one-dimensional closed subscheme $C\subset X$ of degree $d$ and genus $g$, we have \[g\leq \frac{1}{2H^3}d^2+\frac{1}{2}d+1+288\sqrt[3]{12}m^{\frac{8}{3}}H^3,\] where $m\geq 1$ is an integer such that $mH$ is very ample.
\end{corollary}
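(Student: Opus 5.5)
This is a direct specialization of Theorem~\ref{thm:cast-bound}, with $s=1$, $K_X=0$, and the explicit constant from Remark~\ref{rmk:constant}.

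First, I check the hypotheses of Theorem~\ref{thm:cast-bound}. Since $X$ is Calabi--Yau of Picard number one and $H$ is the ample generator, $H$ generates $\NS(X)_{\mathrm{tf}}$. We have $s=1$ because $H$ itself is numerically equivalent to a nonzero effective divisor: $mH$ is very ample, and in fact $H$ is effective, since otherwise the least $s$ with $sH$ effective would be larger. I will note that for the generator of $\NS$ on a Calabi--Yau threefold, $h^0(H)>0$ follows from Riemann--Roch and Kodaira vanishing, since $\chi(H)=H^3/6+c_2.H/12>0$. Next, $K_X=0$, so the linear coefficient $\frac{s+K_X.H^2/H^3}{2}$ equals $\frac12$. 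Thus the leading terms match $\frac{1}{2H^3}d^2+\frac12 d+1$.

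Second, I insert the constant. By Remark~\ref{rmk:constant}, since $\omega_X=\cO_X$ and $mH$ is very ample, Theorem~\ref{thm:main1.1} holds with $\mathsf C=\frac{32}{3}n^2m^2=96m^2$ for $n=3$. Theorem~\ref{thm:cast-bound} then gives the error term
\[\max\left\{2\sqrt2\,\mathsf C,\ \tfrac32\,\mathsf C^{4/3}\right\}H^3.\]

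Third, I simplify this maximum. We have $\tfrac32(96m^2)^{4/3}=\tfrac32\cdot 96\cdot 96^{1/3}m^{8/3}=144\cdot 96^{1/3}m^{8/3}$. Since $96^{1/3}=2\cdot 12^{1/3}$, this equals $288\sqrt[3]{12}\,m^{8/3}$. The other term is $2\sqrt2\cdot 96m^2\approx 271.5\,m^2$, while $288\sqrt[3]{12}\approx 659$. Because $m\ge1$ gives $m^{8/3}\ge m^2$, the second term dominates, and the maximum is $288\sqrt[3]{12}\,m^{8/3}$.

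The only step requiring care is confirming $s=1$, that is, that $H$ is effective. If that needed more than the Riemann--Roch argument above, I would instead apply the bound with general $s$ and observe that $\frac{1}{2sH^3}d^2+\frac s2d$ is not obviously dominated by the $s=1$ expression. So the effectivity argument is the part to nail down.
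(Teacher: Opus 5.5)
Your proof is correct and follows the paper's argument: specialize Theorem~\ref{thm:cast-bound} with $s=1$, $K_X=0$ and $\mathsf C=96m^2$ from Remark~\ref{rmk:constant}, and your evaluation of the maximum as $\tfrac32\mathsf C^{4/3}=288\sqrt[3]{12}\,m^{8/3}$ is also right. The one step you left unjustified is $\chi(\cO_X(H))=H^3/6+c_2(X).H/12>0$, which requires $c_2(X).H\ge0$; this is Miyaoka's theorem (equivalently, Bogomolov's inequality for the semistable tangent bundle), and it is exactly the result the paper cites to get $s=1$.
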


\begin{proof}
By Miyaoka’s theorem, we have $s=1$. Then the result follows from Theorem~\ref{thm:cast-bound} by taking $\mathsf C=96m^2$ as in Remark~\ref{rmk:constant}.
\end{proof}

Note that by \cite{liu}, the quadratic term of Conjecture~\ref{conj:general-cast} is optimal. Combining with Theorem~\ref{thm:cast-bound} and the construction in \cite[Theorem 4.13]{liu}, we see that the linear term is optimal as well, which provides a refinement for \cite[Theorem 1.7]{liu}.

\begin{corollary}
Let $(X,H)$ be a polarized smooth projective threefold over an algebraically closed field of characteristic $0$. Assume that $H$ generates $\NS(X)_\mathrm{tf}$ and $s\in \ZZ_{\geq 1}$ is the least integer such that there exists a nonzero effective divisor numerically equivalent to $sH$. Then we have
\[\lim_{d\to \infty} \frac{g^{X}_{\max}(d)-\frac{1}{2sH^3}d^2}{d}=\frac{s+\frac{K_X.H^2}{H^3}}{2},\]
where
\[g^{X}_{\max}(d)\coloneqq \max\{g(C)\mid C\subset X \text{ is a } 1\text{-dimensional closed subscheme of degree }d\}.\]
\end{corollary}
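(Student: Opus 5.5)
The plan is to prove the limit by matching an upper bound and a lower bound, each of the form
\[\frac{1}{2sH^3}d^2+\frac{s+\frac{K_X.H^2}{H^3}}{2}d+O(1).\]

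\textbf{Upper bound.} Theorem~\ref{thm:main1.1} supplies a constant $\mathsf C=\mathsf C(X,H)$ for which \eqref{eq:quad} holds for every $(\alpha,B)$. In particular it holds for $B\in\RR H$, so Theorem~\ref{thm:cast-bound} applies with this $\mathsf C$. It gives, for every one-dimensional closed subscheme $C\subset X$ of degree $d$,
\[g(C)\le \frac{1}{2sH^3}d^2+\frac{s+\frac{K_X.H^2}{H^3}}{2}d+1+\mathsf K,\qquad \mathsf K:=\max\left\{2\sqrt2 s\mathsf C,\tfrac32 s^{1/3}\mathsf C^{4/3}\right\}H^3.\]
The constant $\mathsf K$ does not depend on $d$. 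Subtracting the quadratic term, dividing by $d$ and letting $d\to\infty$ gives
\[\limsup_{d\to\infty}\frac{g^X_{\max}(d)-\frac{1}{2sH^3}d^2}{d}\le \frac{s+\frac{K_X.H^2}{H^3}}{2}.\]

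\textbf{Lower bound.} For this direction I would use the construction of \cite[Theorem 4.13]{liu}. Take a nonzero effective divisor $S\equiv sH$. For every degree $d$ (at least along a sequence with bounded gaps, which suffices for a limit in $d$), it produces a curve $C\subset X$ of degree $d$ whose genus attains the quadratic term $\frac{1}{2sH^3}d^2$ together with the linear term $\frac{s+K_X.H^2/H^3}{2}d$, up to an $o(d)$ error. The heuristic is plane-curve-like: $C$ is essentially a complete intersection in $S$, or a union of such with a bounded residual piece. Adjunction on $S$ gives $2g-2=C.(C+K_S)$ with $K_S=(K_X+S)|_S$, and maximizing $C^2$ under Hodge index on $S$ gives $C^2\approx d^2/(sH^3)$. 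This is why the factor $s+K_X.H^2/H^3$ appears in the linear coefficient. Dividing by $d$ then gives
\[\liminf_{d\to\infty}\frac{g^X_{\max}(d)-\frac{1}{2sH^3}d^2}{d}\ge \frac{s+\frac{K_X.H^2}{H^3}}{2}.\]

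\textbf{Main obstacle.} The delicate step is the lower bound. One must check that \cite[Theorem 4.13]{liu} really controls the linear term, not only the quadratic one as stated in \cite[Theorem 1.7]{liu}. One must also handle degrees $d$ not of the special form $d=ksH^3$. I would first examine the construction in \cite{liu} to confirm that the error is $o(d)$, and fill intermediate degrees by adding lines or small-degree components, which changes $g$ by $O(d^{1/2})$ or $O(1)$ at worst. If the construction only yields genus along a subsequence with bounded gaps in $d$, monotonicity-type adjustments of this kind still make the lower bound hold for the full limit. The upper and lower bounds together give the stated equality.
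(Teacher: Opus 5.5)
Your upper bound is the paper's argument: Theorem~\ref{thm:main1.1} supplies $\mathsf C$, and Theorem~\ref{thm:cast-bound} then bounds $g$ by the conjectural expression plus a constant independent of $d$, which gives the $\limsup$. Like the paper, you take the lower bound from the construction of \cite[Theorem 4.13]{liu}. The genuine problem is the fallback you build into the lower bound. You claim that good degrees along a sequence with bounded gaps suffice, and that intermediate degrees can be filled by adjoining lines or other bounded-degree components. Both claims are false.

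Put $T(d)=\frac{d^2}{2sH^3}+\lambda d+1$ with $\lambda=\frac{s+K_X.H^2/H^3}{2}$. Then $T(d_0+e)-T(d_0)=\frac{e}{sH^3}d_0+O(1)$: because the leading term is quadratic, a bounded shift of the degree raises the target by an amount linear in $d_0$. Suppose $g(C)=T(d_0)+O(1)$, and let $L$ be a curve of degree $e$ meeting $C$ in a subscheme of bounded length (for instance, a curve not contained in the surface carrying $C$). The sequence $0\to\cO_{C\cup L}\to\cO_C\oplus\cO_L\to\cO_{C\cap L}\to0$ gives
$g(C\cup L)=g(C)+g(L)-1+\operatorname{length}(C\cap L)=T(d_0+e)-\frac{e}{sH^3}d_0+O(1)$.
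Along that residue class you would only get $\liminf\ge\lambda-\frac{e}{sH^3}$. So this loses precisely the linear coefficient the corollary is about; it is harmless only for the quadratic term.

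What the lower bound actually needs is that the added piece meets the main curve in about $\frac{e}{sH^3}d_0$ points. Take an effective $S\equiv sH$ and a general $D_k\in|kH|$. The complete intersection $C_k=S\cap D_k$ has degree $d_0=ksH^3$ and, by adjunction, $g(C_k)=T(d_0)$ exactly. Now let $\Gamma\subset S$ be a pure one-dimensional subscheme of degree $e$ with no component in $D_k$. Then $C_k\cap\Gamma=D_k\cap\Gamma$ has length $k\,H.\Gamma=\frac{e}{sH^3}d_0$, so $g(C_k\cup\Gamma)=T(d_0+e)+O(1)$. Equivalently, you can link a fixed bounded-degree curve inside $S\cap D_{k+1}$.

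So the degrees you can reach with the correct linear term are governed by which degrees, modulo $sH^3$, occur for curves lying on a surface $S\equiv sH$. That is the input you must take from Liu's construction for the degrees over which the limit is taken. It cannot be manufactured from a bounded-gap subsequence by adding lines.
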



\begin{thebibliography}{BMSZ17}



\bibitem[AFK$^+$24]{fey:modularity}
S.~Alexandrov, S.~Feyzbakhsh, A.~Klemm, B.~Pioline and T.~Schimannek,
\emph{Quantum geometry, stability and modularity},
Commun. Number Theory Phys. \textbf{18} (2024), no.~1, 49--151.

\bibitem[AB13]{AB}
D.~Arcara and A.~Bertram,
\emph{Bridgeland-stable moduli spaces for $K$-trivial surfaces},
with an appendix by M.~Lieblich,
J. Eur. Math. Soc. (JEMS) \textbf{15} (2013), no.~1, 1--38.


\bibitem[Bay19]{Bay:deformation}
A.~Bayer,
\emph{A short proof of the deformation property of Bridgeland
stability conditions},
Math. Ann. \textbf{375} (2019), no.~3--4, 1597--1613.


\bibitem[BBMT14]{bbmt14}
A.~Bayer, A.~Bertram, E.~Macr\`i and Y.~Toda,
\emph{Bridgeland stability conditions on threefolds II: An application to Fujita's conjecture},
J. Algebraic Geom. \textbf{23} (2014), no.~4, 693--710.

\bibitem[BLM$^+$21]{BLM} A.~Bayer, M.~Lahoz, E.~Macr\`i, H.~Nuer,
A.~Perry and P.~Stellari, \emph{Stability conditions in families},
Publ. Math. Inst. Hautes \'Etudes Sci. \textbf{133} (2021), 157--325.

\bibitem[BM14]{BM:projectivity}
A.~Bayer and E.~Macr\`i,
\emph{Projectivity and birational geometry of Bridgeland moduli spaces},
J. Amer. Math. Soc. \textbf{27} (2014), no.~3, 707--752.


\bibitem[BM23]{bm:icm}
A.~Bayer and E.~Macr\`i,
\emph{The unreasonable effectiveness of wall-crossing in algebraic geometry},
in \emph{ICM---International Congress of Mathematicians, Vol.~3, Sections 1--4},
EMS Press, Berlin, 2023, pp.~2172--2195.

\bibitem[BMS16]{BMS} A.~Bayer, E.~Macr\`i and P.~Stellari, \emph{The space of stability conditions on abelian threefolds, and on some Calabi--Yau threefolds}, Invent. Math. \textbf{206} (2016), no.~3, 869--933.

\bibitem[BMT14]{BMT} A.~Bayer, E.~Macr\`i and Y.~Toda,
\emph{Bridgeland stability conditions on threefolds I:
Bogomolov--Gieseker type inequalities}, J. Algebraic Geom.
\textbf{23} (2014), no.~1, 117--163.

\bibitem[BMSZ17]{BMSZ} M.~Bernardara, E.~Macr\`i, B.~Schmidt and X.~Zhao,
\emph{Bridgeland stability conditions on Fano threefolds},
\'Epijournal G\'eom. Alg\'ebrique \textbf{1} (2017), Art.~2, 24~pp.


\bibitem[Bri07]{bridgeland:stability} T.~Bridgeland, \emph{Stability conditions on triangulated categories}, Ann. of Math. (2) \textbf{166} (2007), no.~2, 317--345.

\bibitem[Bri08]{bridgeland:K3}
T.~Bridgeland, \emph{Stability conditions on $K3$ surfaces},
Duke Math. J. \textbf{141} (2008), no.~2, 241--291.

\bibitem[BM02]{Bridgeland–Maciocia}
T.~Bridgeland and A.~Maciocia,
\emph{Fourier--Mukai transforms for K3 and elliptic fibrations},
J. Algebraic Geom. \textbf{11} (2002), no.~4, 629--657.

\bibitem[CF26]{CF} Y.~Cheng and S.~Feyzbakhsh, \emph{Stability conditions on threefolds}, preprint (2026), \href{https://arxiv.org/abs/2607.04788v1}{arXiv:\allowbreak 2607.04788v1}.

\bibitem[GRS26]{GRS}
V.~Di Gennaro, A.~Rapagnetta and P.~Sabatino,
\emph{On the genus of a curve in a projective $3$-fold},
preprint (2026); \href{https://arxiv.org/abs/2601.17852}{arXiv:2601.17852}.

\bibitem[DG15]{Donkin-Geranios} S.~Donkin and H.~Geranios,
\emph{Composition factors of tensor products of truncated symmetric powers},
Adv. Math. \textbf{285} (2015), 394--433.


\bibitem[Dou01]{Douglas}
M.~R.~Douglas,
\emph{D-branes, categories and $N=1$ supersymmetry},
J. Math. Phys. \textbf{42} (2001), no.~7, 2818--2843.


\bibitem[DFR05]{DFR}
M.~R.~Douglas, B.~Fiol and C.~R\"omelsberger,
\emph{Stability and BPS branes},
J. High Energy Phys. (2005), no.~9, Paper No.~006.


\bibitem[FKLR25]{FKLR}
S.~Feyzbakhsh, N.~Koseki, Z.~Liu and N.~Rekuski,
\emph{Stability conditions on Calabi--Yau threefolds via Brill--Noether theory of curves},
preprint (2025), \href{https://arxiv.org/abs/2509.24990}{arXiv:2509.24990}.

\bibitem[FT23a]{feyz:curve-counting}
S.~Feyzbakhsh and R.~P.~Thomas,
\emph{Curve counting and S-duality},
\'Epijournal G\'eom. Alg\'ebrique \textbf{7} (2023), Art.~15, 25~pp.

\bibitem[FT23b]{FT1}
S.~Feyzbakhsh and R.~P.~Thomas,
\emph{Rank $r$ DT theory from rank $1$},
J. Amer. Math. Soc. \textbf{36} (2023), no.~3, 795--826.

\bibitem[FT24]{FT2}
S.~Feyzbakhsh and R.~P.~Thomas,
\emph{Rank $r$ DT theory from rank $0$},
Duke Math. J. \textbf{173} (2024), no.~11, 2063--2116.

\bibitem[FLZ22]{FLZ}
L.~Fu, C.~Li and X.~Zhao,
\emph{Stability manifolds of varieties with finite Albanese morphisms},
Trans. Amer. Math. Soc. \textbf{375} (2022), no.~8, 5669--5690.


\bibitem[Gie79]{Gieseker}
D.~Gieseker,
\emph{On a theorem of Bogomolov on Chern classes of stable bundles},
Amer. J. Math. \textbf{101} (1979), no.~1, 77--85.


\bibitem[HRS96]{RS:tilting}
D.~Happel, I.~Reiten and S.~O.~Smal{\o},
\emph{Tilting in abelian categories and quasitilted algebras},
Mem. Amer. Math. Soc. \textbf{120} (1996), no.~575, viii+88~pp.

\bibitem[HL10]{HL}
D.~Huybrechts and M.~Lehn,
\emph{The geometry of moduli spaces of sheaves}, second edition,
Cambridge Mathematical Library, Cambridge University Press, Cambridge, 2010, xviii+325~pp.

\bibitem[JS]{JS}
D.~Joyce and Y.~Song,
\emph{A theory of generalized Donaldson--Thomas invariants},
Mem. Amer. Math. Soc. \textbf{217} (2012), no.~1020, iv+199~pp.

\bibitem[Kos18]{koseki:product-threefolds}
N.~Koseki,
\emph{Stability conditions on product threefolds of projective
spaces and Abelian varieties},
Bull. Lond. Math. Soc. \textbf{50} (2018), no.~2, 229--244.


\bibitem[Kos20]{Naoki:Stabnefbundle}
N.~Koseki,
\emph{Stability conditions on threefolds with nef tangent bundles},
Adv. Math. \textbf{372} (2020), Paper No.~107316, 29~pp.


\bibitem[Kos22]{koseki:double-triple-solids}
N.~Koseki,
\emph{Stability conditions on Calabi--Yau double/triple solids},
Forum Math. Sigma \textbf{10} (2022), Paper No.~e63, 33~pp.


\bibitem[Kos23]{Kos} N.~Koseki, \emph{On the Bogomolov--Gieseker inequality
in positive characteristic}, Int. Math. Res. Not. IMRN
(2023), no.~24, 20784--20811.


\bibitem[Lan04]{Langer} A.~Langer, \emph{Semistable sheaves in positive characteristic},
Ann. of Math. (2) \textbf{159} (2004), no.~1, 251--276.


\bibitem[Lan04b]{Langer:mixed}
A.~Langer,
\emph{Moduli spaces of sheaves in mixed characteristic},
Duke Math. J. \textbf{124} (2004), no.~3, 571--586.


\bibitem[Li19a]{li:quintic}
C.~Li, \emph{On stability conditions for the quintic threefold},
Invent. Math. \textbf{218} (2019), no.~1, 301--340.

\bibitem[Li19b]{li:fano-pic-rk-1}
C.~Li, \emph{Stability conditions on Fano threefolds of Picard number $1$},
J. Eur. Math. Soc. (JEMS) \textbf{21} (2019), no.~3, 709--726.

\bibitem[Li25]{Li:real}
C.~Li,
\emph{A real reduction of the manifold of Bridgeland stability conditions},
preprint (2025);
\href{https://arxiv.org/abs/2506.21995}{arXiv:2506.21995}.

\bibitem[Li26]{Li} C.~Li,
\emph{A remark on stability conditions on smooth projective varieties},
preprint (2026), \href{https://arxiv.org/abs/2601.22994}{arXiv:2601.22994}.

\bibitem[LLL$^+$26]{LLL} C.~Li, Z.~Liu, Z.~Liu, E.~Macr\`i, A.~Perry, P.~Stellari and X.~Zhao, \emph{Stability conditions and moduli spaces on projective families}, preprint (2026), \href{https://arxiv.org/abs/2607.28411v1}{arXiv:2607.28411v1}.

\bibitem[LLSZ]{LLSZ}
C.~Li, Z.~Liu, B.~Sung and X.~Zhao,
\emph{Stability conditions and inequalities for higher Chern characters},
preprint.

\bibitem[LMPSZ25]{LMPSZ} C.~Li, E.~Macr\`i, A.~Perry,
P.~Stellari and X.~Zhao,
\emph{Stability conditions on products of curves and Hilbert schemes of surfaces},
Geom. Topol., to appear;
\href{https://arxiv.org/abs/2512.14207}{arXiv:2512.14207}.

\bibitem[Liu22]{liu:quadratic}
S.~Liu, \emph{Stability condition on Calabi--Yau threefold of complete intersection of quadratic and quartic hypersurfaces},
Forum Math. Sigma \textbf{10} (2022), Paper No.~e106, 29~pp.

\bibitem[Liu21]{Liu} Y.~Liu, \emph{Stability conditions on product varieties},
J. Reine Angew. Math. \textbf{770} (2021), 135--157.


\bibitem[Liu24]{liu}
Z.~Liu, \emph{Castelnuovo bound for curves in projective $3$-folds},
preprint (2024);
\href{https://arxiv.org/abs/2407.20161}{arXiv:2407.20161}.

\bibitem[LR22]{liu-ruan}
Z.~Liu and Y.~Ruan,
\emph{Castelnuovo bound and higher genus Gromov--Witten
invariants of quintic $3$-folds}, preprint (2022);
\href{https://arxiv.org/abs/2210.13411v2}{arXiv:2210.13411v2}.

\bibitem[LM26]{LM} Z.~Liu and T.~Mao,
\emph{Tilt-stability on singular schemes and Bogomolov--Gieseker-type
inequalities}, preprint (2026), \href{https://arxiv.org/abs/2605.13808v1}{arXiv:2605.13808v1}.

\bibitem[Mac14]{macri:projective-space}
E.~Macr\`i,
\emph{A generalized Bogomolov--Gieseker inequality for the
three-dimensional projective space},
Algebra Number Theory \textbf{8} (2014), no.~1, 173--190.


\bibitem[MP15]{maciocia-piyaratne:abelian-I}
A.~Maciocia and D.~Piyaratne,
\emph{Fourier--Mukai transforms and Bridgeland stability conditions
on abelian threefolds},
Algebr. Geom. \textbf{2} (2015), no.~3, 270--297.


\bibitem[MP16]{maciocia-piyaratne:abelian-II}
A.~Maciocia and D.~Piyaratne,
\emph{Fourier--Mukai transforms and Bridgeland stability conditions
on abelian threefolds II},
Internat. J. Math. \textbf{27} (2016), no.~1, Paper No.~1650007.


\bibitem[MS19]{martinez-schmidt:blowups}
C.~Martinez and B.~Schmidt,
\emph{Bridgeland stability on blow ups and counterexamples},
with an appendix by O.~Das,
Math. Z. \textbf{292} (2019), no.~3--4, 1495--1510.


\bibitem[OAI26]{OAI-BMT}
OpenAI,
\emph{Prescribed large-volume charges on threefolds with trivial canonical bundle},
preprint (2026);
\href{https://github.com/openai/math/blob/main/preprints/Prescribed-large-volume-charges-on-threefolds-with-trivial-canonical-bundle-September-24-2026/paper.pdf}{preprint}.

\bibitem[Par23]{pardon}
J.~Pardon, \emph{Universally counting curves in Calabi--Yau threefolds},
preprint (2023);
\href{https://arxiv.org/abs/2308.02948v3}{arXiv:2308.02948v3}.

\bibitem[PT19]{PT} D.~Piyaratne and Y.~Toda,
\emph{Moduli of Bridgeland semistable objects on $3$-folds and
Donaldson--Thomas invariants}, J. Reine Angew. Math.
\textbf{747} (2019), 175--219.

\bibitem[Pol07]{Pol}
A.~Polishchuk,
\emph{Constant families of $t$-structures on derived categories
of coherent sheaves},
Mosc. Math. J. \textbf{7} (2007), no.~1, 109--134.


\bibitem[Sch14]{schmidt:quadric}
B.~Schmidt,
\emph{A generalized Bogomolov--Gieseker inequality for the smooth
quadric threefold},
Bull. Lond. Math. Soc. \textbf{46} (2014), no.~5, 915--923.


\bibitem[Sch17]{Sch17}
B.~Schmidt, \emph{Counterexample to the generalized Bogomolov--Gieseker inequality for threefolds},
Int. Math. Res. Not. IMRN (2017), no.~8, 2562--2566.

\bibitem[SB98]{ShepherdBarron:Frobenius}
N.~I.~Shepherd-Barron,
\emph{Semi-stability and reduction mod $p$},
Topology \textbf{37} (1998), no.~3, 659--664.

\bibitem[Stacks]{Stacks} The Stacks Project Authors,
\emph{Stacks Project}, \url{https://stacks.math.columbia.edu}, 2026.


\bibitem[Sun19]{sun:tilt-vanishing}
H.~M.~Sun,
\emph{Tilt-stability, vanishing theorems and Bogomolov--Gieseker
type inequalities},
Adv. Math. \textbf{347} (2019), 677--707.

\bibitem[Sun21]{sun2021stabilityconditionsthreefoldsvanishing}
H.~M.~Sun,
\emph{Stability conditions on threefolds with vanishing Chern classes},
preprint (2020);
\href{https://arxiv.org/abs/2006.00756}{arXiv:2006.00756}.


\bibitem[Sun08]{Sun} X.~Sun, \emph{Direct images of bundles under
Frobenius morphism}, Invent. Math. \textbf{173} (2008), no.~2, 427--447.

\bibitem[Sun10]{Sun:Frobenius-survey}
X.~Sun,
\emph{Frobenius morphism and semi-stable bundles},
in \emph{Algebraic geometry in East Asia---Seoul 2008},
Adv. Stud. Pure Math., vol.~60,
Mathematical Society of Japan, Tokyo, 2010, pp.~161--182.


\bibitem[Th]{Th}
R.~P.~Thomas,
\emph{A holomorphic Casson invariant for Calabi--Yau $3$-folds,
and bundles on $K3$ fibrations},
J. Differential Geom. \textbf{54} (2000), no.~2, 367--438.

\bibitem[Tod09]{Toda:limit}
Y.~Toda,
\emph{Limit stable objects on Calabi--Yau $3$-folds},
Duke Math. J. \textbf{149} (2009), no.~1, 157--208.

\bibitem[Tod13]{toda:bogomolov-counting}
Y.~Toda, \emph{Bogomolov--Gieseker-type inequality and counting invariants},
J. Topol. \textbf{6} (2013), no.~1, 217--250.

\bibitem[Tod14]{Toda:CYBG}
Y.~Toda,
\emph{A note on Bogomolov--Gieseker type inequality for
Calabi--Yau $3$-folds},
Proc. Amer. Math. Soc. \textbf{142} (2014), no.~10, 3387--3394.

\bibitem[Tod17]{Toda:GepnerBG}
Y.~Toda,
\emph{Gepner point and strong Bogomolov--Gieseker inequality
for quintic $3$-folds},
in \emph{Higher Dimensional Algebraic Geometry---In Honour
of Professor Yujiro Kawamata's Sixtieth Birthday},
Adv. Stud. Pure Math., vol.~74,
Mathematical Society of Japan, Tokyo, 2017, pp.~381--405.

\bibitem[Xu25]{xu:quintic}
C.~Xu,
\emph{Stronger Bogomolov--Gieseker type inequality on quintic threefold},
preprint (2025);
\href{https://arxiv.org/abs/2511.21288}{arXiv:2511.21288}.
\end{thebibliography}
\end{document}